\newtheorem{thm}{Theorem}[section]
\newtheorem{prop}[thm]{Proposition}
\newtheorem{lem}[thm]{Lemma}
\newtheorem{cor}[thm]{Corollary}
\theoremstyle{definition}
\theoremstyle{remark}
\newtheorem{rem}[thm]{Remark}
\newcommand{\fig}[2]{\includegraphics[width=#1\textwidth]{#2}}
\newcommand{\Z}{\mathbb{Z}}
\newcommand{\R}{\mathbb{R}}
\newcommand{\CCm}{\mathcal{C}_{\mathrm{min}}}
\newcommand{\bip}[1]{\left.#1\right\vert_{\mathrm{bip}}}
\newcommand{\bt}{\mathbf{t}}
\newcommand{\bZ}{\mathbf{Z}}
\newcommand{\bU}{\mathbf{U}}
\newcommand{\bx}{\mathbf{x}}
\newcommand{\by}{\mathbf{y}}
\title{Enumeration of maps with tight boundaries and the Zhukovsky transformation}%
\author{J\'er\'emie Bouttier%
  \thanks{Sorbonne Université and Université Paris Cité, CNRS, IMJ-PRG, F-75005 Paris, France}
  \thanks{Université Paris-Saclay, CNRS, CEA, Institut de physique
    théorique, 91191, Gif-sur-Yvette, France} \and %
  Emmanuel Guitter\footnotemark[2] \and %
  Grégory Miermont\thanks{ENS de Lyon, UMPA, CNRS UMR 5669, 46 allée
    d’Italie, 69364 Lyon Cedex 07, France, and Institut Universitaire de France}}%
\date{\today}
\begin{document}
\maketitle

\begin{abstract}
  We consider maps with tight boundaries, i.e.\ maps whose boundaries
  have minimal length in their homotopy class, and discuss the
  properties of their generating functions
  $T^{(g)}_{\ell_1,\ldots,\ell_n}$ for fixed genus $g$ and prescribed
  boundary lengths $\ell_1,\ldots,\ell_n$, with a control on the
  degrees of inner faces. We find that these series appear as
  coefficients in the expansion of $\omega^{(g)}_n(z_1,\ldots,z_n)$, a
  fundamental quantity in the Eynard-Orantin theory of topological
  recursion, thereby providing a combinatorial interpretation of the
  Zhukovsky transformation used in this context. This interpretation
  results from the so-called trumpet decomposition of maps with
  arbitrary boundaries. In the planar bipartite case, we obtain a
  fully explicit formula for $T^{(0)}_{2\ell_1,\ldots,2\ell_n}$ from
  the Collet-Fusy formula. We also find recursion relations satisfied
  by $T^{(g)}_{\ell_1,\ldots,\ell_n}$, which consist in adding an
  extra tight boundary, keeping the genus $g$ fixed. Building on a
  result of Norbury and Scott, we show that
  $T^{(g)}_{\ell_1,\ldots,\ell_n}$ is equal to a parity-dependent
  quasi-polynomial in $\ell_1^2,\ldots,\ell_n^2$ times a simple power
  of the basic generating function $R$. In passing, we provide a
  bijective derivation in the case $(g,n)=(0,3)$, generalizing a
  recent construction of ours to the non bipartite case.
\end{abstract}

\setcounter{tocdepth}{2}

\newpage
\tableofcontents
\newpage

\section{Introduction}\label{sec:introduction-1}

\subsection{Context and motivations}
\label{sec:context}

We pursue our investigation, started in~\cite{triskell,polytightmaps},
of the enumerative properties of maps with tight boundaries. In this
episode, we add a new twist to the story, by making an unexpected
connection with the theory of topological
recursion~\cite{Eynard2007,Eynard2016}. In particular, we find a
combinatorial interpretation of the Zhukovsky transformation, which
was so far used as an analytical tool in this theory.

Let us first recall some context. The enumeration of maps (graphs
embedded in surfaces) is a venerable topic in combinatorics, initiated
by Tutte in his famous series of ``Census'' papers,
see~\cite{Tutte1963} and references therein. Accounts of further
developments may be found for instance
in~\cite{Goulden2004,Schaeffer15}. A fertile connection with random
matrices was made in~\cite{BIPZ}, and it is essentially its
ramifications, motivated by the study of 2D quantum gravity---see
e.g.~\cite{Witten1991,DiFrancesco1995,Ambjoern1997} for
overviews---that led to topological recursion.

In the paper~\cite{triskell}, we found a surprisingly simple formula
for the generating function of planar bipartite maps with three
\emph{tight} boundaries (as we will explain in more detail below, a
boundary is said tight if it has minimal length in its homotopy
class). The tightness property seems to play a role, as without it the
corresponding generating function becomes slightly more
involved~\cite{CoFu12}. One may therefore wonder whether a similar
phenomenon occurs for maps of other topologies (higher genus, more
boundaries).

This question was already explored in the limit case of \emph{tight
  maps}. Colloquially speaking, a tight map is a map in which every
face is seen as a boundary, and is forced to be tight. The counting of
tight maps was actually investigated first by
Norbury~\cite{Norbury2010}, as tight maps are nothing but fatgraphs
describing lattice points in the moduli space of curves. A remarkable
\emph{quasi-polynomiality phenomenon} occurs in this problem: if we
denote by $N_{g,n}(b_1,\ldots,b_n)$ the number of tight maps of genus
$g$ with $n$ boundaries of lengths $b_1,\ldots,b_n$, then Norbury
showed that it is a quasi-polynomial of degree $3g-3+n$ in
$b_1^2,\ldots,b_n^2$ depending on the parities, and we gave
in~\cite{polytightmaps} a bijective construction of these
quasi-polynomials in the planar case $g=0$.

In a further paper~\cite{NoSc13}, Norbury and Scott observed that the
quasi-polynomiality phenomenon can be related to a specific feature in
topological recursion, namely that the ``$x$'' meromorphic function
attached to the ``spectral curve'' takes the particular form
\begin{equation}
  x(z) := \alpha+\gamma \left( z + z^{-1} \right) .
 \label{eq:Zhu}
\end{equation}
This form
is nothing but the so-called Zhukovsky
transformation~\cite{Zhukovskii1910}. The case of tight maps
corresponds to taking the second ``$y$'' function equal to the
Zhukovsky variable $z$, but Norbury and Scott showed that the
quasi-polynomiality subsists when $y$ is modified.

In this paper, we relate the observation of Norbury and Scott 
to the enumeration of maps with tight boundaries. At the combinatorial
level, the Zhukovsky transformation $z\mapsto x(z)$ corresponds to a 
substitution: namely we find that it translates the natural idea of transforming a
map with tight boundaries into a map with arbitrary boundaries by gluing 
a ``trumpet'' onto each boundary (see Figure~\ref{fig:tubamirum} below for an illustration). 
By a trumpet, we mean a planar map with two boundaries,
one of them being tight while the other is arbitrary. We use the term by analogy
with \cite{Saad2019}, where a similar notion is introduced in the context of 
hyperbolic geometry. The coefficients $\alpha$ and $\gamma$ appearing in \eqref{eq:Zhu}
are related to the ``basic'' generating functions $R$ and $S$ defined in the next section
by $\alpha=S$ and $\gamma=R^{1/2}$. 

By this approach, we deduce the interesting property that the generating function
$T^{(g)}_{\ell_1,\ldots,\ell_n}$ of maps of genus $g$ and $n$ tight boundaries
of lengths $\ell_1,\ldots,\ell_n$ has the general form 
  \begin{equation}
    T^{(g)}_{\ell_1,\ldots,\ell_n} = \tau^{(g)}_{\ell_1,\ldots,\ell_n} \gamma^{\ell_1+\cdots+\ell_n}
  \end{equation}
with $\tau^{(g)}_{\ell_1,\ldots,\ell_n}$ a quasi-polynomial of degree $3g-3+n$ in $\ell_1^2,\ldots, \ell_n^2$ 
depending on the parities, and $\gamma=R^{1/2}$ as before. In particular, in the
planar bipartite case, we will derive a fully explicit expression for $T^{(0)}_{2\ell_1,\ldots,2\ell_n}$
from the Collet-Fusy formula \cite{CoFu12}.

Let us conclude this section by mentioning the recent paper \cite{Budd2023} which introduces the notion of tight boundaries in the context
of hyperbolic geometry, and makes the connection with the so-called JT gravity \cite{Saad2019}. A very similar
polynomiality phenomenon occurs therein, which makes us believe that we are looking at the same 
reality from different angles.

\subsection{Basic definitions}
\label{sec:defs}
Let us now introduce precisely the main definitions and conventions used in this paper.

\paragraph{Combinatorial notions: maps with boundaries, and their
automorphisms.}

For $g$ a nonnegative integer, a \emph{map} of genus $g$ is a cellular embedding 
of a finite connected multigraph into the closed orientable surface of genus $g$, considered up
to orientation-preserving homeomorphism. By cellular embedding, we mean that the graph (which consists 
of \emph{vertices} and \emph{edges}) is drawn on the surface
without edge crossings, and that the connected components of the complement of 
the graph, called \emph{faces}, are homeomorphic to open disks. A map of
genus zero is said \emph{planar}.

A \emph{map with boundaries} is a map in which we mark some faces and vertices
and call them \emph{boundaries}. We shall use the terms 
\emph{boundary-face} and \emph{boundary-vertex} when we want to specify the type 
of a boundary. A face which is not a boundary is called an \emph{inner} face.
The \emph{length} of a boundary is by definition equal to its degree (number of incident edge sides)
for a boundary-face, and to $0$ for a boundary-vertex. A boundary-face is said
\emph{rooted} if one of its incident corners is marked. A boundary-vertex cannot be rooted. 

An \emph{automorphism} of a map with boundaries is a graph isomorphism that also preserves the map structure, in the following sense.  
\begin{itemize}
\item
The images of the edges pointing away from a given vertex, listed cyclically in clockwise order, are the edges pointing away from the image vertex, also in cyclic clockwise order. In particular, the graph isomorphism also induces a bijection of the set of faces onto itself, respecting the incidence relations with vertices and edges. 
\item 
The marked elements, i.e.\ the (possibly rooted) boundary-faces and boundary-vertices, are preserved. 
\end{itemize}
The automorphisms of a given map with boundaries form a group, which is often reduced to the identity element. For instance, this holds for a rooted map of any genus,  or for a planar map possessing at least three distinguished boundaries. Contrary to the former situation, the latter is not true in higher genera: Figure \ref{fig:torushex} provides a counterexample in genus $1$.

\begin{figure}
  \centering
  \includegraphics[scale=.4]{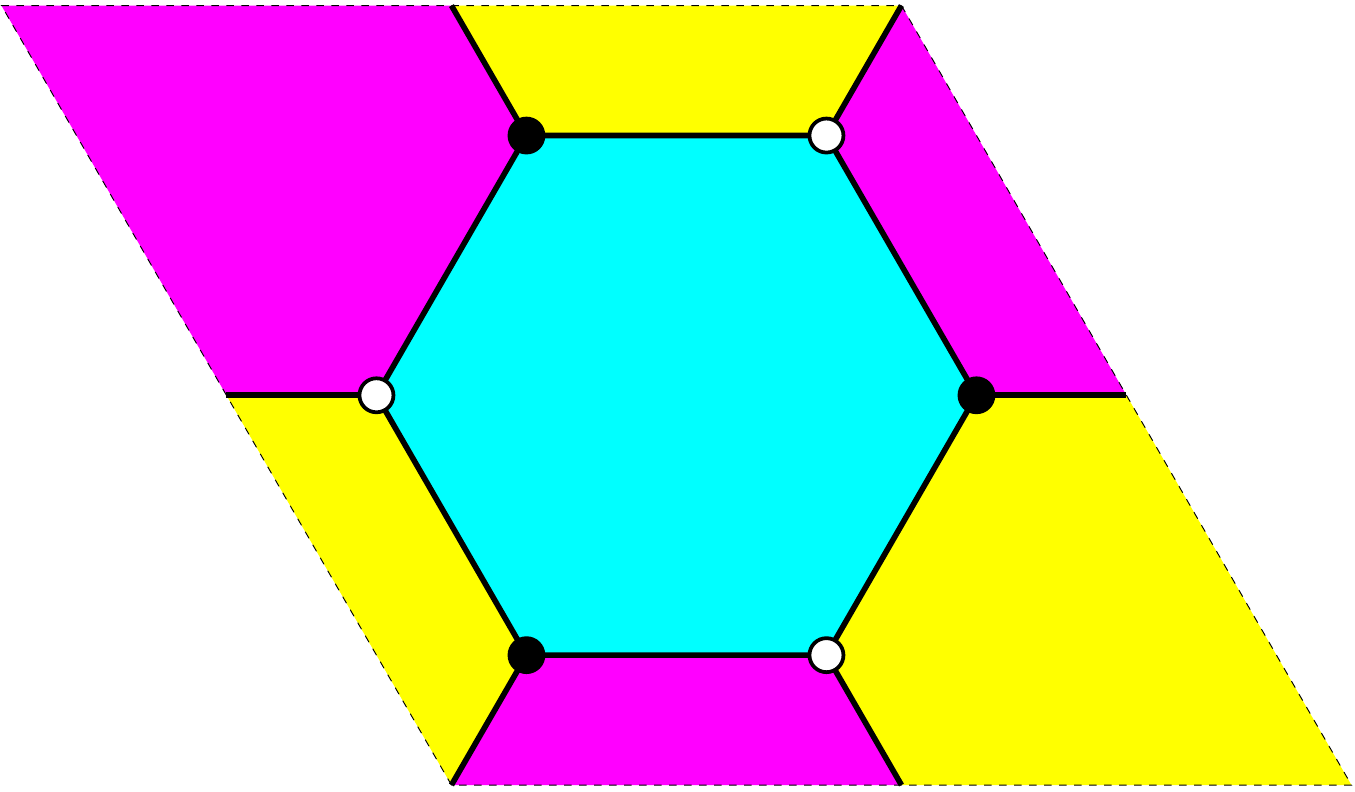}
  \caption{A toric map, with three distinguished boundary-faces of degree 6, admitting an automorphism group of order $3$. There are only $2$ rooted maps resulting from marking one of the $6$ corners incident to the cyan face, and we must account for this fact by weighing this map by the inverse automorphism group order factor $1/3$. }
  \label{fig:torushex}
\end{figure}

\paragraph{Topological notions: boundaries and homotopy.}

We think of boundaries as representing holes in the surface. Precisely, we create a puncture inside
each boundary-face, while we draw a small circle around each boundary-vertex: the interior of the circle
is removed but the edges incident to the vertex remain connected by the circle. See \cite[Figure 2]{triskell} for
an illustration.

Given a map, possibly with boundaries, a \emph{path} is a sequence of
consecutive edges. It defines a curve on the underlying surface
provided that, if the path visits a boundary-vertex, we choose a
direction for ``circumventing'' the corresponding puncture in the
surface. A path will always be assumed to contain this data in the
following. A path is said \emph{closed} if it starts and ends at the
same vertex. A path is said \emph{simple} if it does not visit the
same vertex twice (except at its endpoints for a simple
closed path). The \emph{contour} of a face is the closed path
formed by its incident edges. Two closed paths on the map are said
(freely) \emph{homotopic} to one another if their corresponding curves
can be continuously deformed into one another on the punctured
surface.  A closed path is said \emph{homotopic} to a boundary if, on
the underlying surface of the map, the curve associated with the path
can be contracted onto the puncture associated with the boundary.  A
boundary-face is said \emph{tight} (resp.\ \emph{strictly tight}) if
its degree is not larger (resp.\ is strictly smaller) than the number of
edges of any other closed path homotopic to it.
A boundary-vertex is considered as (strictly) tight by convention.

\paragraph{Generating functions.}

Let $t,t_1,t_2,t_3,\ldots$ be a collection of formal variables.
We attach a weight $t$ to each vertex which is not a boundary, and a weight $t_i$ to each 
inner face of degree $i$, $i=1,2,3,\ldots$. The global weight of a map is the product
of the weights of all its (non-boundary-) vertices and faces, multiplied by the inverse of the order of the automorphism group of the map. This last factor yields better combinatorial properties for the corresponding generating functions, see the caption of Figure \ref{fig:torushex} for a quick explanation. 

We now introduce two important quantities denoted by $R$ and $S$, which are formal
power series in $t,t_1,t_2,t_3,\ldots$.
The first one $R$ is the generating function of planar maps with two distinguished boundaries 
of length $1$. The second one $S$ is that of planar maps with two boundaries,
one of length $1$ and the other of length $0$. It is known\footnote{See for instance \cite{census} for a proof in the dual
language. Beware that $t$ is a weight per edge in this reference, one has to set $t_i=t\, g_i$ to match the notations.} that, as formal power series,
$R$ and $S$ are determined by the following equations:
\begin{equation}
\begin{split}
R &= t+\sum_{i\geq 1} t_i\ [z^{-1}]\left(z+S+\frac{R}{z}\right)^{i-1} ,\\
S &= \sum_{i\geq 1} t_i\ [z^{0}]\left(z+S+\frac{R}{z}\right)^{i-1} .\\
\end{split}
\label{eq:defRS}
\end{equation}
A simplification occurs in the \emph{essentially bipartite} case
$t_1=t_3=t_5=\cdots=0$, which corresponds to imposing that every inner
face has even degree.  Indeed we then have $S=0$ and $R$ satisfies the
single equation
\begin{equation}
  \label{eq:Rbipeq}
  R = t+ \sum_{j\geq 1}t_{2j} \binom{2j-1}{j} R^j .
\end{equation}
In this case, $R$ can also be understood as the generating function of
bipartite planar maps with two unrooted boundaries, one of length $2$ and the
other of
length $0$.

\subsection{Outline}

The remainder of this paper is organized as follows. Section~\ref{sec:fundec} is devoted
to the trumpet decomposition which relates maps with arbitrary boundaries to maps with tight 
boundaries. We first describe in Section~\ref{sec:trumpdec} the bijection for maps with boundary-faces only,
before analyzing its enumerative consequences in Section~\ref{sec:trumpenum}. The case of maps
having also boundary-vertices is discussed in Section~\ref{sec:adding-vert-bound}. In Section~\ref{sec:bipcase},
we derive from the Collet-Fusy formula an explicit expression for the generating function of planar bipartite maps with tight boundaries. Section~\ref{sec:recrel} is devoted to recursion relations satisfied by the 
generating functions of maps with tight boundaries. These recursion relations consist in adding an extra
tight boundary, preserving the genus. We first discuss in Section~\ref{sec:extrabvtx} the addition of 
a boundary-vertex, and then that of a boundary-face in Section~\ref{sec:extrabface}.
In Section~\ref{sec:consistency}, we check that the explicit expression for planar bipartite maps 
found in Section~\ref{sec:bipcase} satisfies these recursion relations. Section~\ref{sec:quasipol}
is devoted to the quasi-polynomiality phenomenon: we first consider maps with boundary-faces only
in Section~\ref{sec:toprec}, revisiting the result of Norbury and Scott. We then treat the case
of maps having also boundary-vertices in Section~\ref{sec:quasipolvia}: the proof is by induction
on the number of boundaries, using the recursion relations of Section~\ref{sec:recrel} for the induction
step and results from topological recursion for the initialization. For the sake of clarity, we
begin with the case of maps with even face degrees. The extension to the general case follows the same strategy
but requires extra technical steps which are offloaded to the appendices. Concluding remarks
are gathered in Section~\ref{sec:conclusion}. Additional material is contained in the 
appendices: Appendix~\ref{app:tightpants} generalizes the formula of \cite{triskell} for the generating
function of planar maps with three tight boundaries (i.e.~\ ``tight pair of pants'') to the non-bipartite case.
Appendices~\ref{sec:proof-prop-refpr-1} and \ref{sec:proof-prop-refpr} contain the proof of 
the induction step and the initialization step, respectively, for the quasi-polynomiality phenomenon.
Appendix~\ref{sec:higherdiff} gives a combinatorial expression for the derivatives of inverse functions
of several variables, used in Appendix~\ref{sec:proof-prop-refpr}.

\section{The trumpet decomposition}\label{sec:fundec}

In this section, we discuss a bijective decomposition of a
map with boundaries into a map with tight boundaries and a collection
of annular maps with one strictly tight boundary (a \emph{trumpet}). This generalizes
\cite[Proposition 6.5]{triskell}, which dealt with planar maps 
with three boundaries, to arbitrary genera and number of
boundaries. 

\subsection{The decomposition theorem}\label{sec:trumpdec}

Let us introduce some notation. A \emph{trumpet} is a map $M$ of genus
$0$ with two boundary-faces $f,F$, such that the boundary-face $F$ is
rooted, and such that the boundary-face $f$, called the
\emph{mouthpiece}, is unrooted and strictly tight. Recall from
Section~\ref{sec:defs} that this means that the contour of $f$ is the
\emph{unique} closed path of minimal length among all closed paths of
$M$ that are (freely) homotopic to $f$.  By considering a leftmost
geodesic from the root corner of the face $F$ aimed towards the face
$f$, we may canonically distinguish one corner incident to $f$. We
state the following simple but crucial fact.

\begin{lem}
  \label{sec:decomp-theor-1}
  The contour of the boundary of the mouthpiece of a trumpet is simple.
\end{lem}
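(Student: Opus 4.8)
The plan is to argue by contradiction, working on the cylinder. Recall that a trumpet has genus $0$ and (exactly) two boundary-faces, so the underlying surface, once we puncture $f$ and $F$, is topologically a cylinder; in particular its fundamental group is $\mathbb{Z}$, and every closed path $\delta$ on the map has a well-defined winding number $w(\delta)\in\mathbb{Z}$ around the puncture of $f$. I normalize it so that the contour $\gamma$ of the mouthpiece $f$ has $w(\gamma)=1$: this is legitimate because $\gamma$, being the boundary of the face $f$, is freely homotopic to a small loop encircling that puncture once. Suppose now that $\gamma$ is \emph{not} simple. Then $\gamma$ visits some vertex $v$ at two positions that are not jointly its two endpoints, and cutting $\gamma$ at these two positions writes it as a concatenation $\gamma=\gamma_1\gamma_2$ of two closed paths based at $v$, each containing at least one edge. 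Denoting by $n_i$ the length of $\gamma_i$, we have $n_1,n_2\ge 1$ and $n_1+n_2=\deg f$, while additivity of winding numbers under concatenation gives $w(\gamma_1)+w(\gamma_2)=1$.

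The key step is to show that $w(\gamma_i)\in\{0,1\}$ for $i=1,2$. Each $\gamma_i$ is a sub-walk of the contour $\gamma$ of $f$, which traverses every edge-side incident to $f$ exactly once; hence $\gamma_i$ uses each such edge-side at most once, and it keeps the face $f$ on a fixed side throughout. One can therefore push $\gamma_i$ slightly into $f$ --- replacing each edge-step by an arc running parallel to it just inside $f$, joining consecutive arcs through the corner of $f$ at each visited interior vertex, and keeping the base vertex $v$ on $\partial f$ --- so as to obtain a closed curve $\gamma_i^{+}$ that is freely homotopic to $\gamma_i$ in the punctured surface and lies in $\overline{f}$ minus the puncture of $f$. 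The point requiring a little care is that $\gamma_i^{+}$ is \emph{embedded}: this uses the ``at most once'' property together with the fact that distinct visits of $\gamma$ (hence of $\gamma_i$) to a vertex correspond to distinct, and therefore disjoint, corners of $f$ at that vertex. Since $\overline{f}$ minus the puncture is an annulus, the simple closed curve $\gamma_i^{+}$ is either contractible or isotopic to the core $\partial f$; because it inherits its orientation from $\gamma$, its winding number equals $0$ or $1$, and hence so does $w(\gamma_i)$.

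Granting this, $w(\gamma_1)+w(\gamma_2)=1$ forces $\{w(\gamma_1),w(\gamma_2)\}=\{0,1\}$, say $w(\gamma_2)=1$. Then $\gamma_2$ is a closed path homotopic to $f$ of length $n_2=\deg f-n_1\le\deg f-1<\deg f$, which contradicts the tightness of $f$ (in fact only plain tightness of the mouthpiece, not strict tightness, is used here). Hence $\gamma$ is simple. I expect the only genuine obstacle to be making the ``push into $f$'' construction rigorous --- in particular, verifying embeddedness of $\gamma_i^{+}$ and that the homotopy carrying $\gamma_i$ to $\gamma_i^{+}$ can be kept away from the puncture --- and I would handle this by carrying out the construction inside a fixed thin collar of $\partial f$ in $f$ that avoids the puncture.
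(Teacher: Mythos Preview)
Your overall strategy is sound, but there is a small circularity in the step where you invoke ``$\overline{f}$ minus the puncture is an annulus''. That would hold if the contour of $f$ were simple --- which is precisely what you are proving. Under your standing contradiction hypothesis that $\gamma$ is not simple, $\overline{f}$ is a nontrivial quotient of the closed disk (two boundary points are identified at $v$), and $\overline f\setminus\{\text{puncture}\}$ is not an annulus. Fortunately the argument does not need this. Your push-off already shows (via the face parametrisation $\phi:\bar D\to\overline f$, injective on the open disk) that $\gamma_i^{+}$ is a \emph{simple} closed curve in the ambient cylinder, and any simple closed curve in the cylinder has winding number in $\{-1,0,1\}$ by Jordan. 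Together with $w(\gamma_1)+w(\gamma_2)=1$ this already forces $\{w(\gamma_1),w(\gamma_2)\}=\{0,1\}$, with no orientation argument needed; the contradiction then follows as you wrote.

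For comparison, the paper takes a shorter route. Instead of splitting $\gamma$ at a repeated vertex and pushing the two halves into $f$, it simply extracts from $\gamma$ a simple closed \emph{subpath} $c'$ of positive length (any non-simple closed walk contains one) and applies Jordan--Schoenflies directly to $c'$ in the plane with the mouthpiece puncture at $0$: either the bounded Jordan region misses $0$, so $c'$ is contractible and deleting it yields a strictly shorter path homotopic to $\gamma$ (contradicting tightness), or it contains $0$, so $c'$ generates $\pi_1(\mathbb R^2\setminus\{0\})$ and tightness gives $|c'|\ge|\gamma|$, hence $c'=\gamma$. This avoids the push-off construction altogether. Your approach has the complementary feature that it never needs to locate a simple subpath of $\gamma$; it manufactures simplicity by pushing into the face, which is a nice idea but buys you the extra bookkeeping that the paper's argument sidesteps.
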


This lemma is a consequence of the fact that the trumpet $M$ is drawn on a 
topological cylinder, and is characteristic 
of this topology: as can be seen for example in Figure 
\ref{fig:tubamirum}, tight faces need not have simple boundaries in 
maps of arbitrary topology (precisely, of negative Euler 
characteristic $\chi=2-2g-n$, where $g$ is the genus and $n$ is the number 
of boundaries).

\begin{proof}
We may assume
that the trumpet $M$ is embedded in the plane, with the puncture of the mouthpiece
$f$ set at point $0$, and the puncture of the rooted face $F$ sent to
infinity. We let $\mathbb{D}$ be the unit open disk  in $\R^2$ and
$\mathbb{S}^1=\partial \mathbb{D}$ be the unit circle. 

The contour of $f$ is a closed path, which we orient arbitrarily and
denote by $c$. Plainly, it admits a simple closed subpath $c'$ not
reduced to a single vertex, and we aim at showing that $c'=c$.  By the
Jordan-Schoenflies theorem, $c'$ cuts the plane into two domains $D$
(bounded) and $\R^2\setminus \overline{D}$ (unbounded), and moreover,
$c'$ viewed as a continuous injective mapping
$\mathbb{S}^1\to \mathrm{Im}(c')$ can be extended into a homeomorphism
$h:\R^2\to \R^2$ sending $\mathbb{D}$ to $D$. If $0$ does not belong
to $D$, it follows $c'$ is homotopic to the trivial path in the
punctured plane $\R^2\setminus \{0\}$, so that $c$ is homotopic to a
strictly shorter path, contradicting the tightness assumption of
$f$. Therefore, it must be that $0\in D$, so that $c'$ is homotopic to
$\mathbb{S}^1$ and therefore generates the fundamental group
$\pi_1(\R^2\setminus \{0\})\simeq \Z$. Since $c$, being the contour of
the face $f$ that contains $0$, is also a generator, this implies that
$c'$ is either homotopic to $c$ or to its reversed path. But the
tightness of $f$ shows that the length of $c'$ is at least that of
$c$, and the only possibility is that $c'$ is equal to $c$.
\end{proof}

Next, let $M_0$ be a map of genus $g$ with $n$ tight rooted boundary-faces
$f^0_1,\ldots,f^0_n$, and let
$M_i,1\leq i\leq n$ be a sequence of trumpets (a \emph{brassband}),
with respective boundary-faces
$f_i,F_i$, where $f_i$ denotes the mouthpiece. We assume
that the length of $f_i$ and the length of $f_i^0$ are equal for $1\leq i\leq n$. 

For every $i\in \{1,\ldots,n\}$, we orient the contour of the faces $f^0_i,f_i$ in such a way that $f^0_i$
lies to the right, and $f_i$ lies to the left of their oriented
contours. Then, since the boundary of $f_i$ is a simple closed path, and
since $f_i,f_i^0$ have the same length, one may identify these boundaries, in such a way that the root of
$f_i^0$ and the distinguished corner of $f_i$ (obtained from the root of $F_i$ as explained above) are matched, by gluing
edges sequentially as they appear in contour order,
for our choice of orientation. This results
in a map $M=\Phi(M_0,M_1,\ldots,M_n)$ with genus $g$ and with the $n$
rooted boundary faces $F_1,\ldots,F_n$ inherited from
$M_1,\ldots,M_n$.  

\begin{thm}[Trumpet decomposition of maps]\label{sec:decomp-theor}
Let $g,n$ be integers such that $g\geq 0$, $n\geq 1$, and  $(g,n)\neq
(0,1)$ or equivalently $\chi=2-2g-n\leq 0$, and let 
  $L_1,\ell_1,\ldots,L_n,\ell_n$ be  fixed positive integers. Then, the mapping $\Phi$ is a bijection
  between
  \begin{itemize}
\item      sequences $(M_0,M_1,\ldots,M_n)$, where $M_0$ is a map of
  genus $g$ with $n$ rooted tight boundaries of lengths $\ell_i,1\leq i\leq n$
  and for $1\leq i\leq n$, $M_i$ is a trumpet with mouthpiece length $\ell_i$
  and whose other external face has length $L_i$, and
\item maps $M$ of genus $g$ with $n$ rooted boundaries $F_1,\ldots,F_n$ of lengths
  $L_1,\ldots,L_n$, and such that for $1\leq i\leq n$, the minimal length of a closed path
  homotopic to $F_i$ is equal to $\ell_i$.  
\end{itemize}
\end{thm}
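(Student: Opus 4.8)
The \emph{plan} is to exhibit an explicit inverse $\Psi$ of $\Phi$ and to check that $\Psi\circ\Phi$ and $\Phi\circ\Psi$ are the identity. Starting from a map $M$ of genus $g$ with $n$ rooted boundary-faces $F_1,\dots,F_n$ of lengths $L_1,\dots,L_n$ whose minimal homotopic lengths are $\ell_1,\dots,\ell_n$, I would attach to each $F_i$ a canonical closed path $\gamma_i$, namely the \emph{leftmost} shortest closed path homotopic to $F_i$, produced by a leftmost-geodesic procedure in the spirit of \cite{triskell,polytightmaps}. Such a $\gamma_i$ has length $\ell_i$, is homotopic to $F_i$ hence separating, and is ``as deep as possible'': among all shortest closed paths homotopic to $F_i$ it bounds, together with $F_i$, the largest region $A_i$ on the $F_i$-side. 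Then $\Psi(M)$ is obtained by cutting $M$ along $\gamma_1,\dots,\gamma_n$: the component containing $F_i$ becomes a map $M_i$, with $F_i$ rooted and with a second boundary-face coming from $\gamma_i$, rooted at the corner singled out by the leftmost geodesic from the root of $F_i$ to $\gamma_i$; the remaining component $M_0$ has genus $g$ and $n$ rooted boundary-faces $f^0_1,\dots,f^0_n$ of lengths $\ell_1,\dots,\ell_n$. For $\Psi$ to be well defined one first needs $\gamma_i$ to be a \emph{simple} closed path: this is the analogue of Lemma~\ref{sec:decomp-theor-1}, but since $M$ may have $\chi<0$ it no longer follows from planarity and must be read off the leftmost-geodesic structure (a vertex visited twice by $\gamma_i$ would allow a shortcut on one side contradicting minimality or leftmostness).

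Next I would check that the pieces are of the required type. The boundary $f^0_i$ is tight in $M_0$: a closed path of $M_0$ homotopic to $f^0_i$, read back in $M$, is homotopic to $F_i$, hence of length $\ge\ell_i$. The key claim is that $M_i$ is a genuine trumpet, i.e.\ $A_i$ is a cylinder and the mouthpiece $\gamma_i$ is \emph{strictly} tight in it; granting this, $\Phi\circ\Psi=\mathrm{id}$ is routine, since cutting along a simple separating closed path and regluing edges in contour order are mutually inverse operations, and the roots and distinguished corners match by construction. For $\Psi\circ\Phi=\mathrm{id}$, given $(M_0,M_1,\dots,M_n)$ and $M=\Phi(M_0,M_1,\dots,M_n)$, one must show that the leftmost shortest closed path of $M$ homotopic to $F_i$ is precisely the glued boundary $f_i=f^0_i$. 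This is a surgery argument: any closed path of $M$ homotopic to $F_i$ can be cut by $f_i$ into arcs inside $M_i$ and inside $M_0$ and rearranged into a closed path homotopic to $f_i$ in the cylinder $M_i$ plus a closed path homotopic to $f^0_i$ in $M_0$; strict tightness of the mouthpiece in the trumpet $M_i$ together with tightness of $f^0_i$ in $M_0$ then forces length $\ge\ell_i$, with equality and leftmost position only for the path $f_i$ itself, whose $F_i$-side region is the whole cylinder $M_i$. The distinguished corner is recovered because the leftmost geodesic from the root of $F_i$ cannot cross $f_i$, so it stays inside $M_i$ and lands on the mouthpiece corner built into the trumpet.

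The main obstacle is the cylinder claim: the region $A_i$ bounded by $F_i$ and the canonical path $\gamma_i$ must have genus $0$ and no boundary-face other than $F_i$ and $\gamma_i$, so that $M_i$ is a cylinder, and then $\gamma_i$, being the deepest minimal representative inside a cylinder, is automatically the unique shortest closed path homotopic to the mouthpiece, i.e.\ strictly tight. The idea is that any handle or extra boundary-face trapped in $A_i$ between $\gamma_i$ and $F_i$ would let one slide a minimal homotopic representative ``around'' it and obtain a shortest closed path homotopic to $F_i$ strictly deeper than $\gamma_i$, contradicting the maximality of $A_i$ (and another $F_j$ cannot lie in $A_i$ since $\gamma_i$ is homotopic to $F_i$ alone); a compact orientable surface of genus $0$ with exactly two boundary-faces and no further marked element is a cylinder. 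Turning ``sliding around an obstruction'' into a clean combinatorial statement, with full control on the length of the modified path, is where the real work lies. The exclusion $(g,n)\ne(0,1)$, i.e.\ $\chi\le 0$, is used precisely here: otherwise $F_1$ would be null-homotopic and the notions of tightness and of trumpet degenerate. Finally, compatibility with the prescribed boundary lengths is automatic, since all $L_i$ and $\ell_i$ are fixed from the outset.
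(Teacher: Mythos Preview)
Your overall strategy---cut $M$ along a canonical minimal closed path homotopic to each $F_i$ to produce the inverse $\Psi$---is the right one and is exactly what the paper does. But you have the extremal backwards, and this breaks the argument.

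You define $\gamma_i$ to be the minimal closed path whose $F_i$-side region $A_i$ is \emph{largest}, i.e.\ the path furthest from $F_i$. With that choice the trumpet $M_i=A_i$ may contain \emph{other} minimal closed paths homotopic to $F_i$ (any such path is closer to $F_i$, hence inside $A_i$), so the mouthpiece $\gamma_i$ is not strictly tight and $M_i$ is not a trumpet in the required sense. Your own check of $\Psi\circ\Phi=\mathrm{id}$ exposes this: you correctly argue that after gluing, $f_i$ is a minimal path whose $F_i$-side region is exactly the cylinder $M_i$, but since $f^0_i$ is only assumed tight (not strictly tight) in $M_0$, there can be another minimal path $c$ lying entirely in $M_0$; its $F_i$-side region strictly contains $M_i$, so $c$, not $f_i$, has the largest $A_i$. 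Hence your $\Psi$ would return $c\neq f_i$ and $\Psi\circ\Phi$ fails.

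The fix is to take the \emph{outermost} minimal closed path, the one whose $F_i$-side region is \emph{smallest}. Then every other minimal path homotopic to $F_i$ lies on the $M_0$ side, so the mouthpiece is strictly tight automatically, while $f^0_i$ in $M_0$ is tight (possibly not strictly). This is precisely what the paper does: it puts a lattice order $\prec^{(i)}$ on the set $\CCm^{(i)}(M)$ of minimal closed paths homotopic to $F_i$---defined via the universal cover, following~\cite[Section~6.1]{triskell}---and takes the minimum, i.e.\ the one closest to $F_i$. The leftmost-geodesic construction is used in the paper only to pick a distinguished corner on the mouthpiece (for rooting), not to select the closed path itself; conflating the two is a second, smaller, issue in your write-up.
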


\begin{figure}
  \centering
  \includegraphics{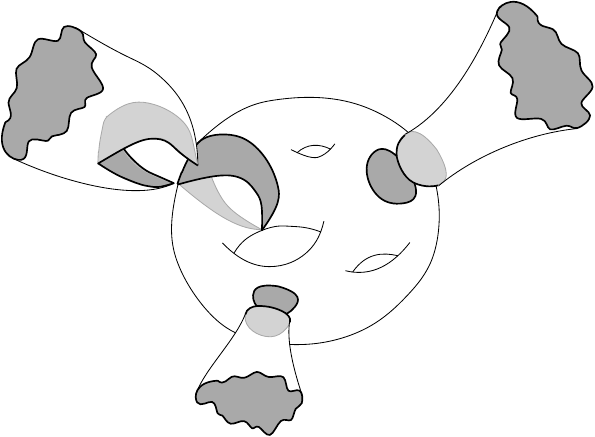}
  \caption[Trumpets decomposition]{An illustration of the
    decomposition of a map of genus $3$ with three external faces
    into a map with tight boundaries and with the same topology, and a brassband of three
    trumpets. We emphasize in this picture that, while the mouthpieces
    of the trumpets always have simple boundaries by Lemma
    \ref{sec:decomp-theor-1}, the (tight) external faces 
    of the central  map need not have simple boundaries.  }
  \label{fig:tubamirum}
\end{figure}

This result is illustrated in Figure \ref{fig:tubamirum}. 
As already mentioned above, this is a generalization of
\cite[Proposition 6.5]{triskell}. The main difficulty comes
from constructing the reverse mapping $\Phi^{-1}(M)$, where $M$ is a
map of genus $g$ with $n$ boundary-faces $F_1,\ldots,F_n$. This consists in
considering, for $i\in \{1,2,\ldots,n\}$, the set $\CCm^{(i)}(M)$ of
closed paths in $M$ that are homotopic to $F_i$, and that have minimal possible length, and to define an
order $\prec^{(i)}$ that makes $\CCm^{(i)}(M)$ a
lattice. In words, we have $c\prec^{(i)}c'$ if $c$ is closer
to $F_i$ than $c'$, although this description
requires some interpretation, which is done in \cite{triskell} by
working in the universal cover of $M$. Since $\CCm^{(i)}(M)$ is clearly a
finite set, this  implies that there is a smallest element $c^i$ in
$\CCm^{(i)}(M)$, that we call the outermost minimal closed path homotopic to
$F_i$. Cutting along these outermost closed paths, for every $i\in
\{1,\ldots,n\}$, produces $n$ trumpets $M_1,\ldots,M_n$ and a remaining
map $M_0$ with $n$ tight boundaries $f^0_1,\ldots,f^0_n$, whose
lengths match with those of the corresponding mouthpieces.  

We will not repeat the detailed argument here and refer the interested
reader to~\cite[Section 6.1]{triskell}.

\subsection{Enumerative consequences}\label{sec:trumpenum}

Theorem \ref{sec:decomp-theor} admits the following immediate
corollary in terms of generating functions (recall that we attach a
weight $t$ to each vertex which is not a boundary, and a weight $t_i$
to each inner face of degree $i$, $i=1,2,3,\ldots$).

\begin{cor}
  \label{cor:funFT}
  For $g\geq 0$ and $L_1,\ldots,L_n$ positive integers ($n \geq 1$),
  let us denote by $F^{(g)}_{L_1,\ldots,L_n}$ the generating function
  of maps of genus $g$ with $n$ rooted boundary-faces of lengths
  $L_1,\ldots,L_n$, and by $\hat{T}^{(g)}_{L_1,\ldots,L_n}$ the
  generating function of those maps whose boundaries are
  tight\footnote{By convention we use a hat symbol to indicate that
    the boundaries are \emph{rooted}. Later on we will consider the
    hatless generating function $T^{(g)}_{L_1,\ldots,L_n}$ where the
    boundaries are \emph{unrooted}, see~\eqref{eq:ThatT}.}. Then, for $(g,n) \neq (0,1)$ or
  equivalently $\chi=2-2g-n\leq 0$, we have
  \begin{equation}
    \label{eq:funFT}
    F^{(g)}_{L_1,\ldots,L_n} = \sum_{\ell_1,\ldots,\ell_n \geq 1}
    A_{L_1,\ell_1} \cdots A_{L_n,\ell_n} \hat{T}^{(g)}_{\ell_1,\ldots,\ell_n}\, ,
  \end{equation}
  where $A_{L,\ell}$ is the generating function of trumpets with
  rooted boundary of length $L$ and mouthpiece of length $\ell$, where
  the vertices incident to the mouthpiece do not receive a weight $t$.
\end{cor}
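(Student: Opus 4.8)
The plan is to deduce Corollary~\ref{cor:funFT} almost mechanically from the bijection $\Phi$ of Theorem~\ref{sec:decomp-theor}, by applying the standard transfer between weighted bijections and generating functions. The one genuine subtlety is the bookkeeping of the vertex weight $t$ along the glued mouthpieces, which forces the peculiar convention in the definition of $A_{L,\ell}$.

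First I would fix the boundary lengths $L_1,\ldots,L_n$ and partition the set of maps of genus $g$ with $n$ rooted boundary-faces of those lengths according to the tuple $(\ell_1,\ldots,\ell_n)$ of minimal lengths of closed paths homotopic to $F_1,\ldots,F_n$; each $\ell_i$ is a positive integer since every boundary is a boundary-face and the genus/boundary constraints rule out $(g,n)=(0,1)$. For a fixed such tuple, Theorem~\ref{sec:decomp-theor} gives a bijection $\Phi$ between the maps in that class and sequences $(M_0,M_1,\ldots,M_n)$ with $M_0$ a map of genus $g$ with $n$ rooted tight boundaries of lengths $\ell_i$ and each $M_i$ a trumpet with mouthpiece length $\ell_i$ and outer boundary length $L_i$. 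So the generating function identity will follow once I check that $\Phi$ is weight-preserving in the appropriate sense, i.e.\ that the weight of $M$ equals the product of the weight of $M_0$ and the weights of $M_1,\ldots,M_n$, provided we agree on how to distribute the weights of the vertices and faces sitting on the gluing interface.

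The key step is therefore the weight accounting. Inner faces of $M$ are in obvious weight-preserving correspondence with inner faces of $M_0$ and of the $M_i$'s (the gluing identifies boundary edges, not inner faces, and no new inner face is created). The vertices of $M$ split into those strictly inside $M_0$, those strictly inside some trumpet $M_i$, and those lying on the glued mouthpiece of $M_i$ (equivalently, on the tight boundary $f_i^0$ of $M_0$); the latter are counted once in $M$ but would be counted twice if we naively multiplied weights. The clean resolution, which is exactly the one the statement adopts, is: in $M_0$ we weight all non-boundary vertices by $t$, including those incident to the boundary-faces $f_i^0$ (these are non-boundary vertices of $M_0$ since only faces, not vertices, are marked as boundaries here); and in the trumpet generating function $A_{L,\ell}$ we decree that the vertices incident to the mouthpiece do \emph{not} receive a weight $t$. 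With this convention each vertex of $M$ is weighted exactly once on the right-hand side, automorphism factors are handled by the fact that the relevant maps here are rooted (hence rigid, cf.\ the discussion after Figure~\ref{fig:torushex}), and summing over all $(\ell_1,\ldots,\ell_n)$ yields \eqref{eq:funFT}, with the product structure $A_{L_1,\ell_1}\cdots A_{L_n,\ell_n}$ coming from the fact that the trumpets $M_1,\ldots,M_n$ are chosen independently once $M_0$ (and hence the $\ell_i$) is fixed.

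The main obstacle — really the only thing to be careful about — is making the vertex-weight convention for $A_{L,\ell}$ precisely consistent with treating the mouthpiece vertices as ordinary weighted vertices of $M_0$; one must check there is no double counting and no omission, and in particular that a vertex of $M$ incident to $F_i$ but not to the gluing interface does get its weight from the trumpet $M_i$, while the interface vertices get theirs from $M_0$. Once this is pinned down, the corollary is an immediate translation of the bijection into generating functions, and no further argument is needed.
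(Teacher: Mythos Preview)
Your proposal is correct and follows exactly the approach the paper takes: the paper states Corollary~\ref{cor:funFT} as an ``immediate'' consequence of Theorem~\ref{sec:decomp-theor} without further argument, and what you have written is precisely the unpacking of that word --- partitioning by the minimal cycle lengths $(\ell_1,\ldots,\ell_n)$, invoking the bijection $\Phi$, and checking that the vertex-weight convention on the mouthpiece makes the bijection weight-preserving. Your attention to the interface vertices and to the rigidity of rooted maps is exactly the bookkeeping the paper leaves implicit.
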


Note that, by their very definition, $F^{(g)}_{L_1,\ldots,L_n}$ and $ \hat{T}^{(g)}_{L_1,\ldots,L_n}$
are symmetric functions of $L_1,\ldots,L_n$.
The usefulness of the statement above comes from the fact that $A_{L,\ell}$
admits an explicit expression:

\begin{prop}{\cite[Section 9.3]{irredmaps}}\label{sec:enum-coroll}
For any two positive integers $L,\ell$, we have
  \begin{equation}
    \label{eq:Adef}
    A_{L,\ell} = [z^\ell] \left( z + S + \frac{R}z \right)^L\, .
  \end{equation}
\end{prop}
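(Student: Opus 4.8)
The plan is to compute $A_{L,\ell}$ by directly analyzing the structure of a trumpet with rooted boundary-face $F$ of length $L$ and mouthpiece $f$ of length $\ell$, and to recognize it as a known quantity. The starting observation is that a trumpet is a planar (genus $0$) map with two boundary-faces, one of which ($f$) is strictly tight. By Lemma~\ref{sec:decomp-theor-1} the contour of $f$ is a simple closed path, so cutting the surface along it, or equivalently reading off the faces met as one moves from the rooted corner of $F$ inward, exhibits the trumpet as a kind of ``slice'' structure. I would set up a transfer-matrix / path decomposition: follow the leftmost geodesics from the root corner of $F$ towards the mouthpiece (the same geodesics used to distinguish a corner of $f$), which foliate the trumpet into elementary ``slices'' each of which is a planar map with a boundary of length $1$ and controlled structure. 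This is exactly the setting in which the basic generating functions $R$ and $S$ arise: $R$ counts the slices that decrease the distance label by one (two boundaries of length $1$) and $S$ counts the slices that keep it constant (boundaries of length $1$ and $0$). Crucially, the vertices on the mouthpiece carry no weight $t$, which matches the convention that the ``innermost'' boundary in the definitions of $R$ and $S$ is not weighted.

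The key step is then to encode a trumpet as a lattice path. Walking around the rooted boundary-face $F$ of length $L$, each of the $L$ unit edge-sides gets pushed inward along a leftmost geodesic, and the resulting sequence of labels (distances to the mouthpiece) forms a closed walk of length $L$ on $\Z$ that must return to its starting height, with each down-step weighted by $R$, each level-step by $S$, and each up-step by $1$ (the weight $z$ in the generating variable). The condition that $f$ is a genuine boundary of positive length $\ell$ and is strictly tight translates into the walk reaching its minimum exactly $\ell$ times — or, after the standard cyclic-lemma / rooting bookkeeping, into extracting the coefficient of $z^\ell$ in the $L$-th power of the step generating function $z + S + R/z$. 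This gives precisely
\begin{equation*}
  A_{L,\ell} = [z^\ell]\left( z + S + \frac{R}{z} \right)^L,
\end{equation*}
as claimed. In fact the cleanest route is probably not to re-derive this from scratch but to invoke the cited computation in \cite[Section 9.3]{irredmaps}: there, generating functions of planar maps with two boundaries (one weighted, one unweighted) of prescribed lengths are computed, and one checks that a trumpet in our sense is exactly such an object with the strict-tightness constraint automatically enforced by planarity plus the length condition. So the proof reduces to (i) identifying our trumpets with the two-boundary planar maps of \cite{irredmaps}, being careful about the rooting/automorphism conventions and the absence of a $t$-weight on the mouthpiece vertices, and (ii) quoting their formula~\eqref{eq:Adef}.

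The main obstacle I anticipate is the bookkeeping around \emph{strict} tightness and rooting. One has to make sure that the naive ``coefficient of $z^\ell$ in $(z+S+R/z)^L$'' really counts trumpets with the mouthpiece strictly tight and with exactly the prescribed length $\ell$, rather than, say, maps where the innermost cycle could be contracted further, or where there is an overcounting by cyclic symmetry of the mouthpiece. The point is that in genus $0$ with two boundaries, minimality of the inner cycle is automatic once we fix its length to be $\ell$ and demand it be homotopic to $f$ — there is no room for a shorter homotopic cycle on a cylinder — and the distinguished corner on $f$ coming from the leftmost geodesic kills the would-be cyclic symmetry, so the count is rooted on both sides and the automorphism group is trivial. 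Making this rigorous is precisely what \cite{irredmaps} does, so beyond setting up the dictionary carefully there is nothing further to prove.
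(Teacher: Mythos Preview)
Your approach is essentially the same as the paper's: both defer to \cite{irredmaps}. The paper does not give a proof at all; it simply records that Proposition~\ref{sec:enum-coroll} is formula~(9.19) of that reference in the special case $d=0$, $d'=\ell$, $n=L$, after a change of notation. Your proposal goes further by sketching the slice/path decomposition that underlies that formula, and then likewise deferring to the reference for the details. So in spirit and in substance you are doing what the paper does, with extra commentary.

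One point in your sketch is wrong as stated and worth correcting. You write that ``in genus $0$ with two boundaries, minimality of the inner cycle is automatic once we fix its length to be $\ell$\ldots there is no room for a shorter homotopic cycle on a cylinder''. This is false: a planar annular map can certainly have a closed path homotopic to the inner boundary that is strictly shorter than that boundary (imagine a cylinder with a narrow waist). Strict tightness of the mouthpiece is \emph{not} a topological tautology; it is a genuine constraint. What makes the formula work is that the slice decomposition from the root corner of $F$ via leftmost geodesics automatically lands on the \emph{outermost minimal} cycle homotopic to $F$, and this is what becomes the mouthpiece; equivalently, in the language of \cite{irredmaps}, the slices glue back to a map whose inner boundary is strictly tight by construction. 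So the right way to phrase it is that strict tightness is produced by the bijection, not guaranteed by the topology. Since you ultimately defer to \cite{irredmaps} for rigour, this does not break your argument, but the claim as you wrote it is misleading.
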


Note that the notation from \cite{irredmaps} differs from that of the
present paper: precisely, Proposition \ref{sec:enum-coroll} is implied
by formula (9.18) in this reference, in the case $d=0$, $d'=\ell$ and
$n=L$. The right-hand side of~\eqref{eq:Adef} is then nothing but the
three-step path generating function $P_\ell(L;R^{(0)},S^{(0)})$,
beware that the variable $z$ used here does not have the same meaning
as in~\cite{irredmaps}.

Let us now introduce the ``grand'' generating function
\begin{equation}\label{eq:Wdef}
  W_n^{(g)}(x_1,\ldots,x_n) := \sum_{L_1,\ldots,L_n \geq 1}
  \frac{F^{(g)}_{L_1,\ldots,L_n}}{x_1^{L_1+1} \cdots x_n^{L_n+1}}
\end{equation}
which is a formal power series in $x_1^{-1},\ldots,x_n^{-1}$ ($n\geq 1$).  Then,
following~\cite[Definition~3.3.1, p.~87]{Eynard2016}, we define for
$(g,n) \neq (0,1), (0,2)$\footnote{Some conventional terms have to be
  added in these cases.}, or equivalently $\chi=2-2g-n<0$, the quantity
\begin{equation}
  \label{eq:omegadef}
  \omega_n^{(g)}(z_1,\ldots,z_n) := W_n^{(g)}\left(x(z_1),\ldots,x(z_n)\right) x'(z_1) \cdots x'(z_n)
\end{equation}
where
\begin{equation}
  x(z) := R^{1/2} \left( z + z^{-1} \right) + S
\end{equation}
is precisely the Zhukovsky transformation discussed in
Section~\ref{sec:context}.

\begin{thm}[Combinatorial interpretation of $\omega_n^{(g)}$]
  \label{thm:enum-coroll-1}
 The series $\omega_n^{(g)}(z_1,\ldots,z_n)$ is a well-defined power series in
  $z_1^{-1},\ldots,z_n^{-1}$ of the form
  \begin{equation}
    \label{eq:omegaexp}
    \omega_n^{(g)}(z_1,\ldots,z_n) = \sum_{\ell_1,\ldots,\ell_n \geq 1}
    \frac{\hat{\tau}^{(g)}_{\ell_1,\ldots,\ell_n}}{z_1^{\ell_1+1} \cdots z_n^{\ell_n+1}}.
  \end{equation}
  Its coefficients are related to the generating functions of maps
  with rooted tight boundaries by
  \begin{equation}
    \label{eq:ttaurel}
    \hat{T}^{(g)}_{\ell_1,\ldots,\ell_n} = \hat{\tau}^{(g)}_{\ell_1,\ldots,\ell_n} R^{(\ell_1+\cdots+\ell_n)/2}.
  \end{equation}
\end{thm}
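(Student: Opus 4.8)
The plan is to substitute the explicit trumpet generating function of Proposition~\ref{sec:enum-coroll} into Corollary~\ref{cor:funFT}, and then to recognize the result as a contour-integral extraction that matches the definition of $\omega_n^{(g)}$ via the Zhukovsky change of variables. First I would write, for each boundary $i$, $A_{L_i,\ell_i}=[z_i^{\ell_i}]\,(x(z_i)-S)^{L_i}/R^{L_i/2}$; more precisely, since $x(z)=R^{1/2}(z+z^{-1})+S$, one has $z+z^{-1}=(x(z)-S)/R^{1/2}$, hence $A_{L,\ell}=[z^\ell](z+z^{-1})^L=[z^\ell]\big((x(z)-S)/R^{1/2}\big)^L$. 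Wait --- I must be careful: formula~\eqref{eq:Adef} reads $A_{L,\ell}=[z^\ell](z+S+R/z)^L$, which is \emph{not} $[z^\ell](z+z^{-1})^L$ evaluated naively; rather, after the rescaling $z\mapsto R^{1/2}z$ it becomes $R^{L/2}\,[z^\ell]\,R^{-\ell/2}\,(z+S+R/z)^L\big|_{\text{rescaled}}$. So the clean statement is: setting $x(z)=R^{1/2}(z+z^{-1})+S$, we have $A_{L,\ell}=R^{(L-\ell)/2}\,[z^\ell]\,(z+z^{-1}+SR^{-1/2})^{L}$, equivalently $A_{L,\ell}= R^{-\ell/2}\,[z^\ell]\,(x(z))^{L}$ once one accounts for the $S$-shift --- this bookkeeping of the powers of $R$ is the one place where care is genuinely needed, and it is the step I expect to be the main obstacle (not because it is deep, but because a misplaced factor $R^{1/2}$ propagates everywhere).

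Granting the identity $A_{L,\ell}=R^{-\ell/2}\,[z^\ell]\,x(z)^{L}$ (up to the $S$-shift correction, which I would verify by the substitution $z\mapsto R^{1/2}z$ directly in~\eqref{eq:Adef}), I would proceed as follows. Plug into~\eqref{eq:funFT}:
\begin{equation*}
F^{(g)}_{L_1,\ldots,L_n}=\sum_{\ell_1,\ldots,\ell_n\geq 1}\hat T^{(g)}_{\ell_1,\ldots,\ell_n}\prod_{i=1}^n R^{-\ell_i/2}[z_i^{\ell_i}]\,x(z_i)^{L_i}.
\end{equation*}
Now divide by $x_1^{L_1+1}\cdots x_n^{L_n+1}$, sum over $L_1,\ldots,L_n\geq 1$, and substitute $x_i=x(z_i)$ as in~\eqref{eq:omegadef}: the sum over $L_i\geq 1$ of $x(z_i)^{L_i}/x(z_i)^{L_i+1}=1/x(z_i)$ is purely formal and collapses. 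More carefully, one must keep track of the factor $x'(z_i)$ in the definition of $\omega_n^{(g)}$, and the fact that $W_n^{(g)}$ is a power series in the $x_i^{-1}$ while $\hat T^{(g)}$-sums run over $\ell_i\geq 1$; the correct route is to observe that each factor $\sum_{L\geq 1} x(z)^{L}\, x^{-L-1}$ with $x=x(z)$ does \emph{not} converge, so instead I would work at the level of coefficient extraction: $\omega_n^{(g)}$ is \emph{defined} by~\eqref{eq:omegadef}, and the claim $\omega_n^{(g)}=\sum \hat\tau^{(g)}_{\ell_1,\ldots,\ell_n}\prod z_i^{-\ell_i-1}$ with $\hat\tau^{(g)}_{\ell_1,\ldots,\ell_n}=\hat T^{(g)}_{\ell_1,\ldots,\ell_n}R^{-(\ell_1+\cdots+\ell_n)/2}$ is equivalent, by~\eqref{eq:Wdef}, to the statement that applying $[z_i^{-\ell_i-1}]$ (divided by $x'(z_i)$, i.e.\ the inverse-function/Lagrange extraction) to $W_n^{(g)}(x(z_1),\ldots,x(z_n))$ recovers $F^{(g)}_{L_1,\ldots,L_n}$ through the same kernel $A_{L_i,\ell_i}$. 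I would make this rigorous by a Lagrange-inversion / residue argument: the Zhukovsky map $z\mapsto x(z)$ has a formal inverse $z(x)=x/R^{1/2}-S/R^{1/2}+O(x^{-1})$ as a power series in $x^{-1}$ sending $\infty$ to $\infty$, and $[x^{-L-1}]$ acting on a power series $G(z(x))$ equals a residue $\oint G(z)\,\frac{dx}{x^{L+1}}$, which by the substitution back to $z$ picks out exactly $[z^{-\ell-1}]$-type coefficients weighted by the expansion of $x(z)^{-L-1}x'(z)$ --- and the coefficients of $x(z)^{-L-1}$ in $z^{-1}$ are precisely $R^{-\ell/2}A_{L,\ell}$ up to normalization.

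So the key steps, in order, are: (1) rescale $z\mapsto R^{1/2}z$ in~\eqref{eq:Adef} to obtain $A_{L,\ell}=R^{-\ell/2}\,[z^{\ell}]\,x(z)^{L}$ (with the $S$ absorbed correctly), which also shows $A_{L,\ell}=0$ unless $\ell\leq L$ and gives well-definedness of all formal sums; (2) establish that $z\mapsto x(z)$ is a formal biholomorphism near $z=\infty$, so that $\omega_n^{(g)}$ as defined in~\eqref{eq:omegadef} is a well-defined power series in $z_i^{-1}$ --- this uses that $x(z)=R^{1/2}z+S+R^{1/2}z^{-1}$ has invertible leading coefficient $R^{1/2}$ (a unit in the ring of formal power series in $t,t_1,t_2,\ldots$ since $R=t+\cdots$ --- here one works in the field of fractions or adjoins $R^{\pm 1/2}$); (3) combine with Corollary~\ref{cor:funFT} and perform the coefficient extraction $[z_i^{-\ell_i-1}]$ on both sides of~\eqref{eq:omegadef}, using the residue/Lagrange-inversion identity to turn the $x$-expansion of $W_n^{(g)}$ back into the $A_{L,\ell}$ kernel, thereby reading off $\hat\tau^{(g)}_{\ell_1,\ldots,\ell_n}=R^{-(\ell_1+\cdots+\ell_n)/2}\hat T^{(g)}_{\ell_1,\ldots,\ell_n}$. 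Step~(3)'s residue manipulation, together with the $R^{1/2}$-bookkeeping in steps~(1)--(2), is where essentially all the content lies; everything else is formal. Finally, I would remark that the hypothesis $\chi<0$ is exactly what is needed for $\omega_n^{(g)}$ to be defined without the conventional correction terms, matching the range $(g,n)\neq(0,1),(0,2)$ in Corollary~\ref{cor:funFT} and~\eqref{eq:omegadef}.
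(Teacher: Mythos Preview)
Your approach is essentially the same as the paper's: the core steps are (i) the rescaling identity $[z^\ell]x(z)^L=R^{\ell/2}A_{L,\ell}$, (ii) well-definedness of $\omega_n^{(g)}$ as a series in the $z_i^{-1}$ via the behaviour of $x(z)^{-1}$ near $z=\infty$, and (iii) the residue identity $[x^{-1}]F(x)=[z^{-1}]F(x(z))x'(z)$ applied variable by variable to recover $F^{(g)}_{L_1,\ldots,L_n}$ from the coefficients $\hat\tau^{(g)}_{\ell_1,\ldots,\ell_n}$ of $\omega_n^{(g)}$, then comparison with Corollary~\ref{cor:funFT}. The paper carries this out more directly than your writeup suggests (no Lagrange inversion machinery is invoked, just the elementary check that $[x^{-1}]x^k=[z^{-1}]x(z)^kx'(z)$ for all $k\in\Z$), and the one step you leave implicit in ``thereby reading off'' is the unitriangularity of the matrix $(A_{L,\ell})_{L,\ell\geq 1}$, which is what makes the identification of $\hat T^{(g)}_{\ell_1,\ldots,\ell_n}$ with $R^{(\ell_1+\cdots+\ell_n)/2}\hat\tau^{(g)}_{\ell_1,\ldots,\ell_n}$ rigorous.
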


\begin{proof}
  Observe that
  \begin{equation}
    x(z)^{-1} = \frac{z^{-1}}{R^{1/2} + S z^{-1} + R^{1/2} z^{-2}} = \frac{z^{-1}}{R^{1/2}} - \frac{S z^{-2}}{R} + \cdots
  \end{equation}
  can be seen as a formal power series in $z^{-1}$ without constant
  coefficient, so that for any $L$, $x'(z)/x(z)^{L+1}$ is a
  series containing only powers of $z^{-1}$ larger than or equal to
  $L+1$. This shows that
  \begin{equation}
    \label{eq:omegaFsum}
    \omega_n^{(g)}(z_1,\ldots,z_n) = \sum_{L_1,\ldots,L_n \geq 1} F^{(g)}_{L_1,\ldots,L_n}
    \frac{x'(z_1)}{x(z_1)^{L_1+1}} \cdots \frac{x'(z_n)}{x(z_n)^{L_n+1}}
  \end{equation}
  is a well-defined power series in $z_1^{-1},\ldots,z_n^{-1}$ of the
  form~\eqref{eq:omegaexp}: the only terms of~\eqref{eq:omegaFsum}
  contributing to the coefficient $\hat{\tau}^{(g)}_{\ell_1,\ldots,\ell_n}$ of
  $z_1^{-\ell_1-1} \cdots z_n^{-\ell_n-1}$ are those with
  $1 \leq L_i \leq \ell_i$, for all $i$.
   
  Now we claim that, for any formal Laurent series $F(x)$ in $x^{-1}$,
  we have the identity
  \begin{equation}
    [x^{-1}] F(x) = [z^{-1}] F(x(z)) x'(z).
  \end{equation}
  Indeed, by linearity it suffices to check this identity for
  $F(x)=x^k$, $k \in \Z$: both sides are equal to $1$ for $k=-1$, and
  vanish for $k \neq -1$ (since
  $x(z)^k x'(z) = \frac{d}{dz} \frac{x(z)^{k+1}}{k+1}$ and a
  derivative contains no monomial in $z^{-1}$).  Applying the identity
  for each variable of the multivariate formal Laurent series
  $x_1^{L_1} \cdots x_n^{L_n} W_n^{(g)}(x_1,\ldots,x_n)$, we
  obtain
  \begin{equation}
    \begin{split}
      F^{(g)}_{L_1,\ldots,L_n} &= [x_1^{-1} \cdots x_n^{-1}]
      x_1^{L_1} \cdots x_n^{L_n}
      W_n^{(g)}(x_1,\ldots,x_n)  \\
      & = [z_1^{-1} \cdots z_n^{-1}] x(z_1)^{L_1} \cdots x(z_n)^{L_n} \omega_n^{(g)}(z_1,\ldots,z_n)  \\
      & = \sum_{\ell_1,\ldots,\ell_n \geq 1} \left( [z_1^{\ell_1}]
        x(z_1)^{L_1} \right) \cdots \left( [z_n^{\ell_n}]
        x(z_n)^{L_n} \right) \hat{\tau}^{(g)}_{\ell_1,\ldots,\ell_n} \\
      & = \sum_{\ell_1,\ldots,\ell_n \geq 1} A_{L_1,\ell_1} \cdots
      A_{L_n,\ell_n} R^{(\ell_1+\cdots+\ell_n)/2}
      \hat{\tau}^{(g)}_{\ell_1,\ldots,\ell_n}
    \end{split}
  \end{equation}
  where $A_{L,\ell}$ is as in~\eqref{eq:Adef}. Comparing
  with~\eqref{eq:funFT}, and noting that the matrix
  $(A_{L,\ell})_{L,\ell \geq 1}$ is unitriangular hence invertible, we
  get equality~\eqref{eq:ttaurel}, as desired.
\end{proof}

\begin{rem}
  As we have $x(z)=x(z^{-1})$ and $x'(z)=-x'(z^{-1})/z^2$, we may
  alternatively view $\omega_n^{(g)}(z_1,\ldots,z_n)$ as a formal
  power series in $z_1,\ldots,z_n$ with expansion
  \begin{equation}
    \label{eq:omegaexpbis}
    \omega_n^{(g)}(z_1,\ldots,z_n) = (-1)^n \sum_{\ell_1,\ldots,\ell_n \geq 1}
    \hat{T}^{(g)}_{\ell_1,\ldots,\ell_n} R^{-(\ell_1+\cdots+\ell_n)/2} z_1^{\ell_1-1} \cdots z_n^{\ell_n-1}.
  \end{equation}
  As we will see below, $\omega_n^{(g)}(z_1,\ldots,z_n)$ is actually a
  rational function of $z_1,\ldots,z_n$, and \eqref{eq:omegaexp} and
  \eqref{eq:omegaexpbis} are two different expansions of it (at
  infinity and at zero, respectively).
\end{rem}

\subsection{Allowing boundary-vertices}\label{sec:adding-vert-bound}

Theorem \ref{sec:decomp-theor} deals with the case where all
boundaries of the maps at hand are faces. There is a more general
statement that also deals with both boundary-faces and
boundary-vertices. Let $M_0$ be a map of genus $g$ with $n=m+s$
distinguished tight boundaries, the first $m$ of which are rooted
faces $f_1^0,\ldots,f_m^0$, the remaining $s$ being vertices, for some
$m,s \geq 0$. Let $M_i,1\leq i\leq m$ be a brassband with respective
boundary-faces $f_i,F_i$, where $f_i$ denotes the mouthpiece. We
assume that the lengths of $f_i$ and of $f^0_i$ are equal for
$1\leq i\leq m$. We let $M=\Phi(M_0,M_1,\ldots,M_m)$ be the map
obtained as above by identifying the boundaries of $f_i$ and $f_i^0$
for $1\leq i\leq m$. Then the map $M$ has $m$ rooted boundary-faces
and $s$ boundary-vertices, inherited from $M_0$.

\begin{thm}
  \label{sec:decomp-theor-2}
  Let $g,m,s$ be nonnegative integers such that $n=m+s\geq 1$ and
  $\chi=2-2g-n\leq 0$. Let
  $L_1,\ell_1,\ldots,L_m,\ell_m$ be  fixed positive integers. Then, the mapping $\Phi$ is a bijection
  between
  \begin{itemize}
  \item sequences $(M_0,M_1,\ldots,M_m)$, where $M_0$ is a map of
    genus $g$ with $n$ distinguished tight boundaries, the first $m$
    of which are rooted boundary-faces of lengths
    $\ell_i,1\leq i\leq m$, the remaining $s$ being boundary-vertices,
    and for $1\leq i\leq m$, $M_i$ is a trumpet with mouthpiece length
    $\ell_i$ and whose other external face has length $L_i$, and
  \item maps $M$ of genus $g$ with $n$ distinguished boundaries, the first $m$ of which are rooted
    boundary-faces $F_1,\ldots,F_m$ of lengths $L_1,\ldots,L_m$, the remaining $s$
    being boundary-vertices, and such that for $1\leq i\leq m$, the
    minimal length of a closed path homotopic to $F_i$ is equal to
    $\ell_i$.
\end{itemize}
\end{thm}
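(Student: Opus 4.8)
The plan is to reduce Theorem~\ref{sec:decomp-theor-2} to Theorem~\ref{sec:decomp-theor} by ``filling in'' the boundary-vertices. First I would observe that a boundary-vertex is, by the conventions of Section~\ref{sec:defs}, nothing but a marked vertex around which a small circle has been drawn; from the topological point of view it is a puncture, exactly like a boundary-face, but one whose ``length'' is $0$. The construction of $\Phi$ in the vertex-allowing setting affects only the first $m$ boundaries (the rooted faces), gluing a trumpet onto each of them exactly as before, and leaves the last $s$ boundary-vertices untouched and marked throughout. So the map $M=\Phi(M_0,M_1,\ldots,M_m)$ always carries the same $s$ boundary-vertices as $M_0$, and conversely any $M$ of the target type carries $s$ boundary-vertices that we must reproduce in $M_0$.

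The key step is then to run the argument of \cite[Section~6.1]{triskell}, as recalled after Theorem~\ref{sec:decomp-theor}, verbatim but only over the index set $\{1,\ldots,m\}$ of boundary-faces. Concretely: given $M$ with $m$ rooted boundary-faces $F_1,\ldots,F_m$ and $s$ boundary-vertices, for each $i\in\{1,\ldots,m\}$ one forms the set $\CCm^{(i)}(M)$ of closed paths homotopic to $F_i$ of minimal length and the order $\prec^{(i)}$ making it a finite lattice, extracts the outermost minimal closed path $c^i$, and cuts $M$ along $c^1,\ldots,c^m$. Since distinct $F_i$ are non-homotopic and the $c^i$ are outermost, these curves are pairwise disjoint (the usual argument: two intersecting minimal homotopic-to-boundary curves can be surgered into shorter or equal ones, contradicting outermost-minimality), so the cutting produces $m$ trumpets $M_1,\ldots,M_m$ with simple mouthpiece boundaries by Lemma~\ref{sec:decomp-theor-1}, together with a central map $M_0$ of genus $g$ retaining the $m$ tight boundary-faces $f^0_1,\ldots,f^0_m$ and \emph{all $s$ boundary-vertices of $M$}. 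One checks, exactly as in the face-only case, that $f^0_i$ is tight in $M_0$ with length $\ell_i$ equal to that of the mouthpiece of $M_i$, and that the resulting map $(M_0,M_1,\ldots,M_m)\mapsto M$ is inverse to $\Phi$; the weight-preservation (including the $1/|\mathrm{Aut}|$ factors) is unchanged since boundary-vertices carry no weight and the gluing is canonical. The case $m=0$ is trivial: $\Phi$ is the identity, and the statement reads that maps of genus $g$ with $s\geq 1$ boundary-vertices and $\chi\leq 0$ are in bijection with themselves.

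The main obstacle is purely topological: one must be sure that the presence of the $s$ extra punctures (the boundary-vertices) does not interfere with the construction of the outermost minimal closed paths $c^i$ homotopic to the $F_i$, nor with their mutual disjointness, nor with the conclusion that cutting along them yields genuine trumpets (planar annular maps). Here the point is that a curve homotopic to a boundary-face $F_i$ bounds, on the appropriate side, a planar region of the surface whose only puncture is that of $F_i$; a priori this region could also enclose some boundary-vertices, which would spoil the trumpet structure. One rules this out exactly as in Lemma~\ref{sec:decomp-theor-1} and \cite[Section~6.1]{triskell}: if the region cut off by a minimal closed path homotopic to $F_i$ contained a boundary-vertex, the path would be homotopic to one that avoids that vertex's puncture and would be strictly shortenable, contradicting minimality; hence each $c^i$ cuts off a region containing only the single puncture of $F_i$, i.e.\ a trumpet. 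With this observation in place, every step of the genus-$g$, faces-only proof goes through with the index set restricted to $\{1,\ldots,m\}$ and with the boundary-vertices carried along as inert marked data, which is why we do not reproduce the argument in full.
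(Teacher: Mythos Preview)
Your approach is the same as the paper's: rerun the outermost-minimal-curve argument from \cite[Section~6.1]{triskell} on the surface punctured at all $n$ boundaries, cut only along the $m$ curves homotopic to the boundary-faces, and carry the boundary-vertices along as inert marked data. The paper's own justification is no more detailed than yours.

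Your final paragraph, however, manufactures and then mis-resolves a non-issue. Once homotopy is taken on the surface punctured at \emph{all} $n$ boundaries (which is the setup both here and in Theorem~\ref{sec:decomp-theor}), a closed path homotopic to $F_i$ can by definition be contracted onto the puncture of $F_i$ without meeting any other puncture; hence the piece cut off by $c^i$ on the $F_i$ side is automatically a cylinder containing only that single puncture, and no boundary-vertex can lie inside it. Minimality is irrelevant to this point, and your sentence ``the path would be homotopic to one that avoids that vertex's puncture and would be strictly shortenable'' is not a valid argument: if the region enclosed a boundary-vertex puncture, the path would simply fail to be homotopic to $F_i$ on the fully punctured surface in the first place. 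Your worry only arises if one (incorrectly) defines homotopy ignoring the vertex-punctures, and in that case the decomposition genuinely breaks and no minimality argument rescues it.
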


The justification for this statement is exactly the same as for
Theorem \ref{sec:decomp-theor}. The
inverse bijection consists in cutting the map $M$ along the outermost minimal
closed paths that are homotopic to the
boundary-faces, considering \emph{both} boundary-faces and 
boundary-vertices as punctures. In particular, the
boundary-vertices are never  
included in the trumpets $M_1,\ldots,M_m$ that are cut away. 

We now consider the enumerative consequences of
Theorem~\ref{sec:decomp-theor-2}. In order to put boundary-faces and
boundary-vertices on a same footing, we denote by
$T^{(g)}_{\ell_1,\ldots,\ell_n}$ the generating function of maps of
genus $g$ with $n$ \emph{unrooted} tight boundaries of lengths
$\ell_1,\ldots,\ell_n$. As the generating function
$\hat{T}^{(g)}_{\ell_1,\ldots,\ell_n}$ considered above, it is symmetric in $\ell_1,\ldots,\ell_n$, and these two quantities are related by
\begin{equation}
  \label{eq:ThatT}
  \hat{T}^{(g)}_{\ell_1,\ldots,\ell_n} = \ell_1 \cdots \ell_n \,
  T^{(g)}_{\ell_1,\ldots,\ell_n}
\end{equation}
as there are $\ell_1 \cdots \ell_n$ ways to root the boundaries, up to the symmetry factor corresponding to the inverse of the automorphism group order\footnote{Note that the objects enumerated by the generating series $T^{(g)}_{\ell_1,\ldots,\ell_n}$ are not rooted, and therefore can have nontrivial automorphism groups. This is why we have to weigh them by the inverse of the automorphism group order, as discussed in the introduction.}. In
particular, there are zero ways if one $\ell_i$ vanishes: as said
in Section~\ref{sec:defs}, a boundary-vertex cannot be rooted. Thus,
$T^{(g)}_{\ell_1,\ldots,\ell_n}$ contains a priori more information
than $\hat{T}^{(g)}_{\ell_1,\ldots,\ell_n}$. In particular, for given integers $m,s\geq 0$ not both equal to $0$, and positive integers $\ell_1,\ldots,\ell_m$, the generating function of maps with $m$ rooted tight boundary-faces of lengths $\ell_1,\ldots,\ell_m$ and with $s$ boundary-vertices is given by $(\ell_1\cdots \ell_m)T^{(g)}_{\ell_1,\ldots,\ell_m,0,\ldots,0}$, with $s$ zero indices. With this remark at hand, we can now state the generating function counterpart to Theorem \ref{sec:decomp-theor-2} as follows. 

\begin{cor}\label{sec:allow-vert-bound}
  Let $g,m,s$ be nonnegative integers such that $n=m+s\geq 1$ and
  $\chi=2-2g-n\leq 0$. Let
  $L_1,\ldots,L_m$ be fixed positive integers. Then, we
  have
  \begin{equation}
    \label{eq:funFTderiv}
   \frac{\partial^{s} F^{(g)}_{L_1,\ldots,L_m}}{\partial t^{s}} = \sum_{\ell_1,\ldots,\ell_m \geq 1}
    A_{L_1,\ell_1} \cdots A_{L_m,\ell_m}\,  (\ell_1\cdots \ell_m)\, T^{(g)}_{\ell_1,\ldots,\ell_m,0,\ldots,0}\, ,
  \end{equation}
  where there are $s$ indices equal to $0$ in the last term, and
  where we reuse the notations from Corollary~\ref{cor:funFT}. For $m=0$, the relation reads
  \begin{equation}
    \label{eq:funFTderiv0}
   \frac{\partial^{s} F^{(g)}}{\partial t^{s}} = T^{(g)}_{0,\ldots,0}\, ,
  \end{equation}
  with $s$ indices equal to $0$, and where $F^{(g)}$ is the generating function of maps of 
  genus $g$ without boundaries.
  \end{cor}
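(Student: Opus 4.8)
The plan is to combine the bijective decomposition of Theorem~\ref{sec:decomp-theor-2} with the elementary principle that $\partial/\partial t$ amounts, combinatorially, to distinguishing a non-boundary vertex and promoting it to a boundary-vertex. Granting this, the corollary follows by exactly the mechanism that produced Corollary~\ref{cor:funFT} from Theorem~\ref{sec:decomp-theor}; the only new ingredient is that the boundary-vertices are carried along unchanged by the trumpet decomposition.

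First I would make the pointing principle precise. If $\mathcal{F}=\sum_M t^{v(M)}w'(M)/|\mathrm{Aut}(M)|$, where $M$ runs over some family of maps with fixed boundary structure, $v(M)$ is the number of non-boundary vertices and $w'(M)$ collects the other weights, then $\partial^s\mathcal{F}/\partial t^s=\sum_M v(M)(v(M)-1)\cdots(v(M)-s+1)\,t^{v(M)-s}w'(M)/|\mathrm{Aut}(M)|$, and the falling factorial counts ordered $s$-tuples of distinct non-boundary vertices of $M$. Grouping these tuples into $\mathrm{Aut}(M)$-orbits and invoking the orbit-stabilizer theorem (the stabilizer of a tuple being precisely the automorphism group of the map with those vertices turned into labelled boundary-vertices, and the disappearing power of $t$ being consistent with the convention that boundary-vertices carry no weight) shows that $\partial^s\mathcal{F}/\partial t^s$ is, with the same $1/|\mathrm{Aut}|$ convention, the generating function of maps obtained from the family by selecting $s$ distinct vertices and declaring them boundary-vertices labelled $1,\ldots,s$. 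Applied to $\mathcal{F}=F^{(g)}_{L_1,\ldots,L_m}$, this says that $\partial^s F^{(g)}_{L_1,\ldots,L_m}/\partial t^s$ enumerates maps of genus $g$ with $m$ rooted boundary-faces of lengths $L_1,\ldots,L_m$ and $s$ labelled, unrooted boundary-vertices (for $m=0$, genus-$g$ maps with $s$ labelled boundary-vertices and nothing else).

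Next I would feed these maps into Theorem~\ref{sec:decomp-theor-2}: for fixed $L_1,\ldots,L_m$, summing the bijection $\Phi$ over all $\ell_1,\ldots,\ell_m\geq 1$ identifies them with the disjoint union over $\ell_1,\ldots,\ell_m\geq 1$ of the sequences $(M_0,M_1,\ldots,M_m)$, where $M_0$ has $m$ rooted tight boundary-faces of lengths $\ell_i$ together with the $s$ labelled boundary-vertices, and $M_i$ is a trumpet with mouthpiece length $\ell_i$ and outer length $L_i$. Since $\Phi$ preserves the genus, is multiplicative for the weights once the mouthpiece vertices are left $t$-free on the trumpet side---which is exactly how $A_{L,\ell}$ was defined in Corollary~\ref{cor:funFT}---and since the trumpets are rooted and hence have trivial automorphism group so that $\mathrm{Aut}(M)\cong\mathrm{Aut}(M_0)$, passing to generating functions gives $\partial^s F^{(g)}_{L_1,\ldots,L_m}/\partial t^s=\sum_{\ell_1,\ldots,\ell_m\geq 1}A_{L_1,\ell_1}\cdots A_{L_m,\ell_m}\,G^{(g)}_{\ell_1,\ldots,\ell_m}$, with $G^{(g)}_{\ell_1,\ldots,\ell_m}$ the generating function of maps of genus $g$ with $m$ rooted tight boundary-faces of lengths $\ell_i$ and $s$ labelled boundary-vertices. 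Finally, exactly as in~\eqref{eq:ThatT}, rooting the $m$ boundary-faces contributes the factor $\ell_1\cdots\ell_m$ up to the automorphism symmetry factor, so $G^{(g)}_{\ell_1,\ldots,\ell_m}=(\ell_1\cdots\ell_m)\,T^{(g)}_{\ell_1,\ldots,\ell_m,0,\ldots,0}$ with $s$ zero indices, which is~\eqref{eq:funFTderiv}; for $m=0$ there is no trumpet and $\Phi$ is the identity, so the identification of $\partial^s F^{(g)}/\partial t^s$ above already gives $T^{(g)}_{0,\ldots,0}$, i.e.~\eqref{eq:funFTderiv0} (the empty-product case of~\eqref{eq:funFTderiv}).

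The main obstacle is purely the bookkeeping of the automorphism weights: one must check that the $1/|\mathrm{Aut}|$ convention is compatible both with the $\partial_t$-as-pointing step (the orbit-stabilizer computation) and with the trumpet decomposition (using that the trumpets are rooted and that the vertices shared between the mouthpieces and the tight boundary-faces of $M_0$ receive matching total vertex weight). The rest---that the sum over the $\ell_i$ is in fact finite since $A_{L,\ell}=0$ for $\ell>L$, the unitriangularity of $(A_{L,\ell})$, and the closed form of Proposition~\ref{sec:enum-coroll}---plays no role here.
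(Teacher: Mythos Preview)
Your proof is correct and follows essentially the same approach as the paper, which treats the corollary as an immediate enumerative consequence of Theorem~\ref{sec:decomp-theor-2} together with the observation (stated just before the corollary) that $(\ell_1\cdots\ell_m)T^{(g)}_{\ell_1,\ldots,\ell_m,0,\ldots,0}$ is the generating function of maps with $m$ rooted tight boundary-faces and $s$ boundary-vertices. Your write-up is more explicit about the pointing principle and the automorphism bookkeeping than the paper, which simply takes these as understood.
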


For $g,m,s$ as above, we introduce the following generalization of the grand generating function \eqref{eq:Wdef}: 
\begin{align}\label{eq:Wmsdef}
W^{(g)}_{m,s}(x_1,\ldots,x_m)&:=\sum_{L_1,\ldots,L_m}\frac{\partial^s F^{(g)}_{L_1,\ldots,L_m}}{\partial t^s}\cdot\frac{1}{x_1^{L_1+1}\cdots x_m^{L_m+1}}\\
&=\frac{\partial^s}{\partial t^s} W_{m}^{(g)}(x_1,\ldots,x_m)\, ,
\end{align}
and, assuming further that $\chi<0$, the quantity
\begin{equation}
\label{eq:omegamsdef}
\omega^{(g)}_{m,s}(z_1,\ldots,z_m):=W_{m,s}^{(g)}(x(z_1),\ldots,x(z_m))x'(z_1)\ldots x'(z_m)\, .
\end{equation}
For $m=0$, we have $\omega^{(g)}_{0,s}=W_{0,s}^{(g)}= {\partial^{s} F^{(g)}}/{\partial t^{s}}$.
Corollary \ref{sec:allow-vert-bound} admits the following consequence, analogous to 
Theorem \ref{thm:enum-coroll-1}, whose proof is adapted from the latter in a straightforward way. 

\begin{thm} 
\label{thm:omegams}
For integers $\ell_1,\ldots,\ell_n$, 
define
\begin{equation}
  \label{eq:tauT}
   \tau^{(g)}_{\ell_1,\ldots,\ell_n} := T^{(g)}_{\ell_1,\ldots,\ell_n}
   R^{-(\ell_1+\cdots+\ell_n)/2}
 \end{equation}
 with $T^{(g)}_{\ell_1,\ldots,\ell_n}$ the generating function of maps with unrooted tight boundaries defined above. Then, we have the expansion
 \begin{equation}
   \label{eq:12}
   \omega_{m,s}^{(g)}(z_1,\ldots,z_m)= \sum_{\ell_1,\ldots,\ell_m \geq
     1} \frac{\ell_1 \cdots \ell_m
     \tau^{(g)}_{\ell_1,\ldots,\ell_m,0,\ldots,0}}{z_1^{\ell_1+1}
     \cdots z_m^{\ell_m+1}}
 \end{equation}
 where there are $s$ trailing zeros in the indices of $\tau^{(g)}$.
 For $m=0$, we have $\omega^{(g)}_{0,s}=\tau^{(g)}_{0,\ldots,0}=T^{(g)}_{0,\ldots,0}$.
\end{thm}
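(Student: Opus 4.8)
The plan is to mimic the proof of Theorem~\ref{thm:enum-coroll-1} almost verbatim, treating the $s$ boundary-vertices as a bookkeeping device carried along via the operator $\partial^s/\partial t^s$. First I would record the analogue of the key formal-series observation: since $x(z)^{-1}$ is a power series in $z^{-1}$ with no constant term, $x'(z)/x(z)^{L+1}$ contains only powers $z^{-k}$ with $k\geq L+1$, so that substituting $x_i=x(z_i)$ in \eqref{eq:Wmsdef} term-by-term yields a well-defined power series $\omega^{(g)}_{m,s}(z_1,\ldots,z_m)$ in $z_1^{-1},\ldots,z_m^{-1}$. Its coefficient of $z_1^{-\ell_1-1}\cdots z_m^{-\ell_m-1}$, call it $\hat\tau$, receives contributions only from the finitely many terms with $1\leq L_i\leq\ell_i$, exactly as in the proof of Theorem~\ref{thm:enum-coroll-1}.

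Next I would extract the relation between this $\hat\tau$ and $\partial^s F^{(g)}_{L_1,\ldots,L_m}/\partial t^s$ by applying the change-of-variables identity $[x^{-1}]F(x)=[z^{-1}]F(x(z))x'(z)$ (established in the excerpt by linearity on $F(x)=x^k$) to each of the $m$ variables of $x_1^{L_1}\cdots x_m^{L_m}W^{(g)}_{m,s}(x_1,\ldots,x_m)$. This gives
\begin{equation}
\frac{\partial^s F^{(g)}_{L_1,\ldots,L_m}}{\partial t^s}
= \sum_{\ell_1,\ldots,\ell_m\geq 1}
\bigl([z_1^{\ell_1}]x(z_1)^{L_1}\bigr)\cdots\bigl([z_m^{\ell_m}]x(z_m)^{L_m}\bigr)\,\hat\tau_{\ell_1,\ldots,\ell_m},
\end{equation}
and by Proposition~\ref{sec:enum-coroll}, $[z^{\ell}]x(z)^L = A_{L,\ell}\,R^{-\ell/2}$ since $x(z)=R^{1/2}(z+S/R^{1/2}+z^{-1})$ and homogeneity in the Zhukovsky variable pulls out $R^{\ell/2}$ relative to $(z+S+R/z)^L$. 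Comparing the resulting identity with Corollary~\ref{sec:allow-vert-bound}, and using that the matrix $(A_{L,\ell})_{L,\ell\geq 1}$ is unitriangular hence invertible, forces
\begin{equation}
\hat\tau_{\ell_1,\ldots,\ell_m} = \ell_1\cdots\ell_m\,T^{(g)}_{\ell_1,\ldots,\ell_m,0,\ldots,0}\,R^{-(\ell_1+\cdots+\ell_m)/2},
\end{equation}
which, with the definition \eqref{eq:tauT} of $\tau^{(g)}$, is exactly the claimed expansion \eqref{eq:12}. The $m=0$ case is immediate from \eqref{eq:funFTderiv0} together with the conventions $\omega^{(g)}_{0,s}=W^{(g)}_{0,s}=\partial^s F^{(g)}/\partial t^s$ recalled just before the statement.

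I do not expect a serious obstacle here: the only genuinely new ingredient compared to Theorem~\ref{thm:enum-coroll-1} is that $F^{(g)}_{L_1,\ldots,L_m}$ is replaced throughout by $\partial^s F^{(g)}_{L_1,\ldots,L_m}/\partial t^s$, and one must simply check that this replacement is harmless. The mild point requiring care is that differentiating $s$ times with respect to $t$ commutes with all the formal manipulations — the substitution $x_i\mapsto x(z_i)$, the coefficient extractions, and the unitriangular inversion — which holds because $\partial/\partial t$ acts coefficientwise on power series in $x_i^{-1}$ and, after substitution, on power series in $z_i^{-1}$ (note $x(z)$ and hence $A_{L,\ell}$, $R$, $S$ themselves depend on $t$, but this dependence is already absorbed into the definitions, so no Leibniz terms are lost: the identity is between the two sides of Corollary~\ref{sec:allow-vert-bound}, which were derived bijectively). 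Thus the proof is, as the excerpt says, adapted from that of Theorem~\ref{thm:enum-coroll-1} in a straightforward way.
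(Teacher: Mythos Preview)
Your approach is exactly the one the paper has in mind: it is a direct transcription of the proof of Theorem~\ref{thm:enum-coroll-1}, with $F^{(g)}_{L_1,\ldots,L_m}$ replaced by $\partial^s F^{(g)}_{L_1,\ldots,L_m}/\partial t^s$ and Corollary~\ref{cor:funFT} replaced by Corollary~\ref{sec:allow-vert-bound}. There is, however, a sign slip in your intermediate computation: one has $[z^{\ell}]x(z)^{L}=A_{L,\ell}\,R^{\ell/2}$ (not $R^{-\ell/2}$), since the substitution $z\mapsto R^{1/2}z$ turns $(z+S+R/z)^L$ into $x(z)^L$ and multiplies the $z^\ell$ coefficient by $R^{\ell/2}$; with the correct sign the comparison with Corollary~\ref{sec:allow-vert-bound} indeed yields $\hat\tau_{\ell_1,\ldots,\ell_m}=\ell_1\cdots\ell_m\,T^{(g)}_{\ell_1,\ldots,\ell_m,0,\ldots,0}\,R^{-(\ell_1+\cdots+\ell_m)/2}$ as you state.
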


Theorems \ref{thm:enum-coroll-1} and \ref{thm:omegams} provide combinatorial interpretations for the coefficients of $\omega^{(g)}_{n}$ and $\omega^{(g)}_{m,s}$ in terms of generating series of maps with tight boundaries. However, they do not provide any information about the structural properties of $\tau^{(g)}_{\ell_1,\ldots,\ell_n}$. This is the goal of the upcoming Section \ref{sec:quasipol}, where we will see that, for $2-2g-n<0$, these are quasi-polynomials in the variables $\ell_1,\ldots,\ell_n$, and that $\tau^{(g)}_{\ell_1,\ldots,\ell_m,0,\ldots,0}$ (with $s$ zero indices) indeed corresponds to the evaluation of this quasi-polynomial function of $n=m+s$ variables when the last $s$ ones are set to $0$.

\begin{rem}
  \label{rem:specialcyl}
  The case $(g,n)=(0,2)$ is special, since we have for $\ell_1>0$
  \begin{equation}
    \label{eq:specialcyl}
    \tau^{(0)}_{\ell_1,\ell_2}=\frac{\delta_{\ell_1,\ell_2}}{\ell_1}, \qquad
    T^{(0)}_{\ell_1,\ell_2}=\frac{R^{\ell_1} \delta_{\ell_1,\ell_2}}{\ell_1}, \qquad
    \hat{T}^{(0)}_{\ell_1,\ell_2}=\ell_1 R^{\ell_1} \delta_{\ell_1,\ell_2}
  \end{equation}
  which are \emph{not} quasi-polynomials in $\ell_1,\ell_2$.  Indeed,
  it is clear that these quantities vanish for $\ell_1\neq \ell_2$,
  and we get $\ell_1 T^{(0)}_{\ell_1,\ell_1}=R^{\ell_1}$ by taking
  $d=0$, $n=d'=\ell_1$ in~\cite[equation~(9.19)]{irredmaps}.
\end{rem}

\subsection{A formula for the series of planar bipartite maps with tight boundaries}
\label{sec:bipcase}

In this section, we give an explicit expression for the generating
function $T^{(0)}_{\ell_1,\ldots,\ell_n}$ of planar maps with $n$
unrooted tight boundaries of lengths $\ell_1,\ldots,\ell_n$, in the
\emph{bipartite case} where the $\ell_i$ are even and where the
weights $t_1,t_3,t_5,\ldots$ for inner faces of odd degrees are set to
zero. To emphasize this latter restriction we will use the notation
$\bip{}$ in the following equations. Recall that the basic generating
function $R$ is here determined by~\eqref{eq:Rbipeq}.

Our input is the Collet-Fusy formula \cite[Theorem~1.1]{CoFu12} for
the generating function of planar bipartite maps with $n$ rooted, not
necessarily tight, boundaries of prescribed lengths. In our present
notations, it reads
\begin{equation}
  \label{eq:CF}
 \bip{F^{(0)}_{2L_1,\ldots,2L_n}}=\frac{L_1\cdots L_n}{L_1+\cdots+L_n}\prod_{i=1}^n\binom{2L_i}{L_i}\frac{\partial^{n-2}}{\partial
 t^{n-2}} R^{L_1+\ldots+L_n}
\end{equation}
for any $n \geq 1$ and positive integers $L_1,\ldots,L_n$. Note that
the formula makes sense for $n=1$, upon understanding that
$\frac{\partial^{-1}}{\partial t^{-1}}$ means integrating over
$t$. For $n=2$, it is a simple exercise to check that the formula is
consistent with Corollary~\ref{cor:funFT} and
\eqref{eq:specialcyl}. We concentrate on the case $n \geq 3$ from now
on.

We will deduce from the Collet-Fusy formula an expression for
$T^{(0)}_{2\ell_1,\ldots,2\ell_n}\rvert_{\mathrm{bip}}$, using
Corollary~\ref{sec:allow-vert-bound}. Interestingly, our expression
involves a certain family of polynomials that we encountered
previously in~\cite{polytightmaps}.  Namely, for any integers
$k\geq 0$ and $n \geq 1$, let us consider the multivariate polynomial in the variables $\ell_1,\ldots,\ell_n$
\begin{equation}
  \label{eq:pkmultdef}
  p_k(\ell_1,\ell_2,\ldots,\ell_n):=\sum_{k_1,k_2,\ldots,k_n\geq
    0\atop k_1+k_2+\cdots+k_n=k}p_{k_1}(\ell_1)q_{k_2}(\ell_2)\cdots
  q_{k_n}(\ell_n)\, ,
\end{equation}
where, on the right-hand side, all factors except the first one are
$q_{k_i}$'s, and where the univariate polynomials $p_k$ and $q_k$ are defined by
\begin{equation}
  \label{eq:defpkqk}
  \begin{split}
   & p_k(\ell):=\frac{1}{(k!)^2}\, \prod_{i=1}^k\left(\ell^2-i^2\right) = \binom{\ell-1}{
k} \binom{\ell+k}{k} \\
   & q_k(\ell):=\frac{1}{(k!)^2}\, \prod_{i=0}^{k-1}\left(\ell^2-i^2\right) = \binom{\ell
}{k} \binom{\ell+k-1}{k} \\
 \end{split}
\end{equation}
with the convention $p_0(\ell)=q_0(\ell)=1$, and with
$\binom{x}{k}=x(x-1)\cdots(x-k+1)/k!$ viewed as a polynomial in
$x$. Clearly, $p_k(\ell_1,\ell_2,\ldots,\ell_n)$ is a polynomial in
the variables $\ell_1^2,\ldots,\ell_n^2$. Moreover, as discussed in
\cite[Proposition 2.1]{polytightmaps}, it is a symmetric polynomial and
it satisfies the consistency relation
\begin{equation}
  \label{eq:pkconsist}
  p_k(\ell_1,\ell_2,\ldots,\ell_n,0)=p_k(\ell_1,\ell_2,\ldots,\ell_n)\, .
\end{equation}
For $k=1,2,3$, we have
\begin{equation}
  \label{eq:firstfewpk}
  \begin{split}
    p_1(\ell_1,\ldots,\ell_n) &= \left(\sum_{i=1}^n \ell_i^2 \right) - 1, \\
    p_2(\ell_1,\ldots,\ell_n) &= \frac14 \left(\sum_{i=1}^n \ell_i^4 \right) +  \sum_{i<j} \ell_i^2 \ell_j^2  - \frac54 \left(\sum_{i=1}^n \ell_i^2 \right) + 1, \\
     p_3(\ell_1,\ldots,\ell_n) &= \frac1{36} \left(\sum_{i=1}^n \ell_i^6 \right) + \frac14 \left( \sum_{i \neq j} \ell_i^4 \ell_j^2 \right) + \sum_{i<j<h} \ell_i^2 \ell_j^2 \ell_h^2 \\
    & \qquad 
    - \frac7{18} \left(\sum_{i=1}^n \ell_i^4 \right) - \frac32 \left( \sum_{i<j} \ell_i^2 \ell_j^2 \right) + \frac{49}{36} \left(\sum_{i=1}^n \ell_i^2 \right) - 1.
  \end{split}
\end{equation}

We will also need the so-called partial exponential Bell polynomials,
defined for any nonnegative integers $n\geq k \geq 1$ by
\begin{multline}
  B_{n,k}(r_1,r_2,\ldots,r_{n-k+1}) := \\
  \sum_{\substack{j_1,j_2,\ldots,j_{n-k+1} \geq 0 \\ j_1+j_2+\cdots+j_{n-k+1}=k \\
      j_1 + 2j_2 + \cdots + (n-k+1)j_{n-k+1}=n}}
  \!\!\!\!\!\!\!\!
  \frac{n!}{j_1!j_2!\cdots j_{n-k+1}!}
  \left(\frac{r_1}{1!}\right)^{j_1} \left(\frac{r_2}{2!}\right)^{j_2} \cdots
  \left(\frac{r_{n-k+1}}{(n-k+1)!}\right)^{j_{n-k+1}}.
\end{multline}
Combinatorially, $B_{n,k}(r_1,r_2,\ldots)$ is the generating series of
partitions of an $n$-element set into $k$ blocks, where we attach a
weight $r_i$ to each block of size $i$. Table~\ref{tab:bellpol} lists
the first few Bell polynomials. They appear in the following classical
formula:

\begin{prop}[Faà di Bruno's formula]
  \label{prop:fdbformula}
  Let $f,g$ be sufficiently smooth functions of one variable, and let
  $n$ be a positive integer. Then, we have
  \begin{equation}
    \frac{d^n}{dt^n} (f \circ g)(t) = \sum_{k=1}^n f^{(k)}(g(t))
    B_{n,k}\left(g'(t),g''(t),\ldots,g^{(n-k+1)}(t)\right)\, .
  \end{equation}
\end{prop}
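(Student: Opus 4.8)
The plan is to prove Faà di Bruno's formula by first establishing a ``set-partition'' version of it by induction on $n$, and then regrouping terms to recover the Bell polynomials. Write $[n]=\{1,\ldots,n\}$, and for a set partition $\pi$ of $[n]$ denote by $|\pi|$ its number of blocks and by $|B|$ the cardinality of a block $B\in\pi$. The claim is that
\begin{equation}
  \frac{d^n}{dt^n}(f\circ g)(t) = \sum_{\pi \vdash [n]} f^{(|\pi|)}(g(t)) \prod_{B\in\pi} g^{(|B|)}(t),
\end{equation}
the sum running over all set partitions of $[n]$.

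For $n=1$ this is the chain rule. For the inductive step, differentiate the right-hand side using the product and chain rules: a term indexed by $\pi\vdash[n]$ produces one contribution in which $f^{(|\pi|)}(g)$ is replaced by $f^{(|\pi|+1)}(g)\,g'$ --- matching the partition of $[n+1]$ obtained from $\pi$ by adjoining the singleton block $\{n+1\}$ --- together with, for each block $B\in\pi$, a contribution in which $g^{(|B|)}$ is replaced by $g^{(|B|+1)}$ --- matching the partition obtained from $\pi$ by inserting $n+1$ into $B$. Every partition of $[n+1]$ is reached exactly once in this way (remove $n+1$ from its block to recover $\pi$, and record whether that block was the singleton $\{n+1\}$), so the claimed identity holds for $n+1$.

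It remains to collect terms. Group the partitions $\pi\vdash[n]$ according to $k=|\pi|$ and to the ``type'' $(j_1,j_2,\ldots)$, where $j_i$ is the number of blocks of $\pi$ of size $i$; such a type is admissible iff $\sum_i j_i=k$ and $\sum_i i\,j_i=n$, which in particular forces $j_i=0$ for $i>n-k+1$. The number of set partitions of $[n]$ of a given admissible type is the multinomial count $n!\big/\bigl(\prod_i (i!)^{j_i}\, j_i!\bigr)$, and each such $\pi$ contributes $f^{(k)}(g)\prod_i \bigl(g^{(i)}\bigr)^{j_i}$ to the sum. Summing over types with fixed $k$ and comparing with the definition of $B_{n,k}$ given above (with $r_i=g^{(i)}(t)$) yields exactly $f^{(k)}(g(t))\,B_{n,k}\bigl(g'(t),\ldots,g^{(n-k+1)}(t)\bigr)$, and summing over $k=1,\ldots,n$ gives the formula.

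No step is a genuine obstacle here; the only things requiring care are the bookkeeping of the bijection between differentiated terms and partitions of $[n+1]$ in the inductive step, and the verification that the multinomial coefficient for partitions of a given type matches the coefficient appearing in the definition of $B_{n,k}$. Alternatively, one may argue purely algebraically, deriving the Bell-polynomial recurrence $B_{n+1,k}=\sum_{i\ge 1}\binom{n}{i-1}r_i\,B_{n-i+1,k-1}$ from the definition and feeding it into an induction on $n$ on the analytic side; or one may compare exponential generating functions, using that $\sum_n B_{n,k}(r_\bullet)\,u^n/n! = \frac1{k!}\bigl(\sum_{i\ge1} r_i u^i/i!\bigr)^k$ and composing Taylor expansions. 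All three routes are of comparable length, and for the purposes of this paper the statement may simply be cited as classical.
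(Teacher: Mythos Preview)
Your proof is correct and follows the standard combinatorial route via set partitions. The paper itself does not give a proof of this proposition: it simply states the formula and refers the reader to \cite[Example~III.24]{FlSe09}, treating it as classical. You anticipated this possibility in your closing remark, and indeed that is exactly what the paper does.
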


\begin{table}
  \centering
  \begin{tabular}{|c|c|c|c|c|}
    \hline
    \diagbox{$n$}{$k$} & $1$ & $2$ & $3$ & $4$ \\ \hline
    $1$ & $r_1$ & & & \\ \hline
    $2$ & $r_2$ & $r_1^2$ & & \\ \hline
    $3$ & $r_3$ & $3 r_1 r_2$ & $r_1^3$ & \\ \hline
    $4$ & $r_4$ & $4 r_1 r_3 + 3 r_2^2$ & $6 r_1^2 r_2$ & $r_1^4$ \\ \hline
  \end{tabular}
  \caption{The first few partial exponential Bell polynomials
    $B_{n,k}(r_1,r_2,\ldots)$.}
  \label{tab:bellpol}
\end{table}

See for instance the discussion in~\cite[Example~III.24]{FlSe09}.  We may now state the main result of
this section:

\begin{thm}
  \label{thm:Tgen}
  For $n\geq 3$ and $\ell_1,\ldots,\ell_n$ nonnegative integers, the
  generating function of bipartite planar maps
  with $n$ unrooted tight boundaries of lengths $2\ell_1,\ldots,2\ell_n$ is equal to
  \begin{equation}
    \label{eq:Tgen}
    \bip{T^{(0)}_{2\ell_1,\ldots,2\ell_n}} = R^{\ell_1+\cdots+\ell_n} \sum_{k=0}^{n-3}    k! p_k(\ell_1,\ldots,\ell_n) b_{n-2,k+1}  + \frac{(-1)^n (n-3)!}{t^{n-2}} \delta_{\ell_1+\cdots+\ell_n,0}
  \end{equation}
  with $R$ given by \eqref{eq:Rbipeq}, and where we introduce the shorthand notation
  \begin{equation}
    \label{eq:bnkdef}
    b_{n,k} :=  R^{-k} B_{n,k}(R',R'',\ldots) = B_{n,k} \left(
      \frac{R'}R, \frac{R''}R, \ldots \right)
  \end{equation}
  with $R',R'',\ldots$ the successive derivatives of $R$ with respect
  to the vertex weight $t$.
\end{thm}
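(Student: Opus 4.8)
The plan is to derive \eqref{eq:Tgen} from the Collet-Fusy formula \eqref{eq:CF} by inverting the trumpet decomposition relation \eqref{eq:funFTderiv}. Concretely, I would start from the $m=0$ instance of Corollary~\ref{sec:allow-vert-bound}, namely $\partial^s F^{(0)}/\partial t^s = T^{(0)}_{0,\ldots,0}$ (with $s$ zeros), and from the $m\geq 1$ instance $\partial^s F^{(0)}_{2L_1,\ldots,2L_m}/\partial t^s = \sum_{\ell_i\geq 1} \prod_i A_{2L_i,2\ell_i}\,(\ell_1\cdots\ell_m)\,T^{(0)}_{2\ell_1,\ldots,2\ell_m,0,\ldots,0}$, restricting everything to the bipartite specialization so that $S=0$ and $A_{2L,2\ell}\bip{} = [z^{2\ell}](z+R/z)^{2L} = \binom{2L}{L-\ell}R^{L-\ell}$. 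The key point is that this matrix (indexed by $L,\ell\geq 1$) is unitriangular in an appropriate sense, so it can be inverted explicitly; the inverse is the well-known "Catalan-type" inversion, and in fact the combination $\tau^{(0)}_{2\ell_1,\ldots,2\ell_n}\bip{} = T^{(0)}_{2\ell_1,\ldots,2\ell_n}\bip{} R^{-(\ell_1+\cdots+\ell_n)}$ is, by Theorem~\ref{thm:omegams}, read off from $\omega^{(0)}_{m,s}$, which is obtained from $W^{(0)}_{m,s}$ by the Zhukovsky substitution $x(z)=R^{1/2}(z+z^{-1})$. So the cleanest route is: plug the Collet-Fusy formula into $W^{(0)}_{m,s}(x_1,\ldots,x_m)$, perform the Zhukovsky change of variable on each $x_i$, extract the coefficient of $z_i^{-\ell_i-1}$, and identify the result with the right-hand side of \eqref{eq:Tgen}.

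In more detail, the generating-function steps would go as follows. Write $W^{(0)}_{m}(x_1,\ldots,x_m) = \sum_{L_i\geq 1}\bip{F^{(0)}_{2L_1,\ldots,2L_m}}\prod_i x_i^{-2L_i-1}$ (only even lengths survive in the bipartite case, so odd powers of $x_i$ do not appear; one should be careful about the $1/x_i$ versus $1/x_i^{2L_i+1}$ bookkeeping — I expect a clean generating function in $x_i^{-2}$ after pulling out a global $\prod x_i^{-1}$). Using \eqref{eq:CF}, the $L_i$-dependence factorizes as $\frac{L_1\cdots L_m}{L_1+\cdots+L_m}\prod\binom{2L_i}{L_i}$ times $\partial_t^{m-2}R^{L_1+\cdots+L_m}$; the factor $\frac{1}{L_1+\cdots+L_m}$ is the usual obstruction to full factorization and is handled by the standard trick $\frac{1}{\sum L_i} = \int_0^1 u^{\sum L_i - 1}\,du$, turning the sum into a product $\prod_i \sum_{L_i\geq 1} L_i\binom{2L_i}{L_i}(uR)^{L_i} x_i^{-2L_i}$ of elementary generating functions (each is an algebraic function, essentially $\frac{d}{d(\cdot)}$ of the Catalan generating function). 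Then apply $\partial_t^{s}$ (bringing in $\partial_t^{m+s-2}=\partial_t^{n-2}$) via Faà di Bruno (Proposition~\ref{prop:fdbformula}) applied to $R\mapsto R^{\text{power}}$, which is exactly where the Bell polynomials $B_{n-2,k+1}$ and hence $b_{n-2,k+1}$ enter; and perform the Zhukovsky substitution $x_i = R^{1/2}(z_i+z_i^{-1})$ and read off $[z_i^{-\ell_i-1}]$. The combinatorial identity that the coefficient extraction produces precisely $k!\,p_k(\ell_1,\ldots,\ell_n)$ — with $p_k$ the polynomial from \eqref{eq:pkmultdef}–\eqref{eq:defpkqk} — is the crux: this is essentially the same computation as in \cite{polytightmaps}, where the $p_k,q_k$ were introduced, and I would either cite that computation or reproduce the relevant generating-function identity $[z^{\ell}]\big(\text{something involving } (z+z^{-1})^{-L}\big) = $ (binomial $\times$ $p_k$-type polynomial).

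The bookkeeping for the special small cases needs care. The Collet-Fusy formula \eqref{eq:CF} involves $\partial_t^{n-2}$, which for $n=2$ would be the identity and for $n=1$ an antiderivative; but the theorem is stated for $n\geq 3$, so $\partial_t^{n-2}$ is a genuine positive-order derivative and there is no integration subtlety at the level of $F$. However, after applying $\partial_t^s$ with $m+s=n$, one is computing $\partial_t^{n-2}$ of a power $R^{L_1+\cdots+L_m}$ of $R$ when $m\geq 1$, but of $R^0=1$-type constants when $m=0$ — and this is the origin of the anomalous term $\frac{(-1)^n(n-3)!}{t^{n-2}}\delta_{\ell_1+\cdots+\ell_n,0}$ in \eqref{eq:Tgen}: it is the contribution of the $m=0$ sector, i.e.\ of $T^{(0)}_{0,\ldots,0}=\partial_t^n F^{(0)}$, for which Collet-Fusy does not directly apply and one must instead use the known planar free energy $F^{(0)}$ (whose $t$-dependence is explicit, giving rise to the $1/t^{n-2}$ term after differentiation). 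So the structure of the proof is: (i) treat $m\geq 1$ via the generating-function manipulation above to get the main sum $\sum_{k=0}^{n-3}k!\,p_k\,b_{n-2,k+1}\,R^{\sum\ell_i}$; (ii) observe via the consistency relation \eqref{eq:pkconsist} and the construction that setting $\ell_i=0$ for the last $s$ indices is harmless, so the formula with $m<n$ boundary-faces plus $s=n-m$ boundary-vertices is the specialization of the all-faces formula; (iii) separately compute the $m=0$ term to pin down the $\delta$-correction. I expect step (i), specifically the coefficient-extraction identity producing $k!\,p_k$, to be the main obstacle — it is a nontrivial symmetric-function identity — but it is exactly the identity already established in \cite{polytightmaps}, so in practice the proof reduces to carefully matching notations and invoking that result, together with a clean application of Faà di Bruno for the $t$-derivatives and a routine (but fiddly) handling of the $1/\sum L_i$ factor via the $\int_0^1 u^{\sum L_i-1}du$ trick.
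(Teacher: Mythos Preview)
Your proposal has the right skeleton and the same key ingredients as the paper's proof: Collet--Fusy on the $F$ side, Fa\`a di Bruno for the $t$-derivatives to bring in the Bell polynomials $b_{n-2,k+1}$, inversion of the trumpet relation \eqref{eq:funFTderiv} using unitriangularity of $A$, and the identities from \cite{polytightmaps} to identify the $p_k$'s. You also correctly isolate the $\delta$-term as coming from the all-zero case handled separately.

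The paper's execution is however more direct than yours on two points. First, the factor $1/(L_1+\cdots+L_m)$ does \emph{not} require the integral trick: applying Fa\`a di Bruno to $\partial_t^{n-2}R^{L_1+\cdots+L_m}$ produces $(L_1+\cdots+L_m)_k\,R^{L_1+\cdots+L_m}\,b_{n-2,k}$, and the leading $L_1+\cdots+L_m$ in the falling factorial cancels the denominator, leaving $(L_1+\cdots+L_m-1)_{k-1}$. That falling factorial is then split over the $L_i$ by Chu--Vandermonde, which is the factorization step you were aiming for with the integral. Second, the paper never passes through $\omega$ or the Zhukovsky substitution: it stays at the coefficient level, rewriting each resulting factor $L_i\binom{L_i-1}{k_i}\binom{2L_i}{L_i}$ (resp.\ $L_i\binom{L_i}{k_i}\binom{2L_i}{L_i}$) via \cite[Lemma~3.1]{polytightmaps} as $\sum_{\ell_i}(2\ell_i)A_{2L_i,2\ell_i}R^{\ell_i-L_i}p_{k_i}(\ell_i)$ (resp.\ $q_{k_i}(\ell_i)$). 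This puts the right-hand side of \eqref{eq:funFTderiv} and the expanded Collet--Fusy expression into identical form, and unitriangularity of $A$ finishes. Your generating-function route would ultimately hit the same identity, but the interplay you sketch between the integral trick and Fa\`a di Bruno is awkward (the $R$-dependence would be spread inside the integral), whereas the paper's falling-factorial/Chu--Vandermonde path avoids that entirely. For the $m=0$ case the paper uses $T^{(0)}_{0,\ldots,0}=\partial_t^{n-2}\ln(R/t)$ and Fa\`a di Bruno on $\ln R$, with the $\ln t$ part giving precisely the $(-1)^n(n-3)!/t^{n-2}$ term.
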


For $n=3$ we recover
$T^{(0)}_{2\ell_1,2\ell_2,2\ell_3}\rvert_{\mathrm{bip}} =
R^{\ell_1+\ell_2+\ell_3-1} R' - t^{-1}
\delta_{\ell_1+\ell_2+\ell_3,0}$ consistently
with~\cite[Theorem~1.1]{triskell}, while for $n=4$ we find
\begin{equation}
  \label{eq:Tfour}
  \bip{T^{(0)}_{2\ell_1,2\ell_2,2\ell_3,2\ell_4}} = R^{\ell_1+\ell_2+\ell_3+\ell_4}
  \left( \left( \ell_1^2 + \ell_2^2 + \ell_3^2 + \ell_4^2 -1 \right)
    \frac{R'^2}{R^2} + \frac{R''}{R} \right) + \frac{\delta_{\ell_1+\cdots+\ell_4,0}}{t^2}.
\end{equation}
Longer expressions for $n=5,6$ may be written straightforwardly
from~\eqref{eq:firstfewpk} and Table~\ref{tab:bellpol}.

\begin{rem}
  When setting the weights $t_2,t_4,\ldots$ for inner faces to zero,
  we have $R=t$, $R'=1$ and $R^{(j)}=0$ for $j \geq 2$. Thus, in this
  case we have $b_{n,k}=t^{-k}\delta_{n,k}$, hence
  $T^{(0)}_{2\ell_1,\ldots,2\ell_n}$ reduces to
  $(n-3)!p_{n-3}(\ell_1,\ldots,\ell_n) t^{\ell_1+\ldots+\ell_n-n+2}$
  if at least one $\ell_i$ is non-zero, and vanishes otherwise since
  $p_{n-3}(0,\ldots,0)=(-1)^{n-3}$ so the rightmost term
  in~\eqref{eq:Tgen} cancels the result. This is consistent
  with~\cite[Theorem 2.3]{polytightmaps} since the tight maps considered
  in this reference are nothing but maps with tight boundaries and
  without inner faces. Note that the exponent of $t$ is consistent
  with Euler's relation.
\end{rem}

\begin{proof}[Proof of Theorem~\ref{thm:Tgen}]
  Let us first consider the case where all $\ell_i$ are zero: then
  $T^{(0)}_{0,\ldots,0}$ is simply the generating function of
  bipartite planar maps with $n \geq 3$ marked vertices. For $n=3$ it
  is equal to $\frac{\partial}{\partial t} \ln(R/t)$, see
  e.g.~\cite[Appendix~A]{triskell}, and for larger $n$ we simply have
  to take more derivatives with respect to $t$, namely
  \begin{equation}
    \bip{T^{(0)}_{0,\ldots,0}} = \frac{\partial^{n-2}}{\partial t^{n-2}} \ln(R/t) =
    \frac{\partial^{n-2}}{\partial t^{n-2}} \ln R + \frac{(-1)^n (n-3)!}{t^{n-2}}\, .
  \end{equation}
  Now, by applying Faà di Bruno's formula, we get
  \begin{multline}
    \bip{T^{(0)}_{0,\ldots,0}} = \sum_{k=1}^{n-2} (-1)^{k-1} (k-1)! R^{-k} B_{n-2,k}(R',R'',\ldots) +
    \frac{(-1)^n (n-3)!}{t^{n-2}}\\
    = \sum_{k=0}^{n-3} (-1)^{k} k! b_{n-2,k+1} +
    \frac{(-1)^n (n-3)!}{t^{n-2}}.
  \end{multline}
  This gives the wanted formula for $\ell_1=\cdots=\ell_n=0$ since, by
  \eqref{eq:defpkqk} and \eqref{eq:pkconsist}, we have
  $p_{k}(0,\ldots,0)=p_{k}(0)=(-1)^{k}$.

  Let us now assume that at least one $\ell_i$ is non-zero, and let
  $m \geq 1$ be the number of non-zero $\ell_i$, and $s=n-m$ the number
  of zero $\ell_i$.  Without loss of
  generality, we can assume that $\ell_1,\ldots,\ell_m>0$ and
  $\ell_i=0$ for $i>m$. We note that, in the bipartite case, the
  trumpet generating function of Proposition \ref{sec:enum-coroll}
  vanishes whenever its indices are of different parities (since
  $S=0$), and reads for even indices
  \begin{equation}
    \label{eq:Abip}
    A_{2L,2\ell}=\binom{2L}{L-\ell}R^{L-\ell}\, .
  \end{equation}
  Thus, Corollary~\ref{sec:allow-vert-bound} reduces to
  \begin{equation}
    \label{eq:funneq}
    \frac{\partial^{s}}{\partial t^{s}}
    \bip{F^{(0)}_{2L_1,\ldots,2L_m}}
    =  
    \sum_{\ell_1,\ldots,\ell_m\geq 1}(2\ell_1)A_{2L_1,2\ell_1} \cdots (2\ell_m)A_{2L_m,2\ell_m} \bip{T^{(0)}_{2\ell_1,\ldots,2\ell_m,0,\ldots,0}}
  \end{equation}  
  (where we append $s$ zeros after $2\ell_m$).  On the other hand,
  by the Collet-Fusy formula and by Fa\`a di Bruno's formula, we have
  \begin{multline}
    \label{eq:CFgenexp}
    \frac{\partial^{s}}{\partial t^{s}}
    \bip{F^{(0)}_{2L_1,\ldots,2L_m}} = \frac{L_1 \cdots L_m}{L_1+\cdots+L_m}
    \binom{2L_1}{L_1} \cdots \binom{2L_m}{L_m} \frac{\partial^{n-2}}{\partial t^{n-2}} R^{L_1+\cdots+L_m} \\
    =  \sum_{k=1}^{n-2} L_1 \cdots L_m (L_1+\cdots+L_m-1)_{k-1} \binom{2L_1}{L_1} \cdots \binom{2L_m}{L_m} R^{L_1+\cdots+L_m} b_{n-2,k}
  \end{multline}
  where $(L)_k:=L(L-1)\cdots(L-k+1)$ denotes the falling factorial.
  We will rewrite this expression in the same form as the right-hand
  side of~\eqref{eq:funneq}, so as to identify
  $\bip{T^{(0)}_{2\ell_1,\ldots,2\ell_m,0,\ldots,0}}$. To this end, we
  proceed as in~\cite[Section~3.1]{polytightmaps} and use the
  Chu-Vandermonde identity to write
  \begin{equation}
    (L_1+L_2+\cdots+L_m-1)_{k-1} = (k-1)! \sum_{\substack{k_1,k_2,\ldots,k_m\geq 0 \\ k_1+k_2+\cdots+k_m=k-1}} \binom{L_1-1}{k_1} \binom{L_2}{k_2} \cdots \binom{L_m}{k_m}.
  \end{equation}
  Plugging into~\eqref{eq:CFgenexp} we obtain
  \begin{multline}
    \label{eq:CFexpand}
    \frac{\partial^{s}}{\partial t^{s}}
    \bip{F^{(0)}_{2L_1,\ldots,2L_m}} =
    \sum_{k=1}^{n-2}(k-1)!  R^{L_1+\cdots+L_m} b_{n-2,k} \times \\
    \sum_{\underset{k_1+\cdots+k_m=k-1}{k_1,\ldots,k_m\geq
        0}} 
     L_1 \binom{L_1-1}{k_1} \binom{2L_1}{L_1} L_2 \binom{L_2}{k_2} \binom{2L_2}{L_2} \cdots L_m \binom{L_m}{k_m} \binom{2L_m}{L_m}\, .
  \end{multline}
  Now, by \cite[Lemma 3.1]{polytightmaps} and
  by~\eqref{eq:Abip}\footnote{Beware that the notation $A_{\ell,m}$
    used in \cite{polytightmaps} differs from that of the present
    paper.}, we have for $L>0$
  \begin{equation}
    \label{eq:pqdeconv}
    \begin{split}
      L \binom{L-1}{k}  \binom{2L}{L} &= \sum_{\ell=1}^L(2\ell)\binom{2L}{L-\ell}p_k(\ell)=\sum_{\ell=1}^L(2\ell)A_{2L,2\ell}R^{\ell-L}p_k(\ell)\, ,\\
      L \binom{L}{k}  \binom{2L}{L} &= \sum_{\ell=1}^L(2\ell)\binom{2L}{L-\ell}q_k(\ell)=\sum_{\ell=1}^L(2\ell)A_{2L,2\ell}R^{\ell-L}q_k(\ell)\, .
    \end{split}
  \end{equation}
  Therefore, by the definition \eqref{eq:pkmultdef} of $p_k$, we may rewrite
  \eqref{eq:CFexpand} as
  \begin{multline}
    \label{eq:CFpk}
    \frac{\partial^{s}}{\partial t^{s}}
    \bip{F^{(0)}_{2L_1,\ldots,2L_m}} =
    \sum_{\ell_1,\ldots,\ell_m\geq
      1}(2\ell_1)A_{2L_1,2\ell_1}\cdots
    (2\ell_m)A_{2L_m,2\ell_m}\\
    \times
    R^{\ell_1+\ldots+\ell_m} \sum_{k=1}^{n-2}(k-1)! p_{k-1}(\ell_1,\ldots,\ell_m)
     b_{n-2,k}\, ,
  \end{multline}
  and by \eqref{eq:pkconsist}, we may replace
  $p_{k-1}(\ell_1,\ldots,\ell_m)$ by
  $p_{k-1}(\ell_1,\ldots,\ell_m,0,\ldots,0)$, in order to have $n$
  variables. Comparing this expression with \eqref{eq:funneq}, using
  the invertibility of the unitriangular matrix
  $(A_{2L,2\ell})_{L,\ell\geq1}$ and doing a change of variable
  $k-1 \to k$ finally give the wanted formula~\eqref{eq:Tgen}, since
  the term $\frac{(-1)^n (n-3)!}{t^{n-2}}$ is absent for $m \geq 1$.
\end{proof}

A variant of the Collet-Fusy formula~\eqref{eq:CF} holds in the
\emph{quasi-bipartite case} where exactly two of the boundaries have
odd lengths, i.e.\ when exactly two of the $L_i$, say $L_1$ and $L_2$,
are half-integers. In this case, the factors
$\binom{2L_1}{L_1} \binom{2L_2}{L_2}$ appearing on the right-hand side
should be replaced by
$4\binom{2L_1-1}{L_1-1/2} \binom{2L_2-1}{L_2-1/2}$. Correspondingly,
we have the following analogue of Theorem~\ref{thm:Tgen}:

\begin{prop}
  \label{prop:Tgenquasi}
  For $n\geq 3$, $\ell_1,\ell_2$ positive half-integers and
  $\ell_3,\ldots,\ell_n$ nonnegative integers, the generating function
  of quasi-bipartite planar maps with $n$ unrooted tight boundaries of
  lengths $2\ell_1,\ldots,2\ell_n$ is equal to
  \begin{equation}
    \label{eq:Tgenquasi}
    \bip{T^{(0)}_{2\ell_1,\ldots,2\ell_n}} = R^{\ell_1+\cdots+\ell_n} \sum_{k=0}^{n-3}    k! \tilde{p}_k(\ell_1,\ell_2;\ell_3,\ldots,\ell_n) b_{n-2,k+1}
  \end{equation}
  where, following~\cite[Section~2.1.3]{polytightmaps}, we define
  \begin{equation}
    \label{eq:defptildekmultivariate}
    \tilde{p}_k(\ell_1,\ell_2;\ell_3,\ldots,\ell_n):=\sum_{k_1,k_2,\ldots,k_n\geq 0\atop k_1+k_2+\cdots+k_n=k}\tilde{p}_{k_1}(\ell_1)
    \tilde{p}_{k_2}(\ell_2)q_{k_3}(\ell_3)\cdots q_{k_n}(\ell_n)
  \end{equation}
  with $q_k$ as in~\eqref{eq:defpkqk} and with
  \begin{equation}
    \tilde{p}_k(\ell):=\frac{1}{(k!)^2}\, \prod_{i=1}^k\left(\ell^2-\left(i-\frac{1}{2}\right)^2\right) = \binom{\ell-\frac12}{k}\binom{\ell+k-\frac12}{k}.
  \end{equation}
\end{prop}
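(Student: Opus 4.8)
The plan is to mimic the proof of Theorem~\ref{thm:Tgen} step by step, replacing the relevant bipartite ingredients by their quasi-bipartite analogues. The starting point is the quasi-bipartite variant of the Collet-Fusy formula mentioned just above the statement: for $n\geq 1$, with $L_1,L_2$ positive half-integers and $L_3,\ldots,L_n$ positive integers, one has
\begin{equation*}
  \bip{F^{(0)}_{2L_1,\ldots,2L_n}}=\frac{L_1\cdots L_n}{L_1+\cdots+L_n}\cdot 4\binom{2L_1-1}{L_1-1/2}\binom{2L_2-1}{L_2-1/2}\prod_{i=3}^n\binom{2L_i}{L_i}\frac{\partial^{n-2}}{\partial t^{n-2}}R^{L_1+\cdots+L_n}.
\end{equation*}
Applying Fa\`a di Bruno's formula to $\partial_t^{n-2}R^{L_1+\cdots+L_n}$ and introducing the shorthand $b_{n-2,k}$ exactly as in~\eqref{eq:bnkdef}, we get an expansion of $\partial_t^s \bip{F^{(0)}_{2L_1,\ldots,2L_m}}$ (here $m$ counts the boundary-faces and $s$ the boundary-vertices, with $m\geq 2$ since at least the two odd boundaries must be faces) as a sum over $k$ of $(k-1)!\,R^{L_1+\cdots+L_m}b_{n-2,k}$ times a falling-factorial factor $L_1\cdots L_m(L_1+\cdots+L_m-1)_{k-1}$ times the product of binomials. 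Then, as in the proof of Theorem~\ref{thm:Tgen}, I would use the Chu-Vandermonde identity to split $(L_1+\cdots+L_m-1)_{k-1}$ into a sum over $k_1+\cdots+k_m=k-1$ of a product $\binom{L_1-1}{k_1}\binom{L_2}{k_2}\cdots\binom{L_m}{k_m}$; the key point is that one may choose to extract the ``$-1$'' shift on \emph{either} of the two half-integer indices (or the first integer one), and a symmetric choice will produce the right combinatorial factors.

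The crucial algebraic input replacing~\eqref{eq:pqdeconv} is the pair of deconvolution identities for half-integer $L$. From \cite[Lemma 3.1 and Section 2.1.3]{polytightmaps} (suitably transcribed), one has, for $L$ a positive half-integer,
\begin{equation*}
  4L\binom{L-1}{k}\binom{2L-1}{L-1/2}=\sum_{\ell}(2\ell)\binom{2L}{L-\ell}\tilde{p}_k(\ell)=\sum_{\ell}(2\ell)A_{2L,2\ell}R^{\ell-L}\tilde{p}_k(\ell),
\end{equation*}
the sum running over positive half-integers $\ell\leq L$, together with the analogous identity pairing $4L\binom{L}{k}\binom{2L-1}{L-1/2}$ with $\sum_\ell(2\ell)A_{2L,2\ell}R^{\ell-L}\tilde q_k(\ell)$ if needed; here $A_{2L,2\ell}=\binom{2L}{L-\ell}R^{L-\ell}$ still holds in the quasi-bipartite case because $S=0$ forces the trumpet generating function to vanish unless its two indices have the same parity, so the relevant trumpets have both boundaries of even integer length $2L,2\ell$ \emph{only when} the original boundaries are even; for the two odd boundaries the trumpet indices $2\ell_1,2\ell_2$ are ``odd'' in the sense that $\ell_1,\ell_2$ are half-integers, and the formula for $A$ extends verbatim. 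Substituting these deconvolutions for the two half-integer factors $\tilde p_{k_1}(\ell_1),\tilde p_{k_2}(\ell_2)$ and the usual one~\eqref{eq:pqdeconv} for the $q_{k_i}(\ell_i)$, $i\geq 3$, and recognising the definition~\eqref{eq:defptildekmultivariate} of $\tilde p_k$, rewrites $\partial_t^s\bip{F^{(0)}_{2L_1,\ldots,2L_m}}$ in precisely the form $\sum(2\ell_1)A_{2L_1,2\ell_1}\cdots(2\ell_m)A_{2L_m,2\ell_m}\cdot R^{\ell_1+\cdots+\ell_m}\sum_k(k-1)!\,\tilde p_{k-1}(\ell_1,\ell_2;\ell_3,\ldots)\,b_{n-2,k}$. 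Comparing with the specialisation of Corollary~\ref{sec:allow-vert-bound} to the quasi-bipartite case (which reads just like~\eqref{eq:funneq}), and using that the matrix $(A_{2L,2\ell})$ restricted to half-integer indices is again unitriangular hence invertible, together with invertibility for the integer indices, identifies $\bip{T^{(0)}_{2\ell_1,\ldots,2\ell_m,0,\ldots,0}}$; the consistency relation for $\tilde p_k$ (the analogue of~\eqref{eq:pkconsist}, which holds because only the last $q$-type variables are set to zero and $q_0=1$) lets us pad with the $s$ trailing zeros, and finally a shift $k-1\to k$ yields~\eqref{eq:Tgenquasi}. Note that there is no $\delta_{\ell_1+\cdots+\ell_n,0}$ correction term: since $\ell_1,\ell_2$ are positive half-integers the sum $\ell_1+\cdots+\ell_n$ is never zero, so the ``all boundaries are vertices'' degenerate case that produced the $t^{-(n-2)}$ term in Theorem~\ref{thm:Tgen} simply cannot occur here.

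The main obstacle I anticipate is bookkeeping the asymmetry between the two distinguished half-integer boundaries and the integer ones: in~\eqref{eq:defptildekmultivariate} both $\ell_1$ and $\ell_2$ carry a $\tilde p$ (not one $\tilde p$ and one $\tilde q$), which forces the Chu-Vandermonde split to be performed in a way that treats $L_1$ and $L_2$ symmetrically and puts the $-1$ shift on a \emph{third} index, or else to symmetrise over the choice; getting the combinatorial factor $4$ and the two $\binom{2L_i-1}{L_i-1/2}$ to line up correctly with \cite[Lemma 3.1]{polytightmaps} transcribed to half-integers is the delicate step. Everything else is a routine, if lengthy, transcription of the proof of Theorem~\ref{thm:Tgen}, and I would simply remark that the argument is ``the same, mutatis mutandis'' once these two deconvolution identities are in place, rather than repeating it in full.
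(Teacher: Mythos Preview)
Your overall strategy matches the paper's proof exactly, and you correctly identify where the delicate step lies. However, you have not found the key trick that resolves your ``main obstacle'', and the deconvolution identity you write down is not correct.

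The point you are missing is that the Chu--Vandermonde expansion of the falling factorial can use \emph{any} shifts summing to $-1$, and the right choice here is to split the $-1$ as $-\tfrac12-\tfrac12$ across the two half-integer variables:
\[
  (L_1+\cdots+L_m-1)_{k-1}=(k-1)!\sum_{k_1+\cdots+k_m=k-1}\binom{L_1-\tfrac12}{k_1}\binom{L_2-\tfrac12}{k_2}\binom{L_3}{k_3}\cdots\binom{L_m}{k_m}.
\]
This treats $L_1,L_2$ symmetrically from the outset and produces, after pairing with the modified binomial factors $2\binom{2L_i-1}{L_i-1/2}$, the quantity $2L_i\binom{L_i-\frac12}{k_i}\binom{2L_i-1}{L_i-\frac12}$ for each of $i=1,2$. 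The correct deconvolution identity (this is \cite[Equation~(3.9)]{polytightmaps}) is then
\[
  2L\binom{L-\tfrac12}{k}\binom{2L-1}{L-\tfrac12}=\sum_{\ell=1/2}^{L}(2\ell)\binom{2L}{L-\ell}\tilde p_k(\ell),
\]
not your version with $4L\binom{L-1}{k}$, which is a different expression. With this single $\tilde p$ identity applied to both odd boundaries, plus the usual $q$-identity from~\eqref{eq:pqdeconv} for $L_3,\ldots,L_m$, the definition~\eqref{eq:defptildekmultivariate} of $\tilde p_k(\ell_1,\ell_2;\ell_3,\ldots)$ appears directly, with two $\tilde p$ factors and no need for a separate $\tilde q$. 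Your alternative proposals (placing the $-1$ on a third index, or symmetrising) do not land on this polynomial without further contortions. Once the split is done this way, the rest of your outline goes through verbatim, including the absence of the $\delta$-correction term, which you explain correctly.
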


\begin{proof}
  Starting with the quasi-bipartite variant of the Collet-Fusy
  formula, we proceed as in the proof of Theorem~\ref{thm:Tgen}.
  Using Fa\`a di Bruno's formula, we obtain the
  expression~\eqref{eq:CFgenexp} with the binomial factors
  $\binom{2L_1}{L_1} \binom{2L_2}{L_2}$ modified as discussed
  above. Now, we expand the falling factorial as follows:
    \begin{equation}
      (L_1+\cdots+L_m-1)_{k-1} = (k-1)! \sum_{\substack{k_1,\ldots,k_m\geq 0 \\ k_1+\cdots+k_m=k-1}} \binom{L_1-\frac12}{k_1}  \binom{L_2-\frac12}{k_2} \binom{L_3}{k_3} \cdots \binom{L_m}{k_m}.
  \end{equation}
  which leads to a variant of~\eqref{eq:CFexpand} in which the factors
  $\binom{L_1-1}{k_1} \binom{2L_1}{L_1} \binom{L_2}{k_2}
  \binom{2L_2}{L_2}$ are replaced by
  $4 \binom{L_1-1/2}{k_1} \binom{2L_1-1}{L_1-1/2} \binom{L_2-1/2}{k_2}
  \binom{2L_2-1}{L_2-1/2}$. By~\cite[Equation~(3.9)]{polytightmaps}, we
  have for $L$ a positive half-integer
  \begin{equation}
    2L \binom{L-\frac12}{k}  \binom{2L-1}{L-\frac12} = \sum_{\ell=\frac12}^L(2\ell)\binom{2L}{L-\ell}\tilde{p}_k(\ell)=\sum_{\ell=\frac12}^L(2\ell)A_{2L,2\ell}R^{\ell-L}\tilde{p}_k(\ell)
  \end{equation}
  where the sum runs over the positive half-integers $\ell$ between
  $\frac12$ and $L$. Using again the second line
  of~\eqref{eq:pqdeconv} to expand the remaining binomial factors, we
  arrive at a variant of~\eqref{eq:CFpk} with $p_{k-1}$ replaced by
  $\tilde{p}_{k-1}$, giving the wanted result.
\end{proof}

Let us consider the case of four boundaries. We have
$\tilde{p}_1(\ell_1,\ell_2;\ell_3,\ell_4)=\ell_1^2+\cdots+\ell_4^2-\frac12$
and $\tilde{p}_0(\ell_1,\ell_2;\ell_3,\ell_4)=1$, hence from
Proposition~\ref{prop:Tgenquasi} we see that, in the quasi-bipartite
case, the term $-1$ appearing in the expression~\eqref{eq:Tfour}
should be replaced by $-\frac12$. In contrast, we found after a
lengthy computation (which we omit here) that~\eqref{eq:Tfour} holds
as is when all $\ell_i$ are half-integers, i.e.\ when all four
boundaries have odd lengths. This is consistent with the discussion of
the lattice count polynomial $N_{0,4}$ at the end
of~\cite[Section~1]{Norbury2010}, since
$T^{(0)}_{2\ell_1,2\ell_2,2\ell_3,2\ell_4}$ reduces to
$N_{0,4}(2\ell_1,2\ell_2,2\ell_3,2\ell_4)$ when we set the weights for
inner faces to zero.

\section{Recursion relations for series of maps with tight boundaries}
\label{sec:recrel}

In this section we give a number of recursion relations for the
generating function $T^{(g)}_{\ell_1,\ldots,\ell_n}$, as obtained by
adding an extra tight boundary to a map with pre-existing
boundaries. We will need the following proposition, proved bijectively
in Appendix~\ref{app:tightpants}:

\begin{prop}
 Let $\ell_1$, $\ell_2$ and $\ell_3$ be nonnegative integers and
  let $T^{(0)}_{\ell_1,\ell_2|\ell_3}$ denote the
  generating function of planar maps with three labeled distinct tight unrooted
  boundaries of lengths $\ell_1,\ell_2,\ell_3$, the third being \emph{strictly tight}, 
  where we attach a weight $t$ per vertex different from a boundary-vertex and not incident to the
  third boundary (and for all $k\geq 1$, a weight $t_{k}$ per inner face of degree $k$).
    Then, for $\ell_1+\ell_2>\ell_3$, we have
  \begin{equation}
    \label{eq:strictpantsformula}
    T^{(0)}_{\ell_1,\ell_2|\ell_3} =
    \begin{cases}
      R^{(\ell_1+\ell_2-\ell_3)/2-1} \displaystyle{\frac{\partial R}{\partial t}}& \text{if $\ell_1+\ell_2-\ell_3$ is even,}\\ \\
      R^{(\ell_1+\ell_2-\ell_3-1)/2} \displaystyle{\frac{\partial S}{\partial t}} & \text{if $\ell_1+\ell_2-\ell_3$ is odd.}
    \end{cases}
  \end{equation}
\end{prop}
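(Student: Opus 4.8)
The plan is to establish a bijection between planar maps with three labeled tight unrooted boundaries, the third being strictly tight, and some simpler combinatorial objects counted by the right-hand side of \eqref{eq:strictpantsformula}. The natural strategy is to cut the map open along a geodesic path connecting the third boundary to the union of the first two, thereby reducing the pair of pants to an annular map (cylinder). Concretely, I would first fix a canonical geodesic (a leftmost shortest path) from a distinguished corner of the third boundary $f_3$ aimed toward the first two boundaries; since $f_3$ is strictly tight, such a path has well-controlled length. Cutting along this path merges $f_1, f_2, f_3$ into a single boundary and produces a planar map with two boundaries, one being the merged boundary of length $\ell_1+\ell_2+\ell_3 + 2(\text{path length})$ and the other being some tight residual boundary — but it is cleaner to instead think of this as a map with a single marked boundary-face, hence a disk, whose enumeration is governed by $R$ and $S$ via the basic equations \eqref{eq:defRS}.

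The key steps, in order, would be: (i) use the strict tightness of the third boundary to show that the leftmost shortest path from $f_3$ toward $\{f_1,f_2\}$ is essentially unique and simple, mirroring the argument of Lemma~\ref{sec:decomp-theor-1}; (ii) cut along this path to obtain a reduced planar map with fewer boundaries, keeping track of the weight bookkeeping — crucially, the vertices on the third boundary and those newly created by the cut carry no weight $t$, which is exactly the weighting convention in the statement; (iii) recognize the resulting object: the generating function of planar maps with two boundaries of lengths $a$ and $b$, with appropriate weight conventions, is a known explicit function of $R$ and $S$ (this is the "disk function" / two-boundary series whose extraction of coefficients is encoded in \eqref{eq:Adef} and \eqref{eq:defRS}); (iv) separate the parities of $\ell_1+\ell_2-\ell_3$: when it is even, the cut interacts with the bulk through an even-length interface, producing a factor $R^{(\ell_1+\ell_2-\ell_3)/2-1}\,\partial_t R$, whereas an odd interface length forces one "defect" of odd type, producing $R^{(\ell_1+\ell_2-\ell_3-1)/2}\,\partial_t S$; the appearance of $\partial_t R$ and $\partial_t S$ rather than $R$ and $S$ themselves reflects the marking of one extra non-boundary vertex (equivalently a derivative with respect to $t$), as in the three-boundary identities of \cite{triskell}.

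The main obstacle I anticipate is step (i)–(ii): making precise the "cut along a canonical geodesic" operation in a way that is genuinely bijective, i.e.\ exhibiting the inverse gluing map and checking that strict tightness of the third boundary is both preserved and necessary (it is what guarantees the interface produced by the cut is again tight, so that the reduced map lies in a class with a clean generating function). One must also handle the boundary case $\ell_3=0$ (a boundary-vertex) and the corners cases where $\ell_1$ or $\ell_2$ is zero uniformly, and verify that the condition $\ell_1+\ell_2>\ell_3$ is exactly what makes the geodesic from $f_3$ nontrivially reach the other boundaries. Once the bijection is in place, the identification of the generating function with the right-hand side of \eqref{eq:strictpantsformula} should follow from a short computation using \eqref{eq:defRS}, splitting on the parity of the interface length; I would relegate that computation, together with the detailed universal-cover argument for uniqueness of the canonical geodesic, to Appendix~\ref{app:tightpants} as announced.
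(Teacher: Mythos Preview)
Your approach is genuinely different from the paper's, and as stated it has a real gap.

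The paper does \emph{not} cut along an open geodesic from $f_3$ toward the other boundaries. Instead, it proves the relation
\[
T^{(0)}_{\ell_1,\ell_2,\ell_3} = R^{\ell_3}\, T^{(0)}_{\ell_1,\ell_2|\ell_3}
\]
for $\ell_1+\ell_2>\ell_3$, by cutting a (non-strictly) tight pair of pants along the \emph{innermost} minimal closed path $C$ homotopic to $f_3$. This path is simple (Lemma~\ref{lem:simplepantssum}, which is where the hypothesis $\ell_1+\ell_2>\ell_3$ is used), and cutting along it yields (i) a tight cylinder of boundary lengths $\ell_3,\ell_3$, counted by $R^{\ell_3}$, and (ii) a pair of pants whose third boundary is now strictly tight by construction. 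The formula then follows immediately by dividing the already-established expression for $T^{(0)}_{\ell_1,\ell_2,\ell_3}$ (Theorem~\ref{thm:triskellnonbip}) by $R^{\ell_3}$. The weight convention on vertices incident to the third boundary is automatically handled by assigning those vertices to the cylinder piece.

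Your proposal, by contrast, attempts a direct reduction via an open geodesic cut, bypassing Theorem~\ref{thm:triskellnonbip} entirely. The gap is that step (i)--(ii) is not well-posed: a ``leftmost shortest path from $f_3$ toward $\{f_1,f_2\}$'' is not a standard object, and strict tightness of $f_3$ constrains closed paths homotopic to $f_3$, not open paths emanating from it. Even granting some canonical path to, say, $f_1$, cutting along it produces a cylinder with boundaries $f_2$ and a merged face of length $\ell_1+\ell_3+2(\text{path length})$; it is unclear how to extract $R^{(\ell_1+\ell_2-\ell_3)/2-1}\partial_t R$ from this, or where the marked vertex responsible for $\partial_t$ comes from. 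A slice-type decomposition along these lines may exist, but it would require substantial new machinery, whereas the paper's route is a two-line reduction once the non-strict formula and the simplicity lemma are in hand.
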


\subsection{Adding an extra boundary-vertex}
\label{sec:extrabvtx}

\paragraph{Marking a vertex in a trumpet.}

Take $L,\ell$ positive integers. Recall that $A_{L,\ell}$ is the generating function for trumpets with rooted boundary of
length $L$ and mouthpiece of length $\ell$. The quantity $\frac{\partial A_{L,\ell}}{\partial t}$ is therefore the generating function
for such trumpets with a marked vertex which is not incident to the mouthpiece (recall that vertices of the mouthpiece do not receive the weight $t$).
It has the following expression:
\begin{prop} We have 
\begin{equation}
\begin{split}
\frac{\partial A_{L,\ell}}{\partial t} & = \sum_{m>\ell}A_{L,m}\, m\, T^{(0)}_{m,0|\ell}  \\
& = \sum_{m=\ell+1\atop m=\ell+1\!\!\!\!\mod 2}^L A_{L,m}\, m\, R^{\frac{m-\ell-1}{2}}\frac{\partial S}{\partial t}   
+ \sum_{m=\ell+2\atop m=\ell+2\!\!\!\!\mod 2}^L A_{L,m}\, m\, R^{\frac{m-\ell}{2}-1}\frac{\partial R}{\partial t}\, .
\label{eq:dAdt}
\end{split}
\end{equation}
\label{prop:markedtrumpet}
\end{prop}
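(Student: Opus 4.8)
The plan is to prove the first equality by a bijective decomposition of a marked trumpet, and to derive the second one by substituting the explicit formula~\eqref{eq:strictpantsformula}. Since the vertices incident to the mouthpiece of a trumpet carry no weight $t$ (Corollary~\ref{cor:funFT}), the series $\partial A_{L,\ell}/\partial t$ is the generating function of trumpets with rooted boundary $F$ of length $L$ and strictly tight mouthpiece $f$ of length $\ell$, carrying in addition one marked (weightless) vertex $v$ not incident to $f$. Viewing $v$ as a third puncture, such an object is a genus-$0$ surface with three holes, and I would apply to it the construction underlying Theorem~\ref{sec:decomp-theor} (the inverse map of \cite[Section~6.1]{triskell}), tightening only the boundary-face $F$ --- the boundary-face $f$ being already strictly tight. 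Concretely, one cuts along the outermost closed path $\gamma$ of minimal length $m$ among all closed paths homotopic to $F$. This splits the marked trumpet into a trumpet $M'$ with rooted boundary $F$ of length $L$ and strictly tight mouthpiece $\gamma$ of length $m$ (whose contour is simple by Lemma~\ref{sec:decomp-theor-1}), together with a remaining planar map $M''$ with three distinguished tight boundaries: the face $\gamma$ of length $m$, the vertex $v$, and the face $f$ of length $\ell$. Here $f$ stays strictly tight in $M''$ because $M''$ is obtained from the marked trumpet by cutting, while $\gamma$ is tight in $M''$ by minimality of its length; moreover $\gamma$ cannot be the contour of $f$, since $v$ lies in $M''$ on the $f$-side of $\gamma$, so strict tightness of $f$ forces $m > \ell$. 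A check of the weight conventions --- the vertex $v$ and the vertices incident to $f$ receive no $t$, whereas the vertices on $\gamma$ receive a single $t$ supplied entirely by $M''$ --- shows that $M''$ is precisely an object counted by $T^{(0)}_{m,0|\ell}$.

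Conversely, given a trumpet $M'$ with rooted boundary of length $L$ and mouthpiece of length $m > \ell$, and a planar map $M''$ counted by $T^{(0)}_{m,0|\ell}$, one recovers a marked trumpet by gluing the $\gamma$-face of $M''$ onto the mouthpiece of $M'$ edge by edge along their contours: because the mouthpiece contour is simple, the gluing is fully determined once the distinguished corner of the mouthpiece is matched with one of the $m$ corners of the $\gamma$-face of $M''$. Since a rooted map and a planar map with three distinguished boundaries both have trivial automorphism group, this choice of corner contributes a plain factor $m$, and the correspondence is a bijection. Summing over $m > \ell$ gives the first equality $\partial A_{L,\ell}/\partial t = \sum_{m>\ell} A_{L,m}\, m\, T^{(0)}_{m,0|\ell}$.

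For the second equality, every term of this sum satisfies $\ell_1+\ell_2 = m > \ell = \ell_3$, so~\eqref{eq:strictpantsformula} applies and yields $T^{(0)}_{m,0|\ell} = R^{(m-\ell)/2-1}\,\partial R/\partial t$ when $m-\ell$ is even and $T^{(0)}_{m,0|\ell} = R^{(m-\ell-1)/2}\,\partial S/\partial t$ when $m-\ell$ is odd. Splitting the sum according to the parity of $m-\ell$, and using $A_{L,m} = [z^m](z+S+R/z)^L = 0$ for $m > L$ to truncate the range at $m = L$, produces exactly the second displayed formula. The main obstacle is the first step: one must carry over from \cite{triskell} the proof that cutting along $\gamma$ is well defined and invertible, now in the presence of the additional boundary-vertex $v$ --- in particular, that $v$ is never absorbed into the trumpet $M'$ --- and check that the weights match the definition of $T^{(0)}_{m,0|\ell}$ across the cut; the remaining algebraic step is then immediate.
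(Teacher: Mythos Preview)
Your proof is correct and follows essentially the same combinatorial approach as the paper's first proof: view the marked vertex as a third puncture, cut along the outermost minimal closed path homotopic to $F$, and identify the remaining piece with a tight pair of pants counted by $T^{(0)}_{m,0|\ell}$; then substitute~\eqref{eq:strictpantsformula} and split by parity. Your weight check across the cut and your justification of $m>\ell$ via the position of $v$ relative to the contour of $f$ are slightly more explicit than the paper's, but the argument is the same. (The paper also records an independent computational proof, directly from the explicit expression~\eqref{eq:Adef} for $A_{L,\ell}$, which you do not need.)
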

\begin{proof}[Combinatorial proof]
Considering the marked vertex as a boundary-vertex, we obtain a planar map with three boundaries which me may decompose as follows:
consider the outermost minimal closed path homotopic to the boundary of length $L$. Cutting along this path  produces a 
trumpet  whose mouthpiece has length $m>\ell$, see Figure~\ref{fig:markedtrumpet}. The remaining map with three tight boundaries is precisely 
a map enumerated by $T^{(0)}_{m,0|\ell}$, endowed canonically with one of its $m$ rootings (inherited from the rooting of the boundary face of length $L$).
\end{proof}
\begin{figure}
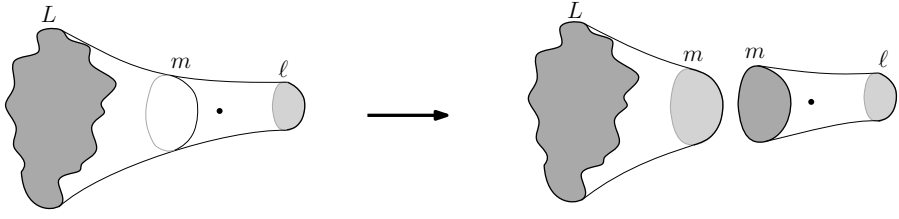

  \centering
  \fig{.8}{markedtrumpet}
  \caption{An illustration of the
    decomposition of a trumpet with mouthpiece of length $\ell$ endowed with an extra marked vertex, viewed as a boundary vertex. 
   It results in two pieces: a trumpet with mouthpiece of length $m>\ell$ and a map with three boundaries counted by $m\, T^{(0)}_{m,0|\ell}$. }
  \label{fig:markedtrumpet}
\end{figure}

It is interesting to note that the expression in the second line in Eq.~\eqref{eq:dAdt} may be obtained in a purely computational
way as follows: 
\begin{proof}[Computational proof]
From the explicit expression \eqref{eq:Adef} of $A_{L,\ell}$, we have 
\begin{multline}
\frac{\partial  A_{L,\ell}}{\partial t}  = \frac{\partial \ }{\partial t}
\left([z^\ell]\left(z+S+\frac{R}{z}\right)^L \right) \\
=[z^\ell]\  L \left(z+S+\frac{R}{z}\right)^{L-1}\left(\frac{\partial  S}{\partial t}+\frac{1}{z}\frac{\partial  R}{\partial t}\right)
= L\, A_{L-1,\ell} \frac{\partial  S}{\partial t}+L\, A_{L-1,\ell+1} \frac{\partial  R}{\partial t}.
\end{multline}
To obtain the second line of \eqref{eq:dAdt}, it is enough to prove the equality 
\begin{equation}
L\, A_{L-1,\ell} = \sum_{p\geq 0} (\ell+2p+1) R^p A_{L,\ell+2p+1}\ ,
\end{equation}
which itself follows from iterating the identity
\begin{equation}
L\, A_{L-1,\ell} =  (\ell+1) A_{L,\ell+1}+R\, L\, A_{L-1,\ell+2}
\label{eq:LA}
\end{equation}
$p$ times until $p$ reaches a value such that $\ell+2p>L-1$. As for this last identity \eqref{eq:LA}, upon moving its last term 
to the left hand side, it is simply obtained by extracting the $z^\ell$ coefficient of the straightforward identity
\begin{equation}
\left(1-\frac{R}{z^2}\right)\frac{\partial \ }{\partial S} \left(z+S+\frac{R}{z}\right)^L= \frac{\partial\ }{\partial z} \left(z+S+\frac{R}{z}\right)^L\ .
\end{equation} 
\end{proof}

\paragraph{Marking a vertex in a map with tight boundaries.} 
We start with equation \eqref{eq:funFTderiv} with $m=n$ and $s=0$, which we differentiate
with respect to $t$. We get
 \begin{equation}
    \label{eq:dtfunFT}
    \begin{split}
    \frac{\partial F^{(g)}_{L_1,\ldots,L_n}}{\partial t} & = \sum_{\ell_1,\ldots,\ell_n \geq 1}
    A_{L_1,\ell_1} \cdots A_{L_n,\ell_n}( \ell_1\cdots \ell_n)   \frac{\partial T^{(g)}_{\ell_1,\ldots,\ell_n}}{\partial t}
    \\ & +\sum_{i=1}^n \sum_{\ell_1,\ldots,\ell_n \geq 1}
    A_{L_1,\ell_1} \cdots   \frac{\partial A_{L_i,\ell_i}}{\partial t} \cdots A_{L_n,\ell_n}( \ell_1\cdots \ell_n) T^{(g)}_{\ell_1,\ldots,\ell_n}\ .\\
    \end{split}
  \end{equation}
From \eqref{eq:dAdt}, the general term in the final sum over $i$ may be written as 
\begin{equation}
\begin{split}
\sum_{\ell_1,\ldots,\ell_n \geq 1}
    A_{L_1,\ell_1} \cdots  & \left( \sum_{m_i>\ell_i}A_{L_i,m_i}\, m_i\, T^{(0)}_{m_i,0|\ell_i}   \right) \cdots A_{L_n,\ell_n}( \ell_1\cdots \ell_i\cdots \ell_n) T^{(g)}_{\ell_1,\ldots,\ell_i,\ldots \ell_n}
\\
= \sum_{\ell_1,\ldots,\ell_n \geq 1} A_{L_1,\ell_1} \cdots  & A_{L_i,\ell_i} \cdots A_{L_n,\ell_n}( \ell_1\cdots \ell_i\cdots \ell_n) \left( \sum_{m_i<\ell_i} m_i\, T^{(0)}_{\ell_i,0|m_i} T^{(g)}_{\ell_1,\ldots,m_i,\ldots \ell_n} \right)
\end{split}
\end{equation}
where, to go from the first to the second line, we exchanged the dummy summation variables $\ell_i $ and $m_i$, as well as the order of summation. 
Comparing with the expression for $\frac{\partial F^{(g)}_{L_1,\ldots,L_n}}{\partial t}$ coming from \eqref{eq:funFTderiv} with $m=n$ and $s=1$ , namely
\begin{equation}
    \label{eq:dtfunFTbis}
    \frac{\partial F^{(g)}_{L_1,\ldots,L_n}}{\partial t}  = \sum_{\ell_1,\ldots,\ell_n \geq 1}
    A_{L_1,\ell_1} \cdots A_{L_n,\ell_n}( \ell_1\cdots \ell_n)   T^{(g)}_{\ell_1,\ldots,\ell_n,0}\ ,
\end{equation}
and using the invertibility of the unitriangular matrix
$(A_{L,\ell})_{L,\ell\geq1}$, we obtain the identity
\begin{equation}
  \label{eq:Tgmarkv}
  T^{(g)}_{\ell_1,\ldots,\ell_n,0} =  \frac{\partial T^{(g)}_{\ell_1,\ldots,\ell_n}}{\partial t}  +\sum_{i=1}^n \sum_{m_i<\ell_i} m_i\, T^{(0)}_{\ell_i,0|m_i} T^{(g)}_{\ell_1,\ldots,m_i,\ldots,\ell_n}.
\end{equation}
This identity has the following nice combinatorial interpretation:
  consider a map counted by $T^{(g)}_{\ell_1,\ldots,\ell_n,0}$, and let $v$ be the
  boundary-vertex corresponding to the last $0$. We claim that there
  exists at most one $i=1,\ldots,n$ such that there exists a closed
  path of length $m_i<\ell_i$ separating $v$ and the $i$-th boundary from
  the others. Indeed, if two such $i_1$ and $i_2$ existed, we could
  construct a closed path contradicting the tightness of the $i_1$-th or
  the $i_2$-th boundary (see Figure \ref{fig:exclusion}). If no such $i$ exists, 
  $v$ can be changed into a regular vertex without affecting the tightness of the other
  boundaries, and we get a map counted by
  $\frac{\partial}{\partial t} T^{(g)}_{\ell_1,\ldots,\ell_n}$. Otherwise, if
  exactly one such $i$ exists, we can consider the ``extremal''
  shortest closed
  path separating $v$ and the $i$-th boundary from the others. By cutting along this path,
  we get on the one hand a map counted by $T^{(0)}_{\ell_i,0|m_i}$ (it is a
  pair of pants with one tight boundary of length $\ell_i$, one boundary-vertex and one strictly
  tight boundary of length $m_i$) and on the
  other hand a map counted by
  $T^{(g)}_{\ell_1,\ldots,m_i,\ldots,\ell_n}$. There are $m_i$ ways
  to glue these maps back together.

  Note that the above reasoning also works when $\ell_i$ is allowed to
  take the value $0$, in which case the sum over $m_i$ vanishes. Putting things together, we arrive at the following proposition.
  
    \begin{prop}
  \label{prop:recur0}
 Let $g,n$ be nonnegative integers. We have, for $\chi=2-2g-n\leq 0$ and $\ell_1,\ldots,\ell_n$ nonnegative integers,
\begin{equation}\label{eq:recur0}
\begin{split}
T^{(g)}_{\ell_1,\ldots,\ell_n,0}  & =  \frac{\partial T^{(g)}_{\ell_1,\ldots,\ell_n}}{\partial t}  +\sum_{i=1}^n \sum_{m_i<\ell_i} m_i\, T^{(0)}_{\ell_i,0|m_i} T^{(g)}_{\ell_1,\ldots,m_i,\ldots,\ell_n}  \\ 
&  =   \frac{\partial T^{(g)}_{\ell_1,\ldots,\ell_n}}{\partial t}  +\sum_{i=1}^n \bigg( \sum_{m_i=1 \atop m_i=\ell_i-1\!\!\!\!\mod 2}^{\ell_i-1}\!\!\!\!\!\!\!\! m_i\, R^{\frac{\ell_i-m_i-1}{2}}\frac{\partial  S}{\partial t}T^{(g)}_{\ell_1,\ldots,m_i,\ldots,\ell_n}\\
&\hskip 3.3cm + \sum_{m_i=1 \atop m_i=\ell_i-2\!\!\!\!\mod 2}^{\ell_i-2}\!\!\!\!\!\!\!\! m_i\, R^{\frac{\ell_i-m_i}{2}-1}\frac{\partial  R}{\partial t}T^{(g)}_{\ell_1,\ldots,m_i,\ldots,\ell_n}\bigg)\, .\\
\end{split}
\end{equation}
\end{prop}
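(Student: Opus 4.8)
The strategy is to reduce the statement to material already in hand. The first line of \eqref{eq:recur0} is nothing but the identity \eqref{eq:Tgmarkv}, once one allows some of the $\ell_i$ to vanish, and the second line is deduced from the first by substituting for $T^{(0)}_{\ell_i,0|m_i}$ the explicit value furnished by \eqref{eq:strictpantsformula}. So there are really only two things to check: the extension of \eqref{eq:Tgmarkv} from positive to nonnegative boundary lengths, and the bookkeeping in that substitution.

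For the first point, rather than leaning on the combinatorial description sketched above (which would require making fully precise the claim that at most one index $i$ admits a short separating closed path, together with the choice of the outermost such path), I would re-run the algebraic derivation of \eqref{eq:Tgmarkv} starting from the general trumpet-decomposition identity \eqref{eq:funFTderiv}, taken with $m$ rooted boundary-faces of positive lengths $L_1,\ldots,L_m$ and $s\geq 0$ boundary-vertices, where $n=m+s$ and $\chi=2-2g-n\leq 0$. Differentiating both sides with respect to the vertex weight $t$, the left-hand side becomes $\partial^{s+1}F^{(g)}_{L_1,\ldots,L_m}/\partial t^{s+1}$, which by \eqref{eq:funFTderiv} with $s+1$ trailing zeros (legitimate, since the corresponding Euler characteristic is then $\chi-1<0$) equals $\sum_{\ell_1,\ldots,\ell_m\geq 1} A_{L_1,\ell_1}\cdots A_{L_m,\ell_m}\,(\ell_1\cdots\ell_m)\,T^{(g)}_{\ell_1,\ldots,\ell_m,0,\ldots,0}$ with $s+1$ zeros. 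On the right-hand side the derivative falls either on some $T^{(g)}$ or on some $A_{L_i,\ell_i}$; in the latter case I substitute the marked-trumpet formula of Proposition~\ref{prop:markedtrumpet}, then exchange the dummy variables $\ell_i\leftrightarrow m_i$ and reorder the (coefficientwise finite) sums, following the same steps as in the derivation of \eqref{eq:Tgmarkv} from \eqref{eq:dtfunFT}. Collecting everything, both sides of \eqref{eq:funFTderiv} then appear with the same prefactors $A_{L_1,\ell_1}\cdots A_{L_m,\ell_m}(\ell_1\cdots\ell_m)$, and the unitriangularity — hence invertibility — of $(A_{L,\ell})_{L,\ell\geq 1}$ lets me identify the coefficients, yielding \eqref{eq:Tgmarkv} with $s$ trailing zeros appended to every index list. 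Since those trailing zeros add only empty inner sums $\sum_{m_i<0}$, this is exactly the first line of \eqref{eq:recur0} with $n=m+s$; the degenerate case $m=0$ is treated directly by \eqref{eq:funFTderiv0}, which gives $T^{(g)}_{0,\ldots,0}=\partial_t T^{(g)}_{0,\ldots,0}$ with one extra zero on the left.

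For the second point, I would plug \eqref{eq:strictpantsformula} into the first line with $(\ell_1,\ell_2,\ell_3)=(\ell_i,0,m_i)$, so that its hypothesis $\ell_1+\ell_2>\ell_3$ reads $m_i<\ell_i$ and holds throughout the inner sum. The even branch $\ell_i-m_i\equiv 0\pmod 2$ contributes $R^{(\ell_i-m_i)/2-1}\,\partial R/\partial t$, which after re-indexing by $m_i\equiv\ell_i-2\pmod 2$, $1\leq m_i\leq\ell_i-2$, is precisely the second sum inside the parentheses of \eqref{eq:recur0}; the odd branch $\ell_i-m_i\equiv 1\pmod 2$ contributes $R^{(\ell_i-m_i-1)/2}\,\partial S/\partial t$, producing the first sum. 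The only thing worth noting is that $m_i<\ell_i$ forces $m_i\leq\ell_i-2$ in the even branch, so the exponent $(\ell_i-m_i)/2-1$ is nonnegative and no negative power of $R$ arises. The step I expect to be the main obstacle is the first one: making sure that the rearrangement of the infinite sums and the coefficient-extraction argument genuinely return the formula with the $s$ vanishing indices intact, rather than a weaker statement valid only for strictly positive boundary lengths; everything else is routine once \eqref{eq:Tgmarkv} and \eqref{eq:strictpantsformula} are granted.
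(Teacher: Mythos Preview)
Your proof is correct and follows the same approach as the paper: differentiate the trumpet-decomposition identity with respect to $t$, invoke the marked-trumpet formula of Proposition~\ref{prop:markedtrumpet}, swap the dummy variables $\ell_i\leftrightarrow m_i$, and extract coefficients via the unitriangularity of $(A_{L,\ell})_{L,\ell\geq 1}$; then substitute \eqref{eq:strictpantsformula} to obtain the explicit second line.

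The only point of departure is the one you flagged yourself. The paper first derives \eqref{eq:Tgmarkv} for strictly positive $\ell_i$ (starting from \eqref{eq:funFT}, i.e.\ $s=0$) and then remarks that the combinatorial interpretation persists when some $\ell_i$ vanish, the corresponding inner sums being empty. You instead re-run the algebraic derivation from the more general identity \eqref{eq:funFTderiv} with $s$ boundary-vertices already present, so that the vanishing $\ell_i$ are built in from the start and no appeal to the bijective picture is needed. This is a mild but genuine improvement in self-containedness: your argument makes the extension to $\ell_i=0$ fully algebraic, at the cost of carrying the extra $0$'s through the computation, whereas the paper's is shorter but relies on the combinatorial reasoning (Figure~\ref{fig:exclusion}) for that step.
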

Note that, for $n=0$, the relation reads $T^{(g)}_0=\partial T^{(g)}_\varnothing/\partial t=\partial F^{(g)}/\partial t$,
where $T^{(g)}_\varnothing=F^{(g)}$ is the generating function of maps of genus $g$ without boundaries.

\begin{figure}
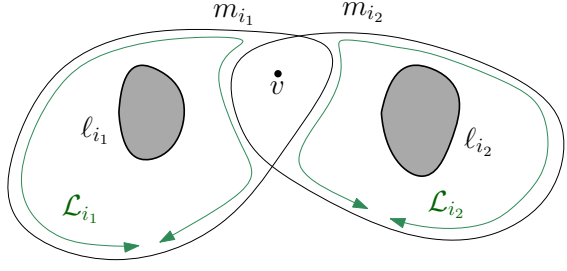

  \centering
  \fig{.5}{exclusion}
  \caption{A sketch of why  $m_{i_1}<\ell_{i_1}$ and $m_{i_2}<\ell_{i_2}$ are mutually exclusive: indeed, it would imply $\mathcal{L}_{i_1}<\ell_{i_1}$ or 
   $\mathcal{L}_{i_2}<\ell_{i_2}$, in contradiction with the fact that the boundary-faces of length $\ell_1$
and $\ell_2$ are both tight.}
  \label{fig:exclusion}
\end{figure}

\subsection{Adding an extra boundary-face}
\label{sec:extrabface}

\paragraph{Marking an extra boundary-face in a trumpet.}
Take $L,\ell,M$ positive integers. The quantity $M \frac{\partial A_{L,\ell}}{\partial t_M}$ is the generating function
for trumpets with rooted boundary of length $L$, mouthpiece of length $\ell$ and with an extra marked rooted boundary-face of length $M$. 
\begin{prop} We have 
\begin{equation}
M \frac{\partial A_{L,\ell}}{\partial t_M}  = \sum_{m>\ell}\sum_{\ell_0\geq 1} A_{L,m}\, A_{M,\ell_0}\, m\, \ell_0\, T^{(0)}_{m,\ell_0|\ell} .
\label{eq:dAdtM}
\end{equation}
\end{prop}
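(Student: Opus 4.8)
The plan is to mirror the combinatorial proof of Proposition~\ref{prop:markedtrumpet} (the ``marking a vertex in a trumpet'' statement), replacing the marked vertex by a marked rooted boundary-face of length $M$. Concretely, start from a trumpet counted by $A_{L,\ell}$, choose an inner face of degree $M$ (there are, morally, $\partial A_{L,\ell}/\partial t_M$ ways to do this, picking up the $t_M$ weight), mark one of its $M$ incident corners (the factor $M$ on the left-hand side), and now regard this face as a new rooted boundary-face of length $M$. The result is a genus-$0$ map with three boundary-faces: the rooted face of length $L$ (still the ``outer'' one), the mouthpiece of length $\ell$ (still strictly tight), and the new rooted face of length $M$.

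Next I would apply the trumpet decomposition (Theorem~\ref{sec:decomp-theor}, or rather its local incarnation used in the proof of Proposition~\ref{prop:markedtrumpet}) to the boundary-face of length $L$ in this three-holed sphere: cut along the outermost minimal closed path homotopic to the length-$L$ boundary. This produces a trumpet with rooted outer face of length $L$ and mouthpiece of some length $m$, where necessarily $m > \ell$ since the original mouthpiece of length $\ell$ was strictly tight and survives inside the remaining piece; this trumpet is counted by $A_{L,m}$, and it carries a canonical rooting of its mouthpiece inherited from the root of the length-$L$ face, accounting for the factor $m$. The remaining piece is a planar map with three tight boundaries: one of length $m$ (rooted, or equivalently contributing a factor $m$), one of length $M$ (the marked boundary-face, rooted, contributing $\ell_0$ once we sum over its possible length — wait, its length is fixed at $M$).

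Let me restate the bookkeeping more carefully in the plan: the remaining central piece has boundaries of lengths $m$, $M$, and $\ell$, with the boundary of length $\ell$ being the strictly tight one (it is the mouthpiece of the original trumpet and is untouched by the cut). So it is counted by $T^{(0)}_{m,M|\ell}$ in the notation of the proposition preceding Section~\ref{sec:extrabvtx}, with the caveat that we must also resolve the rooting of the length-$M$ boundary: the central map carries a rooting of its length-$M$ boundary inherited from the marked corner we chose, but $T^{(0)}_{m,M|\ell}$ counts \emph{unrooted} such maps, whence a factor $M$ — except that the length-$M$ boundary here arose from an inner face that was then rooted, and the $M$ corners were already counted, so in fact the central piece contributes $m\,M\,T^{(0)}_{m,M|\ell}$ after accounting for the two rootings, but one of these $M$'s was the left-hand side factor. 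I will sort out this constant by comparing with the vertex case $M\to 0$ heuristically and, more reliably, by decomposing the length-$M$ boundary of the central map via another trumpet cut (against the length-$M$ boundary), which introduces $A_{M,\ell_0}$ and a mouthpiece of length $\ell_0 \geq 1$, producing the second trumpet and a central map counted by $m\,\ell_0\,T^{(0)}_{m,\ell_0|\ell}$ after fixing rootings.

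The main obstacle is precisely this last point: the right-hand side of \eqref{eq:dAdtM} has \emph{two} trumpet factors $A_{L,m}$ and $A_{M,\ell_0}$, so the decomposition must be applied twice — once to the length-$L$ boundary and once to the length-$M$ boundary — and one must check that the two outermost minimal closed paths can be chosen disjoint (which follows from the same mutual-exclusion argument as in Figure~\ref{fig:exclusion}, since they are homotopic to distinct boundaries and the surface is planar) and that the central map that remains is genuinely a strictly-tight pair of pants counted by $T^{(0)}_{m,\ell_0|\ell}$ with the correct rooting/weight conventions (in particular, vertices incident to the mouthpiece of length $\ell$ carry no weight $t$, consistently with the definition of $T^{(0)}_{\cdot,\cdot|\cdot}$, and the two factors $m$ and $\ell_0$ come from the canonical rootings inherited from the roots of the faces of lengths $L$ and $M$). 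Once this is verified, summing over the lengths $m > \ell$ and $\ell_0 \geq 1$ of the two mouthpieces and noting that this exhausts all configurations (every inner face of degree $M$, promoted to a rooted boundary, gives rise to exactly one such decomposition) yields \eqref{eq:dAdtM}. I would present this as a combinatorial proof in the style of the one for Proposition~\ref{prop:markedtrumpet}, possibly with an accompanying figure.
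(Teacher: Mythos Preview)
Your approach is correct and is exactly what the paper does: its proof is a single sentence saying the result ``amounts to performing a trumpet decomposition of the map with three boundaries of length $L$, $M$ and $\ell$.'' The cleanest way to present it is to apply Theorem~\ref{sec:decomp-theor} in one shot to the three-holed sphere (with the $\ell$-boundary treated as already tight, so that only the $L$- and $M$-boundaries produce nontrivial trumpets), rather than cutting once and then realizing a second cut is needed; this avoids the bookkeeping detour in the middle of your write-up and makes the disjointness of the two cuts automatic.
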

The proof is in all points similar to that of Proposition~\ref{prop:markedtrumpet} and amounts to performing a trumpet decomposition
of the map with three boundaries of length $L$, $M$ and $\ell$.

\paragraph{Marking extra boundary-faces via boundary insertion operators.}
We now wish to find a recurrence relation relating
$T^{(g)}_{m_1,\ldots,m_n,m_{n+1}}$ to $T^{(g)}_{m_1,\ldots,m_n}$ for $m_{n+1}$ a
positive integer.

For $m$ a positive integer, let us define the \emph{tight boundary
  insertion operator} $D_m$ by
\begin{equation}
  \label{eq:Dmdefgen}
  D_m := \sum_{M=1}^m (A^{-1})_{m,M} \frac{M}{m} \frac{\partial\ }{\partial t_{M}}
\end{equation}
where $A^{-1}$ is the inverse of the unitriangular matrix $A=(A_{M,m})_{M,m\geq 1}$. It satisfies
\begin{equation}\label{eq:9}
  M\frac{\partial\ }{\partial t_{M}} = \sum_{m=1}^M m  A_{M,m}  D_m\ .
\end{equation}
We start with the identity
\begin{equation}
L_{n+1}\frac{\partial  F^{(g)}_{L_1,\ldots,L_n}}{\partial t_{L_{n+1}}} =F^{(g)}_{L_1,\ldots,L_n,L_{n+1}}
 = \sum_{\ell_1,\ldots,\ell_{n+1} \geq 1}
    A_{L_1,\ell_1} \cdots A_{L_{n+1},\ell_{n+1}}\,  (\ell_1\cdots \ell_{n+1})\, T^{(g)}_{\ell_1,\ldots,\ell_{n+1}}\ .
\label{eq:dFdtL}
\end{equation}
Alternatively, we may write
\begin{equation}
    \label{eq:dFdtLbis}
    \begin{split}
L_{n+1} \frac{\partial F^{(g)}_{L_1,\ldots,L_n}}{\partial t_{L_{n+1}}} & = \sum_{\ell_1,\ldots,\ell_n \geq 1}
    A_{L_1,\ell_1} \cdots A_{L_n,\ell_n}( \ell_1\cdots \ell_n)  L_{n+1} \frac{\partial T^{(g)}_{\ell_1,\ldots,\ell_n}}{\partial t_{L_{n+1}}}
    \\ & +\sum_{i=1}^n \sum_{\ell_1,\ldots,\ell_n \geq 1}
    A_{L_1,\ell_1} \cdots \left( L_{n+1} \frac{\partial A_{L_i,\ell_i}}{\partial t_{L_{n+1}}} \right)\cdots A_{L_n,\ell_n}( \ell_1\cdots \ell_n) T^{(g)}_{\ell_1,\ldots,\ell_n}\ .\\
    \end{split}
  \end{equation}
From \eqref{eq:dAdtM}, the general term in the final sum over $i$ may be written as 
\begin{equation}
\begin{split}
\sum_{\ell_1,\ldots,\ell_n \geq 1}
    A_{L_1,\ell_1} \cdots  \Bigg( \sum_{m_i>\ell_i}\sum_{\ell_{n+1}\geq 1} A_{L_i,m_i} & \, A_{L_{n+1},\ell_{n+1}}m_i\, \ell_{n+1} T^{(0)}_{m_i,\ell_{n+1}|\ell_i}   \Bigg) \cdots A_{L_n,\ell_n}\\
    & \qquad \qquad \qquad \qquad \times ( \ell_1\cdots \ell_n) T^{(g)}_{\ell_1,\ldots, \ell_n}
\\
= \sum_{\ell_1,\ldots,\ell_{n+1} \geq 1} A_{L_1,\ell_1} \cdots  A_{L_{n+1},\ell_{n+1}}( \ell_1\cdots \ell_{n+1}) & \left( \sum_{m_i<\ell_i}  m_i\, T^{(0)}_{\ell_i,\ell_{n+1}|m_i} T^{(g)}_{\ell_1,\ldots,m_i,\ldots \ell_n} \right)
\end{split}
\end{equation}
where, to go from the first to the second line, we exchanged the dummy summation variables $\ell_i $ and $m_i$, as well as the order of summation.  As for the first term
on the right hand side of  \eqref{eq:dFdtLbis}, using \eqref{eq:9}, it
may be rewritten as 
\begin{equation}
  \sum_{\ell_1,\ldots,\ell_{n+1} \geq 1} A_{L_1,\ell_1} \cdots
  A_{L_{n+1},\ell_{n+1}}( \ell_1\cdots \ell_{n+1})
  D_{\ell_{n+1}}T^{(g)}_{\ell_1,\ldots,\ell_n}\ .
\end{equation}
Gathering all the terms in \eqref{eq:dFdtLbis}, comparing with \eqref{eq:dFdtL}, and noting that
we may incorporate Proposition~\ref{prop:recur0} upon defining 
\begin{equation}
\label{eq:D0def}
D_0:=\frac{\partial}{\partial t}\, ,
\end{equation} 
we arrive at the following identification: 
\begin{prop}\label{prop:recurgen}  Let $g,n$ be nonnegative
  integers. We have, for $\chi=2-2g-n\leq 0$ and
  $\ell_1,\ldots,\ell_{n+1}$ nonnegative integers, 
\begin{equation}\label{eq:recurface}
T^{(g)}_{\ell_1,\ldots,\ell_n,\ell_{n+1}}= D_{\ell_{n+1}}T^{(g)}_{\ell_1,\ldots,\ell_n}+\sum_{i=1}^n \sum_{m_i<\ell_i}  m_i\, T^{(0)}_{\ell_i,\ell_{n+1}|m_i} T^{(g)}_{\ell_1,\ldots,m_i,\ldots \ell_n}\ .
\end{equation}
If $\ell_{n+1}$ is even, this reads
\begin{equation}\label{eq:recurfaceeven}
\begin{split}
T^{(g)}_{\ell_1,\ldots,\ell_n,\ell_{n+1}}&=D_{\ell_{n+1}}T^{(g)}_{\ell_1,\ldots,\ell_n}
+\sum_{i=1}^n \bigg( \sum_{m_i=1 \atop m_i=\ell_i-1\!\!\!\!\mod 2}^{\ell_i-1}\!\!\!\!\!\!\!\! m_i\, R^{\frac{\ell_i+\ell_{n+1}-m_i-1}{2}}\frac{\partial  S}{\partial t}T^{(g)}_{\ell_1,\ldots,m_i,\ldots,\ell_n}\\
&\hskip 3.3cm + \sum_{m_i=1 \atop m_i=\ell_i-2\!\!\!\!\mod 2}^{\ell_i-2}\!\!\!\!\!\!\!\! m_i\, R^{\frac{\ell_i+\ell_{n+1}-m_i}{2}-1}\frac{\partial  R}{\partial t}T^{(g)}_{\ell_1,\ldots,m_i,\ldots,\ell_n}\bigg)\, ,\\
\end{split}
\end{equation}
while if $\ell_{n+1}$ is odd, it reads instead
\begin{equation}\label{eq:recurfaceodd}
\begin{split}
T^{(g)}_{\ell_1,\ldots,\ell_n,\ell_{n+1}}&=D_{\ell_{n+1}}T^{(g)}_{\ell_1,\ldots,\ell_n}
+\sum_{i=1}^n \bigg( \sum_{m_i=1 \atop m_i=\ell_i-1\!\!\!\!\mod 2}^{\ell_i-1}\!\!\!\!\!\!\!\! m_i\, R^{\frac{\ell_i+\ell_{n+1}-m_i}{2}-1}\frac{\partial  R}{\partial t}T^{(g)}_{\ell_1,\ldots,m_i,\ldots,\ell_n}\\
&\hskip 3.3cm + \sum_{m_i=1 \atop m_i=\ell_i-2\!\!\!\!\mod 2}^{\ell_i-2}\!\!\!\!\!\!\!\! m_i\, R^{\frac{\ell_i+\ell_{n+1}-m_i-1}{2}}\frac{\partial  S}{\partial t}T^{(g)}_{\ell_1,\ldots,m_i,\ldots,\ell_n}\bigg)\, .\\
\end{split}
\end{equation}
For $n=0$, these formulas read
$T^{(g)}_{\ell_1}=D_{\ell_1}T^{(g)}_\varnothing=D_{\ell_1}F^{(g)}$. 
\end{prop}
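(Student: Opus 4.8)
The plan is to assemble the identities established just above the statement into a single equality of generating functions, in the same spirit as the proof of Proposition~\ref{prop:recur0}, the difference being that the single derivative $\partial/\partial t$ is now replaced by the family of tight boundary insertion operators $D_m$. The starting point is the pair of expressions \eqref{eq:dFdtL} and \eqref{eq:dFdtLbis} for $L_{n+1}\,\partial F^{(g)}_{L_1,\ldots,L_n}/\partial t_{L_{n+1}}$: the first obtained by applying the trumpet decomposition directly to a map with $n+1$ boundary-faces, the second by differentiating, via the Leibniz rule, the trumpet decomposition of a map with $n$ boundary-faces. In \eqref{eq:dFdtLbis}, the term where $L_{n+1}\,\partial/\partial t_{L_{n+1}}$ acts on $T^{(g)}_{\ell_1,\ldots,\ell_n}$ is rewritten using the operator identity \eqref{eq:9}, which turns it into $\sum_{\ell_{n+1}\geq1}\ell_{n+1}A_{L_{n+1},\ell_{n+1}}D_{\ell_{n+1}}T^{(g)}_{\ell_1,\ldots,\ell_n}$ and thereby produces the term $D_{\ell_{n+1}}T^{(g)}_{\ell_1,\ldots,\ell_n}$ of \eqref{eq:recurface}. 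Each of the $n$ terms where the derivative hits a trumpet factor $A_{L_i,\ell_i}$ is expanded with the marked-trumpet formula \eqref{eq:dAdtM}; after swapping the dummy indices $\ell_i$ and $m_i$ and reorganizing the order of summation exactly as in the displayed computation preceding the statement, it takes the form $\sum_{\ell_1,\ldots,\ell_{n+1}\geq1}A_{L_1,\ell_1}\cdots A_{L_{n+1},\ell_{n+1}}(\ell_1\cdots\ell_{n+1})\sum_{m_i<\ell_i}m_i\,T^{(0)}_{\ell_i,\ell_{n+1}|m_i}T^{(g)}_{\ell_1,\ldots,m_i,\ldots,\ell_n}$.

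At this point both sides of the comparison are written as $\sum_{\ell_1,\ldots,\ell_{n+1}\geq1}A_{L_1,\ell_1}\cdots A_{L_{n+1},\ell_{n+1}}(\ell_1\cdots\ell_{n+1})\times(\text{something})$, valid for all positive integers $L_1,\ldots,L_{n+1}$, so I would conclude by the invertibility of the unitriangular matrix $(A_{L,\ell})_{L,\ell\geq1}$, as in the proof of Theorem~\ref{thm:enum-coroll-1}, stripping off the $n+1$ factors $A_{L_i,\ell_i}$ and the rooting weights $\ell_1\cdots\ell_{n+1}$ simultaneously. This yields \eqref{eq:recurface} for every $\ell_{n+1}\geq1$. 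The remaining case $\ell_{n+1}=0$, which corresponds to adding a boundary-vertex rather than a boundary-face, is precisely Proposition~\ref{prop:recur0}: with the convention $D_0:=\partial/\partial t$ from \eqref{eq:D0def}, formula \eqref{eq:recur0} is literally \eqref{eq:recurface} specialized to $\ell_{n+1}=0$, so the two merge into the single statement of Proposition~\ref{prop:recurgen}. One should also remark that if some $\ell_i$ equals $0$ the corresponding inner sum over $m_i<\ell_i$ is empty, so nothing needs to be added there.

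The parity-split forms \eqref{eq:recurfaceeven} and \eqref{eq:recurfaceodd} are then obtained by substituting the explicit value of $T^{(0)}_{\ell_i,\ell_{n+1}|m_i}$ given by the strict-pants formula \eqref{eq:strictpantsformula}; this formula applies because $m_i<\ell_i\leq\ell_i+\ell_{n+1}$ forces $\ell_i+\ell_{n+1}>m_i$. Splitting the sum over $m_i$ according to the parity of $\ell_i+\ell_{n+1}-m_i$ (which, for $\ell_{n+1}$ even, amounts to the parity of $\ell_i-m_i$, and for $\ell_{n+1}$ odd to the opposite one) and inserting either $R^{(\ell_i+\ell_{n+1}-m_i)/2-1}\,\partial R/\partial t$ or $R^{(\ell_i+\ell_{n+1}-m_i-1)/2}\,\partial S/\partial t$ accordingly gives the two displayed cases. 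Finally, the $n=0$ statement $T^{(g)}_{\ell_1}=D_{\ell_1}F^{(g)}$ follows from the same argument with no sum over $i$: \eqref{eq:dFdtL} with $n=0$ reads $L_1\,\partial F^{(g)}/\partial t_{L_1}=\sum_{\ell_1\geq1}\ell_1A_{L_1,\ell_1}T^{(g)}_{\ell_1}$, while \eqref{eq:9} applied to $F^{(g)}$ gives $L_1\,\partial F^{(g)}/\partial t_{L_1}=\sum_{\ell_1\geq1}\ell_1A_{L_1,\ell_1}D_{\ell_1}F^{(g)}$, and invertibility of $A$ concludes, using $F^{(g)}=T^{(g)}_\varnothing$.

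I expect the main obstacle to be entirely at the level of bookkeeping rather than of substance: the delicate point is to carry out the dummy-variable exchange $\ell_i\leftrightarrow m_i$ and the reindexing of the multiple sums so that every trumpet-derivative contribution lands in the canonical shape $\sum A_{L_1,\ell_1}\cdots A_{L_{n+1},\ell_{n+1}}(\ell_1\cdots\ell_{n+1})(\cdots)$, keeping track in particular of the new rooting factor $\ell_{n+1}$ attached to the extra boundary-face, so that a single application of unitriangularity removes all $n+1$ of the $A$-factors at once. There are no analytic subtleties: since $A_{L,\ell}$ vanishes for $\ell>L$, all the sums over the $\ell_i$ (and over $m_i$ and $\ell_0$ in \eqref{eq:dAdtM}) are finite, so differentiating term by term is automatic; the only genuinely conceptual step is recognizing that the $\ell_{n+1}=0$ case is already Proposition~\ref{prop:recur0} and folding it in via $D_0=\partial/\partial t$.
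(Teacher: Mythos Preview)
Your proposal is correct and follows essentially the same approach as the paper: compare the two expansions \eqref{eq:dFdtL} and \eqref{eq:dFdtLbis}, rewrite the first term of the latter via \eqref{eq:9} and the trumpet-derivative terms via \eqref{eq:dAdtM} with the $\ell_i\leftrightarrow m_i$ swap, invoke unitriangularity of $(A_{L,\ell})$, and absorb the case $\ell_{n+1}=0$ via $D_0=\partial/\partial t$ and Proposition~\ref{prop:recur0}. Your derivation of the parity-split forms from \eqref{eq:strictpantsformula} and of the $n=0$ case is also exactly what the paper does.
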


Again, the identity \eqref{eq:recurface} has a clear combinatorial
interpretation. Consider a map counted by
$T^{(g)}_{\ell_1,\ldots,\ell_{n+1}}$. Then, there exists at most one $i\in
\{1,\ldots,n\}$ such that there exists a closed path of length
$m_i<\ell_i$ separating the 
$i$-th and the $(n+1)$-th boundaries from the others, for a reason
similar to that illustrated in Figure \ref{fig:exclusion}. If exactly
one such $i$ exists, we can consider the extremal shortest separating closed
path and cut along it. We get, on the one hand, a map counted by
$T^{(0)}_{\ell_i,\ell_{n+1}|m_i}$, i.e.\ a pair of pants with two
tight boundaries of lengths $\ell_i$ and $\ell_{n+1}$, and one
strictly tight boundary of length $m_i$, and, on the other hand, a map
counted by
$T^{(g)}_{\ell_1,\ldots,m_i,\ldots,\ell_n}$. There
are $m_i$ ways to glue back these maps together. 
If no such
$i$ exists, the $(n+1)$-th boundary can be changed into an inner face
without affecting the tightness of the other boundaries. We deduce
that such maps are counted by
$D_{\ell_{n+1}}T^{(g)}_{\ell_1,\ldots,\ell_n}$, which explains, in
retrospect, the name of the operator $D_m$. 

\begin{rem}
  For $g=0$ and $n=2$, notice that Proposition \ref{prop:recurgen}
  relates the generating function of tight cylinders
  $T^{(0)}_{\ell_1,\ell_2}$, given by \eqref{eq:specialcyl} for
  $\ell_1,\ell_2>0$, and $T^{(0)}_{0,0}=\ln(R/t)$, with that of tight
  pairs of pants given by Theorem \ref{thm:triskellnonbip} below.
\end{rem}

\subsection{Consistency with the expression for the planar bipartite case}\label{sec:consistency}

In this section we take $n\geq 3$ and check that the explicit expression \eqref{eq:Tgen} for
$\bip{T^{(0)}_{2\ell_1,\ldots,2\ell_n}}$ in the planar bipartite case
is consistent with the
above relations~\eqref{eq:recur0} and \eqref{eq:recurface} describing the addition of an extra boundary-vertex and an extra boundary-face respectively.

\paragraph{Adding an extra boundary-vertex.}

The relation~\eqref{eq:recur0} of Proposition~\ref{prop:recur0} reduces in the planar bipartite case to
\begin{multline}
  \label{eq:recurgen0}
    \bip{T^{(0)}_{2\ell_1,\ldots,2\ell_n,0}} = \frac{\partial}{\partial t} \bip{T^{(0)}_{2\ell_1,\ldots,2\ell_n}} + \\ \sum_{i=1}^n
    \sum_{0<m_i<\ell_i} (2m_i) R^{\ell_i-m_i-1} R' \bip{T^{(0)}_{2\ell_1,\ldots,2m_i,\ldots,2\ell_n}}
\end{multline}
with $R$ given by \eqref{eq:Rbipeq}. 
Let us see how this relation is indeed satisfied by the expression \eqref{eq:Tgen} for
$\bip{T^{(0)}_{2\ell_1,\ldots,2\ell_n}}$.

By~\eqref{eq:Tgen}, we have indeed
  \begin{equation}
   \bip{T^{(0)}_{2\ell_1,\ldots,2\ell_n,0}} = R^{\ell_1+\cdots+\ell_n} \sum_{k=0}^{n-2} k! p_k(\ell_1,\ldots,\ell_n)  b_{n-1,k+1}+ \frac{(-1)^{n+1} (n-2)!}{t^{n-1}} \delta_{\ell_1+\cdots+\ell_n,0}
  \end{equation}
  while
  \begin{multline}
    \frac{\partial}{\partial t} \bip{T^{(0)}_{2\ell_1,\ldots,2\ell_n} }= 
    R^{\ell_1+\cdots+\ell_n-1}  \sum_{k=0}^{n-3}  k! p_k(\ell_1,\ldots,\ell_n) \times \\
    \left( (\ell_1+\cdots+\ell_n-k-1 ) R' b_{n-2,k+1} + 
      \sum_{j\geq 1}  R^{(j+1)} b_{n-2,k+1}^{[j]} \right)\\
      + \frac{(-1)^{n+1} (n-2)!}{t^{n-1}} \delta_{\ell_1+\cdots+\ell_n,0}\ ,
  \end{multline}
  where $R^{(i)}$ denotes the $i$-th derivative of $R$ with respect to $t$ while $b_{n,k}^{[j]}$ is the derivative of the Bell polynomial
  $B_{n,k}$ with respect to its $j$-th variable, evaluated at
  $(R'/R,R''/R,\ldots)$. 
  By the recursion relation
  \begin{equation}
    b_{n-1,k+1} = \frac{R'}{R} b_{n-2,k} + \sum_{j \geq 1}
    \frac{R^{(j+1)}}R b_{n-2,k+1}^{[j]}\ , 
  \end{equation}
  which is easily derived from the combinatorial interpretation of the
Bell polynomials in terms of partitions (with the convention that
$b_{n,k}=0$ if $k>n$),  we deduce that
  \begin{multline}
    \label{eq:Tsubs}
    \bip{T^{(0)}_{2\ell_1,\ldots,2\ell_n,0}} - \frac{\partial}{\partial t} \bip{T^{(0)}_{2\ell_1,\ldots,2\ell_n}} =
    R^{\ell_1+\cdots+\ell_n-1} R' \times \\
    \sum_{k=0}^{n-2} k! p_k(\ell_1,\ldots,\ell_n) \left( b_{n-2,k} - (\ell_1+\cdots+\ell_n-k-1 ) b_{n-2,k+1} \right).
  \end{multline}
  On the other hand we have
  \begin{multline}
   \label{eq:Tsubsbis}
    \sum_{i=1}^n
    \sum_{0<m_i<\ell_i} (2m_i) R^{\ell_i-m_i-1} R' \bip{T^{(0)}_{2\ell_1,\ldots,2m_i,\ldots,2\ell_n}} = R^{\ell_1+\cdots+\ell_n-1} R' \times \\
    \sum_{k=0}^{n-2} \sum_{i=1}^n \sum_{0<m_i<\ell_i} (2m_i) k!
    p_k(\ell_1,\ldots,m_i,\ldots,\ell_n) b_{n-2,k+1}.
  \end{multline}
 Comparing \eqref{eq:Tsubs} and \eqref{eq:Tsubsbis}, we see that the relation~\eqref{eq:recurgen0} is then a direct consequence of the following
  recursion relation for $p_k(\ell_1,\ldots,\ell_n)$:
  \begin{prop} (String equation) For nonnegative integers $\ell_1,\ldots,\ell_n$, we have
  \begin{multline}
  \label{eq:recpk}
  (k+1)p_{k+1}(\ell_1,\ldots,\ell_n)=(\ell_1+\cdots+\ell_n-k-1)p_k(\ell_1,\ldots,\ell_n)\\+\sum_{i=1}^n \sum_{0<m_i<\ell_i} (2m_i)p_k(\ell_1,\ldots,m_i,\ldots,\ell_n)\ .
  \end{multline}
  \end{prop}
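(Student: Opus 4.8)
The plan is to reduce the multivariate string equation~\eqref{eq:recpk} to two univariate identities by an induction on $n$, and then to establish those univariate identities by exhibiting explicit antidifferences. The structural input, read off directly from~\eqref{eq:pkmultdef}, is that one may peel off the last variable:
\[
  p_k(\ell_1,\ldots,\ell_n) = \sum_{a+b=k} p_a(\ell_1,\ldots,\ell_{n-1})\, q_b(\ell_n),
\]
where $p_a(\ell_1,\ldots,\ell_{n-1})$ is again the polynomial~\eqref{eq:pkmultdef}, now in $n-1$ variables. First I would substitute this into both sides of~\eqref{eq:recpk}, separating in the double sum the contributions $i\le n-1$ (which involve the $(n-1)$-variable $p$'s with $\ell_i$ replaced by $m_i$) from the contribution $i=n$ (which produces $\sum_{0<m<\ell_n}(2m)q_b(m)$). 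Feeding in the inductive hypothesis --- the string equation in $n-1$ variables, applied with $k$ replaced by $k-b$ --- and reindexing the convolution sums (using $k+1-a=b$ on $\{a+b=k+1\}$), the whole identity should collapse, term by term in $b$, to the single univariate statement
\[
  (b+1)\,q_{b+1}(\ell) = (\ell-b)\,q_b(\ell) + \sum_{0<m<\ell}(2m)\,q_b(m)
\]
applied in the variable $\ell_n$. The base case $n=1$ of the induction is just~\eqref{eq:recpk} for one variable, namely $(k+1)p_{k+1}(\ell) = (\ell-k-1)p_k(\ell) + \sum_{0<m<\ell}(2m)p_k(m)$.

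Both univariate identities I would prove by the same telescoping device. Set $h_k(m):=(k+1)p_{k+1}(m+1)-(m-k)p_k(m+1)$ and $g_b(m):=(b+1)q_{b+1}(m+1)-(m+1-b)q_b(m+1)$; a direct check from the closed forms~\eqref{eq:defpkqk} gives $h_k(0)=g_b(0)=0$. The claim is then that $h_k(m)-h_k(m-1)=2m\,p_k(m)$ and $g_b(m)-g_b(m-1)=2m\,q_b(m)$, after which the two displayed identities follow by summing over $m=1,\ldots,\ell-1$. To verify these antidifference relations one only needs two elementary consequences of~\eqref{eq:defpkqk}, obtained respectively by expanding the binomials with Pascal's rule and by reading off the product forms: the first-difference formulas $p_{k+1}(m+1)-p_{k+1}(m)=\frac{(m+k+1)(2m+1)}{(k+1)(m+1)}p_k(m)$ and $q_{b+1}(m+1)-q_{b+1}(m)=\frac{(m+b)(2m+1)}{(b+1)m}q_b(m)$, and the ``raising'' relations $p_k(m)=\frac{(m-k)(m+k)}{k^2}p_{k-1}(m)$ and $q_k(m)=\frac{(m-k+1)(m+k-1)}{k^2}q_{k-1}(m)$. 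Plugging the difference formulas for $p_{k+1},p_k$ (resp.\ $q_{b+1},q_b$) into $h_k(m)-h_k(m-1)$ (resp.\ $g_b(m)-g_b(m-1)$) and eliminating $p_{k-1}$ (resp.\ $q_{b-1}$) via the raising relations --- legitimate as an identity of rational functions, all terms being polynomials; the cases $k=0$ or $b=0$ are trivial since the offending term carries a vanishing first difference --- the spurious denominator $m+1$ (resp.\ $m$) cancels and what survives is $\big((2m+1)-1\big)p_k(m)=2m\,p_k(m)$ (resp.\ $2m\,q_b(m)$), as wanted.

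The main obstacle I anticipate is purely organizational: the bookkeeping in the induction step, where one must carefully distinguish the $p$-index from the $q$-index in the convolution $p_k=\sum_{a+b=k}p_aq_b$ and check that the terms produced by the inductive hypothesis recombine exactly into $\sum_{a+b=k}p_a(\ell_1,\ldots,\ell_{n-1})\big[(b+1)q_{b+1}(\ell_n)-(\ell_n-b)q_b(\ell_n)\big]$; granting this, the reduction to the univariate $q$-identity is automatic. As a consistency check (not needed for the proof), the two univariate identities repackage into the first-order ODE $(1+x)\,\partial_x P(\ell,x)=(\ell-1)P(\ell,x)+\sum_{0<m<\ell}(2m)P(m,x)$ for the generating function $P(\ell,x):=\sum_{k\ge 0}p_k(\ell)x^k$, and analogously for $q$.
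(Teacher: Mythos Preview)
Your proof is correct. The induction on $n$ via the splitting $p_k(\ell_1,\ldots,\ell_n)=\sum_{a+b=k}p_a(\ell_1,\ldots,\ell_{n-1})q_b(\ell_n)$ reduces cleanly to the two univariate identities, and your telescoping argument for those goes through: with $h_k(m)=(k+1)p_{k+1}(m+1)-(m-k)p_k(m+1)$ one indeed finds $h_k(m)-h_k(m-1)=2m\,p_k(m)$ after substituting the first-difference and raising relations (the spurious $1/(m+1)$ cancels because $m(m+k+2)-(m-k-1)(m+1)=(k+1)(2m+1)$), and likewise for $g_b$. The bookkeeping in the induction step that worried you is harmless once one writes $\ell_1+\cdots+\ell_n-k-1=(\ell_1+\cdots+\ell_{n-1}-a-1)+(\ell_n-b)$ on $\{a+b=k\}$; the two resulting pieces recombine as $\sum_{a+b=k}\big[(a+1)p_{a+1}q_b+(b+1)p_aq_{b+1}\big]=(k+1)\sum_{a+b=k+1}p_aq_b$.

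As for comparison: the present paper does not give its own proof of this proposition but simply cites \cite[Proposition~2.2]{polytightmaps}. Your argument is therefore a self-contained alternative. It is close in spirit to what one finds there (reduction to univariate facts about $p_k$ and $q_k$), though your packaging via the explicit antidifferences $h_k$ and $g_b$ is a pleasant way to organise the computation.
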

This latter recursion is proved in \cite[Proposition 2.2]{polytightmaps}.

\paragraph{Adding an extra boundary-face of even length.}

We now wish to check the consistency of the expression \eqref{eq:Tgen} for
$\bip{T^{(0)}_{2\ell_1,\ldots,2\ell_n}}$ with the recursion relation~\eqref{eq:recurface} of Proposition~\ref{prop:recurgen} which,
in the planar bipartite case, reduces to 
  \begin{multline}
    \label{eq:recurgenm}
   \bip{T^{(0)}_{2\ell_1,\ldots,2\ell_n,2\ell_{n+1}}} = D_{2\ell_{n+1}} \bip{T^{(0)}_{2\ell_1,\ldots,2\ell_n}}+ \\
    \sum_{i=1}^n
    \sum_{0<m_i<\ell_i} (2m_i) R^{\ell_i-m_i+\ell_{n+1}-1} R' \bip{T^{(0)}_{2\ell_1,\ldots,2m_i,\ldots,2\ell_n}}.
  \end{multline}
As a prerequisite to prove \eqref{eq:recurgenm} from \eqref{eq:Tgen},  we note that $D_{2m}$ satisfies
the Leibniz product rule, namely $D_{2m}(XY)=(D_{2m} X)Y+X(D_{2m} Y)$, and moreover that
the action of $D_{2m}$ on derivatives of $R$ is given by
\begin{equation}
  \label{eq:DmRn}
  D_{2m} R^{(j)} = T^{(0)}_{2m,2,\!\!\!\!\underbrace{\scriptstyle 0,\ldots,0}_{j+1 \text{ times}}}\Big\vert_{\mathrm{bip}} =  R^{m+1} \sum_{k=0}^j k! q_k(m) b_{j+1,k+1}.
\end{equation}
Here, we may justify the first equality by noting that, in the
bipartite case, $R^{(j)}$ can be seen as the generating function of
maps with one unrooted boundary-face of length $2$ and $j+1$ boundary-vertices.
Since $D_{2m}$ creates an extra tight boundary-face of length $2m$, 
and since the other
boundaries (which have lengths $2$ and $0$ only) are automatically tight, 
we indeed obtain the maps counted by $T^{(0)}_{2m,2,\!\!\!\!\underbrace{\scriptstyle 0,\ldots,0}_{j+1 \text{ times}}}\Big\vert_{\mathrm{bip}}$. As for the second equality, it is a particular instance 
of \eqref{eq:Tgen}, simplified via the identity $p_k(m,1,0,\ldots,0)=p_k(m,1)=p_k(m)+p_{k-1}(m)=q_k(m)$.

By \eqref{eq:Tgen}, we now have
  \begin{multline}
  \label{eq:comp1}
    \bip{T^{(0)}_{2\ell_1,\ldots,2\ell_n,2\ell_{n+1}}} = R^{\ell_1+\cdots+\ell_n+\ell_{n+1}} \sum_{k=0}^{n-2} k !p_k(\ell_1,\ldots,\ell_n,\ell_{n+1})  b_{n-1,k+1} \\
    = R^{\ell_1+\cdots+\ell_n+\ell_{n+1}} \sum_{k' \geq 0} \sum_{k'' \geq 0} (k'+k'')! p_{k''}(\ell_1,\ldots,\ell_n) q_{k'}(\ell_{n+1}) b_{n-1,k'+k''+1}
  \end{multline}
  (throughtout this computation, we will leave the summation ranges as
  implicit as possible, since they are naturally enforced by the
  vanishing of the summands). On the other hand, using \eqref{eq:DmRn}, we have
  \begin{multline}
    D_{2\ell_{n+1}} \bip{T^{(0)}_{2\ell_1,\ldots,2\ell_n}}=
    R^{\ell_1+\cdots+\ell_n-1}  \sum_{k \geq 0} k! p_{k}(\ell_1,\ldots,\ell_n) \times \\ \allowdisplaybreaks
    \left( (\ell_1+\cdots+\ell_n-k-1 ) (D_{2\ell_{n+1}} R) b_{n-2,k+1} +
      \sum_{j\geq 1}  (D_{2\ell_{n+1}} R^{(j)}) b_{n-2,k+1}^{[j]} \right) \\ 
    = R^{\ell_1+\cdots+\ell_n+\ell_{n+1}} \sum_{k \geq 0} k! p_{k}(\ell_1,\ldots,\ell_n) \times \hfill \\
    \left( (\ell_1+\cdots+\ell_n-k-1 ) \frac{R'}{R} b_{n-2,k+1} +
      \sum_{j\geq 1} \sum_{k' \geq 0} k'! q_{k'}(\ell_{n+1}) b_{j+1,k'+1}
      b_{n-2,k+1}^{[j]} \right) \label{eq:DTexp}\ .
  \end{multline}
  To compare the
  last two displays \eqref{eq:comp1} and \eqref{eq:DTexp}, we may write
  \begin{multline}
    \binom{k'+k''}{k'} b_{n-1,k'+k''+1} = \sum_{j \geq 0} \binom{n-2}j b_{j+1,k'+1} b_{n-2-j,k''} \\
    = \frac{R'}{R} \delta_{k',0} b_{n-2,k''}+ \sum_{j \geq 1} b_{j+1,k'+1} b_{n-2,k''+1}^{[j]}
  \end{multline}
  upon interpreting the lhs as a sum over partitions of
  $\{1,\ldots,n-1\}$ into $k'+k''+1$ blocks, $k'+1$ of which are marked,
  the element $n-1$ being always in a marked block: the first equality
  is obtained by summing over the number $j$ of elements other than
  $n-1$ that are in a marked block, and the second equality is
  obtained by noting that
  $\binom{n-2}j b_{n-2-j,k''} = b_{n-2,k''+1}^{[j]}$ for $j \geq 1$.
  We deduce that the sum appearing at the end of~\eqref{eq:DTexp},
  with $k$ replaced by $k''$, can be rewritten as
  \begin{multline}
    \sum_{j\geq 1} \sum_{k' \geq 0} k'! q_{k'}(\ell_{n+1}) b_{j+1,k'+1}
    b_{n-2,k''+1}^{[j]} = \sum_{k' \geq 0} \binom{k'+k''}{k'}
    k'! q_{k'}(\ell_{n+1}) b_{n-1,k'+k''+1}\\ - \frac{R'}{R} b_{n-2,k''}
  \end{multline}
  and hence
  \begin{multline}
    \bip{T^{(0)}_{2\ell_1,\ldots,2\ell_n,2\ell_{n+1}}} - D_{2\ell_{n+1}} \bip{T^{(0)}_{2\ell_1,\ldots,2\ell_n}} =
    R^{\ell_1+\cdots+\ell_n+\ell_{n+1}-1} R' \times \\
    \sum_{k \geq 0} k! p_{k}(\ell_1,\ldots,\ell_n) \left( b_{n-2,k} - (\ell_1+\cdots+\ell_n-k-1 ) b_{n-2,k+1} \right).
  \end{multline}
  By comparing with~\eqref{eq:Tsubs} we see that
  \begin{equation}
    \bip{T^{(0)}_{2\ell_1,\ldots,2\ell_n,2\ell_{n+1}}} - D_{2\ell_{n+1}} \bip{T^{(0)}_{2\ell_1,\ldots,2\ell_n}} =
    R^{\ell_{n+1}} \left( \bip{T^{(0)}_{2\ell_1,\ldots,2\ell_n,0}} - \frac{\partial}{\partial t} \bip{T^{(0)}_{2\ell_1,\ldots,2\ell_n}} \right)
  \end{equation}
  and we immediately deduce  that \eqref{eq:recurgenm} is again a  direct consequence of the string equation \eqref{eq:recpk}.
 
 \begin{rem}
We may similarly check that the explicit expression \eqref{eq:Tgenquasi} for
$\bip{T^{(0)}_{2\ell_1,\ldots,2\ell_n}}$ in the planar quasi-bipartite case is also consistent with 
the relations~\eqref{eq:recur0} and \eqref{eq:recurface} for the addition of an extra boundary.
This property is now a direct consequence of the string equation relating $\tilde{p}_{k+1}(\ell_1,\ell_2;\ell_3,\ldots,\ell_n)$
to $\tilde{p}_k(\ell_1,\ell_2;\ell_3,\ldots,\ell_n)$, see \cite[Proposition 2.10]{polytightmaps}.
\end{rem}

\section{The quasi-polynomiality phenomenon}
\label{sec:quasipol}

We have seen in Section \ref{sec:bipcase} that the generating function
of planar bipartite maps with (at least three) tight boundaries is an
even polynomial in the boundary lengths times a power of the basic
generating function $R$.  As mentioned in the introduction, this is a
particular case of a more general quasi-polynomiality
phenomenon.

Precisely, given a positive integer $n$, let us call
\emph{parity-dependent quasi-polynomial in $n$ variables} a function
$f$ of $n$ integer variables, such that there exists a (necessarily
unique) family of polynomials
$(f_I)_I$ indexed by subsets $I$ of $\{1,\ldots,n\}$, such that
$f(m_1,\ldots,m_n)=f_{I}(m_1,\ldots,m_n)$ whenever the $m_i$ with
$i\in I$ are odd integers, and the $m_i$ with $i\notin I$ are even
integers.
By \emph{coefficients} of $f$, we mean the collection of coefficients
of the polynomials $f_I$, which may belong to an arbitrary ring;
by \emph{total degree} of $f$, we mean the maximum of the total
degrees of the $f_I$.  It is convenient to allow the value $n=0$,
in which case it is understood that $f$ is just a constant element of
the ring at hand.
A parity-dependent quasi-polynomial is said \emph{symmetric} if it is invariant under any permutation of its variables.

In order to state the main result of this section, we also introduce
the quantity
\begin{equation}\label{eq:chimgn}
  \chi(\mathcal{M}_{g,n}):=
  \begin{cases}
    (-1)^{n-1} (n-3)! & \text{if $g=0$,}\\
    (-1)^{n-1} \frac{(2g+n-3)!}{(2g-2)!} \zeta(1-2g) & \text{if $g>0$,}
  \end{cases}
\end{equation}
which, for $2-2g-n<0$, is the Euler characteristic of the moduli space
$\mathcal{M}_{g,n}$ \cite{Harer1986}.

\begin{thm}
  \label{thm:quasipol}
  Let $g\geq 0,n\geq 1$ be integers such that $2-2g-n<0$. Then, there
  exists a symmetric parity-dependent quasi-polynomial $\mathfrak{T}^{(g)}_n$ in
  $n$ variables, of total degree $3g-3+n$, such that
  for any nonnegative integers $\ell_1,\ldots,\ell_n$, we
  have  
  \begin{equation}
    \label{eq:quasipol}
    T^{(g)}_{\ell_1,\ldots,\ell_n}=
    R^{\frac{\ell_1+\cdots+\ell_n}{2}}\mathfrak{T}^{(g)}_n(\ell_1^2,\ldots,\ell_n^2) - \chi(\mathcal{M}_{g,n}) t^{2-2g-n} \delta_{\ell_1+\cdots+\ell_n,0}\, .
  \end{equation}
  The coefficients of $\mathfrak{T}^{(g)}_n$ are rational functions
  with rational coefficients of the quantities $\frac{R^{(k)}}{R}$ and
  $\frac{S^{(k)}}{\sqrt{R}}$, $k=1,\ldots,3g-2+n$, where
  $R^{(k)},S^{(k)}$ denote the $k$-th derivatives of the series $R,S$
  with respect to the vertex weight $t$.
\end{thm}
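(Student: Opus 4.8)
The plan is to prove Theorem~\ref{thm:quasipol} by induction on the number of boundaries $n$, at fixed genus $g$, starting from the minimal configurations $(g,n)=(0,3)$ and $(g,n)=(g,1)$ for $g\geq 1$ (treated via topological recursion) and using the recursion relations of Section~\ref{sec:recrel} to pass from $n$ to $n+1$ boundaries.

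\textbf{Initialization.} I would first establish the all-boundary-face case, revisiting Norbury--Scott. By Theorem~\ref{thm:enum-coroll-1}, $R^{-(\ell_1+\cdots+\ell_n)/2}\hat T^{(g)}_{\ell_1,\ldots,\ell_n}$ is the coefficient $\hat\tau^{(g)}_{\ell_1,\ldots,\ell_n}$ of \eqref{eq:omegaexp}, and since the $W^{(g)}_n$ of maps obey topological recursion on the spectral curve whose $x$ is the Zhukovsky map \eqref{eq:omegadef}, whose only branch points are $z=\pm1$, the theory of Eynard--Orantin shows that for $2-2g-n<0$ the quantity $\omega^{(g)}_n$ is a rational function of $z_1,\ldots,z_n$ with poles only at $z_i=\pm1$, of orders bounded in terms of $g,n$. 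Expanding in partial fractions in each variable, a pole at $z_i=+1$ contributes a polynomial in $\ell_i$ and a pole at $z_i=-1$ a polynomial times $(-1)^{\ell_i}$, whence $\hat\tau^{(g)}_{\ell_1,\ldots,\ell_n}$ is a parity-dependent quasi-polynomial; the $z_i\mapsto z_i^{-1}$ symmetry of $\omega^{(g)}_n$ coming from $x(z)=x(z^{-1})$ forces it to be odd in each $\ell_i$, hence divisible by $\ell_1\cdots\ell_n$ (recall \eqref{eq:ThatT}), and the pole orders translate into the total-degree bound $3g-3+n$ for the resulting quasi-polynomial $\mathfrak T^{(g)}_n$ in $\ell_1^2,\ldots,\ell_n^2$. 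Feeding the spectral-curve data and the recursion kernel through the residue formulas, and re-expressing the outcome via $R$, $S$ and their derivatives with respect to $t$, gives the claimed rational dependence of the coefficients on the $R^{(k)}/R$ and $S^{(k)}/\sqrt R$. Boundary-vertices are then incorporated through \eqref{eq:recur0}--\eqref{eq:funFTderiv0}: each one amounts to an extra $t$-derivative, and the only genuinely new quantity is $T^{(g)}_{0,\ldots,0}=\partial_t^{\,n}F^{(g)}$, whose $t\to0$ behaviour $-\chi(\mathcal M_{g,n})\,t^{2-2g-n}$ is supplied by the Euler-characteristic formula \eqref{eq:chimgn} (explicitly $\partial_t^{\,n-2}\ln(R/t)$ for $g=0$, and the Harer--Zagier computation for $g>0$).

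\textbf{Induction step.} Assume the statement at $n$ boundaries and add an $(n+1)$-st boundary of length $\ell_{n+1}$ by Proposition~\ref{prop:recurgen} (the case $\ell_{n+1}=0$ being Proposition~\ref{prop:recur0}, with $D_0:=\partial/\partial t$ as in \eqref{eq:D0def}), so that $T^{(g)}_{\ell_1,\ldots,\ell_n,\ell_{n+1}}=D_{\ell_{n+1}}T^{(g)}_{\ell_1,\ldots,\ell_n}+\sum_{i}\sum_{m_i<\ell_i}m_i\,T^{(0)}_{\ell_i,\ell_{n+1}|m_i}\,T^{(g)}_{\ell_1,\ldots,m_i,\ldots,\ell_n}$. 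In the pants sum, substitute the explicit formula \eqref{eq:strictpantsformula} for $T^{(0)}_{\ell_i,\ell_{n+1}|m_i}$ and the inductive form $R^{(\ell_1+\cdots+m_i+\cdots+\ell_n)/2}\mathfrak T^{(g)}_n(\ldots,m_i^2,\ldots)$ of the last factor: the powers of $R$ telescope, $R^{(\ell_1+\cdots+\ell_{n+1})/2}$ factors out, one is left with an allowed coefficient $R'/R$ or $S'/\sqrt R$ times a sum $\sum_{m_i}m_i\,q(m_i^2)$ over an arithmetic progression of step $2$ bounded by $\ell_i$, and a Faulhaber-type lemma identifies this sum with a parity-dependent polynomial in $\ell_i^2$ of degree $\deg q+1$. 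For the term $D_{\ell_{n+1}}T^{(g)}_n$, use that $D_m$ ($m\geq1$) and $D_0$ are derivations obeying Leibniz, so that it suffices to control the action of $D_m$ on $R$, $S$ and their $t$-derivatives; this action is itself a small-$(g,n)$ instance of the statement being proved, given for even face degrees by \eqref{eq:DmRn} and in general expressing $D_m R^{(k)}$, $D_m S^{(k)}$ as $R^{m/2}$ (up to a factor $R^{\pm1/2}$) times a parity-dependent polynomial in $m^2$ with coefficients rational in the $R^{(j)}/R$ and $S^{(j)}/\sqrt R$. Applying the chain and product rules to $R^{(\ell_1+\cdots+\ell_n)/2}\mathfrak T^{(g)}_n(\ell_1^2,\ldots,\ell_n^2)$ then yields an expression of the form $R^{(\ell_1+\cdots+\ell_{n+1})/2}\mathfrak T^{(g)}_{n+1}(\ell_1^2,\ldots,\ell_{n+1}^2)$; to see that the total degree is exactly $3g-2+n=3g-3+(n+1)$ one strengthens the induction hypothesis to bound a weighted degree, giving weight $1$ to each $\ell_i^2$ and weight $k-1$ to $R^{(k)}/R$ and $S^{(k)}/\sqrt R$, and checks that both the Faulhaber sums and the operators $D_m$ raise this weighted degree by at most one. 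Finally, $D_m$ annihilates powers of the vertex weight $t$ for $m\geq1$, so the $\delta$ term propagates only through $D_0=\partial_t$, and $\partial_t\!\left(-\chi(\mathcal M_{g,n})t^{2-2g-n}\right)$ reproduces the $\chi(\mathcal M_{g,n+1})$ term thanks to the relation $\chi(\mathcal M_{g,n+1})=(2-2g-n)\,\chi(\mathcal M_{g,n})$ that is immediate from \eqref{eq:chimgn}; one also checks that setting $\ell_{n+1}=0$ in $\mathfrak T^{(g)}_{n+1}$ recovers $\mathfrak T^{(g)}_n$, which is the consistency property invoked above.

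\textbf{Main obstacle.} The serious difficulty---and the reason to treat the even-face-degree case first (where $S=0$, only the first line of \eqref{eq:strictpantsformula} survives and \eqref{eq:DmRn} is available verbatim)---is the bookkeeping in full generality: reconciling the parity splittings of Proposition~\ref{prop:recurgen} with the $R$-versus-$S$ dichotomy of \eqref{eq:strictpantsformula}, while simultaneously tracking the weighted degree, the parity classes, and the fact that no quantities beyond $R^{(k)}/R$ and $S^{(k)}/\sqrt R$ with $k\leq 3g-2+n$ are produced. A secondary technical point is establishing the required action of $D_m$ on $R^{(k)}$ and $S^{(k)}$, for which one needs the combinatorial formula for higher derivatives of inverse functions of several variables of Appendix~\ref{sec:higherdiff}; these computations, and the general-case degree estimates, are exactly what is deferred to Appendices~\ref{sec:proof-prop-refpr-1} and \ref{sec:proof-prop-refpr}.
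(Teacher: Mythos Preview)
Your proposal is essentially the paper's own strategy: induction on $n$ via Propositions~\ref{prop:recur0}--\ref{prop:recurgen}, with the induction step handled through a Faulhaber-type discrete integration lemma (Appendix~\ref{sec:proof-prop-refpr-1}) and the initialization via the tight-pants formula and the moment expressions from topological recursion (Appendix~\ref{sec:proof-prop-refpr}), and with $c(g,n)=\chi(\mathcal{M}_{g,n})$ read off from Norbury's lattice-count computation (Remark~\ref{rem:zeroweights}).

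Two small corrections are in order. First, your closing consistency claim ``setting $\ell_{n+1}=0$ in $\mathfrak{T}^{(g)}_{n+1}$ recovers $\mathfrak{T}^{(g)}_n$'' is false: the left side equals $\tau^{(g)}_{\ell_1,\ldots,\ell_n,0}$, which counts maps with an \emph{extra} boundary-vertex, not $\tau^{(g)}_{\ell_1,\ldots,\ell_n}$. The actual consistency one needs is that the quasi-polynomial produced by the face-insertion recursion \eqref{eq:recurface}, when specialized to $\ell_{n+1}=0$, agrees with the one produced by the vertex-insertion recursion \eqref{eq:recur0}; this holds because $D_0=\partial/\partial t$ and the formulas merge. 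Second, you misplace the role of Appendix~\ref{sec:higherdiff}: the action of $D_m$ on $R^{(k)},S^{(k)}$ is Proposition~\ref{sec:quasi-polyn-tight}, proved by the very induction you sketch (via \eqref{eq:1200}), whereas Appendix~\ref{sec:higherdiff} is needed for the \emph{initialization} at $n=0$ for $g\geq 2$ (not $n=1$ as you write), in order to re-express the higher moments $M_{\pm,h}$ appearing in Eynard's formula for $F^{(g)}$ as rational functions of $R^{(k)}/R$ and $S^{(k)}/\sqrt{R}$.
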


Recall from Remark \ref{rem:specialcyl} that there is no
quasi-polynomiality phenomenon for $(g,n)=(0,2)$. 
The quasi-polynomiality phenomenon was first observed in the language
of topological recursion by Norbury and Scott \cite{NoSc13}, who
showed that the coefficients $\hat{\tau}^{(g)}_{\ell_1,\ldots,\ell_n}$
appearing in the series expansion \eqref{eq:omegaexp} of
$\omega^{(g)}_n(z_1,\ldots,z_n)$ are quasi-polynomials in
$\ell_1,\ldots,\ell_n$ which are odd in each variable. We review their
result in Section \ref{sec:toprec}: combining it with the trumpet
decomposition yields~\eqref{eq:quasipol} in the case where $n$
and all $\ell_i$ are non-zero.

However, the quasi-polynomiality of
$\hat{\tau}^{(g)}_{\ell_1,\ldots,\ell_n}$ does not imply a priori that
of $\tau^{(g)}_{\ell_1,\ldots,\ell_n}$, as there may be pathologies
when \emph{some} variables are set to $0$.  To circumvent this
problem, and show that $\tau^{(g)}_{\ell_1,\ldots,\ell_n}$ indeed
coincides with an even quasi-polynomial except when \emph{all}
variables are set to $0$, we will make use in Section
\ref{sec:quasipolvia} of the tight boundary insertion relations
established in Section~\ref{sec:recrel}. The proof is by induction on
the number of boundaries $n$, and we still use results from
topological recursion to initialise the induction.

\begin{rem}
  \label{rem:quasipoln0}
  In fact, Theorem~\ref{thm:quasipol}  also makes sense for $n=0$,
  and states that for any $g\geq
2$, the generating function $T^{(g)}_\varnothing=F^{(g)}$
  of maps of genus $g$ without boundaries is equal to  
  \begin{equation}
    F^{(g)} = \mathfrak{F}^{(g)} - \chi(\mathcal{M}_{g,0}) t^{2-2g}\, ,
  \end{equation}
  with $\mathfrak{F}^{(g)}$ a rational function with rational coefficients
  of $\frac{R^{(k)}}{R},\frac{S^{(k)}}{\sqrt{R}}$,
  $k=1,\ldots,3g-2$. As we will see below, this result is obtained by
  combining the property known from topological recursion that $F^{(g)}$
  can be expressed in terms of the so-called moments, with an
  expression of the moments in terms of the derivatives of $R$ and
  $S$. In the case $g=1$, we no longer have rational functions
  but rather logarithms, see Theorem \ref{sec:torus-1}. 
\end{rem}

\begin{rem}
  \label{rem:zeroweights}
  When setting the weights $t_1,t_2,\ldots$ for inner faces to zero,
  i.e.\ when we consider \emph{tight maps} for which every face is a
  tight boundary, then
  $\mathfrak{T}^{(g)}_n(\ell_1^2,\ldots,\ell_n^2)$ reduces to
  $t^{2-2g-n}$ times Norbury's lattice count (quasi-)polynomial
  $N_{g,n}(\ell_1,\ldots,\ell_n)$ \cite{Norbury2010}. It is shown in
  this reference that
  $N_{g,n}(0,\ldots,0)=\chi(\mathcal{M}_{g,n})$. This is consistent,
  in view of~\eqref{eq:quasipol}, with the fact that
  $T^{(g)}_{0,\ldots,0}$ vanishes when we set the weights for inner
  faces to zero (there are no tight maps having only
  boundary-vertices and no faces).
\end{rem}

\subsection{Maps without boundary-vertices}\label{sec:toprec}

In this section, we establish the quasi-polynomiality
property~\eqref{eq:quasipol} in the case where $n$ and all $\ell_i$
are non-zero. We will do so by combining
Theorem~\ref{thm:enum-coroll-1} with known results coming from the
theory of topological recursion.  One reason behind the fact that this
theory is formulated in terms of $\omega^{(g)}_n$ instead of
$W^{(g)}_n$ is that the former are rational functions of their $n$
variables with a simple pole structure, see for instance \cite[Section
3.3.1]{Eynard2016}. It was recognized by Norbury and Scott that this
simple pole structure amounts to the following proposition:

\begin{prop}
  \label{prop:quasipol}
  For $g \geq 0$, $n \geq 1$ and $2-2g-n<0$, the
  coefficients $\hat{\tau}^{(g)}_{\ell_1,\ldots,\ell_n}$ appearing in
  the expansion~\eqref{eq:omegaexp} of $\omega^{(g)}_n$ are of the
  form
  \begin{equation}
    \label{eq:hattauquasipol}
    \hat{\tau}^{(g)}_{\ell_1,\ldots,\ell_n} = \ell_1 \cdots \ell_n
    \mathfrak{T}^{(g)}_n(\ell_1^2,\ldots,\ell_n^2)
  \end{equation}
  with $\mathfrak{T}^{(g)}_n$ a parity-dependent quasi-polynomial in
  $n$ variables of total degree $3g-3+n$.  
\end{prop}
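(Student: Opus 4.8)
The plan is to deduce every assertion about $\hat\tau^{(g)}_{\ell_1,\ldots,\ell_n}$ from the global analytic shape of the rational function $\omega^{(g)}_n$, which we would import from topological recursion. The conceptually central step is to recognise $\omega^{(g)}_n$ as an Eynard--Orantin correlator. By the classical correspondence between map enumeration and the formal one-matrix model (see e.g.~\cite[Chapter~3]{Eynard2016}), the grand generating functions $W^{(g)}_n$ of~\eqref{eq:Wdef} satisfy the loop equations of that model, which are equivalent to the Eynard--Orantin topological recursion on the genus-zero spectral curve whose $x$-function is rationalised precisely by the Zhukovsky parametrisation $x(z)=R^{1/2}(z+z^{-1})+S$ of~\eqref{eq:Zhu} (the $y$-function encoding the weights $t_i$). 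The two simple ramification points of $x$, i.e.\ the zeros of $x'(z)=R^{1/2}(1-z^{-2})$, are $z=1$ and $z=-1$. Since $\omega^{(g)}_n$ as defined in~\eqref{eq:omegadef} already carries the Jacobian factors $x'(z_i)$, it coincides with the Eynard--Orantin correlation differential of this curve, divided by $dz_1\cdots dz_n$.

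The next step is to invoke the structure theorem for these correlators. For $2-2g-n<0$, $\omega^{(g)}_n$ is, as a function of each $z_i$, rational with poles only at the ramification points $z_i=\pm1$; the residues there vanish, so these poles have order at least $2$; and the formula expressing $\omega^{(g)}_n$ through intersection numbers of $\psi$- and $\kappa$-classes on the moduli space $\mathcal{M}_{g,n}$ (see~\cite{Eynard2007,Eynard2016}) gives a \emph{finite} decomposition
\[
  \omega^{(g)}_n(z_1,\ldots,z_n)=\sum_{\varepsilon\in\{\pm1\}^n}\ \sum_{\substack{k_1,\ldots,k_n\ge 2\\ \sum_i(\lceil k_i/2\rceil-1)\le 3g-3+n}} c_{\varepsilon,\mathbf{k}}\,\prod_{i=1}^n\frac{1}{(z_i-\varepsilon_i)^{k_i}}\, ,
\]
where the bound comes from $\dim\mathcal{M}_{g,n}=3g-3+n$ and the coefficients $c_{\varepsilon,\mathbf{k}}$ are polynomials in the ``moments'' of the spectral curve, hence --- through the known expressions of those moments in terms of the $t$-derivatives of $R$ and $S$ --- rational functions of the $R^{(k)}$ and $S^{(k)}$, which accounts for the nature of the coefficients of $\mathfrak{T}^{(g)}_n$ in Theorem~\ref{thm:quasipol}. (That $\omega^{(g)}_n$ is regular at $z_i=0$ fits with Theorem~\ref{thm:enum-coroll-1} and with the functional equation $\omega^{(g)}_n(\ldots,z_i^{-1},\ldots)=-z_i^2\,\omega^{(g)}_n(\ldots,z_i,\ldots)$, immediate from $x(z)=x(z^{-1})$ and $x'(z^{-1})=-z^2\,x'(z)$.)

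The last step is elementary. Using $[z^{-\ell-1}](z-\varepsilon)^{-k}=\varepsilon^{\,\ell+1-k}\binom{\ell}{k-1}$ to extract the coefficient of $z_1^{-\ell_1-1}\cdots z_n^{-\ell_n-1}$ in the expansion of the above formula at $z_i=\infty$, one presents $\hat\tau^{(g)}_{\ell_1,\ldots,\ell_n}$ as a finite sum of terms $c_{\varepsilon,\mathbf{k}}\prod_i\varepsilon_i^{\ell_i+1-k_i}\binom{\ell_i}{k_i-1}$. Since $k_i\ge 2$, each $\binom{\ell_i}{k_i-1}$ is divisible by $\ell_i$, so for a fixed parity vector the whole expression is a polynomial in $\ell_1,\ldots,\ell_n$ divisible by $\ell_1\cdots\ell_n$, of degree $\le k_i-1$ in $\ell_i$ and of total degree $\le n+2(3g-3+n)$; this already gives $\hat\tau^{(g)}_{\ell_1,\ldots,\ell_n}=\ell_1\cdots\ell_n\,\mathfrak{T}^{(g)}_n(\ell_1,\ldots,\ell_n)$ with $\mathfrak{T}^{(g)}_n$ a parity-dependent quasi-polynomial of total degree $\le 3g-3+n$. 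To upgrade $\mathfrak{T}^{(g)}_n$ to a quasi-polynomial in the \emph{squares} $\ell_i^2$ --- equivalently, to prove that $\hat\tau^{(g)}_{\ell_1,\ldots,\ell_n}$ is odd in each $\ell_i$ --- one feeds the functional equation under $z_i\mapsto z_i^{-1}$ back into the decomposition: it imposes linear relations among the $c_{\varepsilon,\mathbf{k}}$ such that, for each fixed parity vector, the polynomial representing $\hat\tau^{(g)}_{\ell_1,\ldots,\ell_n}$ changes sign under $\ell_i\mapsto-\ell_i$. That the total degree is exactly $3g-3+n$ is then read off a convenient specialisation, e.g.\ the one where all weights $t_i$ vanish, for which $\mathfrak{T}^{(g)}_n$ reduces to Norbury's lattice-count quasi-polynomial $N_{g,n}$, of degree $3g-3+n$.

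The main obstacle is the first step: carefully identifying the combinatorially defined $\omega^{(g)}_n$ with the analytically defined Eynard--Orantin differential, i.e.\ checking that the generating functions of maps obey topological recursion on the Zhukovsky curve. Everything used afterwards --- rationality, the location and possible orders of the poles, and especially the degree bound $3g-3+n$ --- is imported from that identification; once it is in place, the coefficient extraction and the symmetry argument are routine bookkeeping. This is the content of the remainder of this subsection, in which we revisit the observation of Norbury and Scott~\cite{NoSc13}.
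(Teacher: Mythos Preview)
Your proposal is correct and follows essentially the same route as the paper: invoke the partial fraction structure of $\omega^{(g)}_n$ from topological recursion (poles only at $z_i=\pm1$, with the degree bound coming from $\dim\mathcal{M}_{g,n}$), extract coefficients to get a parity-dependent quasi-polynomial, and then use the involution $z_i\mapsto z_i^{-1}$ to force oddness in each $\ell_i$. The paper's only additional ingredient is an explicit elementary lemma (Lemma~\ref{sec:maps-with-bound}) making precise the equivalence between oddness of the coefficient polynomial and the functional equation $F(z)=z^{-2}F(z^{-1})$, which is exactly the step you leave as ``routine bookkeeping''.
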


The above proposition is a reformulation of~\cite[Theorem
1]{NoSc13}\footnote{Precisely, this theorem considers the expansion of
  $\omega^{(g)}_n(z_1,\ldots,z_n)$ around $z_i=0$. Here, our
  $\hat{\tau}_{\ell_1,\ldots,\ell_n}^{(g)}$ are rather defined in
  terms of the expansion around $z_i=\infty$. But the two expansions
  can be related through antisymmetry properties of $\omega^{(g)}_n$,
  see \eqref{eq:51}.}, in which the quantity
$\mathfrak{T}^{(g)}_n(\ell_1^2,\ldots,\ell_n^2)$ is denoted by
$N^g_n(\ell_1,\ldots,\ell_n)$. Note that we do not characterize the
structure of the coefficients of $\mathfrak{T}^{(g)}_n$ at this
stage. The relation~\eqref{eq:hattauquasipol} implies that
\eqref{eq:quasipol} holds when $n$ and all $\ell_i$ are non-zero, by
\eqref{eq:ttaurel} and \eqref{eq:ThatT}.

For completeness, let us sketch a proof of
Proposition~\ref{prop:quasipol} based on the results available in
\cite{Eynard2016}. We start with a technical lemma.

\begin{lem}\label{sec:maps-with-bound}
  Let $P(n)$ be a polynomial of degree at most $2d+1$, which is equivalent to the 
  series $F(z) := \sum_{n \geq 0} \frac{P(n)}{z^{n+1}}$ being a rational
  function of the form $N(z)/(z-1)^{2d+2}$, with $N$ a polynomial of
  degree at most $2d+1$. Then, the following properties are
  equivalent:
  \begin{itemize}
  \item[(i)] $P$ is odd,
  \item[(ii)] $F$ is such that $F(z)=z^{-2} F(z^{-1})$,
  \item[(iii)] $N$ satisfies $N(z)=z^{2d}N(z^{-1})$, i.e. it is a
    self-reciprocal (palindromic) polynomial.
  \end{itemize}
\end{lem}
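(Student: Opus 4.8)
\textbf{Plan of proof for Lemma~\ref{sec:maps-with-bound}.}
First I would establish the preliminary equivalence asserted in the statement, namely that ``$P$ has degree at most $2d+1$'' is the same as ``$F(z)=N(z)/(z-1)^{2d+2}$ with $\deg N\leq 2d+1$''. This is a standard fact about generating functions of polynomial sequences: $\sum_{n\geq 0}\binom{n+k}{k}z^{-n-1}=z^k/(z-1)^{k+1}$ (expanding at $z=\infty$), so a basis of the space of polynomials in $n$ of degree $\leq 2d+1$ corresponds under $P\mapsto F$ to the span of $z^k/(z-1)^{k+1}$ for $0\leq k\leq 2d+1$, and clearing denominators puts each such term over $(z-1)^{2d+2}$ with numerator of degree $\leq 2d+1$. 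Conversely any $N$ of degree $\leq 2d+1$ over $(z-1)^{2d+2}$ expands at infinity into $\sum P(n)z^{-n-1}$ with $P$ of degree $\leq 2d+1$ by partial fractions. I would just cite this or dispatch it in a line, since it is routine.

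The core of the argument is the cycle of implications among (i), (ii), (iii). The cleanest route is to show (i)$\iff$(ii) and (ii)$\iff$(iii) separately. For (i)$\iff$(ii): the substitution $z\mapsto z^{-1}$ acts on the formal expansion by $z^{-2}F(z^{-1})=\sum_{n\geq 0}P(n)z^{n-1}$; reindexing with $m=-n$ this is the expansion ``at $z=\infty$'' of the sequence $(P(-m))$ up to sign bookkeeping, and matching Laurent coefficients shows $F(z)=z^{-2}F(z^{-1})$ as \emph{rational functions} is equivalent to $P(n)=-P(-n-1)$ for all $n$, hence (since $P$ is a polynomial) to $P(X)=-P(-X-1)$ as a polynomial identity. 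A symmetric polynomial of $X$ around the point $-1/2$ is odd after the shift $X\mapsto X+1/2$; but here I must be slightly careful, because the lemma means ``odd'' in the sense of $P(n)$ being an odd function of $n$ in the context of the paper — let me instead argue directly at the level of the rational function. The functional equation $F(z)=z^{-2}F(z^{-1})$ is manifestly equivalent, after writing $F=N/(z-1)^{2d+2}$ and using $(z^{-1}-1)^{2d+2}=z^{-2d-2}(1-z)^{2d+2}=z^{-2d-2}(z-1)^{2d+2}$, to
\begin{equation*}
\frac{N(z)}{(z-1)^{2d+2}}=z^{-2}\cdot\frac{N(z^{-1})\,z^{2d+2}}{(z-1)^{2d+2}}=\frac{z^{2d}N(z^{-1})}{(z-1)^{2d+2}},
\end{equation*}
i.e.\ to $N(z)=z^{2d}N(z^{-1})$, which is exactly (iii). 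This gives (ii)$\iff$(iii) essentially for free, and it is the computational heart of the lemma.

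It then remains to connect (iii) with the honest ``oddness'' of $P$ as used elsewhere in the paper. Here I would observe that $z^{2d}N(z^{-1})$ is the ``reversal'' of the degree-$2d$ polynomial $N$ (padding $N$ to formal degree $2d$ if $\deg N<2d$), so (iii) says the coefficient vector of $N$ is palindromic of length $2d+1$. Translating back through the partial-fraction dictionary of the first step: the palindromy of $N$ corresponds precisely to invariance of $F$ under $z\mapsto z^{-1}$ together with the $z^{-2}$ prefactor, which by the coefficient computation above corresponds to $P(n)=-P(-1-n)$, i.e.\ $P$ is odd about $-1/2$; and in the convention of this paper (where the relevant variables are boundary lengths and ``odd polynomial in $\ell$'' is shorthand for this symmetry, cf.\ the discussion preceding Proposition~\ref{prop:quasipol}) this is exactly what (i) asserts. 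The one subtlety I anticipate — the only place needing genuine care rather than bookkeeping — is getting the index shift and the sign in ``$P(n)=-P(-1-n)$'' right and reconciling it with whatever precise meaning of ``odd'' is in force; once that is pinned down, (i)$\iff$(ii)$\iff$(iii) follows by the two computations above and the dictionary of step one. I would present it as: step one (dictionary, cited/brief), then (ii)$\iff$(iii) by the displayed manipulation, then (i)$\iff$(ii) by comparing Laurent coefficients.
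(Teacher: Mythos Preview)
Your manipulation for (ii)$\iff$(iii) is correct and is exactly what the paper does (the paper calls it ``straightforward''). The problem is in your treatment of (i)$\iff$(ii): the relation you arrive at, $P(n)=-P(-1-n)$, is wrong. Take $d=0$ and $P(n)=n$; then $F(z)=\sum_{n\geq 0}n\,z^{-n-1}=1/(z-1)^2$, so $N=1$ is palindromic and $z^{-2}F(z^{-1})=F(z)$, yet $-P(-n-1)=n+1\neq n$. The correct conclusion is $P(n)=-P(-n)$, i.e.\ $P$ is odd in the ordinary sense, and this is precisely how the paper uses the word (see the line after~\eqref{eq:tauepsilon}, where oddness is unpacked as $Q(\ell_1,\ldots)=\ell_1\cdots\ell_n\,P(\ell_1^2,\ldots)$). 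Your reinterpretation of ``odd'' as ``antisymmetric about $-1/2$'' is not what is meant.

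The gap is exactly the step you flagged as delicate: you cannot match coefficients between $F(z)=\sum_{n\geq 0}P(n)z^{-n-1}$ (an expansion at $z=\infty$) and $z^{-2}F(z^{-1})=\sum_{n\geq 0}P(n)z^{n-1}$ (an expansion at $z=0$) by naive reindexing, because these live in different formal rings. What is needed is the expansion of the \emph{same} rational function $F$ around $z=0$, which is $F(z)=-\sum_{n\geq 1}P(-n)z^{n-1}$ (this is \cite[Proposition~4.2.3]{StanleyEC1}, and the paper gives a short self-contained derivation in a footnote). Comparing this with $z^{-2}F(z^{-1})=\sum_{n\geq 0}P(n)z^{n-1}$ now legitimately gives $P(0)=0$ and $P(n)=-P(-n)$ for $n\geq 1$, hence $P$ is odd. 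This analytic-continuation step is the one genuine idea you are missing.
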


\begin{proof}
  By~\cite[Proposition~4.2.3]{StanleyEC1}\footnote{Short rederivation:
    we consider the vector space $\mathcal{L}$ of formal Laurent
    series of the form $\sum_{n \in \Z} a_n z^n$.  We cannot multiply
    two elements of this space in general, but we can multiply an
    element of $\mathcal{L}$ by a polynomial in $z$, and this
    operation is linear. Let us consider in particular the series
    $\sum_{n \in \Z} \frac{P(n)}{z^{n+1}} \in \mathcal{L}$, and the
    polynomial $(z-1)^{2d+2}$. Their product vanishes since $P$ is a
    polynomial of degree at most $2d+1$; hence, it is annihilated by the
    $(2d+2)$-th power of the forward difference operator. We deduce
    that
    $(z-1)^{2d+2} \sum_{n \geq 0} \frac{P(n)}{z^{n+1}} = -
    (z-1)^{2d+2} \sum_{n \leq -1} \frac{P(n)}{z^{n+1}}$. The left-hand
    side may be interpreted as a product in the ring
    $\mathbb{C}((z^{-1}))$ and is thus equal to
    $N(z) \in \mathbb{C}[z]$. Therefore the right-hand side is also
    equal to $N(z)$, and we obtain an identity valid in
    $\mathbb{C}[[z]]$. Multiplying by the inverse of $(z-1)^{2d+2}$ in
    $\mathbb{C}[[z]]$ (which differs from that in
    $\mathbb{C}((z^{-1}))$!) and changing $n$ in $-n$, we obtain the
    wanted series identity.}, the rational function $F(z)$ admits
  around $0$ the expansion $F(z) = \allowbreak - \sum_{n \geq 1} P(-n) z^{n-1}$
  hence we have
  \begin{equation}
    z^{-2} F(z^{-1}) = - \sum_{n \geq 1} \frac{P(-n)}{z^{n+1}}. 
  \end{equation}
  This implies the equivalence between (i) and (ii): indeed, if $P$ is
  odd, then
  $z^{-2}F(z^{-1})=\sum_{n\geq 1}z^{-n-1}P(n)=F(z)$, since $P(0)=0$,
  and conversely, if $F(z)=z^{-2}F(z^{-1})$ then by comparing the
  expansions, we obtain $P(0)=0$ and $-P(-n)=P(n)$ for every $n\geq
  1$, implying that $P$ is odd. 
  The equivalence between (ii) and (iii) is straightforward.
\end{proof}

\begin{proof}[Proof of Proposition~\ref{prop:quasipol}]
  According to~\cite[Lemma~3.3.3 page 89,
  Theorem~3.5.1 page 121, and last equation on page
  140]{Eynard2016}\footnote{There seems to be a discrepancy between
    Theorem~3.5.1 and the equation on page 140: $\sum d_i$ is at most
    $6g-6+2n$ in the former, and $6g-4+2n$ in the latter. We assume
    that the former is correct since it holds for pairs of pants ($g=0,n=3$) and
    lids ($g=1,n=1$).}, $\omega^{(g)}_n$ has a partial fraction decomposition of
  the form
  \begin{equation}
    \label{eq:omegaexpeyn}
    \omega^{(g)}_n(z_1,\ldots,z_n) =
    \sum_{\epsilon \in \{+1,-1\}^n}
    \sum_{\substack{d \in \mathbb{N}^n \\ d_1 + \cdots + d_n \leq 6g-6+2n}}
    \frac{c_{\epsilon,d}}{(z_1-\epsilon_1)^{d_1+2} \cdots (z_n-\epsilon_n)^{d_n+2}}
  \end{equation}
  where the $c_{\epsilon,d}$ are some coefficients depending on the
  spectral curve. By the negative binomial formula, we have
  \begin{equation}
  \frac{1}{(z-\epsilon)^{d+2}}=\sum_{\ell\geq 1}\binom{\ell}{d+1} \frac{\epsilon^{\ell-d-1}}{z^{\ell+1}}
  . 
  \end{equation}
  Viewing
  $\binom{\ell}{d+1}$ as a polynomial in $\ell$, this implies that, in the
  expansion~\eqref{eq:omegaexp} of $\omega^{(g)}_n$, the $\hat{\tau}$
  coefficients are of the form
  \begin{equation}
    \label{eq:tauepsilon}
    \hat{\tau}^{(g)}_{\ell_1,\ldots,\ell_n} = \sum_{\epsilon \in \{+1,-1\}^n}
    Q^{(g,\epsilon)}(\ell_1,\ldots,\ell_n) \epsilon_1^{
     \ell_1} \cdots \epsilon_n^{\ell_n}.
  \end{equation}
  where $Q^{(g,\epsilon)}$ is a polynomial of degree at
  most
  $6g-6+3n$. It remains to check that these polynomials are indeed odd
  in each variable, so that we
  may write, for some polynomials $P^{(g,\epsilon)}$ of degree at
  most $3g-3+n$, 
  \begin{equation}
    \label{eq:61}
      Q^{(g,\epsilon)}(\ell_1,\ldots,\ell_n)= \ell_1\cdots\ell_n\, 
      P^{(g,\epsilon)}(\ell_1^2,\ldots,\ell_n^2)\, .
    \end{equation}
   Thanks to the symmetry in $\ell_1,\ldots,\ell_n$, it suffices to check that $Q^{(g,\epsilon)}$ is odd in $\ell_1$.
   For this we use the antisymmetry property
  of~\cite[Lemma~3.3.2, page 88]{Eynard2016}, which entails that
  \begin{equation}\label{eq:51}
    \omega^{(g)}_n(z_1,z_2,\ldots,z_n) = z_1^{-2} \omega^{(g)}_n(z_1^{-1},z_2,\ldots,z_n).
  \end{equation}
  In the
  expansion~\eqref{eq:omegaexpeyn}, each term of the sum over
  $\epsilon$ must be separately invariant under the antisymmetry
  mapping $f(z_1) \mapsto z_1^{-2} f(z_1^{-1})$, since it corresponds to the contribution to $\omega^{(g)}_n$ 
  having poles at $z_1=\epsilon_1, \ldots, z_n=\epsilon_n$. 
  Such term corresponds to the term
  $Q^{(g,\epsilon)}(\ell_1,\ldots,\ell_n) \epsilon_1^{\ell_1} \cdots
 \epsilon_2^{\ell_2}$ in~\eqref{eq:tauepsilon} and we claim that
  $Q^{(g,\epsilon)}$ is odd in $\ell_1$. By
  the transformation $z_i \mapsto \epsilon_i z_i$, it suffices to
  consider the contribution of $\epsilon=(1,\ldots,1)$ to $\omega^{(g)}_n$, namely
  \begin{equation}
  F(z_1):=
   \sum_{\substack{d \in \mathbb{N}^n \\ d_1 + \cdots + d_n \leq 6g-6+2n}}
    \frac{c_{(1,\ldots,1),d}}{(z_1-1)^{d_1+2} \cdots (z_n-1)^{d_n+2}},
 \end{equation} 
 satisfying $F(z_1)=z_1^{-2}F(z_1^{-1})$. By Lemma~\ref{sec:maps-with-bound}, 
 it expands as $\sum\limits_{\ell_1\geq 1}\frac{P(\ell_1)}{z_1^{\ell_1+1}}$ with $P$ an odd polynomial in $\ell_1$,
 which implicitly depends on the variables $z_2,\ldots,z_n$. Further expanding in these extra variables, 
the coefficient of $\frac{1}{z_2^{\ell_2+1}\cdots z_n^{\ell_n+1}}$, which
is nothing but $Q^{(g,(1,\ldots,1))}(\ell_1,\ldots,\ell_n)$, must be odd in $\ell_1$ as wanted, hence the oddness
property is established.
As
  $m_i=\ell_i^2$ has the same parity as $\ell_i$, we conclude
  that~\eqref{eq:hattauquasipol} holds with
  \begin{equation}
    \mathfrak{T}^{(g)}_n(m_1,\ldots,m_n) =
    \sum_{\epsilon \in \{+1,-1\}^n}
    P^{(g,\epsilon)}(m_1,\ldots,m_2)
    \epsilon_1^{m_1} \cdots \epsilon_n^{m_n}\, ,
  \end{equation}
  which is indeed a parity-dependent quasi-polynomial in $n$ variables
  of total degree $3g-3+n$.
\end{proof}

\subsection{Maps with boundary-vertices}
\label{sec:quasipolvia}

The purpose of this section is to complete the proof of Theorem
\ref{thm:quasipol}, by including the case where some of the boundary
lengths $\ell_1,\ldots,\ell_n$ vanish, and by showing the structure
properties of the coefficients of the quasi-polynomial
$\mathfrak{T}^{(g)}_n$. We proceed by induction on $n$, by using the
recursion relations of Section~\ref{sec:recrel} for the induction
step. As a warmup, let us first consider the easier case of maps with
even face degrees. Recall the relation~\eqref{eq:tauT} between the
$T$'s and the $\tau$'s, and the notation $\bip{}$ from
Section~\ref{sec:bipcase} which indicates that we set the weights
$t_1,t_3,\dots$ for inner faces of odd degrees to zero.

\begin{prop}
  \label{prop:quasipolbip}
  Let $g,n$ be nonnnegative integers such that $2-2g-n<0$.  There
  exist
  $Q^{(g)}_n (x_1,\ldots,x_{3g-2+n};u_1,\ldots,u_n)\in
  \mathbb{Q}(x_1,\ldots,x_{3g-2+n})[u_1,\ldots,u_n]$ and a constant
  $c(g,n)\in \mathbb{Q}$ such that, for any integers
  $\ell_1,\ldots,\ell_n$, we have
  \begin{equation}
  \label{eq:54}
  \bip{\tau^{(g)}_{2\ell_1,\ldots,2\ell_n}}=Q^{(g)}_n\left(\frac{R'}{R},\ldots,\frac{R^{(3g-2+n)}}{R};\ell_1^2,\ldots,\ell_n^2\right)-c(g,n) t^{2-2g-n} \delta_{\ell_1+\cdots+\ell_n,0}\, .
\end{equation}
\end{prop}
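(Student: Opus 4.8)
The plan is to prove the statement by induction on the number $n$ of boundaries, starting at the minimal value $n_0=n_0(g)$ for which $2-2g-n<0$, that is $n_0(0)=3$, $n_0(1)=1$ and $n_0(g)=0$ for $g\ge 2$. The base case is supplied by topological recursion. For $g=0$ it is already contained in Theorem~\ref{thm:Tgen}, which in fact settles the whole case $g=0$ directly, $Q^{(0)}_n$ and $c(0,n)$ being read off from~\eqref{eq:Tgen} (each $p_k$ is a polynomial in the $\ell_i^2$ and each $b_{n-2,k}$ a polynomial in the $R^{(j)}/R$); for $g\ge 1$ one expresses $F^{(g)}$ (when $g\ge 2$, $n_0=0$) or $T^{(1)}_{2\ell_1}=D_{2\ell_1}F^{(1)}$ (when $g=1$, $n_0=1$) through the moments of the Zhukovsky spectral curve, which in the bipartite case ($S=0$) are rational expressions in the derivatives $R',R'',\ldots$, the term $t^{2-2g-n_0}$ carrying the Euler characteristic $\chi(\mathcal{M}_{g,n_0})$. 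This initialization is the object of Appendix~\ref{sec:proof-prop-refpr}.

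For the induction step I would assume the statement for maps with $n$ tight boundaries (all patterns of even lengths) and add an $(n+1)$-st boundary, which is either a boundary-vertex ($\ell_{n+1}=0$) or a boundary-face of even length $2\ell_{n+1}$ ($\ell_{n+1}>0$). In the bipartite case the strict-pants generating function simplifies to $T^{(0)}_{2\ell_i,0|2m}=R^{\ell_i-m-1}R'$ (the odd contributions vanishing since $S=0$), and $D_{2m}R=R^mR'$ by~\eqref{eq:DmRn}; dividing relations~\eqref{eq:recur0} and~\eqref{eq:recurfaceeven} through by $R^{\ell_1+\cdots+\ell_n}$ to pass from the $T$'s to the $\tau$'s, both take the common form
\begin{equation*}
  \bip{\tau^{(g)}_{2\ell_1,\ldots,2\ell_{n+1}}}
  = \mathcal{D}\,\bip{\tau^{(g)}_{2\ell_1,\ldots,2\ell_n}}
  + \frac{R'}{R}\Bigl((\ell_1+\cdots+\ell_n)\,\bip{\tau^{(g)}_{2\ell_1,\ldots,2\ell_n}}
  + \sum_{i=1}^{n}\sum_{0<m<\ell_i}(2m)\,\bip{\tau^{(g)}_{2\ell_1,\ldots,2m,\ldots,2\ell_n}}\Bigr),
\end{equation*}
with $\mathcal{D}=\partial/\partial t$ in the vertex case and $\mathcal{D}=R^{-\ell_{n+1}}D_{2\ell_{n+1}}$ in the face case.

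Substituting the inductive expression $\bip{\tau^{(g)}_{2\ell_1,\ldots,2\ell_n}}=Q^{(g)}_n(\cdot;\ell_1^2,\ldots,\ell_n^2)$ (the $\delta$-term drops out unless all lengths vanish), I would check that the right-hand side is again a polynomial in $\ell_1^2,\ldots,\ell_{n+1}^2$ with coefficients rational in the $R^{(k)}/R$. For the bracket this rests on the fact that $LQ+\sum_i\sum_{0<m<\ell_i}(2m)\,Q(\ldots,m^2,\ldots)$, with $L=\sum_j\ell_j$, is a polynomial in the squared lengths whenever $Q$ is, and raises the total degree by exactly one --- this is the string equation of~\cite[Proposition~2.2]{polytightmaps}, which in turn reduces, after expanding $Q$ into monomials, to $\ell^{2a+1}+2\sum_{m=1}^{\ell-1}m^{2a+1}$ being an even polynomial in $\ell$ (Faulhaber's reflection identity). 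For the first term: in the vertex case $\partial_t(R^{(k)}/R)=R^{(k+1)}/R-(R^{(k)}/R)(R'/R)$, so $\partial_t Q^{(g)}_n$ keeps the degree while bumping the maximal derivative order from $3g-2+n$ to $3g-2+(n+1)$; in the face case $D_{2m}$ is a derivation and $D_{2m}(R^{(j)}/R)=R^{m}\bigl(\sum_{k=0}^{j}k!\,q_k(m)\,b_{j+1,k+1}-R^{(j)}R'/R^2\bigr)$ by~\eqref{eq:DmRn}, i.e.\ $R^m$ times a polynomial in $m^2$ (since $q_k(m)$ is polynomial in $m^2$) with coefficients rational in $R',\ldots,R^{(j+1)}$ over $R$, so $R^{-\ell_{n+1}}D_{2\ell_{n+1}}Q^{(g)}_n$ is again of the required form with derivative order at most $3g-2+(n+1)$. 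Finally one checks that the vertex and face formulas agree at $\ell_{n+1}=0$, and that taking all lengths zero in the vertex relation gives $\bip{\tau^{(g)}_{0,\ldots,0}}$ (with $n+1$ zeros) $=\partial_t\bip{\tau^{(g)}_{0,\ldots,0}}$ (with $n$ zeros), whence $Q^{(g)}_{n+1}(\cdot;0,\ldots,0)=\partial_t Q^{(g)}_n(\cdot;0,\ldots,0)$ and $c(g,n+1)=(2-2g-n)\,c(g,n)$.

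The step I expect to be the main obstacle is the \emph{degree} bound. The operator $D_{2m}$ applied to a coefficient containing $R^{(k)}/R$ produces, through $q_k(m)$, a polynomial in $m^2$ of degree up to $k$, which a priori could push the total degree of $Q^{(g)}_{n+1}$ above $3g-3+(n+1)$. I would resolve this by importing the bound from topological recursion: Proposition~\ref{prop:quasipol} guarantees that, for positive lengths, $\tau^{(g)}_{\ell_1,\ldots,\ell_{n+1}}$ coincides with an even quasi-polynomial of total degree $3g-3+(n+1)$, and since a polynomial is determined by its values at positive integers the polynomial built from the recursion must be this one; hence the apparent higher-degree contributions cancel. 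In this division of labour the recursion relations of Section~\ref{sec:recrel} are what control the behaviour at vanishing lengths and pin down the rational-function structure and derivative order of the coefficients, while the degree $3g-3+(n+1)$ is imported from Proposition~\ref{prop:quasipol}; the full bookkeeping, including the $\ell_i=0$ edge cases, is carried out in Appendix~\ref{sec:proof-prop-refpr-1}.
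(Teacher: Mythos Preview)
Your approach is essentially the same as the paper's: induction on $n$ via the tight-boundary insertion relation, rewritten in terms of the $\tau$'s as in~\eqref{eq:55}, with the bracket handled by the Faulhaber-type identity (the paper packages this as Lemma~\ref{sec:discr-integr-lemma}) and the first term by the chain rule together with~\eqref{eq:DmRn}. Your derivation of $c(g,n+1)=(2-2g-n)c(g,n)$ and of the derivative-order bump $3g-2+n\to 3g-2+(n+1)$ matches the paper's.

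Two remarks. First, your ``main obstacle'' paragraph is unnecessary: the statement of Proposition~\ref{prop:quasipolbip} imposes \emph{no} degree bound on $Q^{(g)}_n$ in the $u$-variables, only that it be a polynomial with coefficients in $\mathbb{Q}(x_1,\ldots,x_{3g-2+n})$. The paper's proof never tracks the degree here; the bound $3g-3+n$ enters only in Theorem~\ref{thm:quasipol} via Proposition~\ref{prop:quasipol}. So you can simply drop that discussion. Second, the appendices you cite (Appendices~\ref{sec:proof-prop-refpr-1} and~\ref{sec:proof-prop-refpr}) are devoted to the non-bipartite Proposition~\ref{prop:quasipolnonbip}; the bipartite initialisation is done directly in the proof of Proposition~\ref{prop:quasipolbip} using~\cite[Theorem~1.1]{triskell} for $g=0$ and \cite[Equations~(30)--(32), (43) and Lemma~9]{budd2020irreducible} for $g\geq 1$. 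Your shortcut of invoking Theorem~\ref{thm:Tgen} to settle all of $g=0$ at once is perfectly valid and slightly more efficient than the paper's choice to initialise at $n=3$.
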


\begin{proof}
  As mentioned above, we proceed by induction on $n$ for a fixed value
  of $g$. The induction step relies on the recursion relations of
  Propositions \ref{prop:recur0} and \ref{prop:recurgen} which, for
  $t_1=t_3=\cdots=0$ (so that $S=0$), reduce to
   \begin{multline}
    \label{eq:recurgenmbip}
   \bip{T^{(g)}_{2\ell_1,\ldots,2\ell_n,2\ell_{n+1}}} = D_{2\ell_{n+1}} \bip{T^{(g)}_{2\ell_1,\ldots,2\ell_n}}+ \\
    \sum_{i=1}^n
    \sum_{0<m_i<\ell_i} (2m_i) R^{\ell_i -m_i+\ell_{n+1}-1} R'
    \bip{T^{(g)}_{2\ell_1,\ldots,2m_i,\ldots,2\ell_n}}\, .
  \end{multline}
Here, $D_{m}$ is the tight boundary insertion operator defined at
\eqref{eq:Dmdefgen}, reading for $S=0$
\begin{equation}
  \label{eq:53}
  D_{2\ell}=
  \begin{cases}
      \sum_{i=1}^\ell(-1)^{\ell+i}\binom{\ell+i-1}{\ell-i}R^{\ell-i}\frac{\partial}{\partial
        t_{2i}} & \text{if $\ell> 0$,}\\
      \frac{\partial}{\partial t} & \text{if $\ell=0$.}
    \end{cases}
  \end{equation}
In the second line of \eqref{eq:recurgenmbip}, it is understood that, if $\ell_i=0$, then
the corresponding sum over $0<m_i<\ell_i$ vanishes. In terms of the $\tau$'s, this relation is rewritten as
\begin{multline}
  \label{eq:55}
   \bip{\tau^{(g)}_{2\ell_1,\ldots,2\ell_n,2\ell_{n+1}}} =
   \frac{D_{2\ell_{n+1}}\!\!\bip{\tau^{(g)}_{2\ell_1,\ldots,2\ell_n}}}{R^{\ell_{n+1}}}\\
   + 
     \frac{R'}{R}\sum_{i=1}^n\left(\ell_{i}\bip{\tau^{(g)}_{2\ell_1,\ldots,2\ell_n}}+
    \sum_{0<m_i<\ell_i} (2m_i)\!\!
    \bip{\tau^{(g)}_{2\ell_1,\ldots,2m_i,\ldots,2\ell_n}}\right)\, .
\end{multline}
Here, we have used the Leibniz product rule for the differential
operator $D_{2\ell_{n+1}}$, and the relation $D_{2\ell}R=R^\ell R'$
which is the case $j=0$ of \eqref{eq:DmRn}, recalling the
definition~\eqref{eq:bnkdef} of $b_{n,k}$.

Let us now sketch the rough idea of the induction, ignoring the
pathological term of \eqref{eq:54} for now:
\begin{itemize}
\item from the recursion relation~\eqref{eq:55}, it is relatively
  straightforward that if $\tau^{(g)}_{2\ell_1,\ldots,2\ell_n}$ is
  polynomial in $\ell_1^2,\ldots,\ell_n^2$, then
  $\tau^{(g)}_{2\ell_1,\ldots,2\ell_n,2\ell_{n+1}}$ is also polynomial
  in $\ell_1^2,\ldots,\ell_n^2$: it is clear for the first term since
  the linear operator $D_{2\ell_{n+1}}$ does not act on these
  variables, while the second term involves a summation which turns
  out to preserve the wanted polynomiality property,
  \item regarding the new variable $\ell_{n+1}$, we use 
    the assumption that the coefficients of
    $\tau^{(g)}_{2\ell_1,\ldots,2\ell_n}$ are rational in the
    derivatives of $R$: acting on this quantity with the differential
    operator $D_{2\ell_{n+1}}$ and using the chain rule, it follows
    from \eqref{eq:DmRn} that 
    $R^{-\ell_{n+1}} D_{2\ell_{n+1}}\tau^{(g)}_{2\ell_1,\ldots,2\ell_n}$
    is polynomial in $\ell_{n+1}^2$, also with rational coefficients
    in the derivatives of $R$. 
\end{itemize}
Let us now make this argument precise.
Assuming that the expression~\eqref{eq:54} holds at rank $n$, we plug
it into the right-hand side of \eqref{eq:55}, and analyse the two
terms separately. The second term is easier to deal with, since it
does not depend on $\ell_{n+1}$ nor receives a contribution from the
pathological term of \eqref{eq:54}. It equals
\begin{equation}
  \label{eq:56}
 \frac{R'}{R} \sum_{i=1}^n \left(\ell_i
   Q^{(g)}_n\left(\ldots,\ell_i^2,\ldots\right)+\sum_{0<m_i<\ell_i}(2m_i)
   Q^{(g)}_n\left(\ldots,m_i^2,\ldots\right)\right)\, ,
\end{equation}
where, for readability, we only display the $u_i$ variable of
$Q^{(g)}_n$. Now, it is an exercise (solved in Lemma
\ref{sec:discr-integr-lemma} below) that for every univariate
polynomial $\mathfrak{q}$, the quantity
$\ell\mathfrak{q}(\ell^2)+\sum_{0<m<\ell}(2m)\mathfrak{q}(m^2)$ is
again a polynomial in $\ell^2$. Therefore, the second term of
\eqref{eq:55} is of the wanted form. To deal with the first term, we
apply the chain rule to obtain
\begin{multline}
  \label{eq:57}
   \frac{D_{2\ell_{n+1}}\!\!\bip{\tau^{(g)}_{2\ell_1,\ldots,2\ell_n}}}{R^{\ell_{n+1}}}=\sum_{j=1}^{3g-2+n}\frac{D_{2\ell_{n+1}}\left(\frac{R^{(j)}}{R}\right)}{R^{\ell_{n+1}}}\frac{\partial Q^{(g)}_n\left(\ldots\right)}{\partial
   x_j}- \\
 c(g,n) \frac{D_{2\ell_{n+1}}t^{2-2g+n}}{R^{\ell_{n+1}}}
 \delta_{\ell_1+\cdots+\ell_n,0}\, .
\end{multline}
Here, the first term is clearly polynomial in
$\ell_1^2,\ldots,\ell_n^2$. Moreover, it is also a polynomial in
$\ell_{n+1}^2$ by \eqref{eq:DmRn}, which also entails that the
coefficients are rational in $R^{(k)}/R$ for $1\leq k\leq
3g-2+(n+1)$. Finally, the second term in the last display is zero
unless $\ell_1,\ldots,\ell_{n+1}$ all vanish, in which case it is
equal to
$-c(g,n) \frac{\partial}{\partial t}t^{2-2g-n}=-c(g,n+1) t^{2-2g-(n+1)}$,
where $c(g,n+1)=(2-2g+n)c(g,n)$.  This completes the proof of the
induction step: if the statement of Proposition~\ref{prop:quasipolbip}
is true for $(g,n)$, then it is also true for $(g,n+1)$.

It remains to initialise the induction. For $g=0$, we initialise the
induction at $n=3$ by using the explicit formula of \cite[Theorem
1.1]{triskell}, which is the bipartite version of Theorem
\ref{thm:triskellnonbip} in Appendix \ref{app:tightpants}.  For
$g\geq 1$, we rely on the consequences of the topological recursion that
are described in \cite{budd2020irreducible}. Assume first that
$g\geq 2$, in which case we initialise the induction at $n=0$: in view
of Remark~\ref{rem:quasipoln0}, we only have to check that the
generating function $\bip{F^{(g)}}=\bip{\tau^{(g)}_\varnothing}$ of
essentially bipartite maps of genus $g$ without boundary is a rational
function of $R^{(k)}/R,1\leq k\leq 3g-2$.  By \cite[Equations (31) and
(32)]{budd2020irreducible}, there exists a polynomial $\tilde{P}_g$
with rational coefficients in $3g-3$ variables
such that for $t=1$ we have
\begin{equation}
  \label{eq:58}
  \bip{F^{(g)}}[t=1]=\tilde{P}_g(0,\ldots,0)-\left(\frac{1}{\bar{M}_0}\right)^{2g-2}\tilde{P}_g\left(\frac{\bar{M}_1}{\bar{M}_0},\ldots,\frac{\bar{M}_{3g-3}}{\bar{M}_0}\right)\, ,
\end{equation}
where $\bar{M}_p$ are the so-called \emph{moments}, defined at
\cite[Equation (30)]{budd2020irreducible}. Using Euler's relation, we
can restore the dependence on $t$ by the formula
\begin{equation}
  \label{eq:74}
  \bip{F^{(g)}}=t^{2-2g}\bip{F^{(g)}}[t=1](t_2,tt_4,t^2t_6,\ldots)\, .
\end{equation}
Consequently,
letting $\bar{M}_p^{(0)}=t\bar{M}_p(t_2,tt_4,t^2t_6,\ldots)$, in
accordance with \cite[Equation (43)]{budd2020irreducible}, we have
\begin{equation}
  \label{eq:59}
  \bip{F^{(g)}}=t^{2-2g}\tilde{P}_g(0,\ldots,0)-\left(\frac{1}{\bar{M}_0^{(0)}}\right)^{2g-2}\tilde{P}_g\left(\frac{\bar{M}_1^{(0)}}{\bar{M}_0^{(0)}},\ldots,\frac{\bar{M}_{3g-3}^{(0)}}{\bar{M}_0^{(0)}}\right)\, .
\end{equation}
Finally, \cite[Lemma 9]{budd2020irreducible} shows that
$\bar{M}_0^{(0)}=R/R'$, while we have 
\begin{equation}
  \label{eq:66}
  \frac{\bar{M}_p^{(0)}}{\bar{M}_0^{(0)}}=\frac{\mathsf{T}_p\left(\frac{R'}{R},\ldots,\frac{R^{(p+1)}}{R}\right)}{(R'/R)^{2p}}
\end{equation}
for some homogeneous polynomial $\mathsf{T}_p$ of degree $2p$. This entails
that $\bip{F^{(g)}}$ is of the wanted form \eqref{eq:54}: the second term
in \eqref{eq:59} is of the form
$Q^{(g)}_0\left(\frac{R'}{R},\ldots,\frac{R^{(3g-2)}}{R}\right)$ with
$Q^{(g)}_0\in \mathbb{Q}(x_1,\ldots,x_{3g-2})$ (there are no variables
$u_i$ for $n=0$), and the first term is of the form $t^{2-2g}c(g,0)$
with $c(g,0)=-\tilde{P}_g(0,\ldots,0)$.

In the case $g=1$, we rely on \cite[Equation
(31)]{budd2020irreducible}, stating, in our notation, that
$F^{(1)}=\ln(tR'/R)/12$. This entails, again by the boundary insertion
formula and by~\eqref{eq:DmRn}, that
\begin{equation}
  \label{eq:60}
 \bip{\tau^{(1)}_{2\ell}}=\frac{D_{2\ell}F^{(1)}}{R^\ell}=\frac{D_{2\ell}\left(\frac{R'}{R}\right)}{12 \frac{R'}{R} R^{\ell}}+\frac{\delta_{\ell,0}}{12t}\, =  \frac{1}{12}\left((\ell^2-1)\frac{R'}{R}+\frac{R''}{R'}\right)   +\frac{\delta_{\ell,0}}{12t}
\end{equation}
which is of the wanted form~\eqref{eq:54}, with $c(1,1)=-1/12$.  This
concludes the proof of Proposition~\ref{prop:quasipolbip}.
\end{proof}

Let us record in passing the following interesting consequence
of~\eqref{eq:60}, which calls for a bijective interpretation.
\begin{prop}
  The generating function of essentially bipartite maps of genus $1$
  with one tight boundary of length $2\ell$ is given by
  \begin{equation}
    \label{eq:65}
    \bip{T^{(1)}_{2\ell}}
    =  \frac{R^\ell}{12}\left((\ell^2-1)\frac{R'}{R}+\frac{R''}{R'}\right)
    +\frac{\delta_{\ell,0}}{12t}\, . 
  \end{equation}
\end{prop}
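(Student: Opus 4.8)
The plan is to obtain~\eqref{eq:65} by a one-line translation of Equation~\eqref{eq:60}, which was established in the course of the proof of Proposition~\ref{prop:quasipolbip}. By the definition~\eqref{eq:tauT}, specialised to $g=1$, $n=1$ and a single boundary of length $2\ell$, one has $\bip{T^{(1)}_{2\ell}} = R^{\ell}\,\bip{\tau^{(1)}_{2\ell}}$; multiplying both sides of~\eqref{eq:60} by $R^{\ell}$ then gives~\eqref{eq:65} at once.

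For completeness we would recall where~\eqref{eq:60} itself comes from, since this is where the actual content lies. The $n=0$ instance of Proposition~\ref{prop:recurgen} gives $T^{(1)}_{2\ell} = D_{2\ell}F^{(1)}$, and~\cite[Equation (31)]{budd2020irreducible} provides $F^{(1)} = \tfrac1{12}\ln(tR'/R)$. For $\ell>0$, the operator $D_{2\ell}$ of~\eqref{eq:53} annihilates $t$ and obeys the Leibniz rule, so $D_{2\ell}F^{(1)} = \tfrac1{12}\,\dfrac{D_{2\ell}(R'/R)}{R'/R}$. Using $D_{2\ell}R = R^{\ell}R'$ (the $j=0$ case of~\eqref{eq:DmRn}) together with $D_{2\ell}R' = R^{\ell}R'' + \ell^2 R^{\ell-1}(R')^2$ (the $j=1$ case of~\eqref{eq:DmRn}, which follows from $q_0=1$, $q_1(\ell)=\ell^2$, $b_{2,1}=R''/R$ and $b_{2,2}=(R'/R)^2$), one finds $D_{2\ell}(R'/R) = R^{\ell-1}R'' + (\ell^2-1)R^{\ell-2}(R')^2$, whence $R^{-\ell}D_{2\ell}F^{(1)} = \tfrac1{12}\bigl((\ell^2-1)\tfrac{R'}{R} + \tfrac{R''}{R'}\bigr)$. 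For $\ell=0$ we have $D_0 = \partial_t$, and the extra term $\tfrac1{12}\partial_t\ln t = \tfrac1{12t}$ produces the Kronecker delta.

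There is essentially no obstacle: the statement merely repackages a formula already proved, and the only arithmetic is the application of~\eqref{eq:DmRn} for $j=0$ and $j=1$, which was carried out above. The single point worth double-checking is that the hypotheses are met, i.e.\ that $(g,n)=(1,1)$ satisfies $2-2g-n=-1<0$, so that Proposition~\ref{prop:recurgen} and~\eqref{eq:DmRn} apply in the essentially bipartite setting in force here.
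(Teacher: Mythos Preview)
Your proposal is correct and follows exactly the paper's approach: the proposition is stated immediately after~\eqref{eq:60} as a direct consequence of it, and your argument via~\eqref{eq:tauT} makes this explicit, with the additional service of spelling out the computation behind~\eqref{eq:60} using~\eqref{eq:DmRn} for $j=0,1$. One tiny quibble: when you invoke Proposition~\ref{prop:recurgen}, the relevant hypothesis is at $n=0$ (before inserting the boundary), where $\chi=2-2g-0=0\le 0$ for $g=1$, rather than at $(g,n)=(1,1)$; the conclusion is unaffected.
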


We now state the non bipartite analogue of
Proposition~\ref{prop:quasipolbip}.

\begin{prop}
  \label{prop:quasipolnonbip}
  Let $g,n$ be nonnnegative integers such that $2-2g-n<0$.  There
  exist a parity-dependent quasi-polynomial $\mathfrak{T}^{(g)}_n$ in
  $n$ variables, whose coefficients are
  rational functions with rational coefficients of the quantities
  $\frac{R^{(k)}}{R}$ and $\frac{S^{(k)}}{\sqrt{R}}$ for
  $k=1,\ldots,3g-2+n$, and a constant $c(g,n)\in \mathbb{Q}$ such
  that, for any integers $\ell_1,\ldots,\ell_n$, we have
    \begin{equation}
    \label{eq:quasipolnonbip}
    \tau^{(g)}_{\ell_1,\ldots,\ell_n}=
    \mathfrak{T}^{(g)}_n(\ell_1^2,\ldots,\ell_n^2)
    - c(g,n) t^{2-2g-n} \delta_{\ell_1+\cdots+\ell_n,0}\, .
  \end{equation}
\end{prop}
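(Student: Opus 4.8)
The plan is to follow, \emph{mutatis mutandis}, the strategy used for Proposition~\ref{prop:quasipolbip}: fix $g$ and argue by induction on the number of boundaries $n$, using the boundary-face insertion relation \eqref{eq:recurface} of Proposition~\ref{prop:recurgen} for the induction step (in its parity-split forms \eqref{eq:recurfaceeven}--\eqref{eq:recurfaceodd}) and results from topological recursion for the initialisation. The new features compared with the bipartite case are that the boundary lengths $\ell_i$ need no longer be even, so that parities genuinely intervene, and that the series $S$, hence the quantities $S^{(k)}/\sqrt{R}$, now enters. Rewriting \eqref{eq:recurface} through \eqref{eq:tauT} gives, for $\ell_{n+1}\geq 0$,
\[
\tau^{(g)}_{\ell_1,\ldots,\ell_n,\ell_{n+1}}=\frac{D_{\ell_{n+1}}\bigl(R^{(\ell_1+\cdots+\ell_n)/2}\,\tau^{(g)}_{\ell_1,\ldots,\ell_n}\bigr)}{R^{(\ell_1+\cdots+\ell_n+\ell_{n+1})/2}}+\sum_{i=1}^n\sum_{m_i<\ell_i}m_i\,\frac{T^{(0)}_{\ell_i,\ell_{n+1}|m_i}}{R^{(\ell_i+\ell_{n+1}-m_i)/2}}\,\tau^{(g)}_{\ell_1,\ldots,m_i,\ldots,\ell_n}\, ,
\]
where, by the strict pants formula \eqref{eq:strictpantsformula}, the ratio $T^{(0)}_{\ell_i,\ell_{n+1}|m_i}/R^{(\ell_i+\ell_{n+1}-m_i)/2}$ equals $R'/R$ if $\ell_i+\ell_{n+1}-m_i$ is even and $S'/\sqrt{R}$ if it is odd. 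One analyses the two terms separately, exactly as in the bipartite proof.

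For the first term, the Leibniz rule for the derivation $D_{\ell_{n+1}}$ and the chain rule reduce matters to the action of $D_m$ on the generators $R^{(k)}/R$ and $S^{(k)}/\sqrt{R}$. For $m=0$ this is immediate since $D_0=\partial/\partial t$ by \eqref{eq:D0def}; for $m>0$ the key input is the combinatorial identity $D_mR^{(k)}=T^{(0)}_{m,1,1,0,\ldots,0}$ and $D_mS^{(k)}=T^{(0)}_{m,1,0,\ldots,0}$ (with $k$, resp.\ $k+1$, trailing zeros), in which no correction terms arise because all the other boundaries have length at most $1$. These genus-$0$ quantities are computed from the pants formula of Theorem~\ref{thm:triskellnonbip} and the strict pants formula \eqref{eq:strictpantsformula} by repeatedly inserting boundary-vertices through \eqref{eq:recur0}; this is a self-contained sub-computation that does \emph{not} invoke the general inductive hypothesis. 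From it one reads off that $R^{-m/2}D_m$ sends a generator of order $k$ to a quasi-polynomial in $m^2$ (depending on the parity of $m$) with coefficients rational over $\mathbb{Q}$ in the generators of order at most $k+1$, which is precisely what closes the induction, the maximal derivative order increasing by at most one to match $3g-2+(n+1)$. The pathological $\delta$-term is carried along as before: annihilated by $D_m$ for $m>0$ and reproduced by $D_0$ with $c(g,n+1)=(2-2g-n)c(g,n)$, consistent with the recursion satisfied by $\chi(\mathcal{M}_{g,n})$.

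For the second term, once $m_i$ is restricted to a fixed residue class modulo $2$ (dictated by the parities of $\ell_i$ and $\ell_{n+1}$), $\tau^{(g)}_{\ell_1,\ldots,m_i,\ldots,\ell_n}$ becomes an honest polynomial in $m_i^2$, and one invokes a discrete-integration (Faulhaber-type) lemma: for any univariate polynomial $\mathfrak{q}$ and any fixed parity, $\sum_{0<m<\ell}2m\,\mathfrak{q}(m^2)$ is a quasi-polynomial in $\ell^2$ depending on the parity of $\ell$ — the general-case analogue of Lemma~\ref{sec:discr-integr-lemma}. Together with the alternative $R'/R$ versus $S'/\sqrt{R}$ prefactor, whose occurrence is read off from the parity of $\ell_i+\ell_{n+1}-m_i$, this yields a contribution of the announced form, with coefficients still rational over $\mathbb{Q}$ in the $R^{(k)}/R$ and $S^{(k)}/\sqrt{R}$; this term carries no $\ell_{n+1}$-dependence and receives nothing from the $\delta$-term. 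For the initialisation I would use, as in the bipartite case: Theorem~\ref{thm:triskellnonbip} at $(g,n)=(0,3)$; the genus-$1$ free energy $F^{(1)}$ supplied by topological recursion, with a single tight boundary inserted, at $(g,n)=(1,1)$; and for $g\geq 2$, the expression of $F^{(g)}$ in terms of the moments à la \cite{budd2020irreducible}, combined with an expression of the moments as rational functions over $\mathbb{Q}$ of the $R^{(k)}/R$ and $S^{(k)}/\sqrt{R}$, at $(g,n)=(g,0)$ (cf.\ Remark~\ref{rem:quasipoln0}). The total-degree assertion then follows by confronting the resulting quasi-polynomial with Proposition~\ref{prop:quasipol}: the two agree for all $\ell_i\geq 1$, and a quasi-polynomial is determined by its values there.

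The main obstacle, relative to the bipartite setting, is the control of $D_m$ on the generators described above: there the explicit Collet--Fusy-based formula \eqref{eq:Tgen} furnished the closed form \eqref{eq:DmRn} for free, whereas here no such formula is available and one must instead run the little sub-induction on boundary-vertex insertions, while simultaneously bookkeeping parities and the systematic occurrence of $\sqrt{R}$ and of the series $S$. For this reason the detailed induction step and the initialisation are carried out in Appendices~\ref{sec:proof-prop-refpr-1} and~\ref{sec:proof-prop-refpr}.
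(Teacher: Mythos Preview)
Your overall strategy matches the paper's: induction on $n$ via~\eqref{eq:recurface}, with the induction step carried out in Appendix~\ref{sec:proof-prop-refpr-1} and the initialisation at $(0,3)$, $(1,1)$, and $(g,0)$ for $g\geq 2$ in Appendix~\ref{sec:proof-prop-refpr}. The sub-induction you describe for $D_m R^{(k)}$ and $D_m S^{(k)}$ is exactly Proposition~\ref{sec:quasi-polyn-tight}.

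There is, however, a concrete gap. The discrete-integration lemma you state---that for a fixed parity of $m$, $\sum_{0<m<\ell}2m\,\mathfrak{q}(m^2)$ is a quasi-polynomial in $\ell^2$---is \emph{false} when $m$ runs over the same parity class as $\ell$: already for $\mathfrak{q}=1$ and $m$ even, the sum equals $\ell^2/2-\ell$ on even $\ell$, which is not a polynomial in $\ell^2$. The correct version (Lemma~\ref{sec:discr-integr-lemma} and Corollary~\ref{sec:discr-integr-lemma-1}, which you cite but misquote) requires an extra term $\tfrac{\ell}{2}\mathfrak{q}(\ell^2)$ in the same-parity case. That correction is supplied precisely by the piece
\[
\frac{\ell_1+\cdots+\ell_n}{2}\,R^{-\ell_{n+1}/2}\,\frac{D_{\ell_{n+1}}R}{R}\,\mathfrak{T}^{(g)}_n(\ell_1^2,\ldots,\ell_n^2)
\]
that the Leibniz rule produces from your first term, and which is itself linear (hence not even) in the $\ell_i$. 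Neither of your two terms is therefore individually a quasi-polynomial in the $\ell_i^2$; only after regrouping these odd contributions do both become even. The paper performs this regrouping explicitly in Appendix~\ref{sec:proof-prop-refpr-1} (see \eqref{eq:29} and the parity-by-parity case analysis \eqref{eq:30}--\eqref{eq:37}); your assertion that the two terms can be analysed independently does not go through as written.
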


The proof, by induction, is in the same spirit as that of
Proposition~\ref{prop:quasipolbip}, but involves extra
difficulties. For this reason, we offload it to the appendices:
Appendix~\ref{sec:proof-prop-refpr-1} is devoted to the induction step
and Appendix~\ref{sec:proof-prop-refpr} to the initialisation.

\begin{proof}[End of the proof of Theorem~\ref{thm:quasipol}]
  In view of Proposition~\ref{prop:quasipolnonbip} and of the
  relation~\eqref{eq:tauT} between the $T$'s and the $\tau$'s, the
  only tasks left to show are that
  $c(g,n)=\chi(\mathcal{M}_{g,n})$ and that $\mathfrak{T}^{(g)}_n$ has
  total degree $3g-3+n$ for every $n\geq 1$. As discussed in
  Remark~\ref{rem:zeroweights}, the first assertion is a consequence
  of~\cite[Theorem~2]{Norbury2010}, which states that the lattice
  count polynomial $N_{g,n}$ evaluates to $\chi(\mathcal{M}_{g,n})$
  when all its variables are set to zero. The second assertion follows
  from Proposition \ref{prop:quasipol}. We believe that it should also
  be possible to keep track of the degree of $\mathfrak{T}^{(g)}_n$ in
  our inductive proof, at the price of a refinement of
  Proposition \ref{prop:quasipolnonbip}, by looking at the joint
  homogeneity properties of $\mathfrak{T}^{(g)}_n$ in the variables
  $\ell_i^2$ as well as the derivatives of $R$ and $S$, similarly to
  what is stated in Theorem \ref{sec:high-moments-high} for $n=0$. 
\end{proof}

\section{Conclusion and discussion}\label{sec:conclusion}

In this paper, we have seen that, within the theory of topological
recursion applied to the enumeration of maps, the  expansion of the
fundamental quantity $\omega^{(g)}_n(z_1,\ldots,z_n)$ has a simple
relation with the
generating functions $T^{(g)}_{\ell_1,\ldots,\ell_n}$
of maps of genus $g$ with $n$ tight boundaries of lengths $\ell_1,\ldots,\ell_n$. This coincidence is combinatorially explained
by the trumpet decomposition. We have found recursion relations, also of combinatorial nature, expressing 
$T^{(g)}_{\ell_1,\ldots,\ell_{n+1}}$ in terms of $T^{(g)}_{\ell_1,\ldots,\ell_n}$. 
Finally, we have seen that, when at least one $\ell_i$ is non-zero, 
$T^{(g)}_{\ell_1,\ldots,\ell_n}$ is equal to a parity-dependent
quasi-polynomial in the variables $\ell_1^2,\ldots,\ell_n^2$ times a simple power of $R$.

At this stage, a tantalizing question is whether one can derive
\emph{bijectively} an expression for
$T^{(g)}_{\ell_1,\ldots,\ell_n}$. So far, only the situation
$(g,n)=(0,3)$ has been treated in \cite{triskell} for the essentially
bipartite case, and in Appendix~\ref{app:tightpants} for the general
case. In particular, the remarkably simple formulas \eqref{eq:Tfour}
for $(g,n)=(0,4)$ and \eqref{eq:65} for $(g,n)=(1,1)$, in the case of
maps with faces of even degree only, are still waiting for a bijective
proof. One may also attempt to find a combinatorial interpretation
of~\eqref{eq:Tgen}, given that its constituents
$p_k(\ell_1,\ldots,\ell_n)$ and $b_{n-2,k+1}$ have themselves
interpretations in terms of tight maps~\cite{polytightmaps} and set
partitions, respectively.

\paragraph{Acknowledgements.} Many thanks are due to the anonymous
referee for suggesting valuable improvements to this paper. 
This work is supported by the
Agence Nationale de la Recherche via the grants ANR-18-CE40-0033
``Dimers'', ANR-19-CE48-0011 ``Combiné'' and ANR-23-CE48-0018
``CartesEtPlus''. JB acknowledges the hospitality of the Laboratoire
de Physique of ENS de Lyon during the initial stages of this work.
This work was completed while GM was holding a visiting professor
position at the Research Institute for Mathematical Sciences, an
International Joint Usage/Research Center located in Kyoto University.

\appendix

\section{Bijective enumeration of tight pairs of pants}
\label{app:tightpants}
The purpose of this appendix is to establish the following theorem.

\begin{thm}
  \label{thm:triskellnonbip}
  Let $a$, $b$ and $c$ be nonnegative integers or half-integers. Then, the
  generating function $T^{(0)}_{2a,2b,2c}$ of planar maps with three labeled
  distinct tight boundaries of lengths $2a$, $2b$, $2c$, counted with
  a weight $t$ per vertex different from a boundary-vertex and, for
  all $k\geq 1$, a weight $t_{k}$ per inner face of degree $k$, is
  given by
  \begin{equation}
    \label{eq:Tabc}
    T^{(0)}_{2a,2b,2c} =
    \begin{cases}
      R^{-1} \frac{\partial R}{\partial t} - t^{-1} & \text{if $a=b=c=0$,} \\
      R^{a+b+c-1} \frac{\partial R}{\partial t} & \text{if $a+b+c \in \Z_{>0}$,}\\
      R^{a+b+c-\frac12} \frac{\partial S}{\partial t} & \text{if $a+b+c \in \Z_{>0}-\frac12$,}
    \end{cases}
  \end{equation}
  where $R$ and $S$ are the formal power series in
  $t,t_1,t_2,t_3,\ldots$ defined in Section~\ref{sec:defs}.
\end{thm}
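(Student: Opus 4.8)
The plan is to extend the bijective construction of \cite{triskell}, which proved the bipartite instance (all of $a,b,c$ integers, $S=0$), to faces of odd degree and to odd boundary lengths. Since a planar map carrying three distinguished boundaries has trivial automorphism group, we may pass freely between $T^{(0)}_{2a,2b,2c}$ and its rooted version through \eqref{eq:ThatT}; so, setting aside the fully degenerate case $a=b=c=0$ (treated at the end), we assume from now on that, say, $a>0$, and root the pair of pants at a corner of its first boundary-face $f_1$.

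The core is a geodesic cut. From the root corner of $f_1$, I follow the leftmost geodesic toward $f_2$ and the leftmost geodesic toward $f_3$: tightness of the three boundaries makes these well-defined simple paths, and the standard non-crossing property of leftmost geodesics shows that their union is a tree $\Gamma$ (a path or a ``Y''), meeting $f_1$ only at the root. Cutting the surface along $\Gamma$ raises the Euler characteristic from $-1$ to $1$, so it turns the pair of pants into a planar map with a single boundary-face, whose contour reads off $f_1,f_2,f_3$ interleaved with the two sides of each branch of $\Gamma$. The essential point, exactly as in \cite{triskell}, is reversibility: one shows that the branches of $\Gamma$ remain leftmost geodesics in the cut-open disk, so that re-identifying their two sides reconstructs the original map. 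This has to be re-examined in our generality, because --- unlike the mouthpiece of a trumpet (Lemma~\ref{sec:decomp-theor-1}) --- a tight boundary of a planar pair of pants need not have a simple contour.

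It then remains to enumerate the cut-open disks. These are not arbitrary: every arc of the contour is geodesic and the turnings at the marked corners are constrained, so the disks are governed by a slice decomposition whose elementary cells are of two kinds, enumerated respectively by $R$ and by $S$ (the latter occurring only in the non-bipartite regime). A bookkeeping of lengths shows that the total cell weight equals $R^{a+b+c-1}$ times the weight of one distinguished cell, which carries the genuine vertex at the branching of $\Gamma$; hence it is counted by $\partial R/\partial t$ when $a+b+c$ is an integer and by $\partial S/\partial t$ when $a+b+c\in\Z_{>0}-\tfrac12$, exactly one $S$-type cell being forced precisely when the overall length defect is odd. This last dichotomy is the non-bipartite analogue of the parity constraint of \cite{triskell} and reflects the presence of the term $S$ in $x(z)=R^{1/2}(z+z^{-1})+S$. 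Putting the pieces together gives \eqref{eq:Tabc} for $a+b+c>0$; keeping one boundary strictly tight throughout (inserting neither a cell nor a vertex weight along it) yields, as a by-product, the formula \eqref{eq:strictpantsformula}. In the bipartite case the whole computation is equivalent to combining the trumpet decomposition (Corollary~\ref{cor:funFT} and Proposition~\ref{sec:enum-coroll}) with the Collet--Fusy formula~\eqref{eq:CF} for $n=3$, inverted through the unitriangular matrix $(A_{L,\ell})$. Finally, when $a=b=c=0$ the three boundaries are vertices and the cut degenerates; here one argues directly that turning a $t$-weighted vertex into a marked length-$0$ boundary-vertex is the operation $\partial/\partial t$, so that $T^{(0)}_{0,0,0}=\partial_t T^{(0)}_{0,0}=\partial_t\ln(R/t)=R^{-1}\partial_t R-t^{-1}$, using the known value $T^{(0)}_{0,0}=\ln(R/t)$ of the generating function of tight cylinders (equivalently, Proposition~\ref{prop:recur0} at $\ell_1=\ell_2=0$).

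I expect the main obstacle to lie in the interaction of the two extensions over \cite{triskell}: odd boundary lengths and odd face degrees. In the bipartite case all lengths live on an even sublattice, which is what makes the leftmost-geodesic cut, its inverse, and the slice count dovetail; allowing odd data forces a finer analysis of where the unique $S$-type cell can sit and of how it interacts with the cut, and essentially all of the new work is there. Re-proving that the cut is genuinely bijective --- that the leftmost geodesics are reconstructible from the cut-open disk even though the tight boundaries may fail to be simple --- is conceptually the same as in \cite{triskell}, but must be carried out afresh.
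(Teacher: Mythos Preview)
Your outline diverges from the paper's proof in a fundamental way, and it also mischaracterises the construction of \cite{triskell} that you claim to be extending.

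The paper does not cut along a Y-shaped pair of leftmost geodesics from $f_1$ to $f_2$ and $f_3$. The bijection of \cite{triskell} is a \emph{quintuple} decomposition into three bigeodesic diangles $D_1,D_2,D_3$ and two bigeodesic triangles $T_1,T_2$ (Figure~\ref{fig:assembling_unified}), with two combinatorially distinct gluing patterns (types I and II) depending on whether $(a,b,c)$ satisfies the triangle inequality. The paper's extension to the non-bipartite case does not redo this construction from scratch: instead it applies the key trick of \emph{edge subdivision}. One replaces $M$ by the bipartite map $\phi(M)$ obtained by inserting a midpoint vertex on every edge, applies the existing bipartite bijection to $\phi(M)$, and then observes that since bigeodesics can only merge at vertices of degree $\geq 3$ (necessarily regular vertices), every piece of the quintuple has a well-defined preimage under $\phi$. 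All the new work reduces to tracking whether the attachment points of the pieces land on regular or midpoint corners; this dichotomy produces ``even'' and ``odd'' generalized bigeodesic triangles with generating functions $Y,\widetilde{Y}$, and the formula follows by enumerating the quintuples and identifying $\partial R/\partial t$ and $\partial S/\partial t$ via the tautologically tight cases $T^{(0)}_{1,1,0}$ and $T^{(0)}_{1,0,0}$.

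Your single-cut-to-a-disk sketch is a different (and considerably vaguer) idea. Several steps are not defined: what is ``the leftmost geodesic toward $f_2$'' without a target corner; why the two geodesics form a tree; what the slice decomposition of the resulting disk is and why its cell count is exactly $R^{a+b+c-1}$ with a single distinguished cell carrying $\partial R/\partial t$ or $\partial S/\partial t$. You yourself flag the reversibility of the cut and the placement of the unique $S$-cell as open. As written this is a plan, not a proof, and the subdivision shortcut that the paper uses is precisely what would let you avoid reproving the entire bijection in the odd setting. Your treatment of the degenerate case $a=b=c=0$ is fine.
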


This theorem is an extension of~\cite[Theorem~1.1]{triskell} to non
bipartite maps. Note that, in this reference (which the reader is
invited to consult for any definition not found in the current paper),
we used the notation $T_{a,b,c}$ instead of the present notation
$T^{(0)}_{2a,2b,2c}$. The (essentially) bipartite case is recovered
upon taking $t_1=t_3=t_5=\cdots=0$ which, as noted above, implies that
$R$ satisfies~\eqref{eq:Rbipeq} and that $S$ vanishes. In that case,
$T^{(0)}_{2a,2b,2c}$ is non-zero if and only if $a+b+c$ is an integer,
as wanted.

To prove Theorem~\ref{thm:triskellnonbip}, we will use the well-known
fact that any map $M$ can be transformed into a bipartite map by doing
a \emph{subdivision} of every edge, that is adding a new vertex at the
midpoint of every edge, which is thus split into two. We denote by
$\phi(M)$ the resulting \emph{subdivided map}. The vertices of
$\phi(M)$ consist of \emph{regular vertices} (those originally in $M$)
and \emph{midpoint vertices} (those added by the subdivision, which
have degree $2$). The faces of $\phi(M)$ are those of $M$, only their
degree is doubled.

\begin{figure}
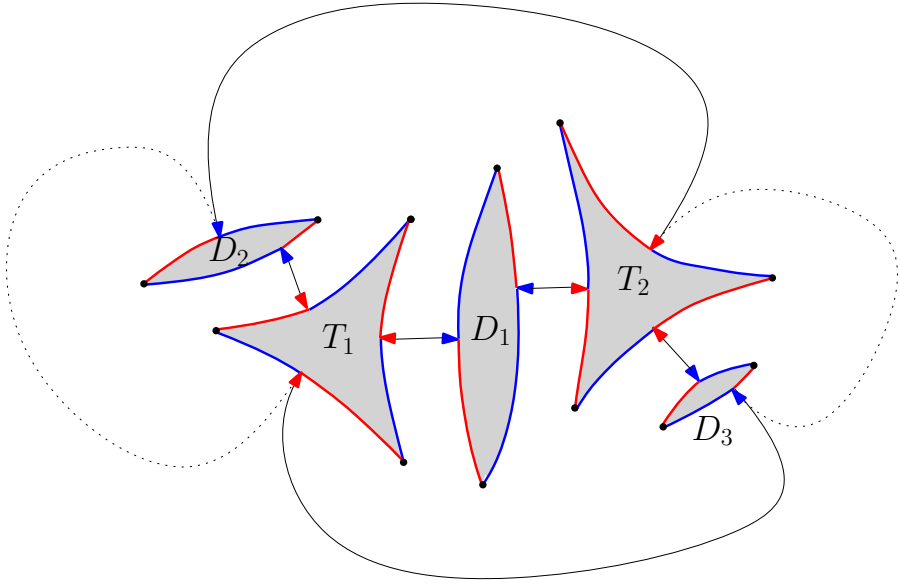

  \centering
  \fig{.8}{assembling_unified}
  \caption{A summary representation of the bijection of
    \cite{triskell} between maps with three tight boundaries and
    quintuples $(D_1,D_2,D_3,T_1,T_2)$ made of three bigeodesic
    diangles $D_1,D_2,D_3$ and two bigeodesic triangles $T_1,T_2$. The
    solid lines indicate the identification between attachment points
    for type I. For type II, the two outer identification lines have
    to be replaced by the dotted lines.}
  \label{fig:assembling_unified}
\end{figure}

We then proceed by ``conjugating'' the bijective decomposition
of~\cite{triskell} by $\phi$. Indeed, let $M$ be a planar map with
three tight boundaries of lengths $2a,2b,2c$: clearly, $\phi(M)$ is a
bipartite planar map with three tight boundaries. We may then apply
one of two possible decompositions described in [\emph{ibid.},
Sections~5.6 and 5.7], namely:
\begin{itemize}
\item the decomposition of type I if $a \leq b + c$ and
  $b \leq c + a$ and $c \leq a + b$,
\item the decomposition of type II if $a \geq b + c$ or $b \geq c + a$
  or $c \geq a + b$.
\end{itemize}
In both cases, we obtain a quintuple consisting of five pieces: two
so-called bigeodesic triangles, and three bigeodesic diangles. The
vertices of these pieces are those of $\phi(M)$, and we see that a
midpoint vertex of $\phi(M)$ remains of degree $2$ in each piece to
which it belongs: indeed, the decomposition is made by cutting along
bigeodesics, and distinct bigeodesics may only merge at vertices of
degree at least $3$, which are necessarily regular. As a consequence
of this remark, each piece admits a (unique) preimage by $\phi$, and
so our decomposition amounts to cutting $M$ itself into five pieces,
denoted by $(D_1,D_2,D_3,T_1,T_2)$ in
Figure~\ref{fig:assembling_unified}.

While this approach seems very natural, the main difficulty is to
characterize precisely which sorts of pieces we may obtain in the
decomposition. Recall from [\emph{ibid.}, Sections~2.3 and 2.4] that
bigeodesic diangles and triangles are planar maps with one
boundary-face having several (four and six, respectively)
distinguished incident corners, half of them being the so-called
\emph{attachment points}. Let us here call \emph{tips} the other
distinguished corners: by the above remark, we see that the tips of
the pieces obtained in the decomposition of $\phi(M)$ are always
incident to regular vertices. However, the attachment points may be
incident to either regular or midpoint vertices. We are thus led to
introduce the following definitions.

In a map $M$, a \emph{midpoint corner} is a corner of the subdivided
map $\phi(M)$ which is incident to a midpoint vertex. A \emph{regular
  corner} is a corner in the usual sense, it corresponds to a corner
of $\phi(M)$ incident to a regular vertex. We define the distance
$d_M(c,c')$ between two (regular or midpoint) corners $c,c'$ of $M$ as
\emph{half} the graph distance between their incident vertices in
$\phi(M)$. The distance between two regular corners, or between two
midpoint corners, is thus an integer, while the distance between a
regular corner and a midpoint corner is a half-integer. The notion of
geodesic boundary interval of [\emph{ibid.}, Section~2.1] extends
naturally to midpoint corners.

\begin{figure}[h]
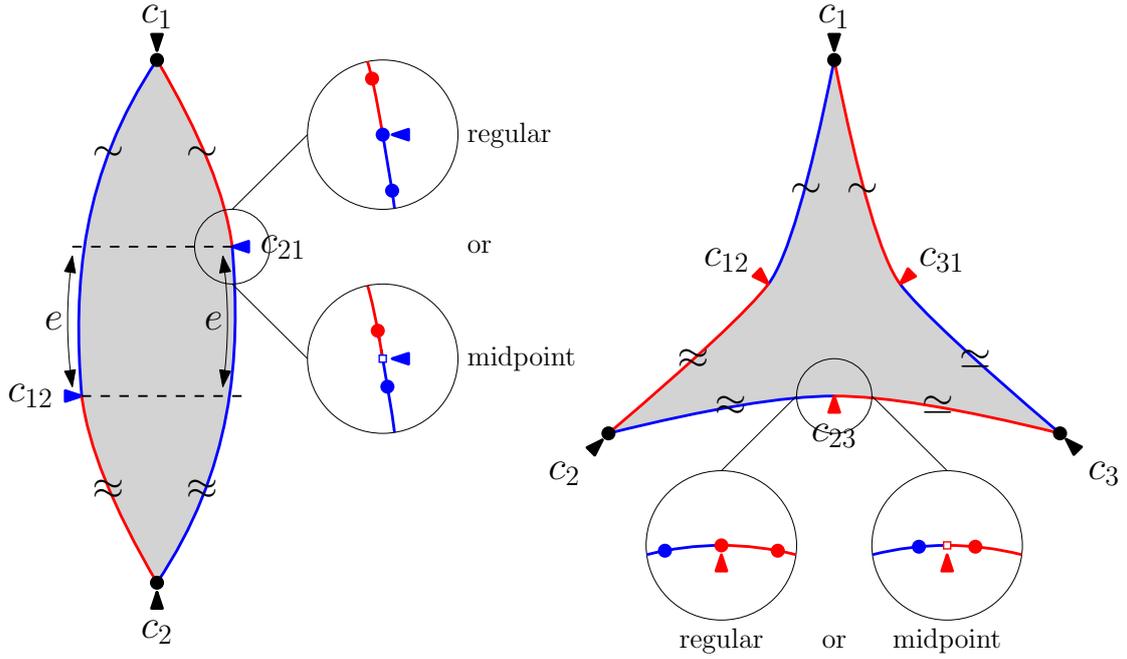

  \centering
  \fig{}{diangle_triangle_nonbip}
  \caption{Schematic representation of a generalized bigeodesic
    diangle (left) and triangle (right). The tips $c_1,c_2,c_3$ are
    regular corners, incident to vertices (shown as solid disks). The
    attachment points $c_{12},...$ are either regular corners or
    midpoint corners, incident to midpoint vertices of the subdivided
    map (shown as small squares). We use the same red/blue coloring
    convention as~\cite[Figures~2.2 and 2.5]{triskell}.}
  \label{fig:diangle_triangle_nonbip}
\end{figure}

A \emph{generalized bigeodesic diangle} $D$ is defined as in
[\emph{ibid.}, Section~2.3] except that we no longer require $D$ to be
bipartite, and that we allow that zero, one or two among the
attachment points $c_{12},c_{21}$ are midpoint corners. We still
require that the tips $c_1,c_2$ are regular corners of $D$. See the
left of Figure~\ref{fig:diangle_triangle_nonbip} for an
illustration. The \emph{exceedance} of $D$ is defined as
\begin{equation}
  e(D) := d_D(c_{12},c_1)-d_D(c_{21},c_1)=d_D(c_{21},c_2)-d_D(c_{12},c_2).
\end{equation}
This quantity is half the exceedance of $\phi(D)$, which is clearly a
bigeodesic diangle in the original sense. When $e(D)$ is an integer,
the two attachment points are of the same kind (regular or midpoint),
and they are of different kinds when $e(D)$ is a half-integer.

A \emph{generalized bigeodesic triangle} $T$ is defined as in
[\emph{ibid.}, Section~2.4] except that we no longer require $T$ to be
bipartite, and that we allow the attachment points
$c_{12},c_{23},c_{31}$ to be midpoint corners. We still require that
the tips $c_1,c_2,c_3$ are regular corners of $T$. Recall that the
definition of a bigeodesic triangle implies that
$d_T(c_{12},c_1)=d_T(c_{31},c_1)$ (and circular permutations thereof),
and hence all the attachment points must be of the same kind. The
triangle is said \emph{odd} if $c_{12},c_{23},c_{31}$ are midpoint
corners, and \emph{even} if they are regular corners. See the right of
Figure~\ref{fig:diangle_triangle_nonbip} for an illustration. Clearly,
$\phi(T)$ is a bigeodesic triangle in the usual sense.

With these new definitions at hand, by applying [\emph{ibid.}, Theorem
3.1] to $\phi(M)$, we arrive at the following proposition.

\begin{prop}
  \label{prop:biject-enum-tight}
  Let $M$ be a planar map with three tight boundaries of lengths
  $2a,2b,2c$.

  If $a \leq b + c$, $b \leq c + a$, and $c \leq a + b$,
  then by the type I decomposition of~\cite[Section~5.6]{triskell},
  $M$ is in one-to-one correspondence with 
  a quintuple $(D_1,D_2,D_3,T_1,T_2)$ where $D_1,D_2,D_3$
  are generalized bigeodesic triangles of respective exceedances
  $e(D_1)=a+c-b$, $e(D_2)=a+b-c$, $e(D_3)=b+c-a$, and $T_1,T_2$ are
  generalized bigeodesic triangles, with the constraints that the
  attachment points which are identified together on
  Figure~\ref{fig:assembling_unified} must be of the same (regular or
  midpoint) kind, and that not all elements of this quintuple are
  reduced to the vertex-map. 

  If $b \geq a+c$, say, then the type II decomposition
  of~[ibid., Section~5.7] produces a quintuple
  $(D_1,D_2,D_3,T_1,T_2)$ with the same properties, except that the
  exceedances are now $e(D_1)=b-a-c$, $e(D_2)=2a$, $e(D_3)=2c$.

  For both type I and type II, we have the relation
  \begin{equation}
    \label{eq:excrel}
    e(D_1)+e(D_2)+e(D_3)=a+b+c\, .
  \end{equation}
\end{prop}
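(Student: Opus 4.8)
The plan is to ``conjugate'' the bijection of~\cite[Theorem~3.1]{triskell} by the subdivision map $\phi$. Let $M$ be a planar map with three tight boundaries of lengths $2a,2b,2c$, so that $\phi(M)$ is a bipartite planar map with three tight boundaries of lengths $4a,4b,4c$. The first step is to apply the decomposition of~\cite{triskell} to $\phi(M)$: since the triangle inequalities for $(2a,2b,2c)$ hold if and only if those for $(a,b,c)$ hold, the type I (resp.\ type II) case of the present statement corresponds exactly to the type I (resp.\ type II) case of~\cite{triskell}, which produces a quintuple $(\tilde D_1,\tilde D_2,\tilde D_3,\tilde T_1,\tilde T_2)$ of ordinary bigeodesic diangles and triangles reassembling into $\phi(M)$, with the exceedance of $\tilde D_i$ equal to twice the value of $e(D_i)$ claimed in the proposition (this uses the exceedance formulas of~\cite{triskell} applied with boundary half-lengths $2a,2b,2c$).

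Next I would show that each of these five pieces descends through $\phi$. The decomposition of~\cite{triskell} cuts $\phi(M)$ along bigeodesics, and the corners where three bigeodesics meet---which are precisely the tips of the resulting diangles and triangles---have degree at least $3$ in $\phi(M)$, hence are incident to regular vertices. Moreover the midpoint vertices of $\phi(M)$, being of degree $2$, cannot be such branch points, so in every piece they remain of degree $2$. Consequently each piece inherits the structure of a subdivided map (its vertices $2$-coloured into regular and midpoint, every edge joining the two colours, every midpoint vertex of degree $2$) and hence lies in the image of $\phi$; we set $D_i:=\phi^{-1}(\tilde D_i)$ and $T_j:=\phi^{-1}(\tilde T_j)$. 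Since $\phi$ commutes with gluing, the reassembly of the $\tilde D_i,\tilde T_j$ into $\phi(M)$ is the $\phi$-image of the reassembly of the $D_i,T_j$, so the latter reassemble into $M$. Unwinding the definitions given above, $\phi(D)$ is a bigeodesic diangle with regular tips exactly when $D$ is a generalized bigeodesic diangle, and $\phi(T)$ is a bigeodesic triangle with regular tips exactly when $T$ is a generalized bigeodesic triangle, odd or even according to whether the attachment points of $\phi(T)$ sit at midpoint or regular vertices. Finally $e(D_i)=\tfrac12 e(\tilde D_i)$ by the very definition $d_D=\tfrac12 d_{\phi(D)}$, which yields the stated exceedances; summing them (equivalently, halving the corresponding identity of~\cite{triskell}) gives~\eqref{eq:excrel}.

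It then remains to identify the inverse map, and thereby establish the bijection onto the set of quintuples of generalized diangles and triangles with the prescribed exceedances and with identified attachment points of equal (regular or midpoint) kind. Injectivity, and the containment of the image in this set, follow from the above together with the bijectivity of~\cite[Theorem~3.1]{triskell}, once one observes the necessity of the matching-kind condition: gluing two attachment points of different kinds would identify a regular vertex of one piece with a midpoint vertex of another, contradicting the fact that each vertex of $\phi(M)$ has a single kind. For surjectivity one reverses the construction: given such a quintuple, apply $\phi$ to each piece, check that the matching-kind condition makes the resulting ordinary pieces assemble, in the sense of~\cite{triskell}, into a bipartite planar map with three tight boundaries of lengths $4a,4b,4c$, and take the $\phi$-preimage of that map.

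I expect this last point to be the main technical obstacle: one must verify that the ``same kind'' condition is exactly the right one---neither too weak, so that the reassembled object is genuinely a subdivided map, nor too strong, so that every subdivided tight pair of pants does decompose with matching kinds. This demands careful bookkeeping of the alternation of regular and midpoint vertices along each glued boundary arc, expressed in terms of the exceedances and the boundary-interval lengths appearing in~\cite{triskell}, and using the bigeodesic structure to pin down the relevant parities.
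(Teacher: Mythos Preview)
Your proposal is correct and follows essentially the same route as the paper: conjugate the decomposition of~\cite[Theorem~3.1]{triskell} by the edge-subdivision map $\phi$, use that distinct bigeodesics merge only at vertices of degree $\geq 3$ (hence never at midpoint vertices) to ensure each piece descends through $\phi$, and read off the exceedances as half those of the subdivided pieces. The paper is in fact terser than you are: it sets up the generalized diangle/triangle definitions, makes the degree-$2$ observation, and then simply states the proposition as an application of~\cite[Theorem~3.1]{triskell} to $\phi(M)$, without spelling out the injectivity/surjectivity argument or the parity bookkeeping you flag at the end.
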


The above proposition encompasses a number of different cases. When
$a+b+c$ is an integer, all the exceedances are integers, and therefore
all twelve attachment points are of the same kind. If they are all
regular (resp.\ midpoint) corners, then $T_1$ and $T_2$ are both even
(resp.\ odd) triangles. When $a+b+c$ is a half-integer, $e(D_1)$ is a
half-integer and hence $T_1$ and $T_2$ have different
``parities''. The other exceedances $e(D_2)$ and $e(D_3)$ are
half-integers for type I, and integers for type II.

Let us now discuss the enumerative consequences of
Proposition~\ref{prop:biject-enum-tight}.  To this end, it is
necessary to introduce the generating functions of the various objects
that may arise in the situations described above. Generally speaking,
we assign a weight $t_k$ per inner face of degree $k$ (before
subdivision), and a weight $t$ per vertex (of the map, i.e.\ per
regular vertex of the subdivided map). Following the conventions of
[\emph{ibid.}], vertices that lie on strictly geodesic boundaries
(displayed in red in the figures), and that are not incident to
attachment points, do not receive the weight $t$.  We denote by $Y$
(resp.\ $\widetilde{Y}$) the generating function of generalized
bigeodesic triangles that are even (resp.\ odd). We then treat the
case of diangles in the following proposition.

\begin{prop}\label{prop:biject-enum-tight-1}
 Let $X$ denote the generating function of generalized bigeodesic
diangles with exceedance $0$, whose attachment points are both
regular corners. Then the following holds. 

The generating function of generalized bigeodesic diangles with
exceedance $e\in \Z_{\geq 0}$, whose attachment points are both
regular corners, is equal to $R^eX$.

The generating function of generalized bigeodesic diangles with
exceedance $e\in \Z_{\geq 0}+\frac 1 2$, where the first attachment
point is a regular corner and the second a midpoint corner, is equal
to $R^{e+\frac{1}{2}}X/t$.

  The generating function of generalized bigeodesic diangles with
  exceedance $e\in \Z_{\geq 0}$, whose
  attachment points are both midpoint corners, is equal to $R^{e+1}X/t^2$.
\end{prop}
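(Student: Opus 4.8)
The plan is to prove the three formulas simultaneously. To a generalized bigeodesic diangle $D$ I would attach the \emph{index} $f(D):=e(D)+\tfrac12 N(D)$, where $N(D)\in\{0,1,2\}$ counts how many of its two attachment points are midpoint corners; since $e(D)$ is an integer precisely when $N(D)$ is even, $f(D)$ is always a nonnegative integer, and the three assertions become the single claim that the generating function of generalized bigeodesic diangles of index $f$ with $N$ midpoint attachment points (the two tips being regular corners, as always) equals $R^{f}X/t^{N}$. For $N=1$ the two variants --- whether $c_{12}$ or $c_{21}$ is the midpoint corner --- carry the same generating function, via the relabelling $c_1\leftrightarrow c_2$, $c_{12}\leftrightarrow c_{21}$, which fixes the exceedance because of its two equivalent expressions. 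The base case $f=N=0$ is the definition of $X$, so it suffices to establish two bijective recursions and chain them down: step $(A)$, for $N=0$ and $e\geq 1$, showing that a diangle of exceedance $e$ corresponds to a diangle of exceedance $e-1$ together with an extra planar piece counted by $R$, whence $G_{f,0}=R\,G_{f-1,0}$ and, iterating, $G_{f,0}=R^{f}X$; and step $(B)$, for $N\geq 1$, showing that the $N$-midpoint family (of index $f$) is in bijection with the $(N-1)$-midpoint family (of the same index $f$, i.e.\ exceedance $e+\tfrac12$) in such a way that the weight is multiplied by exactly $t$, whence $G_{f,N-1}=t\,G_{f,N}$ and, iterating down to $N=0$, $G_{f,N}=t^{-N}G_{f,0}=R^{f}X/t^{N}$. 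One checks that the exceedances occurring along the way stay nonnegative, so every move is legitimate.

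For step $(A)$ I would mirror the proof of the corresponding bipartite statement in~\cite{triskell} --- that a bigeodesic diangle of exceedance $e\geq 1$ splits off a piece counted by $R$ --- carrying it out directly on the generalized diangle $D$: cutting along the outermost geodesic level line on the side bearing the larger exceedance detaches a planar map with two boundaries of length one, which is exactly an object enumerated by the basic series $R$, and leaves a generalized bigeodesic diangle of exceedance $e-1$; the cut shifts both attachment points inward by one full geodesic step and therefore does not alter their regular/midpoint type, and the two pieces glue back in a unique way. The content that is genuinely new relative to~\cite{triskell} is bookkeeping: one must verify that, under the present weighting convention whereby a vertex lying on a strictly geodesic (red) boundary and not incident to an attachment point carries no weight $t$, the detached region is weighted precisely like an $R$-object, and one must handle the degenerate configurations in which the residual diangle collapses.

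For step $(B)$, suppose (say) $c_{12}$ is a midpoint corner, i.e.\ the midpoint of an edge of $D$ lying on the geodesic boundary piece through $c_{12}$, and let $x$ be the endpoint of that edge on the $c_2$-side. Sliding $c_{12}$ outward onto $x$ turns it into a regular corner, raises $e$ by $\tfrac12$ and lowers $N$ by one; this is a bijection, whose inverse slides the attachment point back onto the adjacent midpoint, once one checks --- again as in~\cite{triskell} --- that the shifted boundary pieces remain geodesic (the second piece automatically, being an initial subpath removed; the first piece requiring the usual leftmost-geodesic argument). Its effect on weights is a single factor $t$: in the new diangle $x$ is incident to an attachment point and hence receives the weight $t$, whereas in $D$ it lay on a strictly geodesic red boundary without being incident to any attachment point and so was unweighted, all other vertices and faces being untouched. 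I expect step $(A)$ to be the main obstacle, since it amounts to re-running the bijective heart of~\cite{triskell} in the non-bipartite setting while faithfully tracking the two attachment points and respecting the red-boundary weighting; once this is secured, step $(B)$ and the assembly of the three formulas are routine.
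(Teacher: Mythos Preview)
Your proposal is correct and follows essentially the same approach as the paper: the paper obtains the first statement by adapting the slice decomposition of \cite[Proposition~2.2]{triskell} to the non-bipartite setting (your step~(A)), and obtains the second and third statements by sliding each midpoint attachment point to the adjacent regular vertex along the red arc and tracking the single factor of $t$ gained (your step~(B), with $c_{12}\to c_2$ and $c_{21}\to c_1$). Your unifying index $f(D)=e(D)+\tfrac12 N(D)$ is a clean way to package the three cases, but the underlying bijections are identical to the paper's.
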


\begin{proof}
  The first statement is obtained by a straightforward adaptation of
  the proof of [\emph{ibid.}, Proposition 2.2], noting that the
  arguments do not require that the maps be bipartite, and that $R$
  can still be interpreted as the generating function for 
  elementary slices.

  The second statement follows from the first one, by observing that
  there is a bijection between the generalized bigeodesic diangles
  whose exceedance is $e\in \Z_{\geq 0}+\frac12$ and whose second
  attachment point is a midpoint corner, and diangles whose exceedance
  is $e+\frac12$ and whose both attachment points are regular
  corners. Indeed, looking at
  Figure~\ref{fig:diangle_triangle_nonbip}, we simply have to move the
  second attachment point $c_{21}$ to the nearest (red) regular vertex
  in the direction of $c_1$, and we need to divide by $t$ to unweigh
  this regular vertex.

Finally, the third statement follows from a similar observation: we
now move the two attachment points $c_{12}$ and $c_{21}$ to their
nearest regular vertices in the direction of $c_2$ and $c_1$
respectively, resulting in a diangle of exceedance $e+1$, and the
weight $1/t^2$ is needed to unweigh these regular vertices. 
\end{proof}

\begin{proof}[Proof of Theorem \ref{thm:triskellnonbip}. ]
  We apply Proposition~\ref{prop:biject-enum-tight}, discussing separately the
  different cases that appear there.
   Let us first consider the case $a+b+c\in \Z_{> 0}$: we may write
  \begin{align}
    \nonumber
    T^{(0)}_{2a,2b,2c}&=
    \frac{1}{t^6}(R^{e(D_1)}X)(R^{e(D_2)}X)(R^{e(D_3)}X)Y^2+\frac{R^{e(D_1)+1}X}{t^2}\frac{R^{e(D_2)+1}X}{t^2}\frac{R^{e(D_3)+1}X}{t^2}\widetilde{Y}^2
    \\
    &=R^{a+b+c}\frac{X^3(Y^2+R^3\widetilde{Y}^2)}{t^6}\, . \label{eq:TXYeven}
  \end{align}
  Here, the term in $Y^2$ (resp.\ $\widetilde{Y}^2$) corresponds to
  the the situation where both $T_1$ and $T_2$ are even (resp.\ odd)
  triangles, and the division by $t^6$ in the first term arises from
  the identification between the vertices incident to the attachment
  points. Note that the second equality holds both for type I and type
  II, thanks to~\eqref{eq:excrel}.

  The formula~\eqref{eq:TXYeven} remains valid in the case $a=b=c=0$,
  except that we have to remove a spurious term $t^{-1}$ corresponding
  to the contribution of the quintuple whose all elements are reduced
  to the vertex map.

  Let us now consider the case $a+b+c\in \Z_{\geq 0}+\frac 12$: then
  the discussion differs slightly between type I and type II. For type
  I, we may write
  \begin{equation}
    \label{eq:TXYoddI}
    T^{(0)}_{2a,2b,2c}=
    \frac{1}{t^3}\frac{R^{e(D_1)+\frac12}X}{t}\frac{R^{e(D_2)+\frac12}X}{t} \frac{R^{e(D_3)+\frac12}X}{t}(2Y\widetilde{Y})
    =R^{a+b+c-\frac12}\frac{2R^2X^3Y\widetilde{Y}}{t^6}\, .
  \end{equation}
  Here, the factor $2Y\widetilde{Y}$ enumerates the pairs $(T_1,T_2)$
  of different parities. Note that in this situation, all diangles
  have exactly one regular attachment point, and the factor $1/t^3$
  arises from the identification between the vertices at these
  attachment points. For type II, assuming without loss of generality
  that $b\geq a+c$ as in Proposition \ref{prop:biject-enum-tight}, we
  may write
  \begin{align}
    \nonumber
    T^{(0)}_{2a,2b,2c}&=
    \frac{1}{t^3}\frac{R^{e(D_1)+\frac12}X}{t}\left( (R^{e(D_2)}X) \frac{R^{e(D_3)+1}X}{t^2} Y\widetilde{Y}+
   \frac{R^{e(D_2)+1}X}{t^2}(R^{e(D_3)}X)\widetilde{Y}Y\right)\\
 &=R^{a+b+c-\frac12}\frac{2R^2X^3Y\widetilde{Y}}{t^6}\, .     \label{eq:TXYoddII}
  \end{align}
  Here, the first (resp.\ second) term corresponds to the case where
  $T_1$ is even (resp.\ odd), so that $D_2$ has two (resp.\ zero)
  regular attachment points, $D_3$ has zero (resp.\ two) regular
  attachment points. In all cases, $D_1$ has exactly one regular
  attachment point.

  Gathering all these cases together, we arrive at the unified formula
   \begin{equation}
    \label{eq:Tabcunited}
    T^{(0)}_{2a,2b,2c} +t^{-1}\delta_{a+b+c=0}=
    \begin{cases}
       R^{a+b+c-1} \frac{RX^3(Y^2+R^3\widetilde{Y}^2)}{t^6} & \text{if $a+b+c \in \Z_{\geq 0}$,}\\
      R^{a+b+c-\frac12} \frac{2R^2X^3Y\widetilde{Y}}{t^6} & \text{if $a+b+c \in \Z_{\geq 0}+\frac12$.}
    \end{cases}
  \end{equation}
  We conclude by noting that 
  \begin{equation}
    \label{eq:13000}
    \frac{\partial
      R}{\partial t}=T^{(0)}_{1,1,0}=\frac{RX^3(Y^2+R^3\widetilde{Y}^2)}{t^6}\quad
    \mbox{and}\quad \frac{\partial S}{\partial t}=T^{(0)}_{1,0,0}=\frac{2R^2X^3Y\widetilde{Y}}{t^6} \, ,
  \end{equation}
  since $R$ (resp.\ $S$) counts planar maps with two distinguished
  boundaries both of length $1$ (resp.\ one of length $1$ and the
  other of length $0$): differentiating with respect to $t$ has the
  effect of adding a third boundary of length $0$, and boundaries of
  lengths $0$ or $1$ are tautologically tight.
\end{proof}

We now state a variant of Theorem~\ref{thm:triskellnonbip} allowing to
count tight pairs of pants with some boundaries strictly tight, under
certain assumptions.

\begin{prop}
  \label{prop:strictpants}
  Let $a$, $b$ and $c$ be nonnegative integers or half-integers, and
  let $T^{(0)}_{2a,2b|2c}$ (resp.\ $T^{(0)}_{2a|2b,2c}$) denote the
  generating function of planar maps with three labeled distinct tight
  boundaries of lengths $2a,2b,2c$, the third (resp.\ the second and
  the third) being \emph{strictly tight}, where we attach a weight $t$
  per vertex different from a boundary-vertex and not incident to the
  third boundary (resp.\ to the second nor to the third boundary) and,
  for all $k\geq 1$, a weight $t_{k}$ per inner face of degree $k$.

  Then, for $a+b>c$, we have
  \begin{equation}
    \label{eq:strictpants}
    T^{(0)}_{2a,2b|2c} =
    \begin{cases}
      R^{a+b-c-1} \frac{\partial R}{\partial t} & \text{if $a+b+c \in \Z_{>0}$,}\\
      R^{a+b-c-\frac12} \frac{\partial S}{\partial t} & \text{if $a+b+c \in \Z_{>0}-\frac12$,}
    \end{cases}
  \end{equation}
  while, for $a>b+c$, we have
  \begin{equation}
    \label{eq:doublestrictpants}
    T^{(0)}_{2a|2b,2c} =
    \begin{cases}
      R^{a-b-c-1} \frac{\partial R}{\partial t} & \text{if $a+b+c \in \Z_{>0}$,}\\
      R^{a-b-c-\frac12} \frac{\partial S}{\partial t} & \text{if $a+b+c \in \Z_{>0}-\frac12$.}
    \end{cases}
  \end{equation}  
\end{prop}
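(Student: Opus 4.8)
The plan is to mimic the proof of Theorem~\ref{thm:triskellnonbip}, applying the bijective decomposition of \cite{triskell} to the subdivided map $\phi(M)$, but now with one or two boundaries forced to be \emph{strictly} tight. The key structural observation is that strictness of a boundary affects only which pieces of the quintuple $(D_1,D_2,D_3,T_1,T_2)$ carry the vertex weight $t$ along their red (strictly geodesic) sides: a strictly tight boundary of the original map corresponds, after subdivision and decomposition, to one whose incident diangles/triangles have that boundary displayed in red, so that the vertices lying on it (and not incident to attachment points) are unweighted. Concretely, in the type~I/type~II decompositions, each of the three boundaries is ``shared'' by a fixed subset of the five pieces; making a boundary strictly tight amounts to declaring the corresponding side red, i.e.\ replacing the relevant generating functions by their ``reduced'' (unweighted-boundary) versions.

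First I would recall, following the argument leading to \eqref{eq:13000}, that differentiating $R$ or $S$ with respect to $t$ adds a third boundary of length $0$, and that $T^{(0)}_{m,m'|0}=T^{(0)}_{m,m'}$ automatically since a boundary-vertex is by convention strictly tight; so the right-hand sides of \eqref{eq:strictpants} and \eqref{eq:doublestrictpants} are exactly the values predicted by \eqref{eq:Tabc} with the exponent of $R$ shifted. This suggests the cleaner route: rather than redoing the full bijective bookkeeping, derive \eqref{eq:strictpants} and \eqref{eq:doublestrictpants} from Theorem~\ref{thm:triskellnonbip} by the \emph{trumpet decomposition}. Indeed, an arbitrary (non-strict) tight boundary of length $2c$ can be obtained from a strictly tight one by gluing a brassband trumpet onto it, exactly as in Theorem~\ref{sec:decomp-theor-2}. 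Applying the trumpet decomposition to the third boundary of a map counted by $T^{(0)}_{2a,2b,2c}$ and peeling off a trumpet with mouthpiece of length $2c'\le 2c$ gives
\begin{equation}
  T^{(0)}_{2a,2b,2c} = \sum_{0\le c'\le c} (2c')\, A_{2c,2c'}\, T^{(0)}_{2a,2b|2c'}
\end{equation}
(with the $c'=0$ term understood as $T^{(0)}_{2a,2b}$, since a mouthpiece of length $0$ is a boundary-vertex), where we use that the only closed paths homotopic to the third boundary and of minimal length bound a trumpet, the vertices on the mouthpiece being unweighted precisely because the inner boundary of the trumpet becomes the strictly tight boundary. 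Using the unitriangularity and explicit form \eqref{eq:Adef} of $A_{2c,2c'}$, one inverts this relation; plugging in the closed form \eqref{eq:Tabc} for $T^{(0)}_{2a,2b,2c}$ and the known expansion of powers of $z+S+R/z$ then yields the claimed formula for $T^{(0)}_{2a,2b|2c}$, valid when $a+b>c$ (the hypothesis that guarantees $T^{(0)}_{2a,2b|2c'}$ is a genuine pair of pants for all $c'\le c$). The double-strict case $T^{(0)}_{2a|2b,2c}$ with $a>b+c$ follows by the same argument applied successively to the second and third boundaries, or directly from \eqref{eq:doublestrictpants}-type bookkeeping in the type~II decomposition, where the large boundary of length $2a$ plays the role of the ``outer'' face and the other two are automatically the strictly tight sides $D_2,D_3$.

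The main obstacle is justifying the trumpet-peeling identity above in the presence of a \emph{second} prescribed tight boundary: one must check that the outermost minimal closed path homotopic to the third boundary genuinely cuts off a trumpet without interfering with the second boundary, i.e.\ that the two boundaries cannot be ``linked'' by a short separating path — this is exactly the mutual-exclusion phenomenon illustrated in Figure~\ref{fig:exclusion}, and it is here that the hypothesis $a+b>c$ (resp.\ $a>b+c$) enters, ruling out the degenerate configurations. Once this is in place, the computation is the routine inversion of the unitriangular matrix $(A_{2c,2c'})_{c',c}$ together with the identity $[z^{2c'}](z+S+R/z)^{2c}$ contracting the sum, and the alternative of redoing the five-piece bijective bookkeeping (replacing $Y,\widetilde Y,X$ by their reduced versions when a side is red) serves as an independent check matching \eqref{eq:strictpantsformula} in Section~\ref{sec:recrel}.
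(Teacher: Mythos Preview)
Your main approach has a genuine gap: the trumpet decomposition of Theorem~\ref{sec:decomp-theor} converts an \emph{arbitrary} boundary into a \emph{tight} one, not a tight boundary into a strictly tight one. In a map counted by $T^{(0)}_{2a,2b,2c}$ the third boundary is already tight, so the minimal length of a closed path homotopic to it is exactly $2c$; the outermost minimal path therefore has length $2c$, and peeling it off yields only the trivial trumpet ($A_{2c,2c}=1$) together with a map whose new boundary is again merely tight, not strictly tight. Your displayed identity with a sum over $c'\leq c$ thus collapses to a single term and does not give the factor $R^{2c}$ that is actually needed. The matrix inversion you propose would just return $T^{(0)}_{2a,2b,2c}$ itself.

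What the paper does instead is cut along the \emph{innermost} minimal closed path homotopic to the third boundary (the maximal element of the lattice $\CCm^{(3)}$, not the minimal one used in the trumpet decomposition). This produces, on one side, a planar map whose new third boundary of length $2c$ is now strictly tight, and on the other side a \emph{tight cylinder}: a planar annulus with two tight boundaries both of length $2c$, one of them rooted. The generating function of such cylinders is $R^{2c}$ (this is \cite[Section~9.3]{irredmaps}, not the trumpet series $A$). Hence $T^{(0)}_{2a,2b,2c}=R^{2c}T^{(0)}_{2a,2b|2c}$ directly, and combining with Theorem~\ref{thm:triskellnonbip} gives~\eqref{eq:strictpants}. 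The hypothesis $a+b>c$ enters not through the mutual-exclusion argument of Figure~\ref{fig:exclusion}, but to guarantee that the innermost minimal closed path is \emph{simple}, so that cutting along it genuinely yields two pieces; this is the content of Lemma~\ref{lem:simplepantssum}. The double-strict case follows by repeating the same innermost-cut on the second boundary.
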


Observe that, in these two expressions, the assumptions $a+b>c$ and
$a>b+c$ are respectively crucial to avoid having terms involving
negative powers of $t$. It would be interesting to obtain formulas for
$T^{(0)}_{2a,2b|2c}$ and $T^{(0)}_{2a|2b,2c}$ without these
assumptions, but this is beyond the scope of this paper. Note that,
whenever $a>b+c$, the boundaries of lengths $b,c$ cannot touch each
other.

\begin{proof}[Proof of Proposition~\ref{prop:strictpants}]
  In view of Theorem~\ref{thm:triskellnonbip}, it suffices to
  establish that
  \begin{equation}
    \label{eq:strictpantsalt}
    T^{(0)}_{2a,2b,2c} =
    \begin{cases}
      R^{2c} T^{(0)}_{2a,2b|2c} & \text{if $a+b>c$,} \\
      R^{2b+2c} T^{(0)}_{2a|2b,2c} & \text{if $a>b+c$.} \\
    \end{cases}
  \end{equation}
  Let us first assume $a+b>c$ and establish the formula for
  $T^{(0)}_{2a,2b|2c}$. If $c=0$ then there is nothing to prove, as a
  boundary-vertex is always considered strictly tight. Assuming $c>0$,
  let $M$ be a planar map with three labeled distinct tight boundaries
  of lengths $2a,2b,2c$. The third boundary is a face, which we denote
  by $F_3$. We then cut $M$ along the \emph{innermost} minimal closed path
  $C$ homotopic to $F_3$. This path can be precisely defined as the
  \emph{maximal} element of the lattice $\CCm^{(3)}$, discussed
  in~\cite[Section~6.1]{triskell}. We will justify in
  Lemma~\ref{lem:simplepantssum} below that $C$ is simple, hence by
  cutting we obtain two pieces: a planar map with three tight
  boundaries of lengths $2a,2b,2c$, the third being strictly tight,
  and a planar map with two tight boundaries both of length
  $2c$. Moreover, the boundary resulting from this cutting operation
  can be canonically rooted: indeed, since planar maps with three boundaries have a trivial automorphism
group, we may canonically select a distinguished corner incident to $C$. This
  decomposition is clearly bijective. Turning to generating functions,
  we decide to assign the weight $t$ for vertices on $C$ to
  the second map. It is known~\cite[Section~9.3]{irredmaps} that
  planar maps with two tight boundaries both of length $2c$, one of
  which is rooted, have
  generating function $R^{2c}$. We conclude that
  $T^{(0)}_{2a,2b,2c} = R^{2c} T^{(0)}_{2a,2b|2c}$ as wanted.

  For $a>b+c$, we may perform a similar decomposition with the
  boundary of length $2b$, giving the relation
  $T^{(0)}_{2a,2b|2c}=R^{2b} T^{(0)}_{2a|2b,2c}$.
\end{proof}

Let us now justify that the closed path $C$ used in the above proof is
simple:

\begin{lem}
  \label{lem:simplepantssum}
  Let $M$ be a planar map with three labeled distinct tight boundaries
  of lengths $\ell_1,\ell_2,\ell_3$. If $\ell_1+\ell_2>\ell_3>0$ then
  every closed path on $M$ of length $\ell_3$ which is homotopic to
  the third boundary is simple.
\end{lem}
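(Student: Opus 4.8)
The plan is to follow the strategy used for Lemma~\ref{sec:decomp-theor-1}: embed $M$ in the plane (equivalently, the sphere) so that the third boundary becomes the puncture at infinity, while the first two boundaries become two finite punctures $p_1$ and $p_2$ (using the small circle around the vertex when a boundary happens to be a vertex). For a closed path $\gamma$ of $M$ avoiding $p_1,p_2$, let $w_i(\gamma)\in\Z$ be its winding number around $p_i$. Since the thrice-punctured sphere has free fundamental group, a \emph{simple} closed path $\sigma$ of positive length bounds a disk $D$ (Jordan--Schoenflies), which never contains the point at infinity, and exactly one of the following happens: $D$ contains no puncture and $\sigma$ is contractible; $D$ contains only $p_i$ and $\sigma$ is homotopic to the $i$-th boundary up to reversal; or $D$ contains both $p_1$ and $p_2$ and $\sigma$ is homotopic to the third boundary. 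In particular $w_1(\sigma),w_2(\sigma)\in\{0,\pm1\}$, and since $c$ is homotopic to the third boundary, $w_1(c)=w_2(c)\in\{\pm1\}$, so, replacing $c$ by its reverse if necessary, I may assume $w_1(c)=w_2(c)=1$.

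Assume for contradiction that $c$ is not simple. The first step is to record two facts about the simple closed \emph{subpaths} of $c$ (portions of $c$ run between two cyclically consecutive visits to one vertex, that are themselves simple). Such a subpath $\sigma$ cannot be contractible, for deleting the loop $\sigma$ from $c$ would give a strictly shorter closed path still homotopic to the third boundary, contradicting its tightness. And $\sigma$ cannot be homotopic to the third boundary, for its length would then be at least $\ell_3$ by tightness, whereas a simple closed subpath of a non-simple $c$ is proper, hence of length $<\ell_3$. Consequently every simple closed subpath of $c$ is homotopic, up to reversal, to the first or to the second boundary; equivalently, its disk contains exactly one of $p_1,p_2$.

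Now choose any simple closed subpath $a$ of $c$, so that $c=a\cdot b$ with $b$ the complementary closed subwalk, and, relabelling $1\leftrightarrow2$ if needed (the hypothesis $\ell_1+\ell_2>\ell_3$ being symmetric in these indices), assume $a$ is homotopic to the first boundary; then $\mathrm{len}(a)\ge\ell_1$ and $w_2(a)=0$, so $w_2(b)=1$. The final step is a peeling procedure on $b$. If $b$ is simple, then $w_2(b)=1$ forces $p_2$ inside the disk bounded by $b$, so $b$ is homotopic to the second boundary and $\mathrm{len}(b)\ge\ell_2$; as $a$ and $b$ are disjoint subpaths of $c$ this gives $\ell_1+\ell_2\le\mathrm{len}(a)+\mathrm{len}(b)\le\ell_3$, contradicting the hypothesis. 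If $b$ is not simple, it contains a simple closed subpath $a'$, which is then also a simple closed subpath of $c$; if $a'$ is homotopic to the second boundary, then $a$ and $a'$ are disjoint subpaths of $c$ with $\mathrm{len}(a)+\mathrm{len}(a')\ge\ell_1+\ell_2$, again a contradiction; otherwise $a'$ is homotopic to the first boundary, $w_2(a')=0$, and replacing $b$ by the complementary subwalk $b\setminus a'$ yields a strictly shorter closed subwalk of $c$ which still has winding number $1$ around $p_2$, hence is nonempty, and one iterates. Since the lengths strictly decrease the procedure terminates, and every possible outcome produces one of the contradictions above; hence $c$ is simple.

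The routine ingredients are the planar-topology facts of the first paragraph (Jordan--Schoenflies and the classification of simple closed curves on the thrice-punctured sphere), used exactly as in Lemma~\ref{sec:decomp-theor-1}. The genuinely delicate point, and the only place where the hypothesis $\ell_1+\ell_2>\ell_3$ is used, is the peeling argument of the last paragraph: there one must be careful with the winding-number bookkeeping and with the disjointness of the successive subpaths, and also handle the degenerate case where $\ell_1$ or $\ell_2$ vanishes — only one of them can, as $\ell_1+\ell_2>\ell_3\ge1$, and such a boundary-vertex is still enclosed by a simple closed subpath of length at least $1$, which is all the argument requires.
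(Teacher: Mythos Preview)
Your peeling argument is essentially the paper's proof in a different dress: the paper performs a single loop-erasure of $c$ into simple closed pieces $C_1,\ldots,C_n$, classifies each on the thrice-punctured sphere, and then observes that either some $C_i$ is homotopic to the third boundary (forcing $n=1$, so $c$ is simple) or two of them are homotopic to the first and second boundaries (forcing total length $\geq\ell_1+\ell_2>\ell_3$). Your iterative removal of simple loops, tracked by the winding number $w_2$, reaches the same dichotomy.

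There is, however, one slip in your iteration. After you replace $b$ by $b'=b\setminus a'$, a simple closed subpath $a''$ of $b'$ need \emph{not} be a contiguous subpath of $c$: it may straddle the joint where $a'$ was excised. Consequently your paragraph-2 argument (``deleting the loop $\sigma$ from $c$ gives a strictly shorter path homotopic to the third boundary'') no longer applies, and you cannot conclude that $a''$ is non-contractible. Fortunately you do not need this conclusion: if $a''$ is contractible then $w_2(a'')=0$ anyway, so the peeling step goes through exactly as in your ``homotopic to the first boundary'' case. The clean fix is to replace, in the iterated step, ``otherwise $a'$ is homotopic to the first boundary, $w_2(a')=0$'' by the weaker ``otherwise $a'$ is contractible or homotopic to the first boundary, so $w_2(a')=0$''; the rest of your argument (length bounds, edge-disjointness of $a$ and the current piece, and termination) then stands unchanged.
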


\begin{proof}
  Let $C$ be a closed path on $M$ of length $\ell>0$. We may decompose
  $C$ into a (finite) sequence of \emph{simple} closed paths
  $\hat{C}=(C_1,C_2,\ldots,C_n)$ with \emph{positive lengths}, for
  instance by doing a ``loop-erasure'' (in this construction, a closed
  path reduced to a single edge followed once in both ways is regarded
  as simple).

  To be explicit, choose an arbitrary vertex $v_0$ on $C$ and an
  orientation, and let us denote by
  $v_0,e_1,v_1,e_2,v_2,\ldots,e_\ell,v_\ell=v_0$ the successive
  vertices and edges of $M$ visited by $C$. We then apply the
  following iterative procedure. Initialize $\gamma$ as the path
  reduced to $v_0$, and $\hat{C}$ as an empty sequence. Then, for every step
  $i$ from $1$ to $\ell$, append the edge $e_i$ and $v_i$ to $\gamma$:
  \begin{itemize}
  \item if $\gamma$ is simple, proceed to the next step,
  \item if $\gamma$ is not simple, then it necessarily consists of a
    simple path $\gamma'$ from $v_0$ to $v_i$ (possibly of length zero
    if $v_i=v_0$) and of a simple closed path $C'$ with positive
    length: set $\gamma=\gamma'$, append $C'$ to the sequence
    $\hat{C}$, and proceed to the next step.
  \end{itemize}
  At the last step, the path $\gamma$ is reduced to $v_0=v_\ell$, and
  $\hat{C}$ is the sequence we are looking for. There is an
  interesting combinatorial structure of \emph{heap} underlying this
  decomposition, see e.g.~\cite[Proposition~6.3]{Viennot1986}, but for
  our purposes we need only remark that the sum of the lengths of the
  $C_i$'s is $\ell$.

  Now, let us assume that $C$ is homotopic to the third boundary. We
  claim that at least one of the following assertions is true:
  \begin{itemize}
  \item[(i)] there exists $i$ such that $C_i$ is homotopic to the
    third boundary,
  \item[(ii)] there exists $i,j$ such that $C_i$ and $C_j$ are
    homotopic to the first and second boundaries, respectively.
  \end{itemize}
  Indeed, each $C_i$ is either contractible or homotopic to one of the
  boundaries, as we justify in Lemma~\ref{lem:simplepants} below. It
  is not possible that all $C_i$ are contractible or homotopic to the
  first boundary, as otherwise $C$ would be homotopic to a multiple of
  the latter. Similarly for the second boundary. Hence, either (i) or
  (ii) must hold.

  Let us now assume $\ell=\ell_3$. As the boundaries are assumed
  tight, in case (i) we find that $\hat{C}$ consists of a single
  element ($n=1$) so that $C=C_1$ is simple. In case (ii) $C$ would
  have length at least $\ell_1+\ell_2$, but this is excluded by the
  assumption $\ell_1+\ell_2>\ell_3$.
\end{proof}

\begin{rem}
  In the case where $\ell_1$, say, vanishes,
  Lemma~\ref{lem:simplepantssum} and the above proof still hold, upon
  using the convention discussed in Section~\ref{sec:defs} and
  \cite[Figure~2]{triskell} of replacing the boundary-vertex by a
  small circle made of edges of length zero. Observe that the
  boundary-vertex cannot be incident to the boundary-face of length
  $\ell_3$, as otherwise following the contour of that face but
  circumventing the boundary-vertex in the other direction would yield
  a path of length $\ell_3$ that is homotopic to the
  boundary-face of length $\ell_2$, contradicting the tightness
  assumption.
\end{rem}

The following fact, used in the proof of
Lemma~\ref{lem:simplepantssum}, seems well known in some communities,
but we rederive it for convenience.

\begin{lem}
  \label{lem:simplepants}
  Let $M$ be a planar map with three boundaries. Then, every simple
  closed path on $M$ is either contractible or homotopic to one of the
  boundaries.
\end{lem}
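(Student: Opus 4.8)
The plan is to argue topologically, using the fact that a planar map with three boundaries is drawn on a sphere with three punctures (equivalently, a pair of pants), whose fundamental group and whose classification of simple closed curves are completely understood. First I would set up the topology: create a puncture in each boundary-face and a small circle around each boundary-vertex, as in Section~\ref{sec:defs}, so that the underlying surface $\Sigma$ of $M$ is a $2$-sphere with three open disks removed, i.e.\ a pair of pants. A simple closed path $C$ on $M$ traces a simple closed curve $c$ on $\Sigma$. The key classical fact is that on a pair of pants, every simple closed curve is either null-homotopic or freely homotopic to one of the three boundary components; I would invoke this (it follows, e.g., from the Jordan curve theorem applied on the sphere, or from the change-of-coordinates principle for surfaces). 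Concretely, $c$ being simple and closed on the sphere bounds, by Jordan--Sch\"onflies, two closed disks $D$ and $D'$ on $S^2$; each of the three punctures lies in exactly one of $D,D'$. If all three punctures lie on one side, $c$ is contractible in $\Sigma$; if one puncture lies alone on one side, $c$ is homotopic to that boundary; the case of two-versus-one is the same as one-versus-two by relabeling. This case analysis exhausts the possibilities since a simple closed curve on $S^2$ cannot separate the three punctures into three nonempty groups on two sides.

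The one point requiring a little care is the presence of boundary-vertices, which are not honest punctures but small circles of ``length-zero'' edges. I would handle this exactly as in the Remark following Lemma~\ref{lem:simplepantssum}: a path visiting a boundary-vertex carries a choice of direction for circumventing the corresponding puncture, so it still defines a genuine curve on the punctured surface $\Sigma$, and the above topological argument applies verbatim. A simple closed path that does not visit a given boundary-vertex at all simply stays away from that puncture, which is consistent with the disk decomposition above.

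I do not expect a serious obstacle here: the statement is essentially the classification of simple closed curves on a pair of pants, and the proof is a direct application of the Jordan--Sch\"onflies theorem on the sphere. The only mild subtlety is bookkeeping the three possible ``sides'' configurations and checking that the mixed case (two punctures on one side, one on the other) is genuinely homotopic to a boundary rather than giving something new — but on the sphere, a disk containing exactly two of the three punctures has complement a disk containing the third, so its boundary curve is homotopic to the loop around that third puncture, i.e.\ to the corresponding boundary of $M$. This closes the argument.
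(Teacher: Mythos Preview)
Your proposal is correct and takes essentially the same approach as the paper: both apply the Jordan--Sch\"onflies theorem and do a case analysis on how the three punctures are distributed between the two complementary disks bounded by the simple closed curve. The only cosmetic difference is that the paper sends one puncture to infinity and works in the plane (so the three cases are zero, one, or two punctures in the bounded domain), whereas you work directly on the sphere; your two-versus-one case is exactly the paper's ``two punctures in $D$, hence homotopic to the boundary at infinity'' case.
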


\begin{proof}
  We may draw $M$ in the plane, with the puncture of one of the
  boundaries sent to infinity.  Let $C$ be a simple closed path on
  $M$. By the Jordan-Schoenflies theorem, $C$ delimits a bounded
  domain $D$ homeomorphic to a disk, which may contain zero, one or
  two punctures. If $D$ contains zero puncture, then $C$ is
  contractible.  If $D$ contains one puncture, then $C$ is homotopic
  to the corresponding boundary. Finally, if $D$ contains two
  punctures, then $C$ is homotopic to the boundary corresponding to
  the puncture at infinity.
\end{proof}

\section{Quasi-polynomiality in the general case: induction step}
\label{sec:proof-prop-refpr-1}

This appendix and the next one are devoted to the proof of
Proposition~\ref{prop:quasipolnonbip} by induction on $n$. Here, we
check that if the statement of the proposition is true for $n$
boundaries, then it is also true for $n+1$ boundaries. We proceed in
three steps.

\subsection{A discrete integration lemma for polynomials}

We first record an elementary discrete integration
lemma. It is certainly well known, but the derivation is a nice
variation on the Bernoulli-Faulhaber formulas so we give a full proof.

\begin{lem}
  \label{sec:discr-integr-lemma}
  Fix an integer $k\geq 0$. Then the sums
  \begin{equation}
    \label{eq:1}
    \sum_{0<m<\ell\atop m\in 2\Z}m^{2k+1}+\frac{\ell^{2k+1}}{2}\,
    ,\qquad \sum_{0<m<\ell \atop m\in 2\Z+1}m^{2k+1}\, ,
  \end{equation}
  are even polynomials in the \textbf{even} integer variable $\ell$, and the
  sums
  \begin{equation}
    \label{eq:200}
     \sum_{0<m<\ell\atop m\in 2\Z}m^{2k+1}\,
    ,\qquad \sum_{0<m<\ell \atop m\in
      2\Z+1}m^{2k+1}+\frac{\ell^{2k+1}}{2}\, ,
  \end{equation}
are even polynomials in the \textbf{odd} integer variable $\ell$. 
\end{lem}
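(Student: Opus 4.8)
The plan is to prove the four statements of the lemma simultaneously by relating the sums over even (resp.\ odd) $m$ to the classical power sums $\sum_{m=1}^{N} m^{2k+1}$, for which the Bernoulli--Faulhaber formula gives a polynomial in $N$. The starting point is the identity $\sum_{0<m<\ell,\ m\in 2\Z} m^{2k+1} = 2^{2k+1}\sum_{j=1}^{(\ell-2)/2} j^{2k+1}$ when $\ell$ is even, and similarly $\sum_{0<m<\ell,\ m\in 2\Z+1} m^{2k+1} = \sum_{m=1}^{\ell-1} m^{2k+1} - 2^{2k+1}\sum_{j=1}^{(\ell-2)/2} j^{2k+1}$, plus their analogues when $\ell$ is odd. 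Since the Bernoulli--Faulhaber formula says $\sum_{m=1}^{N} m^{2k+1} = S_{2k+1}(N)$ for a polynomial $S_{2k+1}$ of degree $2k+2$, each of the sums in \eqref{eq:1} and \eqref{eq:200} becomes, after substituting $N=\ell-1$ and the appropriate half-integer argument for the even part, a genuine polynomial in $\ell$ (of degree $2k+2$). So \emph{polynomiality} is immediate; the content of the lemma is that, after adding the correction term $\ell^{2k+1}/2$ where indicated, one obtains an \emph{even} polynomial in $\ell$.

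To handle the evenness, I would use the reflection property of the Faulhaber polynomial: writing $P(\ell)$ for any one of the four candidate expressions, it suffices to show $P(-\ell)=P(\ell)$ as polynomials, and for this it is enough to verify the identity on infinitely many integer values of $\ell$ of the relevant parity. The key combinatorial input is the symmetry $\sum_{0<m<\ell,\ m\equiv\varepsilon} m^{2k+1} + \sum_{0<m<-\ell,\ m\equiv\varepsilon}m^{2k+1}$-type telescoping; more concretely, for an even integer $\ell>0$ one has
\begin{equation}
  \sum_{\substack{0<m<\ell\\ m\in 2\Z}} m^{2k+1} + \frac{\ell^{2k+1}}{2}
  = \frac12\sum_{\substack{-\ell<m<\ell\\ m\in 2\Z}} m^{2k+1} + \frac{\ell^{2k+1}}{2} + \frac12\,\ell^{2k+1},
\end{equation}
wait—let me instead use the cleaner route: since $m\mapsto -m$ is a bijection of the even (resp.\ odd) integers in $(-\ell,\ell)$ reversing sign, and $m^{2k+1}$ is odd, the sum $\sum_{0<m<\ell} m^{2k+1}$ equals $-\sum_{-\ell<m<0} m^{2k+1}$; so the polynomial $P(\ell)=\sum_{0<m<\ell,\,m\in2\Z} m^{2k+1}+\tfrac12\ell^{2k+1}$ agrees at even $\ell>0$ with $-P(-\ell)$ shifted by boundary terms, and a direct bookkeeping of the endpoint $m=\pm\ell$ (which is why the factor $\tfrac12$ is exactly right) shows $P(\ell)=P(-\ell)$ for all even integers $\ell$ of both signs, hence identically.

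The main obstacle is getting the endpoint/boundary bookkeeping exactly right: the sums are over the open range $0<m<\ell$, the bijection $m\mapsto -m$ mishandles the endpoint $m=\ell$, and this is precisely what the correction term $\ell^{2k+1}/2$ is designed to fix (it symmetrizes a half-open sum into a full symmetric sum). Once this is pinned down, one argues: (a) the expression is a polynomial in $\ell$ by Faulhaber; (b) it is invariant under $\ell\mapsto-\ell$ as a function on the even (resp.\ odd) integers, by the sign-reversal bijection together with the endpoint correction; (c) a polynomial taking equal values at $\ell$ and $-\ell$ for infinitely many $\ell$ is an even polynomial. The cases in \eqref{eq:200} are entirely parallel, swapping the roles of the even and odd residue classes, and the odd-$m$ sums follow from the even-$m$ sums by subtracting from the full power sum $\sum_{m=1}^{\ell-1}m^{2k+1}$, whose Faulhaber polynomial $S_{2k+1}$ famously satisfies $S_{2k+1}(\ell-1) = S_{2k+1}(-\ell)$, i.e.\ is a polynomial in $\ell(\ell-1)$ and hence already even about $\ell=\tfrac12$—one must then track how the shift by $\tfrac12$ interacts with the required correction term, which is the one place where the even-$\ell$ and odd-$\ell$ statements genuinely differ.
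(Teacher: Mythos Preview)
Your strategy is sound and genuinely different from the paper's. The paper proceeds by direct computation: writing $\ell=2n$ (resp.\ $\ell=2n-1$), expanding each sum via the Bernoulli--Faulhaber formula, and reading off that only even powers of $n$ survive (using $B_1=\tfrac12$ and the vanishing of higher odd Bernoulli numbers), or invoking Faulhaber's observation that $\sum_{0<q<n}q^{2k+1}$ is a polynomial in $n(n-1)$. Your route---first polynomiality via Faulhaber, then evenness via the reflection $\ell\mapsto -\ell$---is more conceptual and unifies the four cases under a single symmetry principle; the paper's route is more explicit and avoids needing the reflection identity $S_{2k+1}(-1-N)=S_{2k+1}(N)$.

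That said, your write-up has a gap in the symmetry step. The bijection $m\mapsto -m$ on $(-\ell,\ell)$ only tells you that $\sum_{0<m<\ell}m^{2k+1}=-\sum_{-\ell<m<0}m^{2k+1}$; it does \emph{not} by itself identify the value of the polynomial $P$ at $-\ell$, because the defining sum formula for $P$ is only valid for $\ell>0$. The sentence ``agrees at even $\ell>0$ with $-P(-\ell)$ shifted by boundary terms'' is where the argument is not yet a proof. To close it cleanly you should either (i) use the Faulhaber reflection $S_{2k+1}(-1-N)=S_{2k+1}(N)$ directly on the explicit polynomial $P(\ell)=2^{2k+1}S_{2k+1}(\ell/2-1)+\tfrac12\ell^{2k+1}$, which gives $P(-\ell)=P(\ell)$ in one line, or (ii) note that $P(\ell+2)-P(\ell)=\tfrac12(\ell^{2k+1}+(\ell+2)^{2k+1})$ as a polynomial identity, which is invariant under $\ell\mapsto -\ell-2$, and combine with $P(0)=0$ to get $P(-\ell)=P(\ell)$ by induction along even integers. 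Either fix is short; once done, the remaining three cases follow by the same mechanism (and your subtraction trick for the odd-$m$ sums is fine, since $S_{2k+1}(\ell-1)-2^{2k+1}S_{2k+1}(\ell/2-1)$ inherits the same reflection). The final paragraph about ``tracking how the shift by $\tfrac12$ interacts'' is a red herring: you never need evenness about $\ell=\tfrac12$, only about $\ell=0$, and the reflection formula gives that directly.
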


\begin{proof}
  First suppose that $\ell=2n$ is even. 
  By the Bernoulli-Faulhaber formula, we have
  \begin{align}
    \sum_{0<m<\ell\atop m\in
      2\Z}m^{2k+1}+\frac{\ell^{2k+1}}{2}&=2^{2k+1}\left(\sum_{q=1}^nq^{2k+1}-n^{2k+1}+\frac{n^{2k+1}}{2}\right)\\
    &=2^{2k+1}\left(\frac{1}{2k+2}\sum_{i=0}^{2k+1}\binom{2k+2}{i}B_in^{2k+2-i}
    -\frac{n^{2k+1}}{2}\right)\, .
  \end{align}
Since $B_1=1/2$ and all other Bernoulli numbers with odd indices
vanish, we obtain that this is a polynomial in
$n^2=(\ell/2)^2$. Similarly, a simple consequence of the
Bernoulli-Faulhaber formula, subtracting the terms with even indices, is that 
\begin{equation}
  \label{eq:4}
  \sum_{0<m<\ell\atop m\in 2\Z+1}m^{2k+1}=\frac{2^{2k+2}}{k+1}\sum_{i=0}^{2k+1}\binom{2k+2}{i}B_in^{2k+2-i}(2^{1-i}-1)
\end{equation}
and we see that the term involving $B_1$ vanishes. Using again the
vanishing properties of Bernoulli numbers of odd indices, this is a
polynomial in $n^2=(\ell/2)^2$.

We now study the case where $\ell=2n-1$ is odd. Note that 
\begin{equation}
  \label{eq:5}
  \sum_{0<m<\ell\atop m\in 2\Z}m^{2k+1}=2^{2k+1}\sum_{0<q<n}q^{2k+1}
\end{equation}
which, by another formula of 
Faulhaber, is a polynomial in the
variable $n(n-1)=(\ell^2-1)/4$, hence an even polynomial in $\ell$.
Finally, simply note that, since $\ell$ is odd, 
\begin{equation}
  \label{eq:6}
  \sum_{0<m<\ell\atop m\in 2\Z+1}m^{2k+1}+\frac{\ell^{2k+1}}{2}=
  \sum_{0<m\leq \ell\atop m\in 2\Z+1}m^{2k+1}-\frac{\ell^{2k+1}}{2}
  =\frac{1}{2}\left(\sum_{0<m<\ell\atop m\in 2\Z+1}m^{2k+1}+
  \sum_{0<m\leq \ell\atop m\in 2\Z+1}m^{2k+1}\right)\, ,
\end{equation}
since the last term is the average of the first two equal terms. By a
final use of the Bernoulli-Faulhaber formula, this quantity if of the form
$(P(n)+P(n+1))$ where $P$ is a polynomial and $2n-1=\ell-2$, so it can be
put in the form $P((\ell-1)/2)+P((\ell+1)/2)$, which is an even polynomial
in $\ell$ by Newton's formula. 
\end{proof}

\begin{cor}
  \label{sec:discr-integr-lemma-1}
  Let $P\in \mathbb{Q}[x]$ be a polynomial. Then
  \begin{equation}
    \label{eq:8}
    \sum_{0<m<\ell\atop m\in 2\Z}mP(m^2)+\frac{\ell}{2}P(\ell^2)\,
    ,\qquad \sum_{0<m<\ell\atop m\in 2\Z+1}mP(m^2)\, ,
  \end{equation}
  are even polynomials of the even integer variable $\ell$, and
  \begin{equation}
    \label{eq:9bis}
     \sum_{0<m<\ell\atop m\in 2\Z}mP(m^2)\, ,\qquad
     \sum_{0<m<\ell\atop m\in 2\Z+1}mP(m^2) +\frac{\ell}{2}P(\ell^2)\, ,
   \end{equation}
    are even polynomials of the odd integer variable $\ell$. 
\end{cor}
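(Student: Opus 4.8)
The plan is to deduce Corollary~\ref{sec:discr-integr-lemma-1} directly from Lemma~\ref{sec:discr-integr-lemma} by linearity, so that essentially no new computation is needed.

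First I would note that each of the four expressions appearing in the statement of the corollary depends $\mathbb{Q}$-linearly on the polynomial $P$: writing $P=\sum_k c_k x^k$, the sum $\sum_{0<m<\ell}mP(m^2)$ over a given parity class of $m$ equals $\sum_k c_k\sum_{0<m<\ell}m^{2k+1}$, and similarly $\frac{\ell}{2}P(\ell^2)=\sum_k c_k\frac{\ell^{2k+1}}{2}$. Since the even polynomials in $\ell$ (those containing only even powers of $\ell$) form a $\mathbb{Q}$-subspace of $\mathbb{Q}[\ell]$, and since a polynomial agreeing with a prescribed function on all even integers (resp.\ on all odd integers) is unique, it suffices to establish the corollary when $P$ ranges over the monomial basis $P(x)=x^k$, $k\ge 0$.

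Next, for $P(x)=x^k$ one has $mP(m^2)=m^{2k+1}$ and $\frac{\ell}{2}P(\ell^2)=\frac{\ell^{2k+1}}{2}$, so the two sums in \eqref{eq:8} become verbatim those in \eqref{eq:1}, and the two sums in \eqref{eq:9bis} become verbatim those in \eqref{eq:200}. Lemma~\ref{sec:discr-integr-lemma} asserts precisely that the former are even polynomials in the even integer variable $\ell$ and the latter are even polynomials in the odd integer variable $\ell$, which is exactly the content of the corollary for $P(x)=x^k$. Combining this with the linearity reduction of the previous step finishes the argument.

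The only point to be careful about is the bookkeeping in the first step: one must check that the property ``being an even polynomial of the even (resp.\ odd) integer variable $\ell$'' is stable under $\mathbb{Q}$-linear combinations. This is immediate, since even polynomials form a linear subspace of $\mathbb{Q}[\ell]$ and the interpolating polynomial along an infinite arithmetic progression is unique; all the genuine work has already been done in Lemma~\ref{sec:discr-integr-lemma} through the Bernoulli-Faulhaber formulas. Hence I do not expect any substantive obstacle.
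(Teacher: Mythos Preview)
Your proof is correct and matches the paper's intent: the paper states this as an immediate corollary of Lemma~\ref{sec:discr-integr-lemma} with no separate proof, and your linearity reduction to the monomial case $P(x)=x^k$ is exactly the intended (and only natural) argument.
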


\subsection{Quasi-polynomiality for tight boundary insertions}

 Next, we  prove the following result on the boundary
insertion operator $D_\ell$.  

\begin{prop}
  \label{sec:quasi-polyn-tight}
  For every $i\geq 0$, the quantities
  \begin{equation}
    \label{eq:11}
    R^{-\frac{\ell}{2}}\frac{D_\ell R^{(i)}}{R}\, \quad ,\qquad
    R^{-\frac{\ell}{2}}\frac{D_\ell S^{(i)}}{\sqrt{R}}
  \end{equation}
  are parity-dependent quasi-polynomials in the variable $\ell^2$
  whose coefficients are polynomial functions of
  $R^{(j)}/R,S^{(j)}/\sqrt{R},1\leq j\leq i+1$.
\end{prop}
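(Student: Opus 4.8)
The plan is to compute $D_\ell R^{(i)}$ and $D_\ell S^{(i)}$ explicitly by pushing the derivative $\partial^i/\partial t^i$ through the operator $D_\ell$, reducing everything to the single identities for $D_\ell R$ and $D_\ell S$. First I would establish the base case $i=0$. By definition $D_\ell$ creates an extra tight boundary-face of length $\ell$; since the boundary-faces of length $1$ and the boundary-vertex appearing in the combinatorial interpretations of $R$ and $S$ are automatically tight, we have $D_\ell R = \ell\, T^{(0)}_{\ell,1,0}$ (a map with one tight boundary-face of length $\ell$, one of length $1$, and one boundary-vertex, suitably rooted) and similarly $D_\ell S = \ell\, T^{(0)}_{\ell,1,0,0}$ with two boundary-vertices — more precisely, one uses the trumpet/pants formulas of Section~\ref{sec:recrel}, namely $D_\ell R$ and $D_\ell S$ are given in terms of $T^{(0)}_{\ell,1|0}$-type quantities, and Proposition~\ref{prop:strictpants} (equivalently Theorem~\ref{thm:triskellnonbip}) evaluates these. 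Concretely, $D_\ell R = R^{\ell/2}\bigl(a_\ell(\ell^2) R'/R + b_\ell(\ell^2) S'/\sqrt R\bigr)$ for parity-dependent quasi-polynomials $a_\ell,b_\ell$ in $\ell^2$ — the even/odd split coming precisely from whether $\ell+$(length of the other boundary) is even or odd in \eqref{eq:strictpantsformula} — and similarly for $D_\ell S$. After dividing by $R^{\ell/2}$ this is manifestly of the claimed form for $i=0$.

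Next I would handle general $i$ by induction on $i$, using that $D_\ell$ and $\partial/\partial t$ are both derivations and that they do \emph{not} commute; rather, one has a commutation relation because $D_\ell$ is a linear combination of $\partial/\partial t_{2j}$ (in the essentially bipartite case) or more generally of $\partial/\partial t_M$ with coefficients that are polynomials in $R$ (see \eqref{eq:53}), and $R,S$ themselves depend on $t$. The cleanest route is: $D_\ell R^{(i)} = \partial_t^i (D_\ell R) - [\,\partial_t^i, D_\ell\,] R$, and the commutator $[\partial_t^i, D_\ell]$ applied to any series expands, via the Leibniz rule for $D_\ell$ and the chain rule, into a sum of terms each of which is $D_\ell$ (or an ordinary $t$-derivative) applied to some $R^{(j)}$ or $S^{(j)}$ with $j<i+1$, times polynomials in the $R^{(j)}/R$. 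Then I apply Faà di Bruno / the chain rule to $\partial_t^i$ acting on $D_\ell R = R^{\ell/2}(\cdots)$: differentiating $R^{\ell/2}$ brings down factors of $\frac\ell2$ — hence polynomials in $\ell$, and by the parity bookkeeping only even powers of $\ell$ survive after we also track the $R^{\ell/2}$ prefactor — together with ratios $R^{(j)}/R$; differentiating the bracket $(\cdots)$ produces, by the chain rule again, polynomials in $R^{(j)}/R$ and $S^{(j)}/\sqrt R$ for $j\le i+1$ times quasi-polynomials in $\ell^2$. Collecting, $R^{-\ell/2} D_\ell R^{(i)}$ is a parity-dependent quasi-polynomial in $\ell^2$ with coefficients polynomial in $R^{(j)}/R, S^{(j)}/\sqrt R$, $1\le j\le i+1$, and the same argument verbatim handles $R^{-\ell/2} D_\ell S^{(i)}/\sqrt R$.

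The main obstacle I anticipate is purely bookkeeping: keeping the parity-dependence coherent through the repeated applications of Leibniz and the chain rule, and checking that every stray power of $\ell$ combines with the $R^{\ell/2}$ factor into an \emph{even} quasi-polynomial in $\ell^2$ (rather than, say, producing odd powers of $\ell$ or half-integer powers of $R$ with the wrong parity). This is exactly the point where one must use that $D_\ell R$ and $D_\ell S$ have the two-case structure of \eqref{eq:strictpantsformula}, with the power of $R$ shifting by a half-integer between the even and odd cases, so that after dividing by $R^{\ell/2}$ the residual power of $R$ is integer in one parity class and the coefficient $S^{(j)}/\sqrt R$ carries the compensating half-power in the other; keeping track of which monomials in $\ell^2$ land in which parity class is the delicate part, whereas the algebra of derivations itself is routine. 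I would isolate the $i=0$ computation as a lemma (essentially a restatement of Proposition~\ref{prop:strictpants}) so that the inductive step becomes a clean formal manipulation.
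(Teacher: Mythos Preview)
Your base case is essentially right (modulo a small mislabelling: $D_\ell R = T^{(0)}_{\ell,1,1}$ and $D_\ell S = T^{(0)}_{\ell,1,0}$, no extra factor of $\ell$), and the overall plan of induction on $i$ matches the paper. The gap is in the inductive step. When you differentiate $D_\ell R = R^{\ell/2}R'$ with respect to $t$, you get $R^{\ell/2}\bigl(\tfrac{\ell}{2}(R'/R)^2 R + R''\bigr)$, which after normalising carries a term \emph{linear} in $\ell$. Iterating Fa\`a di Bruno on $R^{\ell/2+1}$ produces falling factorials $(\ell/2+1)_k$, so $\partial_t^i(D_\ell R)$ by itself is genuinely a polynomial in $\ell$ with both even and odd powers. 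These odd powers must be cancelled by your commutator term, but you have not identified the mechanism. In fact $[\partial_t,D_\ell]$ is \emph{not} built from $D_\ell$ alone: from Proposition~\ref{prop:recur0} one has, on any $T$-type quantity,
\[
D_\ell\,\partial_t - \partial_t\, D_\ell \;=\; \sum_{0<m<\ell} m\, T^{(0)}_{\ell,0|m}\, D_m\,,
\]
so the commutator is a sum over \emph{all} smaller $D_m$, and the quantities $D_m R^{(i)}$ enter for every $0<m<\ell$. The evenness in $\ell$ then comes from a discrete Faulhaber-type identity: for any polynomial $P$, the combination $\tfrac{\ell}{2}P(\ell^2)+\sum_{0<m<\ell,\; m\equiv\ell\,(2)} m\,P(m^2)$ is an even polynomial in $\ell$ (this is Corollary~\ref{sec:discr-integr-lemma-1} in the paper). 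This is the step that absorbs the stray factor of $\ell/2$, and it is entirely absent from your sketch; the ``parity bookkeeping'' you flag as the main obstacle is precisely this lemma, not a routine check.

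The paper sidesteps the commutator formalism by writing $D_\ell R^{(i+1)}=T^{(0)}_{\ell,1,1,0_{i+1}}$ and applying Proposition~\ref{prop:recur0} directly to add the last boundary-vertex. This produces in one stroke the term $\partial_t(D_\ell R^{(i)})$ \emph{and} the sum $\sum_{0<m<\ell} m\,T^{(0)}_{\ell,0|m}\,D_m R^{(i)}$, after which the discrete integration lemma combines the dangerous $\tfrac{\ell}{2}$-term of the former with the latter into an even quasi-polynomial, parity class by parity class. Your commutator route is formally equivalent once you have the correct expression for $[\partial_t,D_\ell]$, but as written it misses both that expression and the summation lemma that makes the cancellation work.
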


\begin{proof}
We prove this statement by induction on $i$, using the fact that for
every $\ell\geq 0$, 
\begin{equation}
  \label{eq:1200}
  D_\ell R^{(i)}=T^{(0)}_{\ell,1,1,0_i}\, ,\qquad D_\ell
  S^{(i)}=T^{(0)}_{\ell,1,0_{i+1}}\, ,
\end{equation}
where we use the simplifying notation $0_i=0,\ldots,0$ for a string of
$i$ consecutive zeros.  We initialize the induction at $i=0$ by noting
that, by our formula \eqref{eq:Tabc} for tight pairs of pants,
\begin{equation}
  \label{eq:13}
  D_\ell R=R^{\frac{\ell}{2}+1}\times
\begin{cases}\displaystyle{\frac{R'}{R}} & \text{if $\ell$ is
                                         even}\\
           \displaystyle{\frac{S'}{\sqrt R}} & \text{ if $\ell$ is 
             odd,}
         \end{cases}
         \qquad D_\ell S=R^{\frac{\ell+1}{2}}\times 
         \begin{cases}  \displaystyle{\frac{S'}{\sqrt R}} & \text{if $\ell$ is
                                         even}\\
      \displaystyle{\frac{R'}{R}}    & \text{ if $\ell$ is 
        odd.}
    \end{cases}
\end{equation}
Suppose that the statement holds
up to  some given value of $i$: namely, we may
find polynomials $P_{j,2},P_{j,3},Q_{j,1},Q_{j,2}$ in $\mathbb{Q}[x_1,\ldots,x_{j+1},y_1,\ldots,y_{j+1},u]$
(the second index will refer to the number of boundary-faces of odd
degree in the expressions below) such 
that, for $0\leq j\leq i$, and letting $\epsilon$ be $0$ if $\ell$ is
even and $1$ if $\ell$ is odd, we have
\begin{equation}
  \label{eq:14}
  \begin{split}
    D_\ell R^{(j)}&= R^{\frac{\ell}{2}+1}
    P_{j,2+\epsilon}\left(\frac{R^{(\leq j+1)}}{R},\frac{S^{(\leq j+1)}}{\sqrt{R}},\ell^2\right)\, , \\
    D_\ell S^{(j)}&= R^{\frac{\ell+1}{2}}
    Q_{j,1+\epsilon}\left(\frac{R^{(\leq j+1)}}{R},\frac{S^{(\leq
    j+1)}}{\sqrt{R}},\ell^2\right)\, .
  \end{split}
\end{equation}
Here, we use the shorthand notation $R^{(\leq j+1)},S^{(\leq j+1)}$
for the variables $(R^{(k)},1\leq k\leq j+1)$ and
$(S^{(k)},1\leq k\leq j+1)$.

Now, to prove the statement at rank $i+1$, we write
\begin{align}
  \label{eq:15bis}
  D_\ell R^{(i+1)}&=T^{(0)}_{\ell,1,1,0_{i+1}}=\frac{\partial}{\partial
  t}T^{(0)}_{\ell,1,1,0_i}+\sum_{0<m<\ell}mT^{(0)}_{\ell,0|m}T^{(0)}_{m,1,1,0_i}\, , \\
  \label{eq:19}
  D_\ell S^{(i+1)}&=T^{(0)}_{\ell,1,0_{i+2}}=\frac{\partial}{\partial
  t}T^{(0)}_{\ell,1,0_{i+1}}+\sum_{0<m<\ell}mT^{(0)}_{\ell,0|m}T^{(0)}_{m,1,0_{i+1}}\, ,
\end{align}
where the second equality on each line is a consequence of Proposition
\ref{prop:recur0}. Let us first analyse the right-hand side
of~\eqref{eq:15bis}. Its first term is equal to
\begin{align}
  \label{eq:10}
\frac{\partial}{\partial t}T^{(0)}_{\ell,1,1,0_i}&=
                            \left(\frac{\ell}{2}+1\right)R^{\frac{\ell}{2}+1}\frac{R'}{R}P_{i,2+\epsilon}\left(\frac{R^{(\leq
                            i+1)}}{R},\frac{S^{(\leq
                            i+1)}}{\sqrt{R}},\ell^2\right)\\
                          &+R^{\frac{\ell}{2}+1}\sum_{k=1}^{i+1}\left(\frac{R^{(k)}}{R}\right)'\frac{\partial
                            P_{i,2+\epsilon}}{\partial x_k}\left(\frac{R^{(\leq
                            i+1)}}{R},\frac{S^{(\leq
                            i+1)}}{\sqrt{R}},\ell^2\right)\\
  &+R^{\frac{\ell}{2}+1}\sum_{k=1}^{i+1}\left(\frac{S^{(k)}}{\sqrt{R}}\right)'\frac{\partial
                            P_{i,2+\epsilon}}{\partial y_k}\left(\frac{R^{(\leq
                            i+1)}}{R},\frac{S^{(\leq
                            i+1)}}{\sqrt{R}},\ell^2\right)\, .    
\end{align}
Note that $(R^{(k)}/R)'=(R^{(k+1)}/R)-(R^{(k)}/R)(R'/R)$ and
$(S^{(k)}/\sqrt{R})'=(S^{(k+1)}/\sqrt{R})-(1/2)(S^{(k)}/\sqrt{R})(R'/R)$,
and therefore, the last two sums are quasi-polynomial in $\ell^2$ with
coefficients in $\mathbb{Q}[R^{(\leq i+2)}/R,S^{(\leq
  i+2)}/\sqrt{R}]$. 
In order to study the last sum in \eqref{eq:15bis}, it is easier to
split cases.
\paragraph{Assume that $\ell$ is even ($\epsilon=0$).} The last sum in \eqref{eq:15bis}
then equals 
\begin{align}
  \label{eq:3}
 \sum_{0<m<\ell}mT^{(0)}_{\ell,0|m}T^{(0)}_{m,1,1,0_i}&=\sum_{0<m<\ell\atop m\in 2\Z}mR^{\frac{\ell-m}{2}}\frac{R'}{R}R^{\frac{m}{2}+1}P_{i,2}\left(\frac{R^{(\leq
                            i+1)}}{R},\frac{S^{(\leq
  i+1)}}{\sqrt{R}},m^2\right)    \\
  &+\sum_{0<m<\ell\atop m\in 2\Z+1}mR^{\frac{\ell-m}{2}}\frac{S'}{\sqrt{R}}R^{\frac{m}{2}+1}P_{i,3}\left(\frac{R^{(\leq
                            i+1)}}{R},\frac{S^{(\leq
  i+1)}}{\sqrt{R}},m^2\right)\, .\label{eq:16}
\end{align}
By Corollary \ref{sec:discr-integr-lemma-1}, the term \eqref{eq:16},
and the sum of the term \eqref{eq:3} and \eqref{eq:10} (for $\epsilon=0$) is of the
wanted form of $R^{\frac{\ell}{2}+1}$ times an even polynomial in the
variable $\ell^2$, with coefficients in $\mathbb{Q}[R^{(\leq i+1)}/R,S^{(\leq
  i+1)}/\sqrt{R}]$. Putting things together, we see that \eqref{eq:15bis}
indeed yields an even polynomial expression in the even variable
$\ell$.
\paragraph{Assume that $\ell$ is odd ($\epsilon=1$).} This time, we
have
\begin{align}
  \label{eq:17}
 \sum_{0<m<\ell}mT^{(0)}_{\ell,0|m}T^{(0)}_{m,1,1,0_i}&=\sum_{0<m<\ell\atop m\in 2\Z}mR^{\frac{\ell-m}{2}}\frac{S'}{\sqrt{R}}R^{\frac{m}{2}+1}P_{i,2}\left(\frac{R^{(\leq
                            i+1)}}{R},\frac{S^{(\leq
  i+1)}}{\sqrt{R}},m^2\right)    \\
  &+\sum_{0<m<\ell\atop m\in 2\Z+1}mR^{\frac{\ell-m}{2}}\frac{R'}{R}R^{\frac{m}{2}+1}P_{i,3}\left(\frac{R^{(\leq
                            i+1)}}{R},\frac{S^{(\leq
  i+1)}}{\sqrt{R}},m^2\right)\, .\label{eq:18}
\end{align}
We then apply Corollary \ref{sec:discr-integr-lemma-1} to the term
\eqref{eq:17} on the one hand, and to the sum of the term
\eqref{eq:18} and \eqref{eq:10} (for $\epsilon=1$) on the other
hand. These are of the wanted form of $R^{\frac{\ell}{2}+1}$ times an
even polynomial in $\ell^2$, with coefficients in
$\mathbb{Q}[R^{(\leq i+1)}/R,S^{(\leq i+1)}/\sqrt{R}]$. Putting things
together, we see that \eqref{eq:15bis} indeed yields an even
polynomial expression in the odd variable $\ell$. This completes the
induction step for $D_\ell R^{(i+1)}$.

We now analyse the right-hand side of~\eqref{eq:19} in a similar
way. Its first term is equal to
\begin{align}
  \label{eq:21}
\frac{\partial}{\partial t}T^{(0)}_{\ell,1,0_{i+1}}&=
                            \frac{\ell+1}{2}\cdot R^{\frac{\ell+1}{2}}\frac{R'}{R}Q_{i,1+\epsilon}\left(\frac{R^{(\leq
                            i+1)}}{R},\frac{S^{(\leq
                            i+1)}}{\sqrt{R}},\ell^2\right)\\
                          &+R^{\frac{\ell+1}{2}}\sum_{k=1}^{i+1}\left(\frac{R^{(k)}}{R}\right)'\frac{\partial
                            Q_{i,1+\epsilon}}{\partial x_k}\left(\frac{R^{(\leq
                            i+1)}}{R},\frac{S^{(\leq
                            i+1)}}{\sqrt{R}},\ell^2\right)\\
  &+R^{\frac{\ell+1}{2}}\sum_{k=1}^{i+1}\left(\frac{S^{(k)}}{\sqrt{R}}\right)'\frac{\partial
                            Q_{i,1+\epsilon}}{\partial y_k}\left(\frac{R^{(\leq
                            i+1)}}{R},\frac{S^{(\leq
                            i+1)}}{\sqrt{R}},\ell^2\right)    
\end{align}
Again, the last two sums are quasi-polynomial in the variable $\ell^2$
with coefficients in
$\mathbb{Q}[R^{(\leq i+2)}/R,S^{(\leq i+2)}/\sqrt{R}]$.  To analyse
the last sum in \eqref{eq:19} we again split cases. Assume that $\ell$
is even ($\epsilon=0$). Then, we have
\begin{align}
\label{eq:22}
 \sum_{0<m<\ell}mT^{(0)}_{\ell,0|m}T^{(0)}_{m,1,0_{i+1}}&=\sum_{0<m<\ell\atop m\in 2\Z}mR^{\frac{\ell-m}{2}}\frac{R'}{R}R^{\frac{m+1}{2}}Q_{i,1}\left(\frac{R^{(\leq
                            i+1)}}{R},\frac{S^{(\leq
  i+1)}}{\sqrt{R}},m^2\right)    \\
  &+\sum_{0<m<\ell\atop m\in 2\Z+1}mR^{\frac{\ell-m}{2}}\frac{S'}{\sqrt{R}}R^{\frac{m+1}{2}}Q_{i,2}\left(\frac{R^{(\leq
                            i+1)}}{R},\frac{S^{(\leq
  i+1)}}{\sqrt{R}},m^2\right)\, .\label{eq:23}
\end{align}
We apply Corollary \ref{sec:discr-integr-lemma-1} to the term
\eqref{eq:23}, and to the sum of the term \eqref{eq:22} and
\eqref{eq:21} (for $\epsilon=0$), showing that they are of the wanted
form of $R^{\frac{\ell+1}{2}}$ times an even polynomial in the
variable $\ell^2$, with coefficients in
$\mathbb{Q}[R^{(\leq i+1)}/R,S^{(\leq i+1)}/\sqrt{R}]$. Finally,
assume that $\ell$ is odd ($\epsilon=1$). This time, we have
\begin{align}
\label{eq:24} 
 \sum_{0<m<\ell}mT^{(0)}_{\ell,0|m}T^{(0)}_{m,1,0_{i+1}}&=\sum_{0<m<\ell\atop m\in 2\Z}mR^{\frac{\ell-m}{2}}\frac{S'}{\sqrt{R}}R^{\frac{m+1}{2}}Q_{i,1}\left(\frac{R^{(\leq
                            i+1)}}{R},\frac{S^{(\leq
  i+1)}}{\sqrt{R}},m^2\right)    \\
  &+\sum_{0<m<\ell\atop m\in 2\Z+1}mR^{\frac{\ell-m}{2}}\frac{R'}{R}R^{\frac{m+1}{2}}Q_{i,2}\left(\frac{R^{(\leq
                            i+1)}}{R},\frac{S^{(\leq
  i+1)}}{\sqrt{R}},m^2\right)\, .\label{eq:25}
\end{align}
We then apply Corollary \ref{sec:discr-integr-lemma-1} to the term
\eqref{eq:24} on the one hand, and to the sum of the term
\eqref{eq:25} and \eqref{eq:21} (for $\epsilon=1$) on the other
hand. These are of the wanted form of $R^{\frac{\ell}{2}+1}$ times an
even polynomial in the variable $\ell^2$, with coefficients in
$\mathbb{Q}[R^{(\leq i+1)}/R,S^{(\leq i+1)}/\sqrt{R}]$. Putting things
together, we see that \eqref{eq:19} indeed yields an even polynomial
expression in the odd variable $\ell$. This completes the induction
step for $D_\ell S^{(i+1)}$, and the proof of
Proposition~\ref{sec:quasi-polyn-tight}.
\end{proof}

From  the fact that $D_m$ is
a differential operator, note that
\begin{equation}
  \label{eq:26}
  D_\ell\left(\frac{R^{(i)}}{R}\right)=\frac{D_\ell
  R^{(i)}}{R}-\frac{R^{(i)}}{R}\frac{D_\ell R}{R}\, ,\qquad
D_\ell\left(\frac{S^{(i)}}{\sqrt{R}}\right)=\frac{D_\ell
  S^{(i)}}{\sqrt{R}}-\frac{1}{2}\frac{S^{(i)}}{\sqrt{R}}\frac{D_\ell R}{R}
\end{equation}
are of the form of $R^{\ell/2}$ times a parity-dependent quasi-polynomial in
$\ell^2$, and with coefficients in $\mathbb{Q}[R^{(\leq i+1)}/R,S^{(\leq
  i+1)}/\sqrt{R}]$. 

\subsection{End of the proof of the induction step}

Let us now assume that the statement of
Proposition~\ref{prop:quasipolnonbip} is true for $(g,n)$, we will
show that it is then true for $(g,n+1)$. Let us introduce the
shorthand notation
\begin{equation}
  d_{g,n} := 3g-2+n.
\end{equation}
By the induction hypothesis, for every subset $I$ of $\{1,\ldots,n\}$
there exists
\begin{equation}
  \label{eq:7}
  P^{(g)}_{n,I}\in
  \mathbb{Q}(x_1,\ldots,x_{d_{g,n}},y_1,\ldots,y_{d_{g,n}})[u_1,\ldots,u_n]\, ,
\end{equation}
such that, whenever the $\ell_i$ with $i\in I$ are odd integers, and
the $\ell_i$ with $i\notin I$ are even integers, we have
\begin{equation}
  \label{eq:28}
  \tau^{(g)}_{\ell_1,\ldots,\ell_n}=P^{(g)}_{n,I}\left(\frac{R^{(\leq
        d_{g,n})}}{R},\frac{S^{(\leq
        d_{g,n})}}{\sqrt{R}},\ell_1^2,\ldots,\ell_n^2\right)
  - c(g,n) t^{2-2g-n} \delta_{\ell_1+\cdots+\ell_n,0}\, .
\end{equation}
The $P^{(g)}_{n,I}$ correspond to the family of polynomials associated
with the parity-dependent quasi-polynomial $\mathfrak{T}^{(g)}_n$, but
the extra variables $x_1,\ldots,x_{d_{g,n}},y_1,\ldots,y_{d_{g,n}}$
are useful to keep track of the dependency in the derivatives of $R$
and $S$. In what follows, to lighten notation, we omit the mention of
these extra variables, except when we differentiate with respect to
them.

Recall that Propositions~\ref{prop:recur0} and~\ref{prop:recurgen}
assert that
\begin{align}
  \label{eq:27}
  T^{(g)}_{\ell_1,\ldots,\ell_{n+1}}=D_{\ell_{n+1}}T^{(g)}_{\ell_1,\ldots,\ell_n}
  +\sum_{i=1}^n\sum_{0<m_i<\ell_i}m_i\,
  T^{(0)}_{\ell_i,\ell_{n+1}|m_i}T^{(g)}_{\ell_1,\ldots,m_i,\ldots,\ell_n}
\end{align}
with $D_0=\frac{\partial}{\partial t}$ and $D_m$ as in \eqref{eq:Dmdefgen} for $m>0$. We plug~\eqref{eq:28} into the
above relation. Since $D_{\ell_{n+1}}$ is a differential operator that
annihilates the variable $t$ except when $\ell_{n+1}=0$, the first
term on the right-hand side of \eqref{eq:27} is equal to
\begin{multline}
  \label{eq:29}
      D_{\ell_{n+1}}\left(R^{\frac{\ell_1+\cdots+\ell_n}{2}}P^{(g)}_{n,I}(\ell_1^2,\ldots,\ell_n^2)-c(g,n)t^{2-2g-n}\delta_{\ell_1+\cdots+\ell_n,0}\right)=\\
    \frac{\ell_1+\cdots+\ell_n}{2}\cdot R^{\frac{\ell_1+\cdots+\ell_n}{2}}\frac{D_{\ell_{n+1}}R}{R}P^{(g)}_{n,I}(\ell_1^2,\ldots,\ell_n^2)\\
  + R^{\frac{\ell_1+\cdots+\ell_n}{2}}
    \sum_{r=1}^{d_{g,n}}\left(D_{\ell_{n+1}}\left(\frac{R^{(r)}}{R}\right)\frac{\partial
                            P^{(g)}_{n,I}}{\partial x_r}(\ell_1^2,\ldots,\ell_n^2)
 +D_{\ell_{n+1}}\left(\frac{S^{(r)}}{\sqrt{R}}\right)\frac{\partial
                            P^{(g)}_{n,I}}{\partial
                            y_r}(\ell_1^2,\ldots,\ell_n^2)\right)\\
                        - c(g,n+1) t^{2-2g-(n+1)} \delta_{\ell_1+\cdots+\ell_n+\ell_{n+1},0}
\end{multline}
where we set $c(g,n+1)=(2-2g-n)c(g,n)$.
 By Proposition \ref{sec:quasi-polyn-tight} and its consequence
 discussed around \eqref{eq:26}, the third line of \eqref{eq:29} is
 of the wanted form $R^{\frac{\ell_1+\cdots+\ell_{n+1}}{2}}$ times a
 quasi-polynomial in $\ell_1^2,\ldots,\ell_{n+1}^{2}$.
 We then split the last term of \eqref{eq:27} into pieces.

 Assume
first that 
that $\ell_{n+1}$ is even, in which case, the quantity
$D_{\ell_{n+1}}R/R$ appearing in the second line of \eqref{eq:29}
equals $R^{\ell_{n+1}/2}R'/R$. The last term of \eqref{eq:27} is then 
\begin{align}
  \label{eq:30} 
& \sum_{i=1\atop \ell_i\in 2\Z}^n \sum_{0<m_i<\ell_i\atop m_i\in
                   2\Z}m_iR^{\frac{\ell_i+\ell_{n+1}-m_i}{2}}\frac{R'}{R}R^{\frac{\ell_1+\cdots+m_i+\cdots+\ell_n}{2}}P^{(g)}_{n,I}(\ell_1^2,\ldots,m_i^2,\ldots,\ell_n^2)\\
  \label{eq:31}
  &+\sum_{i=1\atop \ell_i\in 2\Z}^n \sum_{0<m_i<\ell_i\atop m_i\in
    2\Z+1}m_iR^{\frac{\ell_i+\ell_{n+1}-m_i}{2}}\frac{S'}{\sqrt{R}}R^{\frac{\ell_1+\cdots+m_i+\cdots+\ell_n}{2}}P^{(g)}_{n,I\cup\{i\}}(\ell_1^2,\ldots,m_i^2,\ldots,\ell_n^2)\\
  \label{eq:32}
 & +\sum_{i=1\atop \ell_i\in 2\Z+1}^n \sum_{0<m_i<\ell_i\atop m_i\in
   2\Z}m_iR^{\frac{\ell_i+\ell_{n+1}-m_i}{2}}\frac{S'}{\sqrt{R}}R^{\frac{\ell_1+\cdots+m_i+\cdots+\ell_n}{2}}P^{(g)}_{n,I\setminus \{i\}}(\ell_1^2,\ldots,m_i^2,\ldots,\ell_n^2)\\
  \label{eq:33}
  &+ \sum_{i=1\atop \ell_i\in 2\Z+1}^n \sum_{0<m_i<\ell_i\atop m_i\in
  2\Z+1}m_iR^{\frac{\ell_i+\ell_{n+1}-m_i}{2}}\frac{R'}{R}R^{\frac{\ell_1+\cdots+m_i+\cdots+\ell_n}{2}}P^{(g)}_{n,I}(\ell_1^2,\ldots,m_i^2,\ldots,\ell_n^2)\, .
\end{align}
By the discrete integration Corollary \ref{sec:discr-integr-lemma-1}, we
see that \eqref{eq:31} and \eqref{eq:32} are polynomial expressions in
their respectively even and odd variables $\ell_i^2$, while we should
combine \eqref{eq:30} and \eqref{eq:33} with the corresponding first terms
of \eqref{eq:29} (which is made possible by the fact that
$D_{\ell_{n+1}}$ has a factor $R'/R$) to get similar polynomial
expressions.

If, on the other hand, $\ell_{n+1}$ is odd, then $D_{\ell_{n+1}}R/R$ 
equals $R^{\ell_{n+1}/2}S'/\sqrt{R}$. The last term of \eqref{eq:27} is then 
\begin{align}
\label{eq:34}
& \sum_{i=1\atop \ell_i\in 2\Z}^n \sum_{0<m_i<\ell_i\atop m_i\in
                   2\Z}m_iR^{\frac{\ell_i+\ell_{n+1}-m_i}{2}}\frac{S'}{\sqrt{R}}R^{\frac{\ell_1+\cdots+m_i+\cdots+\ell_n}{2}}P^{(g)}_{n,I}(\ell_1^2,\ldots,m_i^2,\ldots,\ell_n^2)\\
\label{eq:35}
  &+\sum_{i=1\atop \ell_i\in 2\Z}^n \sum_{0<m_i<\ell_i\atop m_i\in
    2\Z+1}m_iR^{\frac{\ell_i+\ell_{n+1}-m_i}{2}}\frac{R'}{R}R^{\frac{\ell_1+\cdots+m_i+\cdots+\ell_n}{2}}P^{(g)}_{n,I\cup\{i\}}(\ell_1^2,\ldots,m_i^2,\ldots,\ell_n^2)\\
\label{eq:36}
 & +\sum_{i=1\atop \ell_i\in 2\Z+1}^n \sum_{0<m_i<\ell_i\atop m_i\in
   2\Z}m_iR^{\frac{\ell_i+\ell_{n+1}-m_i}{2}}\frac{R'}{R}R^{\frac{\ell_1+\cdots+m_i+\cdots+\ell_n}{2}}P^{(g)}_{n,I\setminus\{i\}}(\ell_1^2,\ldots,m_i^2,\ldots,\ell_n^2)\\
\label{eq:37}
  &+ \sum_{i=1\atop \ell_i\in 2\Z+1}^n \sum_{0<m_i<\ell_i\atop m_i\in
  2\Z+1}m_iR^{\frac{\ell_i+\ell_{n+1}-m_i}{2}}\frac{S'}{\sqrt{R}}R^{\frac{\ell_1+\cdots+m_i+\cdots+\ell_n}{2}}P^{(g)}_{n,I}(\ell_1^2,\ldots,m_i^2,\ldots,\ell_n^2)\, .
\end{align}
By the discrete integration Corollary \ref{sec:discr-integr-lemma-1}, we
see that \eqref{eq:35} and \eqref{eq:36} are polynomial expressions in
their respectively even and odd variables $\ell_i^2$, while we should
combine \eqref{eq:34} and \eqref{eq:37} with the corresponding first terms
of \eqref{eq:29} to get similar polynomial expressions.

From this discussion we conclude that the statement of
Proposition~\ref{prop:quasipolnonbip} holds for $(g,n+1)$, which
concludes the proof of the induction step.

\section{Quasi-polynomiality in the general case: initialisation}
\label{sec:proof-prop-refpr}

We now initialise the induction for proving
Proposition~\ref{prop:quasipolnonbip}: for $g=0$, $g=1$ and $g \geq 2$
we should respectively show that the statement holds for $n=3$, $n=1$
and $n=0$.

The fact that the statement holds in the planar case $(g,n)=(0,3)$ is a direct
consequence of Theorem~\ref{thm:triskellnonbip} and of the
relation~\eqref{eq:tauT} between the $T$'s and the $\tau$'s.  To treat
the case of higher genera, we will need some results from the theory
of topological recursion presented in \cite{Eynard2016}, and more
precisely, we will need the expression of the generating function of
maps in genus $g\geq 1$ based on the so-called \emph{method of
  moments}~\cite{Ambjoern1995}. Our approach can be seen as an
extension of the analysis of Budd
\cite[Section~3.4]{budd2020irreducible} to the non bipartite case (but
only considering the non-irreducible case).  Recall that
$F^{(g)}=F^{(g)}_\varnothing =T^{(g)}_\varnothing=\tau^{(g)}_\varnothing$
denote the generating series of maps of genus $g$ without boundaries.

\subsection{Genus one}\label{sec:torus}

Let us first deal with the case $g=1$. Our main input will be
the following. 

\begin{thm}
  \label{sec:torus-1}
  The generating function for maps of genus one without boundary is
  given by
  \begin{equation}
    \label{eq:2700}
  F^{(1)}=\frac{1}{24}\ln\left(\left(\frac{R'}{R}\right)^2-\left(\frac{S'}{\sqrt{R}}\right)^2\right)+\frac{\ln(t)}{12}\,
  .
\end{equation}
\end{thm}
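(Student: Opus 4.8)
The plan is to derive \eqref{eq:2700} from the closed-form expression for the genus-one free energy supplied by the theory of topological recursion \cite{Eynard2016}, following and extending to the case $S\neq0$ the analysis of \cite[Section~3.4]{budd2020irreducible}. The underlying spectral curve is the rational curve with global coordinate $z$, the meromorphic function $x(z)=R^{1/2}(z+z^{-1})+S$ (the Zhukovsky transformation), and a function $y(z)$ encoding the disk series $W^{(0)}_1$. Since $x'(z)=R^{1/2}(1-z^{-2})$, the branch points are $z=1$ and $z=-1$, with critical values $a_\pm:=x(\pm1)=S\pm2R^{1/2}$; in particular $a_+-a_-=4R^{1/2}$, hence $(a_+-a_-)^4=256R^2$.

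Recall that $F^{(1)}=\omega^{(1)}_0$ is not produced by the recursion itself but admits a separate closed form; in the present one-cut situation this is the classical one-matrix-model expression $F^{(1)}=-\tfrac1{24}\ln(M_1(a_+)\,M_1(a_-)\,(a_+-a_-)^4)+\mathrm{const}$, where $M_1(a_\pm)$ denotes the leading moment of $y\,dx$ at the branch point $a_\pm$, the power $(a_+-a_-)^4$ being the Bergman tau-function contribution. The first step is therefore to reduce \eqref{eq:2700} to computing $M_1(a_+)$ and $M_1(a_-)$. Expanding $x$ near a branch point as $x-a_\pm\sim R^{1/2}(z\mp1)^2$, so that $z\mp1\sim(\pm1)R^{-1/4}(x-a_\pm)^{1/2}$, and using that $y$ vanishes there (the spectral curve of maps is one-cut), one gets $M_1(a_\pm)=y'(\pm1)/R^{1/2}$ up to an inessential numerical constant, reducing the problem to evaluating $y'(\pm1)$.

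The crux is to express $y'(\pm1)$ — equivalently $M_1(a_\pm)$ — in terms of the series $R,S$ and their derivatives with respect to the vertex weight $t$. This is where the loop equations enter, in the guise of the defining equations \eqref{eq:defRS} for $R$ and $S$: differentiating them with respect to $t$, and noting that $\partial_t$ acts on the curve by $\partial_t x(z)=\frac{R'}{2R^{1/2}}(z+z^{-1})+S'$ so that $\partial_t x(\pm1)=a'_\pm=S'\pm R'R^{-1/2}$, one recovers the one-matrix-model identity $M_1(a_\pm)(a_+-a_-)\propto1/a'_\pm$, i.e.\ $y'(\pm1)\propto1/a'_\pm$. (Equivalently, this information is encoded in Theorem~\ref{thm:triskellnonbip}, which determines $D_\ell R=T^{(0)}_{\ell,1,1}$ and $D_\ell S=T^{(0)}_{\ell,1,0}$ as explicit monomials in $R$ times $R'$ or $S'$ according to the parity of $\ell$ — precisely the data of the local expansion of $y$ at $z=\pm1$.) Feeding $M_1(a_+)M_1(a_-)=y'(1)y'(-1)/R$ and $(a_+-a_-)^4=256R^2$ into the closed form, one finds
\[
  M_1(a_+)\,M_1(a_-)\,(a_+-a_-)^4\ \propto\ \frac{R}{a'_+a'_-}\ =\ \frac{-1}{\,R'^2/R^2-S'^2/R\,}\,,
\]
using $a'_+a'_-=S'^2-R'^2/R$, and therefore $F^{(1)}=\tfrac1{24}\ln(R'^2/R^2-S'^2/R)+\mathrm{const}$.

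It only remains to fix the additive constant. Setting all the face weights $t_1,t_2,\dots$ to zero gives $R=t$, $S=0$, $R'=1$, $S'=0$, so the right-hand side of \eqref{eq:2700} vanishes; this agrees with the absence of maps of genus one having neither inner face nor boundary, and forces the constant to equal $\tfrac1{12}\ln t$. (As a further consistency check, setting $S=0$ in \eqref{eq:2700} recovers the essentially bipartite identity $F^{(1)}=\tfrac1{12}\ln(tR'/R)$ of \cite[Eq.~(31)]{budd2020irreducible}.) The main obstacle in carrying out this plan is the third step: pinning down the precise normalisation constants in the topological-recursion formula for $F^{(1)}$, including the Bergman tau contribution, and tracking the $S\neq0$ terms correctly through the loop equations; everything else is routine bookkeeping.
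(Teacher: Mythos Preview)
Your approach is essentially the same as the paper's: quote Eynard's closed form $F^{(1)}=-\tfrac{1}{24}\ln(\gamma^2 y'(1)y'(-1)/t^2)$, identify $y'(\pm1)$ (equivalently the zeroth moments $M_{\pm,0}$) in terms of $R'/R$ and $S'/\sqrt{R}$, and plug in. The one genuine difference is in how that identification is made. The paper avoids the normalisation bookkeeping you flag as ``the main obstacle'' by a clean trick: it quotes Eynard's explicit formula for $\omega_3^{(0)}$ in terms of $M_{\pm,0}$, expands it in powers of $z_i^{-1}$, and compares coefficientwise with the tight pair-of-pants formula \eqref{eq:Tabc} (i.e.\ with $\tau^{(0)}_{\ell_1,\ell_2,\ell_3}$). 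This yields directly $\tfrac{1}{RM_{\pm,0}}=R'/R\pm S'/\sqrt{R}$, with no proportionality constants to chase. Your parenthetical alternative via Theorem~\ref{thm:triskellnonbip} is exactly this route; your primary route through differentiating \eqref{eq:defRS} and the generic one-matrix identity $M_1(a_\pm)(a_+-a_-)\propto 1/a'_\pm$ is correct in spirit but, as you note, leaves the constants and the $t$-dependence to be tracked by hand. The paper's comparison-with-$\omega_3^{(0)}$ argument sidesteps that entirely and also delivers the $\ln t$ term without a separate specialisation step, since Eynard's formula already carries the factor $1/t^2$.
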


From this, we can easily deduce Proposition \ref{prop:quasipolnonbip}
in the case $(g,n)=(1,1)$. Indeed, note that by
Propositions~\ref{prop:recur0} and~\ref{prop:recurgen}, we have
$T^{(1)}_\ell=D_\ell F^{(1)}$, which equals
\begin{equation}
  \label{eq:38}
  T^{(1)}_\ell=\frac{\frac{R'}{R}D_\ell\left(\frac{R'}{R}\right)-\frac{S'}{\sqrt{R}}D_\ell\left(\frac{S'}{\sqrt{R}}\right)}{12\left(\left(\frac{R'}{R}\right)^2-\left(\frac{S'}{\sqrt{R}}\right)^2\right)}+\frac{\delta_{\ell,0}}{12t}
\end{equation}
and by Proposition \ref{sec:quasi-polyn-tight}---see also
\eqref{eq:26}---this is of the wanted form of $R^{\ell/2}$ times a quasipolynomial in
$\ell$ with quasiperiod $2$, whose coefficients are rational functions
of $R^{(i)}/R,S^{(i)}/\sqrt{R},i\in \{1,2\}$, minus a pathological term for $\ell=0$.

\begin{proof}[Proof of Theorem \ref{sec:torus-1}]
  Let us recall Eynard's notation\footnote{Eynard puts a bound on the degrees,
and excludes faces of degrees $1$ and $2$, but this restriction is in
fact inessential, as it results from Eynard's choice of considering
the map generating functions as formal power series in the
vertex weight $t$ whose coefficients are polynomials in the face
weights. Working instead with a weight per edge lifts this restriction
since the set of maps with a given number of edges is finite, and we
believe that the reasonings of \cite{Eynard2016} extend to this
setting.} from \cite{Eynard2016}. 
We set (p.58 and Theorem 3.1.2 p.62) $x(z)=S+\sqrt{R}(z+1/z)$
($S=\alpha,R=\gamma^2$), and (pp.59-60, Definition 3.1.1 p.63 and
Theorem 3.1.2 p.62) 
\begin{align}
\label{eq:400}
  y(z)&=W_1^{(0)}(x(z))-\frac{V'(x(z))}{2}\\
 &=       -\frac{1}{2}\sum_{k\geq 1}u_k(z^k-z^{-k})\, ,
\end{align}
where 
\begin{equation}
  \label{eq:300}
  u_k=S\delta_{k,0}+\sqrt{R}\delta_{k,1}-\sum_{i\geq
    k+1}t_i\sum_{j=k}^{\lfloor(i+k-1)/2\rfloor}\binom{i-1}{j,j-k,i-1+k-2j}R^{j-k/2}S^{i-1+k-2j}\, .
\end{equation}
Note that the latter is related to the generating function $V_{k-1}$ for $(k-1)$-slices
introduced in \cite{irredmaps}: for $k\geq 2$, we have $V_{k-1}=-R^{k/2}u_k$. On the other hand, 
we have $u_0=0$ and $u_1=t/\sqrt{R}$, as seen by applying the multinomial formulas in 
\eqref{eq:defRS}. 
We define the $0$-th order \emph{moments} by 
\begin{equation}
  \label{eq:500}
  M_{+,0}=\frac{-y'( 1)}{\sqrt{R}}\, ,\qquad M_{-,0}=\frac{-y'(-
    1)}{\sqrt{R}}. 
\end{equation}
Now, \cite[Theorem 3.4.6]{Eynard2016}  expresses the generating
function of toric maps in terms of $M_{\pm,0}$ as 
\begin{equation}
  \label{eq:40}
  F^{(1)}=-\frac{1}{24}\ln\left(\frac{\gamma^2y'(1)y'(-1)}{t^2}\right)=-\frac{1}{24}\ln\left(\frac{R^2M_{+,0}M_{-,0}}{t^2}\right)\,
  ,
\end{equation}
On the other hand,   
\cite[Theorem 3.3.4]{Eynard2016} gives 
\begin{equation}
 \begin{split} \omega_3^{(0)}(z_1,z_2,z_3)=&
    \frac{1}{2RM_{+,0}}\frac{1}{(z_1-1)^2(z_2-1)^2(z_3-1)^2}\\
 &-\frac{1}{2RM_{-,0}}\frac{1}{(z_1+1)^2(z_2+1)^2(z_3+1)^2}\, ,\end{split}
\end{equation}
which we now know to count pairs of pants. Namely, this is
\begin{equation}
  \label{eq:69}
  \sum_{\ell_1,\ell_2,\ell_3\geq
  1}\left(\frac{1}{2RM_{+,0}}+\frac{(-1)^{\ell_1+\ell_2+\ell_3}}{2RM_{-,0}}\right)\frac{\ell_1\ell_2\ell_3}{z_1^{\ell_1+1}z_2^{\ell_2+1}z_3^{\ell_3+1}}\,
.
\end{equation}
Comparing with our formula for tight pairs of pants \eqref{eq:Tabc}, we obtain 
\begin{equation}
  \label{eq:70}
  \tau^{(0)}_{\ell_1,\ell_2,\ell_3}=\left\{\begin{split}
\frac{1}{2RM_{+,0}}+\frac{1}{2RM_{-,0}}&=\frac{R'}{R} \mbox{ if
}\ell_1+\ell_2+\ell_3\in 2\Z\\
\frac{1}{2RM_{+,0}}-\frac{1}{2RM_{-,0}}&=\frac{S'}{\sqrt{R}} \mbox{ if
}\ell_1+\ell_2+\ell_3\in 2\Z+1
\end{split}
\right. ,
\end{equation}
where $\ell_1,\ell_2,\ell_3>0$. 
This shows that
\begin{equation}
  \label{eq:39}
  \frac{1}{RM_{+,0}}=-\frac{1}{\gamma y'(1)}=\frac{R'}{R}+\frac{S'}{\sqrt{R}}\, ,\qquad
\frac{1}{RM_{-,0}}=-\frac{1}{\gamma
  y'(-1)}=\frac{R'}{R}-\frac{S'}{\sqrt{R}}\, .
\end{equation}
Plugging this into \eqref{eq:40} yields Theorem \ref{sec:torus-1}. 
\end{proof}

\subsection{Higher moments, higher genera}

The generating functions for maps in higher genera are given by higher
order moments, which are given (\cite[p.64]{Eynard2016}), for $h\geq 1$, by
\begin{equation}
  \label{eq:600}
  M_{\pm,h}=\frac{-1}{R^{(h+1)/2}M_{\pm,0}}\sum_{k\geq h+1}(\pm
  1)^{k+h+1}u_k\binom{k+h}{2h+1}\, .
\end{equation}
We define the renormalized version
\begin{equation}
  \label{eq:700}
  \bar{M}_{\pm,h}=R^{(h+1)/2}M_{\pm,h}M_{\pm,0}=-\sum_{k\geq h+1}(\pm
  1)^{k+h+1}u_k\binom{k+h}{2h+1}\, .
\end{equation}
One should note that \cite[Corollary 3.5.1]{Eynard2016} introduces the
quantities $R^{h/2}M_{\pm,h}$ that are called ``dimensionless'', we
will see that these are indeed natural quantities to consider, due to
the following result\footnote{There seems to be an inaccuracy in the
  statement of \cite[Corollary 3.5.1]{Eynard2016}. Indeed, the latter
  is stated without the boundary term involving $t^{2-2g}$ only, which
  is however present in other similar statements such as
  [\emph{ibid.}, Theorems~3.4.3 and 3.4.9]. In these statements, the
  coefficient $\frac{B_{2g}}{2g(2-2g)}$ is indeed equal to
  $\chi(\mathcal{M}_{g,0})$ as wanted.
  It seems that this boundary term got omitted at some stage.}:

\begin{thm}{\cite[Corollary 3.5.1]{Eynard2016}}
\label{sec:high-moments-high}
  There exists $\mathcal{P}_g\in
  \mathbb{Q}[x_+,x_-,(y_{+,h},y_{-,h},1\leq h\leq 3g-3)]$ that is homogeneous
  of degree $2g-2$ in the first two variables, and such that
  $\mathcal{P}_g(x_+,x_-,(\mu^h,\mu^h,1\leq h\leq 3g-3))$ is of degree
  $3g-3$ in $\mu$, and $c(g,0) \in \mathbb{Q}$, such that
  \begin{equation}
    \label{eq:68}
      F^{(g)}=\mathcal{P}_g\left(\frac{1}{RM_{+,0}},\frac{1}{RM_{-,0}},(R^{h/2}M_{+,h},R^{h/2}M_{-,h},1\leq
        h\leq 3g-3)\right)-c(g,0) t^{2-2g}\, .
    \end{equation}
  \end{thm}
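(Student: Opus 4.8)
The plan is to obtain this statement essentially as \cite[Corollary~3.5.1]{Eynard2016} applied to our spectral curve, supplemented by a quasi-homogeneity argument that produces the boundary term $-c(g,0)t^{2-2g}$ (which, as the footnote explains, is missing from the statement in \cite{Eynard2016} but is forced by consistency with the Euler characteristic). First I would recall that the spectral curve $x(z)=S+\sqrt{R}(z+z^{-1})$, $y(z)=-\tfrac12\sum_{k\ge1}u_k(z^k-z^{-k})$ of \eqref{eq:400}--\eqref{eq:300} is a rational genus-zero spectral curve with two simple branch points at $z=\pm1$, hence of exactly the type for which the formal-matrix-model / map-enumeration dictionary of \cite[Chapter~3]{Eynard2016} applies; in particular the free energy $\omega^{(g)}_0$ of topological recursion on this curve \emph{is} the generating function $F^{(g)}$ of maps of genus $g$ without boundary, consistently with Theorems~\ref{thm:enum-coroll-1} and~\ref{thm:omegams} for $n\ge1$. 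The non-bipartite feature $S\ne0$ (odd inner-face degrees allowed) is harmless: it corresponds to allowing the shift $\alpha$ to be non-zero and to the odd-index coefficients $u_k$ in \eqref{eq:300} being non-zero, both permitted in \cite{Eynard2016}.

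With this identification in hand, \cite[Corollary~3.5.1]{Eynard2016} provides, for $g\ge2$, a polynomial expression for $F^{(g)}$ (at $t=1$, say) in $\gamma$ and the moments $M_{\pm,k}$, $0\le k\le3g-3$, with the stated weighted homogeneity: total weight $3g-3$ when $M_{\pm,k}$ is assigned weight $k$, and weight $2g-2$ in the $M_{\pm,0}$-sector. The proof of this structure, which I would import verbatim, is by induction on $g$ using the topological recursion for the $\omega^{(g)}_n$ together with the dilaton equation $F^{(g)}=\tfrac{1}{2-2g}\sum\operatorname{Res}\Phi\,\omega^{(g)}_1$: the residues at $z=\pm1$ turn the recursion into the stated polynomial in the moments, and the grading is respected step by step. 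Rewriting that polynomial in the variables $x_\pm:=1/(RM_{\pm,0})$ and $y_{\pm,h}:=R^{h/2}M_{\pm,h}$ yields $\mathcal{P}_g\in\mathbb{Q}\bigl[x_+,x_-,(y_{+,h},y_{-,h})_{1\le h\le 3g-3}\bigr]$ with exactly the homogeneity claimed; the coefficients are rational because every operation in topological recursion on this curve is rational over $\mathbb{Q}$.

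It remains to restore the $t$-dependence and exhibit the boundary term. Here I would argue exactly as in the bipartite case treated in Section~\ref{sec:quasipolvia} around \eqref{eq:74}: a map of genus $g$ with $V$ non-boundary vertices, $E$ edges and inner faces of degrees $d_1,\dots,d_F$ satisfies $V-E+F=2-2g$ and $2E=\sum_j d_j$, whence, under $t\mapsto\lambda t$, $t_i\mapsto\lambda^{i/2-1}t_i$, its weight is multiplied by $\lambda^{2-2g}$; equivalently $F^{(g)}(t;t_1,t_2,\dots)=t^{2-2g}F^{(g)}(1;(t^{i/2-1}t_i)_i)$, the natural extension of \eqref{eq:74} to the non-bipartite setting. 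Under this scaling one checks, as in \eqref{eq:39} and \eqref{eq:66}, that the combinations $x_\pm$ and $y_{\pm,h}$ are scaling-invariant (indeed rational in $R^{(k)}/R$ and $S^{(k)}/\sqrt{R}$), so that evaluating the Corollary at $t=1$ and restoring $t$ gives $F^{(g)}=\mathcal{P}_g(\dots)-c(g,0)t^{2-2g}$ with $c(g,0)\in\mathbb{Q}$ the constant term absorbed in the rescaling. (The value $c(g,0)=\chi(\mathcal{M}_{g,0})$, which we do not need here, follows by setting all $t_i=0$ and comparing with \cite[Theorem~2]{Norbury2010}.)

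The main obstacle is not the algebra but the bookkeeping of conventions: one must verify carefully that the free energies produced by topological recursion on the non-bipartite Zhukovsky curve \eqref{eq:400}--\eqref{eq:300} coincide with the map generating functions $F^{(g)}$ of Section~\ref{sec:defs}, and that the moments of \eqref{eq:600}--\eqref{eq:700} and their renormalizations match those of \cite[Corollary~3.5.1]{Eynard2016} in our normalization---there are several minor normalization discrepancies in the literature, one of which is precisely the missing boundary term flagged in the footnote. Once the dictionary is pinned down, the homogeneity count and the extraction of the constant term are routine.
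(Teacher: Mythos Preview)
The paper does not prove this theorem: it is quoted directly from \cite[Corollary~3.5.1]{Eynard2016}, with the footnote explaining that the boundary term $-c(g,0)t^{2-2g}$, absent from Eynard's statement, must be reinstated by comparison with \cite[Theorems~3.4.3 and~3.4.9]{Eynard2016}. Your sketch is therefore not so much a different proof as a fuller justification of what the paper takes for granted---identifying the spectral curve, importing the polynomial structure in the moments from Eynard's induction via the dilaton equation, and restoring the $t$-dependence by Euler's relation exactly as the paper does in the bipartite case around~\eqref{eq:74}. This is appropriate and essentially correct.

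One small slip: in your scaling statement ``under $t\mapsto\lambda t$, $t_i\mapsto\lambda^{i/2-1}t_i$, its weight is multiplied by $\lambda^{2-2g}$'', the exponent on $t_i$ has the wrong sign. With that substitution the weight of a map picks up $\lambda^{V+E-F}$, not $\lambda^{V-E+F}=\lambda^{2-2g}$. The equation you write immediately after, $F^{(g)}(t;(t_i)_i)=t^{2-2g}F^{(g)}(1;(t^{i/2-1}t_i)_i)$, is however correct (and matches~\eqref{eq:74} in the bipartite case), because there the substitution goes the other way: one is expressing the general-$t$ series in terms of the $t=1$ series. So the argument goes through, but the verbal description of the scaling should read $t_i\mapsto\lambda^{1-i/2}t_i$.
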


We would like to show that, for $h\geq 1$, the ``dimensionless'' moments $R^{h/2}M_{\pm,h}$ are
rational functions of $R^{(k)}/R,S^{(k)}/\sqrt{R}$ for
$1\leq k\leq h+1$.  Our goal will be to get rid of the variables $t_i$
appearing implicitly in the $u_k$ in the previous expression, and
express this purely in terms of $R,S$ and their derivatives. Our
approach is inspired by that of \cite[Section~3.4]{budd2020irreducible}.

First, for  $h\geq 1$, we rewrite, using \eqref{eq:700} and \eqref{eq:300}, 
\begin{multline}
  \label{eq:2}
  \bar{M}_{\pm,h}
 =\!\!\sum_{k\geq h+1}\!\!(\pm 1)^{k+h+1} \sum_{i\geq 1}t_i \binom{k+h}{2h+1}\sum_{j}\binom{i-1}{j,j-k,i-1+k-2j}R^{j-k/2}S^{i-1+k-2j}\\
  =(\pm 1)^h\sum_{i\geq h+2}t_i\sum_{l}(\pm 1)^{i-l}R^{\frac{i-l-1}{2}}S^l\sum_{k}\binom{k+h}{2h+1}\binom{i-1}{\frac{i-1+k-l}{2},\frac{i-1-k-l}{2},l}
\end{multline}
where the second expression comes by changing the variable $j$ into $l=i-1+k-2j$ and permuting the sums. To alleviate notations, we do not specify some summation ranges as they are naturally enforced by the vanishing of the binomial and multinomial coefficients and by
the requirement that their arguments are nonnegative integers. In
particular, in the second expression, $k+1$ has the same parity as $i-l$, which allows to
reorganize the term $(\pm 1)^{k+h+1}$ as displayed. 

\subsubsection{A binomial identity}
\label{sec:binomial-identity}

Following \cite[Lemma
7]{budd2020irreducible}, it is convenient to simplify the sum over $k$
appearing in \eqref{eq:2}. Let us first rewrite
\begin{equation}
  \label{eq:20}
  \sum_{k}\binom{k+h}{2h+1}\binom{i-1}{\frac{i-1+k-l}{2},\frac{i-1-k-l}{2},l}=
  \binom{i-1}{l}\sum_{k}\binom{k+h}{2h+1}\binom{i-l-1}{\frac{i-l-1+k}{2}}\, ,
\end{equation}
where we recall that the sum is over nonnegative integers $k$  with
the same parity as $i-l-1$. Writing $j=i-l$, we treat the sum over $k$ on the
right-hand side by the following lemma. 

\begin{lem}
  \label{sec:higher-values-p}
  For any nonnegative integer $h$, there exist two polynomials $Q_h^{[0]}$ and $Q_h^{[1]}$ of
degree $h+1$ such that, denoting by $\epsilon=\epsilon(j)\in \{0,1\}$ the parity of
  $j$, we have
  \begin{equation}
    \label{eq:1100}
    \sum_{k}\binom{k+h}{2h+1}\binom{j-1}{\frac{j-1+k}{2}}
    =\binom{j-1}{\frac{j-\epsilon}{2}}Q_h^{[\epsilon]}\left(\frac{j-\epsilon}{2}\right)
  \end{equation}
where we sum over nonnegative $k$ such that $k+\epsilon$ is odd. 
\end{lem}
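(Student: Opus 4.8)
The plan is to reduce the stated identity to a purely one‑variable binomial identity, and then prove that identity by the method of \cite[Lemma~7]{budd2020irreducible}, upgraded so as to keep track of the two parity classes $\epsilon\in\{0,1\}$ simultaneously.

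First I would peel off the factor depending on $l$. Writing $N=i-1$, the summand on the left‑hand side factors as $\binom{N}{\frac{N+k-l}{2},\frac{N-k-l}{2},l}=\binom{N}{l}\binom{N-l}{\frac{(N-l)+k}{2}}$, and the multinomial on the right‑hand side likewise equals $\binom{N}{l}$ times a single binomial coefficient on $n:=N-l=i-1-l$. Dividing out $\binom{N}{l}$, and using that $\epsilon$ is exactly the parity of $n+1$, that $\frac{i-l-\epsilon}{2}=\lfloor\frac{n+1}{2}\rfloor$, and that the surviving binomial on the right is $\binom{n}{\lfloor n/2\rfloor}$ (since $\lfloor\frac{n+1}{2}\rfloor+\lfloor\frac n2\rfloor=n$), the claim reduces to the following: for every $h\geq 0$ the quantity
\[
  S_h(n):=\sum_{\substack{k\geq 0\\ k\equiv n\ (\mathrm{mod}\ 2)}}\binom{k+h}{2h+1}\binom{n}{\tfrac{n+k}{2}}
\]
equals $\binom{n}{\lfloor n/2\rfloor}$ times a polynomial of degree $h+1$ in the variable $\lfloor\frac{n+1}{2}\rfloor$, this polynomial depending only on the parity of $n$; these are the two polynomials $Q_h^{[1]}$ (for $n$ even) and $Q_h^{[0]}$ (for $n$ odd). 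Here one uses that $\binom{k+h}{2h+1}$ vanishes for $0\leq k\leq h$ and is an \emph{odd} polynomial in $k$ of degree $2h+1$, so the sum runs effectively over $k\geq h+1$.

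Next I would obtain a clean closed form for $S_h(n)$. From $\sum_{k\geq h+1}\binom{k+h}{2h+1}u^{k}=u^{h+1}(1-u)^{-2h-2}$ and from the Catalan‑type identity $\sum_{j\geq 0}\binom{k+2j}{j}x^{j}=C(x)^{k}/\sqrt{1-4x}$, one computes $\sum_{n}S_h(n)\,t^{n}$ and simplifies it through the substitution $t=w/(1+w^{2})$ (equivalently $w=tC(t^{2})$), which rationalizes $\sqrt{1-4t^{2}}=(1-w^{2})/(1+w^{2})$. Lagrange inversion then yields
\[
  S_h(n)=[w^{\,n-h-1}]\,\frac{(1+w^{2})^{n}}{(1-w)^{2h+2}}=\sum_{j\geq 0}\binom{n}{j}\binom{n+h-2j}{2h+1}.
\]
(Alternatively one could invoke \cite[Lemma~7]{budd2020irreducible} directly for the even class, which is the bipartite situation treated there, and establish only the odd class in this way; I find it cleaner to treat both uniformly.)

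Finally, from the closed form I would conclude by induction on $h$. The base case $h=0$ is $\binom{k}{1}=k$, for which $S_0(n)=\lfloor\frac{n+1}{2}\rfloor\binom{n}{\lfloor n/2\rfloor}$ is the classical ``mean absolute deviation of the binomial'' identity, so $Q_0^{[0]}(x)=Q_0^{[1]}(x)=x$. For the step, substituting $j=\lfloor n/2\rfloor-r$ turns $\sum_{j}\binom{n}{j}\binom{n+h-2j}{2h+1}$ into $\sum_{r\geq r_0}\binom{n}{\lfloor n/2\rfloor-r}\,\pi_h(r)$ with $\pi_h$ an \emph{odd} polynomial in $r$ of degree $2h+1$ and $r_0\geq 1$; the heart of the matter is then that a \emph{truncated} odd‑moment sum of a central binomial, $\sum_{r\geq r_0}r^{2a+1}\binom{n}{\lfloor n/2\rfloor-r}$ with $r_0\geq 1$, equals $\binom{n}{\lfloor n/2\rfloor}$ times a polynomial of degree $a+1$ in $\lfloor\frac{n+1}{2}\rfloor$ — for $a=h$ this gives degree $h+1$. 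This sub‑statement is itself proved by induction on $a$, using Pascal's rule to pass from $n$ to $n-2$ and lower the exponent, in the spirit of the Bernoulli–Faulhaber manipulations already used in Lemma~\ref{sec:discr-integr-lemma}. I expect the main obstacle to be exactly this last point: carefully controlling the truncation (boundary) terms so that the resulting rational function of $n$ has no spurious pole — equivalently, that an apparent linear factor in the denominator divides the numerator — while carrying out the bookkeeping for the two parity classes in parallel, since $Q_h^{[0]}\neq Q_h^{[1]}$ in general. Once this is in place, the degree bound $h+1$ and the rationality of the coefficients of the $Q_h^{[\epsilon]}$ are immediate.
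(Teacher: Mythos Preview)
Your reduction to the one–variable quantity $S_h(n)$ is correct, and so is the Lagrange–inverted closed form $S_h(n)=[w^{\,n-h-1}]\,(1+w^{2})^{n}(1-w)^{-2h-2}$. This is a genuinely different route from the paper. After the same factoring of $\binom{i-1}{l}$, the paper does not pass through generating functions at all: it uses $\binom{x+h}{2h+1}=\frac{h!^{2}}{(2h+1)!}\,x\,p_h(x)$, expands $p_h(2m)$ in the basis $\{p_r\}_{r\ge 0}$ (for $\epsilon=1$) or $\{\tilde p_r\}_{r\ge 0}$ (for $\epsilon=0$) from \cite{polytightmaps}, and then applies the summation identities $\sum_{k}\mathsf{A}_{j,k}\,p_r(k)=\binom{j-1}{r}\binom{2j-1}{j}$ and its half–integer analogue \cite[Equations (3.7), (3.9)]{polytightmaps}. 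This gives $Q_h^{[\epsilon]}$ explicitly and avoids any induction on $h$.

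Two points in your write–up are actual gaps. First, the expansion $S_h(n)=\sum_{j\ge 0}\binom{n}{j}\binom{n+h-2j}{2h+1}$ is false without the restriction $j\le(n-h-1)/2$: for $j$ beyond this range the second factor, read as a polynomial, is nonzero (e.g.\ at $h=0,\ n=2$ your unrestricted sum gives $0$ while $S_0(2)=2$). Second, and more importantly, after the substitution $j=\lfloor n/2\rfloor-r$ one gets $\pi_h(r)=\binom{2r+h+n-2\lfloor n/2\rfloor}{2h+1}$, which is an odd polynomial in $r$ only when $n$ is even; for $n$ odd it is odd in $r+\tfrac12$, not in $r$. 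Thus your ``truncated odd–moment'' sub–statement does not cover the $n$ odd case (which is precisely the construction of $Q_h^{[0]}$). The fix is to shift to half–integer summation variables, and then the needed identity is exactly the half–integer analogue that the paper handles via the $\tilde p_r$ basis. Once you make this shift, your inductive scheme should go through, but as written the $\epsilon=0$ case is not established.
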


Thanks to this lemma, the sum over $k$ in \eqref{eq:2} is expressed as 
\begin{equation}
    \label{eq:1100bis}
    \sum_{k}\binom{k+h}{2h+1}\binom{i-1}{\frac{i-1+k-l}{2},\frac{i-1-k-l}{2},l}
    =\binom{i-1}{\frac{i-l-\epsilon}{2},\frac{i-l+\epsilon-2}{2},l}Q_h^{[\epsilon]}\left(\frac{i-l-\epsilon}{2}\right)\, ,
  \end{equation}
which leads to the expression 
\begin{equation}\label{eq:52}
  \bar{M}_{\pm,h}  =(\pm 1)^h\sum_{\epsilon\in \{0,1\}}(\pm 1)^\epsilon
  \sum_{i\geq 2}t_i\sum_{l=i +\epsilon\, \mathrm{mod}\, 2}
               R^{\frac{i-l-1}{2}}S^l\binom{i-1}{\frac{i-l-\epsilon}{2},\frac{i-l+\epsilon-2}{2},l}Q^{[\epsilon]}_h\left(\frac{i-l-\epsilon}{2}\right)
\end{equation}
that we will further study in the next subsection. In passing, we note
that
\begin{equation}
  \label{eq:50}
  Q_h^{[0]}(1)=0\quad \mbox{ for every }\quad h\geq 1\, ,
\end{equation}
as seen by taking $j=2$ in \eqref{eq:1100}: on the left-hand side, the
sum is over all positive odd $k$, and each term vanishes.

\begin{proof}[Proof of Lemma \ref{sec:higher-values-p}]
    Assume first that $j$ is even, that is, $\epsilon=0$, so that the summation is over
  odd $k=2k'+1$. Then, writing $2j'=j$, Equation~\eqref{eq:1100} is rewritten as
  \begin{equation}
    \label{eq:1300}
    \sum_{k'\geq 0}\binom{2k'+h+1}{2h+1}\binom{2j'-1}{j'+k'}=\binom{2j'-1}{j'}Q_h^{[0]}(j')\, ,
  \end{equation}
where $Q_h^{[0]}=Q_h(0,\cdot)$ is the polynomial of degree $h+1$ of \cite[Lemma~7]{budd2020irreducible}
specialized at $b=0$. Alternatively, we
may recover the expression of $Q_h^{[0]}$ by the following independent argument. First
note that, in the notation of \cite{polytightmaps}, we have
\begin{equation}
  \label{eq:1800}
\binom{x+h}{2h+1}=\frac{h!^2}{(2h+1)!}xp_h(x)\, , 
\end{equation}
so that, using the
notation
\begin{equation}
  \label{eq:71}
  \mathsf{A}_{\ell,k}=\frac{2k}{2\ell}\binom{2\ell}{\ell+k}\, ,\quad \ell,k\in
  \frac{1}{2}\mathbb{Z}
\end{equation}
from \cite[Equation (3.2)]{polytightmaps}, we can rewrite the left hand side
of \eqref{eq:1300} in
the form
\begin{multline}
  \label{eq:1500}
  \frac{h!^2}{(2h+1)!}\sum_{k'}2(k'+1/2)p_h(2(k'+1/2))\binom{2(j'-1/2)}{j'-1/2+k'+1/2}\\
  =
   (2j'-1)\frac{h!^2}{(2h+1)!}\sum_{k'}\mathsf{A}_{j'-1/2,k'+1/2}p_h(2(k'+1/2))\, .
 \end{multline}
Taking $m=k'+1/2$, the polynomial $p_h(2m)$ is of degree $h$ in $m^2$, so it can be
 expressed in the basis of the polynomials $\tilde{p}_r(m),0\leq r\leq
 h$ defined in \cite[Equation (2.18)]{polytightmaps}, in
 the form $p_h(2m)=\sum_{r=0}^h\tilde{\alpha}_{h,r}\tilde{p}_r(m)$, which, based on
\cite[Equation (3.9)]{polytightmaps}, yields the
 expression
\begin{multline}
   \label{eq:1600}
   (2j'-1)\frac{h!^2}{(2h+1)!}\sum_{k'}\sum_{r=0}^h
   \tilde{\alpha}_{h,r}\mathsf{A}_{j'-1/2,k'+1/2}\, \tilde{p}_r(k'+1/2)\\
   =(2j'-1)\frac{h!^2}{(2h+1)!}\sum_{r=0}^h\tilde{\alpha}_{h,r}\sum_{k'}
     \mathsf{A}_{j'-1/2,k'+1/2}\, \tilde{p}_r(k'+1/2)\\
  =(2j'-1)\frac{h!^2}{(2h+1)!}\sum_{r=0}^h \tilde{\alpha}_{h,r}
    \binom{j'-1}{r}\binom{2j'-2}{j'-1}\\
   =\binom{2j'-1}{j'} \frac{h!^2}{(2h+1)!}
j'\sum_{r=0}^h \tilde{\alpha}_{h,r}\, 
       \binom{j'-1}{r}
\end{multline}
which, since $j'=j/2$, is indeed of the wanted form
$\binom{j-1}{j/2}Q_h^{[0]}(j/2)$
for some polynomial $Q^{[0]}_h$ of degree $h+1$. 

For $j$ odd, i.e.\ $\epsilon=1$, we argue in a similar (and slightly simpler) way.
The summation is now over even $k=2k'$, and we write $2j'=j-1$, so
that the left-hand side of \eqref{eq:1100} reads 
\begin{equation}
  \label{eq:15000}
   \sum_{k'}\binom{2k'+h}{2h+1}\binom{2j'}{j'+k'}=
   2j'\frac{h!^2}{(2h+1)!}\sum_{k'}\mathsf{A}_{j',k'}p_h(2k')\, .
 \end{equation}
 where we use \eqref{eq:1800} and \eqref{eq:71} to pass to the right-hand side.
 This time, we take $m=k'$ and we express the polynomial $p_h(2m)$ in the basis of the polynomials $p_r(m),0\leq r\leq h$, in
 the form $p_h(2m)=\sum_{r=0}^h\alpha_{h,r}p_r(m)$, which yields, now
 using \cite[Equation (3.7)]{polytightmaps}, the
 expression
\begin{multline}
   \label{eq:1600bis}
   2j'\frac{h!^2}{(2h+1)!}\sum_{k'}\sum_{r=0}^h \alpha_{h,r}\mathsf{A}_{j',k'}p_r(k')\\
   =2j'\frac{h!^2}{(2h+1)!}\sum_{r=0}^h\alpha_{h,r}\sum_{k'}
     \mathsf{A}_{j',k'}p_r(k')\\
  =2j'\frac{h!^2}{(2h+1)!}\sum_{r=0}^h\alpha_{h,r}
    \binom{j'-1}{r}\binom{2j'-1}{j'}\\
   =\binom{2j'}{j'} \frac{h!^2}{(2h+1)!}
     j'\sum_{r=0}^h\alpha_{h,r}
    \binom{j'-1}{r}\, ,
\end{multline}
which is again of the wanted form
$\binom{j-1}{(j-1)/2}Q_h^{[1]}((j-1)/2)$ for some polynomial
$Q_h^{[1]}$ of degree $h+1$. 
\end{proof}

\subsubsection{Relating the moments with the recursions determining $R,S$}
\label{sec:expr-moments-terms}

The expression \eqref{eq:52} for the moments can be compared to the
equations \eqref{eq:defRS} defining
$R$ and $S$, which we may write in the more compact and symmetric form
\begin{equation}
  \label{eq:800}
  \bZ(\bU(\bt))=\bt
\end{equation}
where $\bt=(t_0,t_1)$, $t_0=t$, $\bU=(R,S)$ and $\bZ=(Z_0,Z_1)$ is the pair of bivariate series defined by
\begin{equation}
  \label{eq:900}
  Z_0(r,s)
  =r-\sum_{i\geq 2}t_i\sum_{l=i\, \mathrm{mod}\, 2}\binom{i-1}{\frac{i-l}{2},\frac{i-l}{2}-1,l}
  r^{\frac{i-l}{2}}s^{l}
\end{equation}
and
\begin{equation}
  \label{eq:1000}
  Z_1(r,s)
 =s-\sum_{i\geq
            2}t_i\sum_{l=i +1\, \mathrm{mod}\, 2}\binom{i-1}{\frac{i-l-1}{2},\frac{i-l-1}{2},l}r^{\frac{i-l-1}{2}}s^{l} \, .
\end{equation}
Combining this with \eqref{eq:52},  we obtain that, for every $h\geq
1$, 
\begin{equation}
  \label{eq:62}
   (\pm 1)^h\bar{M}_{\pm,h}  =-\sum_{\epsilon\in \{0,1\}}(\pm
   1)^\epsilon R^{\frac{\epsilon-1}{2}}
  Q_h^{[\epsilon]}(r\partial_r)Z_\epsilon(r,s)|_{r=R,s=S}\, ,
             \end{equation}
             where we note that the term $r$ in \eqref{eq:900} is
             annihilated by the differential operator $Q_h^{[0]}(r\partial_r)$ thanks to
             \eqref{eq:50}. 
By Leibniz's formula, this means that the renormalized moments are
linear combinations of the expressions
\begin{equation}
  \label{eq:2100}
  R^{k+\frac{\epsilon-1}{2}}\frac{\partial^kZ_\epsilon}{\partial r^k}(R,S)\, ,\qquad 1\leq k\leq
  h+1\, ,\quad \epsilon\in \{0,1\}.
\end{equation}
We now discuss the structure of these
expressions.

\subsubsection{Structure of derivatives of $\bZ$}\label{sec:closer-look-at}

To this end, we need to better
understand the partial derivatives of $R,S$ with respect to
$t_0,t_1$. We recall that $R^{(k)}$ and
$S^{(k)}$ correspond to the derivatives with respect to $t=t_0$.

\begin{prop}
  \label{sec:expr-deriv-bz}
  There exists a family of polynomials $P^{(a,b)}(x_k,y_k,1\leq k\leq
  a+b)$ such that, for any integers $a,b \geq 0$, we have
  \begin{equation}
    \label{eq:2200}
 \frac{\partial^{a+b}
R}{\partial t_0^a\partial
t_1^b}=R^{\frac{b}{2}+1}P^{(a,b)}\left(\frac{R^{(k)}}{R},\frac{S^{(k)}}{\sqrt{R}},1\leq
  k\leq a+b\right)
\end{equation}
  and, if moreover $(a,b) \neq (0,0)$,
  \begin{equation}
    \label{eq:2300}
 \frac{\partial^{a+b}
S}{\partial t_0^a\partial
t_1^b}=R^{\frac{b+1}{2}}P^{(a+1,b-1)}\left(\frac{R^{(k)}}{R},\frac{S^{(k)}}{\sqrt{R}},1\leq
  k\leq a+b\right)\, .
\end{equation}
It is defined by induction as follows: we have
$P^{(a,0)}=x_a$ for every $a\geq 0$, with the convention that $x_0=1$,
$P^{(a+1,-1)}=y_a$ for every $a\geq 1$, and, for every $a,b\geq 0$,
\begin{equation}
  \label{eq:78}
      P^{(a,b+1)}
      = \left(\left(\frac{b}{2}+1\right)y_1 
        +
\sum_{k=1}^{a+b}
\left(\sum_{i=0}^{k-1}\binom{k}{i}x_iy_{k+1-i}\partial_{x_k}
+\left(x_{k+1}-\frac{y_1y_k}{2}\right)\partial_{y_k} \right)\right)
P^{(a,b)}\, .
\end{equation}
Moreover, the polynomial $P^{(a,b)}(\bar{x}_k^k,\bar{y}_k^k,1\leq
k\leq a+b)$ in the variables $\bar{x}_k,\bar{y}_k$ is homogeneous of
degree $a+b$. 
\end{prop}

\begin{proof}
We first make the trivial observation that for $b=0$, \eqref{eq:2200} holds with
$P^{(a,0)}=x_a$ for $a\geq 0$, and \eqref{eq:2300} holds with
$P^{(a+1,-1)}=y_a$ for $a\geq 1$.  Moreover, by the combinatorial definition of $R$ and $S$ given in
  Section~\ref{sec:defs}, the derivatives in \eqref{eq:2200} and \eqref{eq:2300} are of the form
  $T^{(0)}_{1,\ldots,1,0,\ldots 0}$ in both cases, with $a$ terms
  equal to $0$ and $b+2$ terms equal to $1$ in the first case, and
  with $a+1$ terms equal to $0$ and $b+1$ terms equal to $1$ in the
  second case. This shows that, for $a\geq 0$ and $b \geq 1$,
  \eqref{eq:2300} is nothing but \eqref{eq:2200} applied
  to $a+1$ and $b-1$ instead of $a,b$. 
Furthermore, we deduce from this discussion and Theorem A.1 the identities
  \begin{equation}
    \label{eq:63}
    \frac{\partial R}{\partial t_1}=RS'\, ,\qquad \frac{\partial 
      S}{\partial t_1}=R'\, , 
  \end{equation}
which are consistent with the particular values $P^{(0,1)}=y_1$ and $P^{(1,0)}=x_1$. 

  In view of these preliminary remarks, it now suffices to
  establish~\eqref{eq:2200} for general $b$, which we will do by
  induction. Assume that it holds at rank $b \geq 0$, and let us
  establish it at rank $b+1$. To this end, we write
\begin{equation}
  \label{eq:64}
  \frac{\partial^{a+b+1}
R}{\partial t_0^a\partial
t_1^{b+1}}= \frac{\partial}{\partial t_1}\left(R^{\frac{b}{2}+1}P^{(a,b)}\left(\frac{R^{(k)}}{R},\frac{S^{(k)}}{\sqrt{R}},1\leq
  k\leq a+b\right)\right)\, , 
\end{equation}
and we expand the right-hand side using the Leibniz and chain
rules. The first term in the expansion, obtained by differentiating $R^{b/2+1}$, is
equal to 
\begin{equation}
  \label{eq:67}
 \left(\frac{b}{2}+1\right) R^{\frac{b}{2}} \frac{\partial R}{\partial t_1} P^{(a,b)} = \left(\frac{b}{2}+1\right) R^{\frac{b+1}{2}+1}\frac{S'}{\sqrt{R}}P^{(a,b)}\,
  .
\end{equation}
The term obtained by
differentiating with respect to the variable $x_k=R^{(k)}/R$ in $P^{(a,b)}$
is equal to 
\begin{equation}
  \label{eq:75}
  R^{\frac{b}{2}+1}\partial_{x_k} P^{(a,b)}\frac{\partial}{\partial t_1}\left(\frac{R^{(k)}}{R}\right)\, ,
\end{equation}
where we may write
\begin{equation}
  \label{eq:76}
  \begin{split}
  \frac{\partial}{\partial
    t_1}\left(\frac{R^{(k)}}{R}\right)&=\frac{\frac{\partial R^{(k)}}{\partial t_1}}{R}-\frac{\frac{\partial R}{\partial t_1}  R^{(k)}}{R^2}
 =\frac{\frac{\partial^k (RS')}{\partial t_0^k}}{R}-\frac{S'R^{(k)}}{R}\\
 &=\sqrt{R}
   \sum_{i=0}^{k-1} \binom{k}i \frac{R^{(i)}}{R}\frac{S^{(k+1-i)}}{\sqrt{R}}
   \end{split}
\end{equation}
by \eqref{eq:63} and Leibniz's formula. 
Finally, the term obtained by
differentiating with respect to the variable $y_k=S^{(k)}/\sqrt{R}$ in
$P^{(a,b)}$ is equal to
\begin{equation}
  \label{eq:77}
   R^{\frac{b}{2}+1}\partial_{y_k} P^{(a,b)}\frac{\partial}{\partial t_1}\left(\frac{S^{(k)}}{\sqrt{R}}\right)\, ,
\end{equation}
where we may write
\begin{equation}
  \label{eq:79}
  \frac{\partial}{\partial
    t_1}\left(\frac{S^{(k)}}{\sqrt{R}}\right)=\frac{\frac{\partial
      S^{(k)}}{\partial t_1}}{\sqrt{R}}-\frac{\frac{\partial
      R}{\partial t_1} 
    S^{(k)}}{2R^{3/2}}
=\sqrt{R}\left(\frac{R^{(k+1)}}{R}-\frac{1}{2}\frac{S'}{\sqrt{R}}\frac{S^{(k)}}{\sqrt{R}}\right)\, .
\end{equation}
Putting \eqref{eq:67}--\eqref{eq:79} together, we
obtain that \eqref{eq:2200} is valid for $(a,b+1)$, where
$P^{(a,b+1)}$ is given by \eqref{eq:78}, as wanted. The statement
about the degree and homogeneity of
$P^{(a,b)}(\bar{x}_k^k,\bar{y}_k^k,1\leq k\leq a+b)$ is readily
derived by induction, using \eqref{eq:78}. 
\end{proof}

At this point, we use the formula \eqref{eq:2400} for derivatives of
inverses discussed in Appendix~\ref{sec:higherdiff}, applied to
$\mathbf{f}=\bU$ and $\mathbf{g}=\bZ$ (for $n=2$ in the notation
therein)
\begin{equation}
  \label{eq:41}
  \frac{\partial^k Z_\epsilon}{\partial r^k}(\bU(\bt))=\sum_{\tau\in
    \mathcal{T}^{\mathbf{0}}_\epsilon(k)}\prod_{v\in
    \tau}(d_\bt\bU)^{-1}_{i_v,j_v}\prod_{v\in
    \tau: k_v(\tau)>0}\frac{1}{k_v(\tau)!}\frac{\partial^{k_v(\tau)}U_{j_v}}{\partial
  t_{i_{v1}}\cdots\partial t_{i_{vk_v(\tau)}}}(\bt)
\end{equation}
where $\mathcal{T}^{\mathbf{0}}_\epsilon(k)$ is the family of planted P\'olya
trees $\tau$ with $k$ leaves, and where the bottom and top half-edges of the
edge $e_v$ below the vertex $v\in \tau$ are labeled by elements
$i_v,j_v\in \{0,1\}$, in such a way that 
$i_\varnothing=\epsilon$, and $i_v=0$ for every leaf $v\in \tau$.

Let us look more closely at the contribution of a given
tree $\tau\in \mathcal{T}^{\mathbf{0}}_\epsilon(k)$ to the preceding sum.
Note that by \eqref{eq:13}, we have 
\begin{equation}
  \label{eq:42}
  d_\bt\bU=\left(
  \begin{array}{cc}
    \frac{\partial R}{\partial t_0} & \frac{\partial R}{\partial t_1}   \\
    \frac{\partial S}{\partial t_0} & \frac{\partial S}{\partial t_1} 
  \end{array}
\right)=\left(\begin{array}{cc}
    R' & RS'   \\
    S' & R' 
  \end{array}
\right)
\end{equation}
We use
the comatrix formula to compute
\begin{equation}
  \label{eq:43}
  (d_\bt\bU^{-1})_{i,j}=R^{-1-\frac{i}{2}+\frac{j}{2}}\frac{\mathcal{D}(i,j)}{\left(\frac{R'}{R}\right)^2-\left(\frac{S'}{\sqrt{R}}\right)^2}\, ,
\end{equation}
where $\mathcal{D}(i,j)$ is $R'/R$ if $i+j$ is even, and
$-S'/\sqrt{R}$ otherwise. 
Hence, the contribution of
edges of $\tau$---the first product in \eqref{eq:41}---is 
\begin{equation}
  \label{eq:44}
  \prod_{v\in \tau}R^{-1-\frac{i_v}{2}+\frac{j_v}{2}}\frac{\mathcal{D}(i_v,j_v)}{\left(\frac{R'}{R}\right)^2-\left(\frac{S'}{\sqrt{R}}\right)^2}\,
  ,
\end{equation}
while the contribution of internal vertices---the second product---is,
by Proposition \ref{sec:expr-deriv-bz},
\begin{equation}
  \label{eq:45}
  \prod_{v\in \tau:k_v(\tau)>0}\frac{R^{\frac{\sum_{l=1}^{k_v(\tau)}i_{vl}}{2}+1-\frac{j_v}{2}}}{k_v(\tau)!}P^{\left(\sum_{l=1}^{k_v(\tau)}(1-i_{vl})+j_v,\sum_{l=1}^{k_v(\tau)}i_{vl}-j_v\right)}\left(\frac{R^{(l)}}{R},\frac{S^{(l)}}{\sqrt{R}},1\leq
    l\leq k_v(\tau)\right)\, .
\end{equation}
The exponents in $R$ have many cancellations and result in a global
exponent of 
\begin{equation}
  \label{eq:15}
  -k-i_\varnothing/2+\sum_{v\in
    \tau:k_v(\tau)=0}j_v/2=-k-\epsilon/2\, ,
\end{equation}
since $\tau\in \mathcal{T}^{\mathbf{0}}_\epsilon(k)$. The rest of the
contribution of $\tau$ is a homogeneous polynomial of degree
$2\#\tau-1$ (in the sense of Proposition \ref{sec:expr-deriv-bz}, that
is, where the variables of index $l$ are considered to have degree
$l$), times $((R'/R)^2-(S'/\sqrt{R})^2)^{-\#\tau}$. Note that 
$\#\tau$ is maximal and equal to $2k-1$ when $\tau$ is a binary 
tree. Hence, we reduce the sum in the right-hand side of \eqref{eq:41} to a
common denominator by multiplying and dividing the contribution of
$\tau$ by $((R'/R)^2-(S'/\sqrt{R})^2)^{2k-1-\#\tau}$, which is thus of
the form 
\begin{equation}
  \label{eq:72}
  R^{-k-\epsilon/2}\frac{\Pi^{\tau}_{\pm,k}\left(\frac{R^{(l)}}{R},\frac{S^{(l)}}{\sqrt{R}},1\leq
  l\leq
  k\right)}{\left(\left(\frac{R'}{R}\right)^2-\left(\frac{S'}{\sqrt{R}}\right)^2\right)^{2k-1}}\,
,
\end{equation}
where $\Pi^\tau_{\pm,k}$ is a homogeneous polynomial of degree
$4k-3$.
After multiplying by $R^k$, summing over all possible $\tau\in
\mathcal{T}^{\mathbf{0}}_{\epsilon}(k)$, and applying
\eqref{eq:62} and the remark leading to \eqref{eq:2100}, we finally obtain that
\begin{equation}
  \label{eq:73}
  \bar{M}_{\pm,h}=R^{-1/2}\frac{\Pi_{\pm,h}\left(\frac{R^{(k)}}{R},\frac{S^{(k)}}{\sqrt{R}},1\leq
    k\leq
  h+1\right)}{\left(\left(\frac{R'}{R}\right)^2-\left(\frac{S'}{\sqrt{R}}\right)^2\right)^{2h+1}}\,
,
\end{equation}
where $\Pi_{\pm,h}$ are homogeneous polynomials of degree $4h+1$. 
Coming back to the ``dimensionless'' quantities
$R^{h/2}M_{\pm,h}=\bar{M}_{\pm,h}/(\sqrt{R}M_{\pm,0})$, those are,
recalling \eqref{eq:39}, 
\begin{equation}
  \label{eq:2500}
  R^{h/2}M_{\pm,h}=\left(\frac{R'}{R}\pm\frac{S'}{\sqrt{R}}\right)
  \frac{\Pi_{\pm,h}\left(\frac{R^{(k)}}{R},\frac{S^{(k)}}{\sqrt{R}},1\leq
  k\leq
  h+1\right)}{\left(\left(\frac{R'}{R}\right)^2-\left(\frac{S'}{\sqrt{R}}\right)^2\right)^{2h+1}}\,
.
\end{equation}
Combining this with Theorem \ref{sec:high-moments-high}, this
concludes the proof that Proposition~\ref{prop:quasipolnonbip} holds
for $g \geq 2$ and $n=0$.

\section{Higher order differentials of inverse functions of several variables}
\label{sec:higherdiff}

Here we provide a general discussion revisiting
Lagrange inversion that is used in Section \ref{sec:closer-look-at}.
The results are probably known in some other form, the paper by
Warren Johnson cited in \cite{budd2020irreducible} pointing as far as Sylvester.

Fix $n\geq 1$ and let $\bx=(x_1,\ldots,x_n)$, $\by=(y_1,\ldots,y_n)$
be formal variables. We let 
$\mathbf{f}(\bx)=(f_1(\bx),\ldots,f_n(\bx))$ and
$\mathbf{g}(\by)=(g_1(\by),\ldots,g_n(\by))$ be two families of
$n$ elements in $K[[\bx]]$ and $K[[\by]]$ respectively, where $K$ is
some given field. We assume that $\mathbf{f}$ and $\mathbf{g}$ are
compositional inverses, that is, $\mathbf{g}(\mathbf{f}(\bx))=\bx$ and
$\mathbf{f}(\mathbf{g}(\by))=\by$. By classical consideration, given
$\mathbf{f}$, the compositional inverse $\mathbf{g}$ exists if and
only if $\mathbf{f}(\mathbf{0})=\mathbf{0}$ and the differential of $\mathbf{f}$ at $\mathbf{0}$ is invertible, which
we assume. 

Let us write the expansion
\begin{equation}
  \label{eq:49}
  \mathbf{f}(\bx)=a_1\bx-a_2(\bx,\bx)-a_3(\bx,\bx,\bx)-\cdots\, ,
\end{equation}
where $a_1,-a_2,-a_3,\ldots$ are, up to a multiplicative coefficient, the successive differentials of $\mathbf{f}$ at
$\mathbf{0}$, which are respectively $k$-linear symmetric. Using
tensor notations, we explicitly have
\begin{equation}
    \label{eq:46}
(a_1)_i^j=\frac{\partial f_i}{\partial x_j}(\mathbf{0})\, ,\qquad 
    (a_k)_{i}^{j_1,\ldots,j_k}=
   -\frac{1}{k!}
  \frac{\partial^kf_i}{\partial x_{j_1}\ldots\partial
    x_{j_k}}(\mathbf{0})\, ,\quad k\geq 2\, .
\end{equation}
This yields, after substituting $\mathbf{g}(\by)$ to $\bx$,
\begin{equation}
  \label{eq:48}
  \mathbf{g}(\by)=a_1^{-1}\by+a_1^{-1}a_2(\mathbf{g}(\by),\mathbf{g}(\by))+a_1^{-1}a_3(\mathbf{g}(\by),
  \mathbf{g}(\by), \mathbf{g}(\by))+\cdots\, .
\end{equation}
This former expression has a natural tree interpretation, illustrated
in Figure \ref{fig:derivtree}.
\begin{figure}[!]
  \centering
  \includegraphics[scale=.9]{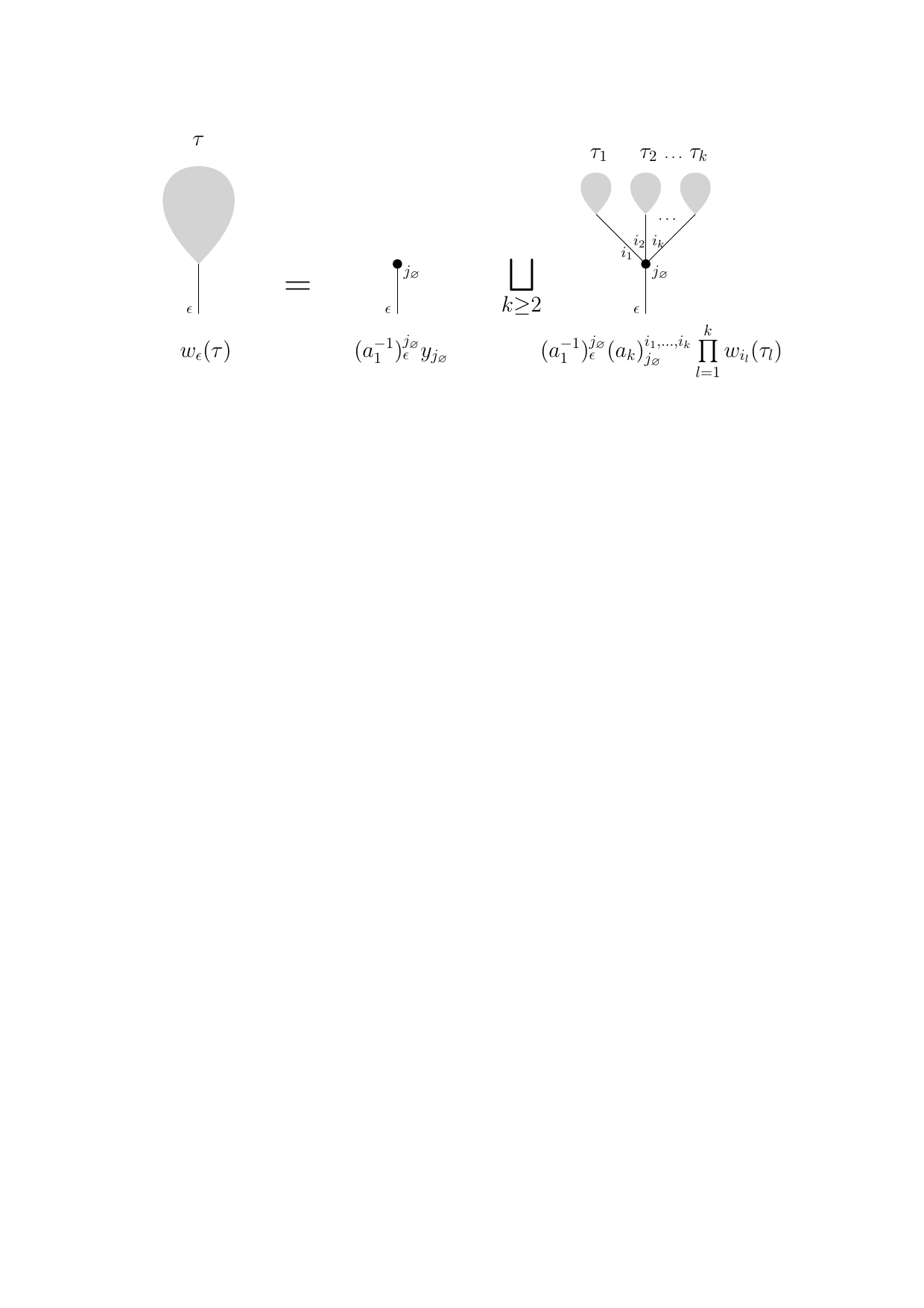}
  \caption{An illustration of the tree structure of derivatives of
    inverses.}
  \label{fig:derivtree}
\end{figure}
We use the
Ulam-Harris notation, where a tree $\tau$ is a subset of the set of
integer words, rooted at the empty word $\varnothing$. Every vertex
$v\in \tau$ has an arity (number of children) denoted by $k_v(\tau)$,
and these children are the words $v1,v2,\ldots,vk_v(\tau)$.  
The trees
considered are planted P\'olya trees (with an extra edge pointing to
the root vertex $\varnothing$), in which the arity $k_v(\tau)$ of every
vertex $v\in \tau$ is an element of $\{0,2,3,4,\ldots\}$. We let
$\mathcal{T}$ be the family of such trees in which the bottom and top half-edges of the
parent edge of $v$ is decorated with two elements $i_v,j_v\in
\{1,\ldots,n\}$. For a given $\epsilon\in \{1,\ldots,n\}$, we also let
$\mathcal{T}_\epsilon$ be the set of those trees for which
$i_\varnothing=\epsilon$. Then
\begin{equation}
  \label{eq:2000}
  g_\epsilon(\by)=\sum_{\tau\in \mathcal{T}_\epsilon}\prod_{v\in
  \tau}(a_1^{-1})_{i_v}^{j_v}(a_{k_v(\tau)})_{j_v}^{i_{v1},i_{v2},\ldots,i_{vk_v(\tau)}}\,
,
\end{equation}
with the convention that $(a_0)_j^\varnothing=y_j$, that is, the
contributions of the variable $\by$ come from the leaves of the
trees. The partial derivatives of order $k$ of $g_\epsilon$ come from
the finite family $\mathcal{T}_\epsilon(k)$ of elements of $\mathcal{T}_\epsilon$ with $k$
leaves. More precisely, for a given
composition $\mathbf{c}=(c_1,\ldots,c_n)$ of $k$ with $n$ parts, one
has
\begin{equation}
  \label{eq:47}
  \frac{\partial^kg_{\epsilon}}{\partial x_1^{c_1}\cdots\partial x_n^{c_n}}(\mathbf{0})=\sum_{\tau\in \mathcal{T}^{\mathbf{c}}_\epsilon}\prod_{v\in
  \tau}(a_1^{-1})_{i_v}^{j_v}\prod_{v\in \tau:k_v(\tau)>0}(a_{k_v(\tau)})_{j_v}^{i_{v1},i_{v2},\ldots,i_{vk_v(\tau)}}\,
,
\end{equation}
where $\mathcal{T}^{\mathbf{c}}_\epsilon$ is the set of trees $\tau\in
\mathcal{T}_\epsilon(k)$ with $c_i$ leaves $v$ labeled by $j_v=i$, for
$1\leq i\leq n$. 

By applying this reasoning to the series 
$\mathbf{f}(\bx+\bx')-\mathbf{f}(\bx)$ and to $\mathbf{g}(\by+\by')-\mathbf{g}(\by)$
instead, we obtain
\begin{equation}
  \label{eq:2400}
  \frac{\partial^kg_{\epsilon}}{\partial x_1^{c_1}\cdots\partial x_n^{c_n}}(\mathbf{f}(\bx))=\sum_{\tau\in \mathcal{T}^{\mathbf{c}}_\epsilon(k)}\prod_{v\in
  \tau}(d_\bx\mathbf{f}^{-1})_{i_v}^{j_v}\prod_{v\in
  \tau:k_v(\tau)>0}\frac{-1}{k_v(\tau)!} \frac{\partial^{k_v(\tau)}
  f_{j_v}}{\partial x_{i_{v1}}\cdots \partial x_{i_{vk_v(\tau)}}}(\bx)\,
.
\end{equation}

\printbibliography

\end{document}